\newcommand{\comment}[1]{}
\numberwithin{equation}{section}
\newtheorem{theorem}{Theorem}[section]
\newtheorem{lemma}{Lemma}[section]
\newtheorem{proposition}{Proposition}[section]
\newtheorem{prop}[theorem]{Proposition}
\numberwithin{equation}{section}
\DeclareMathOperator{\Var}{Var}
\theoremstyle{definition}
\newtheorem{definition}{Definition}[section]
\newtheorem{remark}{Remark}[section]
\newcommand{\Cov}{\text{Cov}}
\DeclareMathOperator{\E}{E}
\DeclareMathOperator{\Tr}{Tr}
\newcommand{\nb}[1]{\textcolor{red}{\texttt{[#1]}}}
\newcommand{\ER}{{Erd\H{o}s--R\'{e}nyi}}
\newcommand{\FK}{{F{\"u}redi--Koml{\'o}s}}
\newcommand{\Acenone}{A_{\mathrm{cen1}}}
\newcommand{\Acentwo}{A_{\mathrm{cen2}}}
\newcommand{\pnav}{p_{n,\mathrm{av}}}
\newcommand{\hpnav}{\widehat{p}_{n,\mathrm{av}}}
\title{Optimal hypothesis testing for stochastic block models with growing degrees\footnote{We thank Edgar Dobriban for careful reading of an earlier version of the manuscript and for constructive comments.}}
\author{Debapratim Banerjee\footnote{Email: \texttt{dban@wharton.upenn.edu}.}~
and~
Zongming Ma\footnote{Email: \texttt{zongming@wharton.upenn.edu}.}\\
~\\
\textit{University of Pennsylvania}}
\date{August 10, 2017}
\begin{document}
\maketitle
\begin{abstract}
	 
The present paper considers testing an \ER~random graph model against a stochastic block model in the asymptotic regime where the average degree of the graph grows to infinity with the graph size $n$.
Our primary interest lies in those cases in which the signal-to-noise ratio is at a constant level.
Focusing on symmetric two block alternatives, we first derive joint central limit theorems for linear spectral statistics of power functions for properly rescaled graph adjacency matrices under both the null and local alternative hypotheses.
The powers in the linear spectral statistics can also grow to infinity together with the graph size.
In addition, we show that linear spectral statistics of Chebyshev polynomials are closely connected to signed cycles of growing lengths that determine the asymptotic likelihood ratio test for the hypothesis testing problem.
This enables us to construct a sequence of test statistics that achieves the exact optimal asymptotic power within $O(n^3\log n)$ time complexity in the contiguous regime when $n^2 \pnav^3 \to\infty$ where $\pnav$ is the average connection probability.
We further propose a class of adaptive tests that are computationally tractable and completely data-driven.
They achieve nontrivial powers in the contiguous regime and consistency in the singular regime whenever $n\pnav\to\infty$.
These tests remain powerful when the alternative becomes a stochastic block model with more than two blocks.
 
\medskip   

\textbf{Keywords}: community detection, computational complexity, contiguity, linear spectral statistic, sparse Wigner matrix.
 
\end{abstract}


\section{Introduction}
Stochastic block model (SBM) \cite{holland83} is an active domain of modern research in statistics, computer science and many other related fields. 
A stochastic block model for random graphs encodes a community structure where a pair of nodes from the same community are expected to be connected in a different manner from those from different communities. 
This model, together with the related community detection problem, has drawn substantial attentions in statistics and machine learning.
Throughout the paper, let $\mathcal{G}_1(n,p_n)$ denote the \ER~graph with $n$ nodes in which the edges are i.i.d.~Bernoulli random variables with success probability $p_n$. 
For any integer $\kappa \geq 2$, let $\mathcal{G}_\kappa(n,p_n,q_n)$ denote the symmetric stochastic block model with $\kappa$ different blocks where the label $\sigma_u$ of any node $u$ is assigned independently and uniformly at random from the set $\{1,2,\dots, \kappa\}$. 
The edges are independent Bernoulli random variables, and 
two nodes are connected with probability $p_n$ if they share the same label and $q_n$ otherwise.

A fundamental question related to stochastic block models is community detection where one aims to recover the partition of nodes into communities based on one instance of the random graph.
Depending on the signal-to-noise ratio, there are three different regimes for recovery, namely partial recovery, almost exact recovery and exact recovery.
In the asymptotic regime of bounded degrees (i.e.~$np_n$ and $nq_n$ remain constants as $n\to\infty$), the seminal papers by Mossel et al.~\cite{MNS12, MNS13} 
and \citet{Mas14} established sharp threshold for $\mathcal{G}_2(n,p_n, q_n)$ on when it is possible and impossible to achieve a partial recovery of community labels that is strictly better than random guessing, which confirmed the conjecture in \citet{DKMZ11}.
See \cite{AS16} for an extension to multiple blocks and \cite{Ban16} for an extension to the regime of growing degrees (i.e.~$n p_n, nq_n\to\infty$ as $n\to\infty$). 
In the regime of growing degrees,
\citet{MNS15} established the necessary and sufficient condition for achieving almost exact recovery in $\mathcal{G}_2(n, p_n ,q_n)$, i.e.~when only a vanishing proportion of node labels are not recovered correctly.
See also \cite{abbe2015community,gao2015achieving,yezhang15,yun2015optimal,
gao2016community} for results on more general SBMs.
Furthermore, \citet{ABH14} and \citet{MNS15} established the necessary and sufficient condition for achieving exact recovery of labels for $\mathcal{G}_2(n,p_n,q_n)$ which was later extended by \cite{hajek2014achieving, hajek2015achieving,abbe2015community,jog2015information,yun2015optimal,gao2016community} to more general cases.
See \cite{abbe2017community} for a survey of some recent results.

In addition to the literature on information-theoretic limits, many community detection algorithms have been proposed, including but not limited to spectral clustering and likelihood based clustering.
An almost universal assumption of these algorithms is the knowledge of the number of blocks $\kappa$, which usually is unknown in practice. 
For data-driven choice of $\kappa$, researchers have proposed different methods.
One popular way is information criterion based model selection. See, e.g., \cite{daudin2008mixture,latouche2012variational,peixoto2013parsimonious,wang2015likelihood,saldana2017many}.
In addition, several block-wise cross-validation methods have been proposed and studied. See, e.g., \cite{chen2014network,dabbs2016}.
Furthermore, \citet{bickel2016} proposed to recursively apply the largest eigenvalue test for partitioning the nodes and for determining $\kappa$. 
The proposal was based on the GOE Tracy--Widom limit \cite{tracy1996orthogonal} of the largest eigenvalue distribution for adjacency matrices of \ER~graphs when the average degree grows linearly with $n$.
\citet{lei2016} extended it to a procedure based on sequential largest eigenvalue tests in the regime where exact recovery can be achieved.
See also \cite{le2015estimating} for another spectral method for choosing $\kappa$.

Let the observed adjacency matrix be $A\in \{0,1\}^{n\times n}$.
The major focus of the present paper is to test the following hypotheses:
\begin{equation}
	\label{test:sbm}
H_0 : A \sim \mathcal{G}_1\left(n,\,\frac{p_n+q_n}{2}\right)
\quad\mbox{vs.}\quad
H_1 : A \sim \mathcal{G}_2(n,p_n,q_n)
\end{equation}
when the average degree of the random graph grows to infinity with the graph size.
The parameters in the hypotheses are so chosen that the expected numbers of edges match under null and alternative.
Let $a_n = np_n$ and $b_n = nq_n$.
Our primary interest lies in the cases where the signal-to-noise ratio
\begin{equation}
	\label{eq:snr} 
	c := \frac{(a_n - b_n)^2}{a_n+b_n}  
\end{equation}
is a constant, and we call any such alternative a \emph{local} one.
For such cases, one has growing average degree if and only if $np_n\to\infty$.
In what follows, we denote the null and alternative hypotheses in \eqref{test:sbm} by $\mathbb{P}_{0,n}$ and $\mathbb{P}_{1,n}$ respectively.
This testing problem is not only fundamental to inference for SBMs but is also the foundation of any test based method for choosing $\kappa$.

For \eqref{test:sbm},
\citet{MNS12} (resp.~\citet{Ban16}) proved that when $a_n\equiv a$ and $b_n\equiv b$ are fixed constants (resp.~when $a_n\to\infty$ and ${a_n}/{n} = p_n \to p \in[0,1)$), 
if $c < 2$ (resp.~$c<2(1-p)$), then
the measures $\mathbb{P}_{0,n}$ and $\mathbb{P}_{1,n}$ are \emph{mutually contiguous} \cite{LeCam}, i.e.~for a sequence of events $E_n$, $\mathbb{P}_{0,n}(E_n) \to 0$ if and only if $\mathbb{P}_{1,n}(E_n) \to 0$. 
On the other hand, if $c>2$ (resp.~$c > 2(1-p)$), then $\mathbb{P}_{0,n}$ and $\mathbb{P}_{1,n}$ are \emph{asymptotically singular}.
These results imply that whenever $c<2$ (resp.~$c<2(1-p)$), it is impossible to find a consistent test for \eqref{test:sbm}.

In the respective asymptotic regimes,
\citet{MNS12} and \citet{Ban16} further obtained 
explicit descriptions of the asymptotic log-likelihood ratio 
within the contiguous regime.
Let $L_n := \frac{\mathrm{d}\mathbb{P}_{1,n}}{\mathrm{d}\mathbb{P}_{0,n}}$ be the likelihood ratio.
In the growing degree asymptotic regime, 
\citet[Proposition 3.4]{Ban16} showed that if $c < 2(1-p)$ where $p=\lim_{n\to\infty} p_n$, then
\begin{equation}
	\label{exp:likelihood}
\begin{aligned}
\log(L_n)  \,|\, \mathbb{P}_{0,n} & \stackrel{d}{\to} \sum_{i=3}^{\infty}\frac{2t^{i}Z_{i} - t^{2i}}{4i},
\quad
\log(L_n) \,|\, \mathbb{P}_{1,n} 
\stackrel{d}{\to} \sum_{i=3}^{\infty}\frac{2t^{i}Z_{i}+t^{2i}}{4i},
\end{aligned}
\end{equation}  
where 
\begin{equation}
	\label{eq:t}
t= \sqrt{\frac{c}{2(1-p)}} \quad \text{and} \quad
Z_{i} \stackrel{\text{ind}}{\sim} N(0,2i).	
\end{equation}
Each random variable $Z_i$ comes from the weak limit of the signed cycle of length $i$.
See \cite[Definition 4.1]{Ban16} and Eq.~\eqref{def:signed} below for the exact definition. 
Asymptotically the log-likelihood is a measurable function of the signed cycles. 
As a consequence, in the contiguous regime, knowing the signed cycles is enough for obtaining the asymptotically optimal test for (\ref{test:sbm}).
On the other hand, in the singular regime, one has a consistent test for \eqref{test:sbm} by using the signed cycle statistic the length $k_n$ of which tends to infinity with $n$ at a rate of $o(\min\{\log(np_n), \sqrt{\log n}\})$
\cite[Proposition 4.1]{Ban16}.
Here and after, for any two sequences of positive numbers $x_n$ and $y_n$, we write $x_n = O(y_n)$ if $x_n/y_n$ is uniformly bounded by a numeric constant and $x_n=o(y_n)$ or $x_n\ll y_n$ if $x_n/y_n$ converges to zero as $n\to\infty$.

These results are satisfying from a statistical optimality viewpoint because the Neyman--Pearson lemma dictates that the likelihood ratio test is optimal for the simple vs.~simple testing problem \eqref{test:sbm}. 
However, there are two major drawbacks.
First, neither the likelihood ratio test nor any test involving signed cycles of diverging lengths is computationally tractable.
In particular, evaluation of the likelihood function of the alternative is of exponential time complexity,
and calculating the signed cycle of length $k$ directly requires enumeration of all node subsets of size $k$ which is of $O({n\choose k})$ time complexity. 
It grows faster than any polynomial of $n$ as long as $k$ diverges with $n$.
In addition, to decide on which test statistic to use, one needs to know the null and alterative (or at least the value of $t$ in \eqref{eq:t}), and so one does not yet have a test procedure that is adaptive to different null and alternative hypotheses.

In view of the foregoing shortcomings, we pursue answers to the following two questions in the present paper:
\begin{itemize}
\item Can one achieve the sharp asymptotic optimal power of the likelihood ratio test in the contiguous regime with a test of polynomial time complexity?

\item Can one design an adaptive test which achieves nontrivial power in the contiguous regime and consistency in the singular regime?
\end{itemize}
 
\subsection{Main contributions}
In this paper, we provide affirmative answers to both of the foregoing questions under appropriate conditions, which are summarized as the following main contributions:
\begin{enumerate}
\item For appropriately rescaled graph adjacency matrices,
we derive joint central limit theorems for their linear spectral statistics (LSSs) of power functions under both the null and local alternative hypotheses in the growing degree asymptotic regime. 
An important feature of the central limit theorems is that we allow the powers in LSSs to grow to infinity with the graph size.
The proof of these CLTs based on the ideas of F{\"u}redi--Koml{\'o}s enumeration and unicyclic graphs further reveals a deep connection between LSSs of Chebyshev polynomials and signed cycles.

\item Based on the connection between the spectrum of an adjacency matrix and signed cycles, given the knowledge of both hypotheses in \eqref{test:sbm} (or the quantity $t$ in \eqref{eq:t}), we propose a test based on a special linear spectral statistic.
The test statistic can be evaluated within $O(n^3 \log{n})$ time complexity and achieves the sharp optimal asymptotic power as the likelihood ratio test in the contiguous regime under the additional condition that $n^2 p_n^3 \to\infty$. 
If only $np_n\to\infty$ holds, we have a slightly different test with $O(n^3\log n)$ time complexity that achieves a nontrivial fraction of the optimal power in the contiguous regime.
It is worth noting that
regardless of the rate at which $p_n$ scales with $n$, no community detection method can perform better than random guessing within the contiguous regime. In other words, we can only tell with nontrivial probability that the random graph comes from $\mathcal{G}_2(n,p_n,q_n)$ while having little idea about how the nodes are partitioned into communities.
Based on our limited knowledge, the present paper is one of the first to achieve the exact asymptotic power of the likelihood ratio test on a non-Gaussian model in high dimensions.

\item Further exploiting the connection between LSSs and signed cycles, we propose several adaptive tests for \eqref{test:sbm}.
These tests are data-driven, of $O(n^3\log n)$ time complexity and do not require knowledge of $p_n$ or $q_n$.
Moreover, they achieve nontrivial power in the contiguous regime and consistency in the singular regime. 
We also show that they remain consistent when the alternative is some symmetric SBM with $\kappa > 2$ blocks and the separation is above the Kesten--Stigum threshold \cite{DKMZ11}.

\end{enumerate}

\subsection{Related works}
The present work is closely related to a large body of work on the edge scaling limits of \ER~graphs, and more generally of Wigner matrices. 
Consider first the case where $p_n \to p >0$ and so the average degree of the graph grows linearly with the graph size $n$.
In this case, the rescaled graph adjacency matrix (s.t.~the entries are i.i.d.~random variables with mean zero and variance $1/n$) is a Wigner matrix under the null hypothesis in \eqref{test:sbm}, and can essentially be viewed as a rank-one perturbation of a Wigner matrix under the alternative with operator norm of the perturbation given by $t$ in \eqref{eq:t}.
It is well known that the largest eigenvalue of a Wigner matrix converges to $2$ almost surely and have the GOE Tracy--Widom scaling limit under a fourth moment condition. 
Assuming the perturbation is positive semi-definite (corresponding to $p_n>q_n$ in \eqref{test:sbm}), the largest eigenvalue of a rank-one deformed Wigner matrix undergoes a phase transition. 
In particular, it converges to $2$ or $t+{1}/{t}$ depending on whether $t<1$ or $t>1$. 
In addition, it has a GOE Tracy--Widom limit when $t<1$ and a non-universal scaling limit when $t>1$. 
See for instance \cite{PF07,CDF09,PRS11} for more details. 
Note that the threshold $t=1$ is exactly the threshold between the contiguous and the singular regimes for the null and alternative hypotheses in \eqref{test:sbm}. 
The phase transition thus suggests that any test based solely on the largest eigenvalue has trivial power within the contiguous regime for having the same scaling limit under null and alternative. 
A result of this flavor was first discovered by \citet{baik2005phase} for complex sample covariance matrices. 

When $p_n$ and $q_n\to 0$, the average degree of the graph grows sub-linearly with the graph size. 
In this regime, results about the edge scaling limits under either the null or the alternative are less complete compared with the linear degree growth regime.
Under the null, the convergence to the GOE Tracy--Widom limit was established in \cite{EKYY12} under the assumption that $np_{n}^{3} \to \infty$. \citet{LS16} weakened the condition to $n^2p_{n}^{3} \to \infty$. 
Turning to the alternative. 
Suppose $p_n>q_n$.
\citet{EKYY13} showed that the largest eigenvalue of the rescaled graph adjacency matrix 
converges in probability to $t+1/t$ whenever $np_n \gg (\log n)^{6\xi}$
for some $1 < \xi = O(\log \log n)$ and $t>1$. Further, it was proved in \cite{EKYY13} when $t> C_{0} (\log n)^{2\xi}$ for some large constant $C_{0}$,
the largest eigenvalue has a $\sqrt{2/n}$ fluctuation and a normal scaling limit.
To the best of our knowledge, little is known about the asymptotic null distribution of the largest eigenvalue when $n^2 p_n^3$ is bounded or about its distribution under any local alternative when $t$ is a constant.

As discussed earlier, one of the main contributions of the present paper is to link signed cycles and linear spectral statistics. 
In that sense analyzing linear spectral statistics of the rescaled adjacency matrices of \ER~graphs lies at the heart of our technical analysis. 
There are a series of papers on linear spectral statistics of Wigner and Wishart matrices relying on the methods introduced by \citet{bai2004clt}. 
See, in particular, \cite{bai2009clt} for CLTs of linear spectral statistics of Wigner matrices. These techniques are however specific to the asymptotic regime where the average degree grows linearly with graph size. 
In this paper we adopt the combinatorial methods developed by \citet{AZ05}
which we modify and use for all growing degree cases, regardless of the growth rate.  

In addition, our results are connected with the literature on optimal hypothesis testing in high dimensions.
Onatski et al.~\cite{onatski13, onatski14} studied the optimal tests for an identical covariance (or correlation) matrix against a spiked local alternative for Gaussian data when the sample size and the ambient dimension grow proportionally to infinity. 
Remarkably, they further studied the asymptotic powers of the Gaussian likelihood ratio tests for non-Gaussian data.
\citet{johnstone14} and \citet{johnstone15} studied analogous questions for an exhaustive collection of testing problems in various ``double Wishart'' scenarios where the sufficient statistics of the observations are essentially two independent Wishart matrices.
See also \citet{dobriban2016} for an important extension of \cite{onatski13, onatski14} where one is allowed to have general covariance matrices as the null model.
From a slightly different viewpoint, \citet{cai2013optimal} and \citet{cai2015optimal} studied minimax optimal hypothesis testing for an identity covariance matrix.
The concurrent work by \citet{gao2017testing} studied minimax rates for testing \ER~graph against SBMs and more general alternatives.
Interestingly, one of their proposed test statistics is asymptotically equivalent to the signed cycle of length three, also known as the signed triangle \cite{BDER14}.


\subsection{Organization}
The rest of the paper is organized as the following.
After a brief introduction of definitions and notation in Section \ref{sec:notation}, we establish in Section \ref{sec:lss} joint central limit theorems under both the null and local alternatives for linear spectral statistics of rescaled graph adjacency matrices and their connection to signed cycles.
In addition, we propose a computationally tractable testing procedure based on these findings that achieves the same optimal asymptotic power as the likelihood ratio test.
Section \ref{sec:adaptive} investigates adaptive testing procedures for \eqref{test:sbm} and we also study the powers of the proposed tests under symmetric multi-block alternatives.
We give an outline of the proofs in Section \ref{sec:overview} and the detailed proofs are presented in the appendix.
Finally, we conclude in Section \ref{sec:conclusion}.


\section{Definitions and notation}
\label{sec:notation}
We first introduce some preliminary definitions and notation to be used throughout the paper.
We let $\E_{i,n}$ and $\Var_{i,n}$ denote expectation and variance under $\mathbb{P}_{i,n}$ for $i=0$ and $1$.
For any random graph $G$, its adjacency matrix will be denoted by $A$ and $x_{i,j}$ (instead of $a_{i,j}$) will be used to denote the indicator random variable corresponding to an edge between the nodes $i$ and $j$. 
We denote the expected average connection probability and its sample counterpart by
\begin{equation}\label{def:hatp}
\pnav =\frac{1}{n(n-1)} \sum_{i \neq j} \E_{0,n}[x_{i,j}],
\quad \text{and} \quad
\hpnav = \frac{1}{n(n-1)} \sum_{i \neq j} x_{i,j}.
\end{equation}
Under out settings, $\pnav$ remains unchanged if we replace $\E_{0,n}$ with $\E_{1,n}$ in its definition.
The signed cycle of length $k$ of the graph $G$ is defined to be 
\begin{equation}\label{def:signed}
C_{n,k}(G)= \left(\frac{1}{\sqrt{np_{n,\mathrm{av}}(1-p_{n,\mathrm{av}})}}\right)^{k} \sum_{i_0,i_1,\ldots,i_{k-1}} (x_{i_0,i_1}-p_{n,\mathrm{av}})\ldots(x_{i_{k-1}i_0}-p_{n,\mathrm{av}})
\end{equation}
where $i_0,i_1,\dots, i_{k-1}$ are all distinct.
We define the following centered and scaled versions of the adjacency matrix $A$. 
For any $n\in \mathbb{N}$, let $1_n = (1,\dots,1)'\in \mathbb{R}^n$
and $I_n$ be the $n \times n$ identity matrix. Then
\begin{equation}\label{cen1}
\Acenone := \frac{A- p_{n,\mathrm{av}}(1_n 1_n'- I_{n})}{\sqrt{np_{n,\mathrm{av}}(1-p_{n,\mathrm{av}})}}\, ,
\end{equation}
and
\begin{equation}\label{cen2}
\Acentwo := \frac{A- \hpnav(1_n 1_n'- I_{n})}{\sqrt{n\hpnav(1-\hpnav)}}\,.
\end{equation}
Note that $\Acentwo$ is completely data-driven.
If $A$ is a random instance of the \ER~graph $\mathcal{G}_1(n,\pnav)$, then $\Acenone$ has zeros on the diagonal and the sub-diagonal entries (subject to symmetry) are i.i.d.~with mean zero and variance $1/n$.

We now introduce an important generating function. 
Given any $r \in \mathbb{N}$, let
\begin{equation}\label{gen:f}
\left( \frac{1-\sqrt{1-4z^2}}{2z} \right)^r= \sum_{m=r}^{\infty} f(m,r) z^{m}.
\end{equation}
The coefficients $f(m,r)$'s are key quantities for defining the variances and covariances of linear spectral statistics constructed from different power functions. 
For any $k \in \mathbb{N}$ denote 
\begin{equation}\label{def:catalan}
\psi_{k}=\left\{  
\begin{array}{ll}
0 & \text{if $k$ is odd}\\
\frac{1}{\frac{k}{2}+1} \binom{k}{\frac{k}{2}} & \text{if $k$ is even}.
\end{array}
\right.
\end{equation}
So $\psi_k$ is the $\frac{k}{2}$-th Catalan number for every even $k$.
Finally, we define a set of rescaled Chebyshev polynomials. 
These polynomials are important for drawing the connection between signed cycles and the spectrum of adjacency matrix. 
The standard Chebyshev polynomial of degree $m$ is denoted by $S_{m}(x)$ and can be defined by the identity
\begin{equation}\label{def:ChebyshevI}
S_{m}\left(\cos(\theta)\right) = \cos(m\theta).
\end{equation}
In this paper we use a slight variant of $S_m$, denoted by $P_{m}$ and defined as
\begin{equation}\label{def:ChebyshevII}
P_{m}(x)= 2S_{m}\left(\frac{x}{2}\right).
\end{equation}
In particular, 
$P_{m}(2\cos(\theta)) = 2 \cos(m\theta)$.
It is easy to note that 
$P_{m}\left( z+ z^{-1}\right)= z^{m} + z^{-m}$ for all $z \in \mathbb{C}$.
One also notes that $P_{m}(\cdot)$ is even and odd whenever $m$ is even or odd respectively.

Throughout the paper, we use $C, C_1, C_2, \dots$ 
to denote positive numeric constants and their values may value from occurrence to occurrence.
For any matrix (and vector) $U$, $U'$ stands for its transpose.


\section{Linear spectral statistics and likelihood ratio tests}
\label{sec:lss}

\subsection{Joint CLTs for LSSs of power functions}

We first characterize the asymptotic joint normality of linear spectral statistics of the form
$\sum_{i=1}^n g(\lambda_i)$
where $\lambda_1\geq \cdots \geq \lambda_n$ are ordered eigenvalues of either $\Acenone$ in \eqref{cen1} or $\Acentwo$ in \eqref{cen2} and $g(\lambda) = \lambda^k$ for some integer $k\geq 2$.
For convenience, we often write the statistic as $\Tr(A_{\mathrm{cen}i}^k)$ for $i=1,2$.
In what follows, we separate the discussion under the null and the alternative hypotheses in \eqref{test:sbm}.

\subsubsection{Results under the null}
Recall the definition of $f(m,r)$ in \eqref{gen:f}.
For any $1 \leq k_1< \cdots < k_l$, define an $l\times l$ symmetric matrix $\Sigma_{2k_1+1,\dots, 2k_l + 1}$ by setting its $(i,j)$-th entry as
\begin{equation}\label{cov:null:odd}
\Sigma_{2k_1+1,\ldots, 2k_l+1}(i,j)= \sum_{r=3: \text{$r$ odd}}^{\min(2k_i+1,2k_j+1)} 2f(2k_i+1,r)f(2k_j+1,r)\frac{(2k_i+1)(2k_j+1)}{r}.
\end{equation}
In addition, define a second $l\times l$ symmetric matrix $\widetilde{\Sigma}_{2k_1,\dots, 2k_l}$ by setting its $(i,j)$-th entry as
\begin{equation}\label{cov:null:even}
\begin{split}
\widetilde{\Sigma}(2k_1,\ldots,2k_l)(i,j) & = \sum_{r=4: \text{$r$ even}}^{\min(2k_i,2k_j)} 2f(2k_i,r)f(2k_j,r)\frac{(2k_i)(2k_j)}{r}
\\ & ~~~
+ 2(k_ik_j\psi_{2k_i}\psi_{2k_j})\lim_{n \to \infty}\frac{\Var_{0,n}\left[(x_{1,2}-\E_{0,n}[x_{1,2}])^{2}\right]}{\pnav^2(1-\pnav)^2}.
\end{split}
\end{equation}
Here and after, we may omit the subscripts in variance and expectation when there is no ambiguity.
With the foregoing definitions, we have the following results under $\mathbb{P}_{0,n}$.

\begin{theorem}
	\label{thm:null}
Suppose $A \sim \mathbb{P}_{0,n}$ and $n \pnav \to \infty$. 
For any fixed $l\geq 1$, we have:
\begin{enumerate}[(i)]
\item If $1\le k_1 < \ldots< k_l=o(\log(np_{n,\mathrm{av}}))$, 
then for $\Sigma = \Sigma_{2k_1+1,\ldots, 2k_l+1}$ defined in \eqref{cov:null:odd}
\begin{equation}\label{dist:null:oddpower}
\Sigma^{-\frac{1}{2}}\left(\Tr(A_{\mathrm{cen1}}^{2k_1+1}),\ldots, \Tr(A_{\mathrm{cen1}}^{2k_l+1}) \right)^{'} \stackrel{d}{\to} N_{l}(0,I_l).
\end{equation} 

\item Suppose $1\le k_1 < \ldots< k_l=o(\log(np_{n,\mathrm{av}}))$.
If $\pnav \to 0$,
\begin{equation}\label{dist:null:evenpowerptozero}
\frac{\sqrt{\pnav}\left(\beta_{2k_i}\right)}{\sqrt{2}k_i\psi_{2k_i}} \stackrel{d}{\to} N(0,1)
\end{equation}
where 
\begin{equation*}
\beta_{2k_i}= 
\Tr(A_{\mathrm{cen1}}^{2k_i})- \E_{0,n} (\Tr(A_{\mathrm{cen1}}^{2k_i}) ) .
\end{equation*}
Further,
$\mathrm{Cov}\left(\beta_{2k_i},\beta_{2k_j}\right) \to 2(k_ik_j\psi_{2k_i}\psi_{2k_j})$.
In other words, when $\pnav \to 0$, asymptotically the even moments are constant multiples of each other after rescaling.

If $\pnav \to p \in (0,1)$, 
then for $\widetilde{\Sigma} = \widetilde{\Sigma}_{2k_1,\ldots, 2k_l}$ defined in \eqref{cov:null:even}
\begin{equation}\label{dist:null:evenpowerptop}
(\widetilde{\Sigma})^{-\frac{1}{2}}\left(\beta_{2k_1},\ldots, \beta_{2k_l} \right)^{'} \stackrel{d}{\to} N_{l}(0,I_l).
\end{equation}

\item For any $k_i= o(\log(np_{n,\mathrm{av}}))$,
\begin{equation}\label{formula1}
{\Tr(A_{\mathrm{cen1}}^{2k_i+1})- \sum_{r=3: \text{$r$ odd}}^{2k_i+1} f(2k_i+1,r)\frac{2k_i+1}{r} C_{n,r}(G)} \stackrel{p}{\to} 0.
\end{equation}

\item If $1\le k_1 < \ldots< k_l=o\left(\min\left(\log(np_{n,\mathrm{av}}), \sqrt{\log{n}}\right)\right)$, results in (i),(ii) and (iii) continue to hold 
when $A_{\mathrm{cen1}}$ is replaced with $A_{\mathrm{cen2}}$.
\end{enumerate}
\end{theorem}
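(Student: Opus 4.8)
The plan is to prove Theorem \ref{thm:null} by first establishing the joint CLT for $\Tr(A_{\mathrm{cen1}}^k)$ via the Füredi--Komlós/Anderson--Zeitouni combinatorial method of path enumeration, then transferring the result to $A_{\mathrm{cen2}}$ by a separate perturbation argument. For part (i), I would expand $\Tr(A_{\mathrm{cen1}}^k) = \sum \prod_{s} (A_{\mathrm{cen1}})_{i_s i_{s+1}}$ as a sum over closed walks of length $k$ on the vertex set. After centering, each factor has mean zero, so a walk contributes in expectation only if every edge it uses is traversed at least twice; the dominant contributions come from walks whose edge-multigraph is a tree traversed exactly twice (giving the deterministic leading term and, for even $k$, the Catalan-number normalization $\psi_{2k}$) together with \emph{unicyclic} graphs, in which the single cycle of odd or even length $r$ produces exactly the signed cycle statistic $C_{n,r}(G)$ with the combinatorial coefficient $f(m,r)$ from the generating function \eqref{gen:f}. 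The key bookkeeping is: (a) show that walks whose edge-multigraph has more than one independent cycle, or a cycle traversed more than twice, are negligible in the growing-degree regime (this is where $n\pnav\to\infty$ and $k = o(\log(n\pnav))$ enter, controlling the number of such walks against the powers of $(n\pnav)^{-1/2}$); and (b) identify the surviving unicyclic contribution as a linear combination of signed cycles. This simultaneously proves \eqref{formula1} (the odd-power LSS equals a linear combination of signed cycles up to $o_p(1)$) and reduces the CLT to the already-understood joint asymptotic normality of the signed cycles $C_{n,r}(G)$, whose limits are independent $N(0,2r)$; propagating these through the linear map with coefficients $f(2k_i+1,r)(2k_i+1)/r$ yields the covariance \eqref{cov:null:odd}.

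For part (ii), the even-power case, the same expansion applies but now there is an additional leading stochastic term beyond the unicyclic contributions: the fluctuation of the tree-type walks themselves, coming from the variance of $(x_{1,2}-\E x_{1,2})^2$ around its mean. When $\pnav\to 0$ this term dominates the signed-cycle terms (the even signed cycles $C_{n,r}$ have variance of constant order while the tree-fluctuation term, after the $\psi_{2k_i}$ normalization, scales like $1/\sqrt{\pnav}\to\infty$), which is why one must rescale by $\sqrt{\pnav}$ and why all even moments become asymptotically perfectly correlated multiples of a single Gaussian — they all pick up the same tree-fluctuation term. When $\pnav\to p\in(0,1)$ both sources of fluctuation are of the same constant order, and one gets a genuine nondegenerate $l$-dimensional limit with covariance \eqref{cov:null:even}, the second summand being exactly the tree-fluctuation contribution $2 k_ik_j\psi_{2k_i}\psi_{2k_j}\lim \Var_{0,n}[(x_{1,2}-\E x_{1,2})^2]/(\pnav^2(1-\pnav)^2)$. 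Establishing joint normality in both regimes can be done by the method of moments: compute all joint moments of the relevant linear combinations and match them to those of the claimed Gaussian, again using Füredi--Komlós enumeration to show that only pairings survive in the limit.

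For part (iv), I would write $A_{\mathrm{cen2}}$ in terms of $A_{\mathrm{cen1}}$ via the ratio $\sqrt{\pnav(1-\pnav)}/\sqrt{\hpnav(1-\hpnav)}$ and the rank-one shift $(\pnav-\hpnav)(1_n1_n'-I_n)$. Since $\hpnav$ concentrates around $\pnav$ at rate $n^{-1}$ (it is an average of $\binom{n}{2}$ weakly dependent Bernoullis), the multiplicative factor is $1+O_p(n^{-1}\pnav^{-1/2})$ raised to the power $k$, which stays $1+o_p(1)$ precisely when $k=o(\sqrt{\log n})$ — this is the origin of the extra constraint in (iv). The rank-one correction contributes to $\Tr(A_{\mathrm{cen2}}^k)$ only through walks that use the all-ones direction, and a careful expansion shows these extra terms are $o_p(1)$ under the same rate condition. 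The main obstacle is part (a) of the first paragraph combined with the rate tracking in (iv): one must carry explicit, uniform-in-$k$ bounds on the number of closed walks of each topological type (number of vertices, number of independent cycles, maximal edge multiplicity) and balance them against the explicit powers of $(n\pnav)^{-1/2}$, and the bound must be strong enough that summing over the growing range of $k$ still leaves a negligible remainder — the interplay between the exponential-in-$k$ growth of walk counts and the polynomial decay from the normalization is exactly what forces $k = o(\log(n\pnav))$, and getting the constants right there (rather than an off-by-a-factor condition) is the delicate part.
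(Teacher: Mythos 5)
Your treatment of parts (i)--(iii) runs on the same combinatorial engine as the paper (F\"uredi--Koml\'os / Anderson--Zeitouni word enumeration, classification of closed walks into tree-type, unicyclic and negligible remainder, identification of the unicyclic contribution with signed cycles weighted by $f(m,r)m/r$), but in a different logical order: you propose to prove the approximation \eqref{formula1} first and then deduce the joint CLT \eqref{dist:null:oddpower} by pushing the known asymptotic normality of the signed cycles (Proposition \ref{prop:signdistr}, from \cite{Ban16}) through the triangular linear map. The paper instead proves (i) directly by the method of moments --- verifying Wick's formula for joint moments of the traces via weak CLT sentences and computing the pair covariance by enumerating CLT word pairs through unicyclic graphs --- and only afterwards obtains (iii) by showing that the variance of the difference in \eqref{formula1} vanishes. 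Both routes are valid and rest on the same two technical pillars, which you correctly identify as the crux: a count of weak CLT sentences that is uniform in the growing power $k$ (Lemma \ref{lem:appendix}), which is what turns the requirement ``remainder $=o(1)$'' into the condition $k=o(\log(n\pnav))$, and the exact enumeration of uniwords (Proposition \ref{prop:uniword}, Lemma \ref{lem:cltwordpaircnt}) that yields \eqref{cov:null:odd}. Your explanation of part (ii) --- the tree-type fluctuation proportional to $\Var_{0,n}[(x_{1,2}-\E_{0,n}x_{1,2})^2]/\pnav^2(1-\pnav)^2\asymp 1/\pnav$ dominating the $O(1)$ unicyclic part when $\pnav\to 0$, forcing the $\sqrt{\pnav}$ rescaling and the asymptotic perfect correlation of all even traces --- matches the paper's.

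Part (iv) is where your proposal has a genuine gap. First, your attribution of the extra condition $k=o(\sqrt{\log n})$ to the multiplicative factor $\bigl(\pnav(1-\pnav)/\hpnav(1-\hpnav)\bigr)^{k/2}$ is wrong: since $|\hpnav-\pnav|=O_p(\sqrt{\pnav}/n)$, that factor is $1+O_p\bigl(k/(n\sqrt{\pnav})\bigr)$, so it is $1+o_p(1)$ under a far weaker condition on $k$. The $\sqrt{\log n}$ constraint actually arises from the enumeration of \emph{pairs} of words whose graphs share edges, which produces factors of the form $(Ck)^{Dk}/n$ and $n^{Ck}\exp(-n^{\eta})$ that must vanish. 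Second, and more importantly, your plan to dispose of the shift $(\pnav-\hpnav)(1_n1_n'-I_n)$ by ``a careful expansion'' skips the central difficulty: $\hpnav-\pnav$ is not a deterministic perturbation but a function of the very entries $x_{i,j}$ appearing in the walk products. Cross terms of the form $(\pnav-\hpnav)^{\#\mathcal{E}_T}\prod_{e\in\mathcal{E}_L}(x_{e(w)}-\pnav)$ therefore do \emph{not} have mean zero even when some edge in $\mathcal{E}_L$ is traversed only once, because the factor $(\pnav-\hpnav)^{\#\mathcal{E}_T}$ can supply the matching copy of that Bernoulli variable. The paper handles this by truncating on the event $|\hpnav-\pnav|\le\sqrt{\pnav}/n^{\delta}$ with $\delta\in(\tfrac12,1)$ and then running a second, finer combinatorial count over the positions $I_1,\dots,I_{\#\mathcal{E}_{T_1}+\#\mathcal{E}_{T_2}}$ at which $\hpnav$'s summands can pair with walk edges; without an argument of this type the claim that the correction terms are $o_p(1)$ is unsupported.
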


For finite $k$, parts (i) and (ii) in Theorem \ref{thm:null} are known in the literature.
See, for instance, \citet{AZ05} or \citet[pp.30-35]{AGZ} for reference. 
In particular, the variance expression given in \cite[Equation (2.1.44)]{AGZ}  matches with \eqref{cov:null:even} and differs from \eqref{cov:null:odd} at only the term involving $\E[Y_1^2]$. 
This term comes from the diagonal entries which are zeros in our case.
The significance of (i) and (ii) in Theorem \ref{thm:null} (and Theorem \ref{thm:alt} below) lies in the fact that the CLTs continue to hold when the powers grow to infinity with the graph size $n$. 

\subsubsection{Results under local alternatives}

Recall the definition of $c$ and $t$ in \eqref{eq:snr} and \eqref{eq:t}.
Our next result gives the counterpart of Theorem \ref{thm:null} under any local alternative where $c$ and hence $t$ are finite.

\begin{theorem}
	\label{thm:alt}
Suppose that $A \sim \mathbb{P}_{1,n}$ and that as $n\to\infty$, $n \pnav \to \infty$ while $t$ in \eqref{eq:t} remains a constant. 
For any fixed $l\geq 1$, we have:
\begin{enumerate}[(i)]
\item If $1\le k_1 < \ldots< k_l=o (\min(\log(np_{n,\mathrm{av}}), \sqrt{\log{n}} ))$, 
then for $\Sigma = \Sigma_{2k_1+1,\ldots, 2k_l+1}$ in \eqref{cov:null:odd}
\begin{equation}\label{dist:null:alt}
\Sigma^{-\frac{1}{2}}\big(\Tr(A_{\mathrm{cen1}}^{2k_1+1})-\nu_{2k_1+1},\ldots, \Tr(A_{\mathrm{cen1}}^{2k_l+1})-\nu_{2k_l+1} \big)^{'} \stackrel{d}{\to} N_{l}(0,I_l) 
\end{equation} 
where if $p_n > q_n$, for $\mu_r = t^r$, $r=1,2,\dots$,
\[
\nu_{2k_i+1}= \sum_{r=3: \text{$r$ odd}}^{2k_i+1} f(2k_i+1,r)\frac{2k_i+1}{r} \mu_{r}.
\]
If $p_n<q_n$, we set $\mu_r = (-t)^r$ for all $r \geq 1$.

\item 
If $1\le k_1 < \ldots< k_l=o\left(\min\left(\log(np_{n,\mathrm{av}}), \sqrt{\log{n}}\right)\right)$, then \eqref{dist:null:evenpowerptozero} (resp.~\eqref{dist:null:evenpowerptop}) continues to hold when $p_{n,\mathrm{av}}\to 0$ (resp.~when $p_{n,\mathrm{av}}\to p$) where the expectation in the definition of $\beta_{2k_i}$ is now taken under $\mathbb{P}_{1,n}$ while the definition of $\widetilde{\Sigma}$ remains unchanged.

\item For any $k_i= o\left(\min\left(\log(np_{n,\mathrm{av}}), \sqrt{\log{n}}\right)\right)$, \eqref{formula1} continues to hold.

\item If $1\le k_1 < \ldots< k_l=o\left(\min\left(\log(np_{n,\mathrm{av}}), \sqrt{\log{n}}\right)\right)$, results in (i),(ii) and (iii) continue to hold 
when $\Acenone$ is replaced with $\Acentwo$.
\end{enumerate}
\end{theorem}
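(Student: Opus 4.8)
The plan is to reduce everything to the null‑case analysis of Theorem~\ref{thm:null} together with a finite‑rank perturbation argument. I would work conditionally on the community labels, encoded as a vector $\sigma\in\{\pm1\}^n$; the limits below do not depend on $\sigma$, so one may un‑condition at the end. The first step is the decomposition
\[
\Acenone \;=\; \widetilde A \;+\; \mu\,\hat\sigma\hat\sigma' \;+\; R,\qquad \widetilde A:=\frac{A-\E_{1,n}[A]}{\sqrt{n\pnav(1-\pnav)}},\quad \hat\sigma:=\frac{\sigma}{\sqrt n},
\]
where $\hat\sigma'\hat\sigma=1$, $\mu=\dfrac{a_n-b_n}{2\sqrt{n\pnav(1-\pnav)}}$, and $R=-\dfrac{\mu}{n}\,I_n$. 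Because the alternative is \emph{local} ($c$, hence $t$, constant, forcing $p_n/q_n\to1$), a short computation shows $\mu\to\sgn(a_n-b_n)\,t$, that the two entrywise variances of $\widetilde A$ are both $(1+o(1))/n$ uniformly, that $\|R\|_{\mathrm{op}}=O(1/n)$, and that $\Var_{1,n}[(x_{1,2}-\E_{1,n}[x_{1,2}])^2]/(\pnav^2(1-\pnav)^2)$ has the same limit as under $\mathbb{P}_{0,n}$. I would record these facts first: they let me delete $R$ from every trace ($\Tr(\Acenone^m)$ changes by $o(1)$ when $m=o(\log n)$) and reduce to analyzing $\widetilde A+\mu\hat\sigma\hat\sigma'$.

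For part~(i), expand $\Tr\bigl((\widetilde A+\mu\hat\sigma\hat\sigma')^{2k_i+1}\bigr)$ over words in $\{\widetilde A,\hat\sigma\hat\sigma'\}$. Since $(\hat\sigma\hat\sigma')^b=\hat\sigma\hat\sigma'$, a word with $j\ge1$ occurrences of $\hat\sigma\hat\sigma'$ collapses to $\mu^{j}\prod_\ell(\hat\sigma'\widetilde A^{a_\ell}\hat\sigma)$ with $a_\ell\ge0$, $\sum_\ell a_\ell=2k_i+1-j$. The $j=0$ word is $\Tr(\widetilde A^{2k_i+1})$, for which the combinatorial central limit theorem underlying Theorem~\ref{thm:null}(i) (the \citet{AZ05} enumeration adapted to growing powers) applies essentially verbatim — it uses only zero diagonal, independence, uniformly $(1+o(1))/n$ variances and moment growth, all valid here — so $(\Tr(\widetilde A^{2k_1+1}),\dots,\Tr(\widetilde A^{2k_l+1}))'\stackrel{d}{\to}N_l(0,\Sigma)$ with $\Sigma$ from \eqref{cov:null:odd}. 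For $j\ge1$ I would prove the quadratic forms concentrate, $\hat\sigma'\widetilde A^{a}\hat\sigma=\tfrac1n\sigma'\widetilde A^{a}\sigma\stackrel{p}{\to}\psi_a$ uniformly over $a=o(\sqrt{\log n})$ — the expectation is $\psi_a+o(1)$ because, after weighting by $\sigma_i\sigma_j$, only the return walks ($i=j$, i.e.\ $\tfrac1n\Tr\widetilde A^a$) contribute at leading order, by a \FK\ bound, and the variance is $o(1)$ by a quadratic‑form estimate — so each $j\ge1$ word converges in probability to a constant. Summing these over all words and matching with the generating function \eqref{gen:f} identifies the limit as $\nu_{2k_i+1}=\sum_{r\ \mathrm{odd}}f(2k_i+1,r)\tfrac{2k_i+1}{r}\mu_r$ with $\mu_r=\mu^r\to(\sgn(a_n-b_n)t)^r$; Slutsky's theorem then gives \eqref{dist:null:alt}. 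Conceptually this is just part~(iii) together with the facts that under $\mathbb{P}_{1,n}$ the signed cycle $C_{n,r}(G)$ keeps its null asymptotic variance $2r$ but has mean shifted to $\E_{1,n}[C_{n,r}(G)]\to\mu_r$ (a one‑line computation), and that the covariance of the resulting Gaussian limits reproduces \eqref{cov:null:odd}.

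Parts~(ii)--(iv) are perturbative. For~(iii), the closed‑walk expansion of $\Tr(\Acenone^{2k_i+1})$ is the same as in the null case, except walk edges may now carry the mean $\delta_{i,j}=\tfrac{p_n-q_n}{2}\sigma_i\sigma_j$ rather than a centered entry; the decomposition above makes transparent that the tree parts of a unicyclic walk shape still produce the factors $f(2k_i+1,r)\tfrac{2k_i+1}{r}$ (their ``mean energy'' per vertex is $O(1/n)$), so the shapes whose cyclic part consists of mean edges assemble, via \eqref{gen:f}, into $\sum_{r\ \mathrm{odd}}f(2k_i+1,r)\tfrac{2k_i+1}{r}C_{n,r}(G)$ while all remaining shapes are $o_P(1)$. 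For~(ii), the fluctuation scale of the even‑power trace is dictated by the $\psi_{2k}$ term in \eqref{cov:null:even}, of order $\pnav^{-1/2}$ (resp.\ $1$) when $\pnav\to0$ (resp.\ $\pnav\to p$), whereas every $j\ge1$ cross term is $O_P(1)$ with $o(1)$ fluctuation; the rank‑one perturbation therefore only shifts the mean, which is subtracted inside $\beta_{2k_i}$, so \eqref{dist:null:evenpowerptozero}--\eqref{dist:null:evenpowerptop} persist. For~(iv), $\hpnav/\pnav=1+O_P\bigl(1/(n\sqrt{\pnav})\bigr)$, so $\Acentwo$ equals $\Acenone$ times a scalar $1+O_P(1/(n\sqrt\pnav))$ plus a rank‑one matrix of operator norm $O_P(n^{-1/2})$; since $k_l=o(\sqrt{\log n})$ keeps the relevant powers of $\Acenone$ at scale $n^{o(1)}$ and $1_n/\sqrt n$ has bounded image under $\Acenone$, expanding in this perturbation and collapsing the rank‑one factor as in part~(i) shows the difference from the $\Acenone$ statistics is $o_P(1)$.

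I expect the main obstacle to be the \emph{uniform} control, as the power grows with $n$, of (a) the moments of $\Tr(\widetilde A^{2k+1})$ under the two‑valued variance profile of the alternative and (b) the quadratic forms $\hat\sigma'\widetilde A^{a}\hat\sigma$: the \FK/\citet{AZ05} enumeration has to be rerun with the perturbed profile and its error terms tracked closely enough to remain $o(1)$ throughout $a=o(\sqrt{\log n})$, and the bookkeeping that sums the $j\ge1$ words to exactly $\nu_{2k_i+1}$ via \eqref{gen:f} must be made precise. The extra restriction $k_l=o(\sqrt{\log n})$ — absent from Theorem~\ref{thm:null}(i) but already present for $\Acentwo$ in Theorem~\ref{thm:null}(iv) — is precisely what these perturbation and concentration estimates (equivalently, the concentration of signed cycles under $\mathbb{P}_{1,n}$) require.
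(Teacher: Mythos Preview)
There is a genuine gap in your direct argument for part~(i): the diagonal remainder $R=-\tfrac{\mu}{n}I_n$ \emph{cannot} be dropped. Since $R$ is a scalar multiple of the identity and $\widetilde A+\mu\hat\sigma\hat\sigma'=\Acenone+\tfrac{\mu}{n}I_n$, one has
\[
\Tr\bigl((\widetilde A+\mu\hat\sigma\hat\sigma')^{2k+1}\bigr)-\Tr\bigl(\Acenone^{2k+1}\bigr)
=\sum_{s\ge 1}\binom{2k+1}{s}\Bigl(\frac{\mu}{n}\Bigr)^{s}\Tr\bigl(\Acenone^{2k+1-s}\bigr),
\]
and already the $s=1$ term equals $(2k+1)\mu\cdot\tfrac{1}{n}\Tr(\Acenone^{2k})=(2k+1)\mu\,\psi_{2k}+o_p(1)$, which is of order $4^k$ and certainly not $o(1)$. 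This is not a technicality: your rank-one expansion of $\Tr((\widetilde A+\mu\hat\sigma\hat\sigma')^{2k+1})$, combined with $\hat\sigma'\widetilde A^{a}\hat\sigma\to\psi_a$, actually sums to $\sum_{r\ge 1,\ r\ \mathrm{odd}}f(2k+1,r)\tfrac{2k+1}{r}\,\mu^r$ --- the $r=1$ summand is exactly $(2k+1)\mu\,\psi_{2k}$, since $f(2k+1,1)=\psi_{2k}$ --- whereas the theorem's $\nu_{2k+1}$ starts at $r=3$. The discarded $R$-contribution is precisely what kills the spurious $r=1$ term. So you must either retain $R$ and track this cancellation explicitly, or replace $\mu\hat\sigma\hat\sigma'$ by the zero-diagonal matrix $\tfrac{\mu}{n}(\sigma\sigma'-I_n)$; but the latter is not rank one and the clean factorisation into quadratic forms $\hat\sigma'\widetilde A^{a}\hat\sigma$ is lost. (For even powers $R$ happens to be harmless because $\psi_{2k-1}=0$, so part~(ii) is unaffected by this issue.)

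The paper never isolates a rank-one piece. It expands each walk factor as $x_{i,j}-\pnav=(x_{i,j}-p_{i,j})+d\sigma_i\sigma_j$ with $d=(p_n-q_n)/2$ and analyses the resulting cross terms $V_{n,w}$ directly via the \FK/unicyclic-graph enumeration, showing that the only non-negligible mean contribution arises when the ``mean edges'' of the walk trace out the bracelet (necessarily of length $r\ge 3$) of a unicyclic graph, and that the variance of the remainder is $O((Ck)^{Ck}/n)\to 0$. Your sketch for part~(iii) is in fact much closer to this than your matrix-level route for~(i), and once made precise would yield~(i) via the signed-cycle CLT under $\mathbb{P}_{1,n}$. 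But ``the decomposition makes transparent'' and ``by a quadratic-form estimate'' would have to be replaced by that careful enumeration to get uniform-in-$k$ control: the isotropic concentration $\hat\sigma'\widetilde A^{a}\hat\sigma\to\psi_a$ for $a$ growing with $n$ is not delivered by standard quadratic-form bounds under the mere hypothesis $n\pnav\to\infty$, and this is where the $o(\sqrt{\log n})$ restriction actually bites.
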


Theorem \ref{thm:null}, Proposition 3.4 in \cite{Ban16} and Le Cam's Third Lemma \cite{LeCam} jointly imply claims (i) and (iv) in Theorem \ref{thm:alt} within the contiguous regime, i.e., when $c < 2(1-p)$ or equivalently $t<1$.
The significance of Theorem \ref{thm:alt} is that the CLTs continue to hold in the singular regime as long as $t$ is finite. It is not implied by Le Cam's Third Lemma and requires a dedicated proof.

When $\pnav \to 0$, 
for traces of even powers of $\Acenone$ and $\Acentwo$,
the second term on the right side of (\ref{cov:null:even}) dominates. This explains the result in (\ref{dist:null:evenpowerptozero}).
Indeed, one can further show that  $\frac{\sqrt{\pnav}\beta_{2k_i}}{\sqrt{2}k_i\psi_{2k_i}}$ is asymptotically the same as a rescaled version of the average degree of the graph when $c$ (and hence $t$) is finite. 
On the other hand, it is more complicated to state the counterpart of claim (iii) in both Theorem \ref{thm:null} and Theorem \ref{thm:alt} for traces of even powers and signed cycles of even lengths, which is our next focus.

\subsubsection{Connection between traces of even powers and even signed cycles}

Fix any integer $k\geq 2$.
We first decompose 
$\Tr(A_{\mathrm{cen1}}^{2k})$ as
\begin{equation}\label{eq:T2k}
\Tr(A_{\mathrm{cen1}}^{2k})= n\psi_{2k} - \binom{k+1}{2}\psi_{2k} + R_{1,2k} + R_{2,2k} + T_{2k},
\end{equation} 
where
\begin{equation}
	\label{eq:R}
\begin{aligned}
R_{1,2k} & := k\psi_{2k}\left[ \Tr\left( A_{\mathrm{cen1}}^{2}\right)- n + 1\right],
\,\,
R_{2,2k} 
:= \sum_{r=4: \text{$r$ even}}^{2k} f(2k,r)\frac{2k}{r} C_{n,r}(G),	
\end{aligned}
\end{equation}
and $T_{2k}$ is the remainder term. 
Observe that under both $\mathbb{P}_{0,n}$ and local $\mathbb{P}_{1,n}$, $$
\sqrt{\pnav}\left[\Tr\left( A_{\mathrm{cen1}}^{2}\right)- n + 1\right] \stackrel{d}{\to} N(0,\sigma^2).
$$ 
Here $\sigma^2= 2\lim_{n \to \infty}\frac{\Var_{0,n}\left[(x_{1,2}-\E_{0,n}[x_{1,2}])^{2}\right]}{\pnav(1-\pnav)^2}$.
When $\pnav \to 0$, the scaling $\sqrt{\pnav}$ kills the mean shift in $R_{2,2k}$ which leads to the degeneracy in the asymptotics. 
However, this can be circumvented by working with the difference
\[
\Tr(A_{\mathrm{cen1}}^{2k})- k\psi_{2k} \Tr\left( A_{\mathrm{cen1}}^{2}\right).
\]
One can show that under appropriate conditions, $T_{2k}$ has negligible fluctuation around its mean. Therefore, this difference has a nontrivial asymptotic normal distribution without any further scaling,
which is a direct consequence of the joint asymptotic normality of the signed cycles \cite{Ban16}. 
This is described in more details in Theorem \ref{thm:even} below.

The following theorem characterizes the first and second moments of $T_{2k}$.
\begin{theorem}
\label{thm:even}
Suppose $n\pnav\to\infty$ and $k = o( \min( \log(n\pnav), \sqrt{\log(n)} ) )$ as $n\to\infty$.
Under both $\mathbb{P}_{0,n}$ and any local $\mathbb{P}_{1,n}$, if $\pnav\to 0$, one has for $i\in \{0,1\}$
\begin{align*}
\E_{i,n}[T_{2k}] & = (1+o(1))\left[\alpha_{1,2k}+ \frac{\alpha_{2,2k}}{\pnav} + \frac{\alpha_{3,2k}}{n\pnav^2} + \varepsilon^{(1)}_{i,2k}\right],\\
\Var_{i,n}[T_{2k}] & = (1+o(1)) \left[\frac{v_{2k}}{n^2p^3_{n,av}} + \varepsilon^{(2)}_{i,2k} \right],
\end{align*}
where 
\begin{align*}
\alpha_{1,2k} &= 2^{2k-1} - \binom{2k}{k}\frac{5k+1}{2(k+1)} + \binom{k+1}{2} \psi_{2k} - 3\binom{2k}{k+2},
\,\,
\alpha_{2,2k} = \binom{2k}{k+2},
\end{align*}
$\alpha_{3,2k} \le 2^{2k} (2k)^{12}$ and $v_{2k} \le 2^{4k} (C_{1}k)^{C_2}$ for some numeric constants $C_1, C_2 > 0$.
Moreover, $\varepsilon^{(2)}_{i,2k}\to 0$ for $i=0,1$.
If further $n^2\pnav^3\to \infty$ and $k = o(\min( \log(n^2\pnav^3), \sqrt{\log(n)} )$, we also have
$\varepsilon^{(1)}_{i,2k}\to 0$ for $i=0,1$.

When $\pnav \to p > 0$, we replace the multiplier $\frac{1}{\pnav}$ in the second term of $\E_{i,n}[T_{2k}]$ with $\lim_{n\to\infty} \frac{\Var_{0,n}\left[(x_{1,2}-\E_{0,n}[x_{1,2}])^{2}\right]}{\pnav^2(1-\pnav)^2}$,
while all the other conclusions remain the same.
The results continue to hold if we replace $\Acenone$ with $\Acentwo$ in the definition of $T_{2k}$.
\end{theorem}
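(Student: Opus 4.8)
The plan is to analyze $T_{2k}$ through the \FK/\AZ{} combinatorial expansion of $\Tr(A_{\mathrm{cen1}}^{2k})$, and to show that $T_{2k}$ collects precisely those closed-walk contributions that are neither constant, nor proportional to $\Tr(A_{\mathrm{cen1}}^2)$, nor equal to a signed cycle of even length $r$ with $4\le r\le 2k$. Concretely, I would start from the full expansion $\Tr(A_{\mathrm{cen1}}^{2k})=\sum_{\mathbf i} \prod_{s=0}^{2k-1}(A_{\mathrm{cen1}})_{i_s i_{s+1}}$ over closed walks of length $2k$, group the walks by the isomorphism type of the graph they trace out (the ``shape''), and use the fact that each centered off-diagonal entry has mean zero, variance $1/n$, and higher moments controlled by powers of $1/(n\pnav)$. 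The leading term $n\psi_{2k}$ comes from walks on trees (double-edge traversals), the $-\binom{k+1}{2}\psi_{2k}$ correction from the missing diagonal, and $R_{1,2k}$ isolates the ``single back-edge'' contributions that reproduce $k\psi_{2k}\Tr(A_{\mathrm{cen1}}^2)$. The term $R_{2,2k}$ is exactly the unicyclic contribution: a walk that traverses a cycle of even length $r$ once, with trees hanging off, contributes $f(2k,r)\frac{2k}{r}C_{n,r}(G)$ — this identity is the even-length analogue of part (iii) in Theorems \ref{thm:null} and \ref{thm:alt} and should be quoted or re-derived via the generating function \eqref{gen:f}. What remains, $T_{2k}$, is the sum over all other shapes: graphs with at least one vertex of excess degree, or with more than one independent cycle, or with edges traversed three or more times.

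For the mean, I would compute $\E_{i,n}[T_{2k}]$ shape by shape. The dominant surviving shapes are those where the ``defect'' is as mild as possible: a single extra edge-coincidence beyond a tree. Under the null, each such shape contributes a term of the form (combinatorial count)$\times n^{v}\times n^{-k}\times(\text{moment factors})$ where the moment factors bring in extra powers of $(np_{n,\mathrm{av}})^{-1}=1/a_n$, i.e.\ powers of $1/(n\pnav)$; since the counts are binomial coefficients in $k$, this produces the $\alpha_{1,2k}$, $\alpha_{2,2k}/\pnav$, $\alpha_{3,2k}/(n\pnav^2)$ hierarchy. The explicit constants $\alpha_{1,2k}=2^{2k-1}-\binom{2k}{k}\frac{5k+1}{2(k+1)}+\binom{k+1}{2}\psi_{2k}-3\binom{2k}{k+2}$ and $\alpha_{2,2k}=\binom{2k}{k+2}$ I would obtain by carefully enumerating the shapes with exactly one non-tree edge (Catalan-type bookkeeping, organized by where the extra edge sits relative to the Dyck path of the walk), and bound $\alpha_{3,2k}$ crudely by $2^{2k}(2k)^{12}$ by counting shapes with two extra coincidences and noting each loses a factor $\lesssim 1/(n\pnav^2)$ relative to the tree term after accounting for the vertex-count deficit. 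The error $\varepsilon^{(1)}_{i,2k}$ absorbs (a) the difference between $\E_{0,n}$ and $\E_{1,n}$, which under a local alternative is a rank-one perturbation of operator norm $t=O(1)$ and affects only shapes carrying a genuine cycle, hence is lower order; and (b) the tail of shapes with $\ge 3$ defects or long multiply-traversed edges, which is where the condition $n^2\pnav^3\to\infty$ together with $k=o(\min(\log(n^2\pnav^3),\sqrt{\log n}))$ is needed to make the geometric-type series in $2^{2k}/(n^2\pnav^3)$ and $2^{k^2}/n$ negligible. The $p_{n,\mathrm{av}}\to p>0$ modification just replaces the second-moment-of-centered-Bernoulli ratio appearing in the single-defect count, exactly as in the difference between \eqref{cov:null:odd} and \eqref{cov:null:even}.

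For the variance, I would write $\Var_{i,n}[T_{2k}]=\E[T_{2k}^2]-(\E[T_{2k}])^2$ as a sum over \emph{pairs} of closed walks of length $2k$ and keep only the connected joint shapes (disconnected pairs cancel against the square of the mean). Tracking the vertex count versus the edge count of a connected union of two length-$2k$ walks, the leading surviving contribution scales like $n^{-2}\times$(polynomial in $k$)$\times\pnav^{-3}$, yielding $v_{2k}/(n^2\pnav^3)$ with $v_{2k}\le 2^{4k}(C_1 k)^{C_2}$ from $(2^{2k})^2$ shape counts times a polynomial correction; all other connected joint shapes lose further powers of $1/(n\pnav)$ or $1/n$ and go into $\varepsilon^{(2)}_{i,2k}\to 0$, for which only $n\pnav\to\infty$ and $k=o(\min(\log(n\pnav),\sqrt{\log n}))$ are required. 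Finally, passing from $\Acenone$ to $\Acentwo$ replaces $\pnav$ by $\hpnav=\pnav(1+o_p(1))$; since $\hpnav-\pnav=O_p(1/(n\sqrt{\pnav}))$ and $T_{2k}$ depends smoothly on this normalization, a first-order expansion shows the change is absorbed into the $o(1)$ and $\varepsilon$ terms, using the control on $k$ exactly as before. The main obstacle I anticipate is the uniform control of the defect expansion when $k$ is allowed to grow: one must show that the number of shapes with a given number of defects times their per-shape value is summable with the stated explicit bounds, uniformly in $k=o(\min(\log(n\pnav),\sqrt{\log n}))$, which is precisely where the \AZ{} enumeration scheme must be pushed beyond its usual fixed-$k$ regime and where the two logarithmic rate restrictions originate.
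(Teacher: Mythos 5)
Your overall strategy coincides with the paper's: expand $\Tr(\Acenone^{2k})$ over closed words, sort words by the isomorphism type of $G_w$, peel off the Wigner words (giving $n\psi_{2k}-\binom{k+1}{2}\psi_{2k}$ and $R_{1,2k}$) and the unicyclic words with a once-traversed bracelet (giving $R_{2,2k}$), and identify $T_{2k}$ with the contribution of the remaining weak Wigner words; the $\alpha_{1,2k}$, $\alpha_{2,2k}/\pnav$, $\alpha_{3,2k}/(n\pnav^2)$ hierarchy and the $v_{2k}/(n^2\pnav^3)$ variance then come from counting vertices against edges (a critical weak Wigner tree has $k$ vertices and $k-1$ edges with one edge traversed four times, whence the $1/\pnav$; the variance leader is a pair of such trees glued along one edge). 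This is exactly the paper's decomposition into $\mathcal{W}_1,\dots,\mathcal{W}_4$ together with the Anderson--Zeitouni/{\FK} enumeration pushed to growing $k$, and your identification of where the two logarithmic rate restrictions enter is correct.

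Two steps are asserted where the paper has to do genuine work, and as written they would not go through. First, under $\mathbb{P}_{1,n}$ you dismiss the mean shift because the perturbation ``affects only shapes carrying a genuine cycle.'' After writing $x_{ij}-\pnav=(x_{ij}-p_{ij})+d\sigma_i\sigma_j$, every weak Wigner word --- including the tree-shaped ones that produce the dominant $\alpha_{2,2k}/\pnav$ and $\alpha_{3,2k}/(n\pnav^2)$ terms --- generates cross terms mixing the two kinds of factors, and these do not vanish for any soft structural reason. The paper eliminates them by a parity/power-counting argument tailored to weak Wigner trees: using that every edge is traversed at least twice and some edge at least four times, it shows the exponent of $1/\pnav$ is at least $1/2$ whenever the exponent of $1/n$ is zero, so the total cross-term contribution is $o(1)$ after summing the $\le 2^{Ck}k^{Ck}$ equivalence classes. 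Some version of this argument is indispensable for the claim $\varepsilon^{(1)}_{1,2k}-\varepsilon^{(1)}_{0,2k}\to 0$. Second, for $\Acentwo$ a first-order expansion in $\hpnav-\pnav$ is insufficient (and your order $O_p(1/(n\sqrt{\pnav}))$ should be $O_p(\sqrt{\pnav}/n)$): the real obstruction is that $\hpnav$ is built from the same edge variables, so terms of the form $(\pnav-\hpnav)^{j}\prod_e(x_e-\pnav)$ have nonzero expectation even when some edge appears only once, and one cannot simply invoke the mean-zero structure used under exact centering. The paper handles this by conditioning on the Bernstein event $|\hpnav-\pnav|\le\sqrt{\pnav}/n^{\delta}$ with $\delta\in(1/2,1)$ and re-expanding $(\pnav-\hpnav)^j$ as a sum over edge positions, then redoing the pairing analysis; an argument of that type is needed to justify the last sentence of the theorem.
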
 


\begin{remark}\label{rem:alpha}
If $n^2 \pnav^3\to\infty$ and $k= o(\min( \log(n^2\pnav^3), \sqrt{\log(n)} ) )$, then Theorem \ref{thm:even} implies that 
$\Var_{i,n}\left[ T_{2k}\right] \to 0$ for $i=0,1$, and so $T_{2k}$ is essentially a constant though we do not have an explicit formula for its mean.
Moreover, the constant remains the same whether it is under null or local alternative since $\E_{0,n}(T_{2k}) = \E_{1,n}(T_{2k})+o(1)$ under the above growth condition.
Furthermore, if $n \pnav^2 \to\infty$ and 
$k= o( \min ( \log(n\pnav^2), \sqrt{\log(n)} ) )$, one gets 
\begin{equation*}
T_{2k} = \binom{2k}{k+2}\left[\frac{1}{\pnav}-3\right]+ 2^{2k-1} - \binom{2k}{k}\frac{5k+1}{2(k+1)} + \binom{k+1}{2} \psi_{2k} +o_p(1)
\end{equation*}
under both null and local alternative when $\pnav\to 0$,
where $o_p(1)$ stands for a term that goes to zero in probability.
When $\pnav\to p > 0$, we replace $1/\pnav$ in the last display as in Theorem \ref{thm:even}.
\end{remark}

A careful examination of the sources of randomness in $T_{2k}$ suggests that one could offset the randomness by computing specific statistics of the graph adjacency matrix. 
However, these correction statistics are specific to the value of $k$ and hence are hard to track when $k$ is large.
As an example, the following proposition spells out the corrections for $T_4$ and $T_6$.

\begin{proposition}\label{prop:correction}
Suppose $n\pnav\to\infty$ as $n\to\infty$.
Under both $\mathbb{P}_{0,n}$ and $\mathbb{P}_{1,n}$ with finite $c$, 
\begin{align*}
& T_{4} - \left(\frac{1}{n\pnav(1-\pnav)}\right)^2\sum_{i,j}(x_{i,j}- \pnav)^{4} \stackrel{p}{\to} 0, \quad \mbox{and}\\
& T_{6} -  6\left(\frac{1}{n\pnav(1-\pnav)}\right)^3 \sum_{i_1,i_2,i_3} (x_{i_1,i_2}- \pnav)^{4}(x_{i_2,i_3}- \pnav)^{2} \\
& ~~~~~~~~~~~~~~~~~~~~~~~ - \left(\frac{1}{n\pnav(1-\pnav)}\right)^3 \sum_{i,j}(x_{i,j}- \pnav)^{6} -4  \stackrel{p}{\to} 0,
\end{align*}
where all the summations are over distinct indices. 
The results remain valid if $\pnav$ is replaced by $\hpnav$ in $T_{4}, T_6$ and the correction terms.
\end{proposition}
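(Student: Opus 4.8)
The plan is to establish both displays by an explicit Füredi--Komlós/unicyclic-graph expansion of $\Tr(\Acenone^{2k})$ for $k=2$ and $k=3$ — the same mechanism behind Theorem~\ref{thm:even}. Write $w_{ij}=(x_{ij}-\pnav)/\sqrt{n\pnav(1-\pnav)}$, so that $\Acenone=(w_{ij})$ has zero diagonal and $\Tr(\Acenone^{2k})=\sum w_{i_0i_1}w_{i_1i_2}\cdots w_{i_{2k-1}i_0}$, the sum ranging over all closed walks of length $2k$. First I would group the summands by the isomorphism type of the \emph{reduced multigraph} of the walk (the distinct vertices visited, together with the edge multiplicities). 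The tree-shaped walks — $k+1$ distinct vertices, each edge traversed exactly twice — produce the deterministic Catalan term $n\psi_{2k}-\binom{k+1}{2}\psi_{2k}$ up to an $O(1/n)$ discrepancy, plus a fluctuation that a short computation identifies with $k\psi_{2k}\bigl(\Tr(\Acenone^{2})-n+1\bigr)+o_p(1)=R_{1,2k}+o_p(1)$; the single-cycle walks reproduce $R_{2,2k}=\sum_{r}f(2k,r)\tfrac{2k}{r}C_{n,r}(G)$ via the generating-function identity \eqref{gen:f}, which is the even-power analogue of Theorem~\ref{thm:null}(iii). Consequently $T_{2k}$ equals, up to $o_p(1)$, the sum over the remaining walk shapes, of which there are only a handful when $k=2,3$.

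For $k=2$ the sole remaining shape is a single edge traversed four times, i.e.\ the walks $(a,b,a,b)$ and $(b,a,b,a)$, whose total is $\sum_{i\neq j}w_{ij}^4=\bigl(n\pnav(1-\pnav)\bigr)^{-2}\sum_{i\neq j}(x_{ij}-\pnav)^4$; checking that the residual constants and the ``Catalan fluctuation minus $R_{1,4}$'' are $o_p(1)$ then gives the first identity. For $k=3$ the remaining shapes are: (i) a single edge traversed six times, contributing $\sum_{i\neq j}w_{ij}^6$; (ii) a three-vertex path $a$--$b$--$c$ in which one edge has multiplicity four and the other multiplicity two — a count of the closed $6$-walks realizing this shape gives exactly six of them (three based at the center, two at the quadruple leaf, one at the double leaf), producing the term $6\sum_{i_1,i_2,i_3}w_{i_1i_2}^4 w_{i_2i_3}^2$ with $i_2$ the center; and (iii) a short list of further shapes on three or four vertices whose individual sums either have vanishing mean or concentrate around a constant, their pooled deterministic contribution being $4$. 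Recombining these with $-R_{1,6}-6\,C_{n,4}(G)-C_{n,6}(G)$ yields the second identity.

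The main obstacle is the error control in this last step: one must bound the variances of the exceptional walk-sums, and of the ``Catalan minus $R_{1,2k}$'' remainder, so as to conclude that everything not displayed explicitly is $o_p(1)$. These are sums of products of centered edge weights over index sets with prescribed coincidences, and the estimates required are of the type already developed for Theorem~\ref{thm:even}; it is precisely here that the hypothesis $n\pnav\to\infty$ enters, killing the $O\bigl((n\pnav)^{-1}\bigr)$-type remainders. Two remaining points are routine: replacing $\pnav$ by $\hpnav$ throughout alters each of the (uniformly tight) statistics by only $o_p(1)$, because $\hpnav/\pnav=1+O_p\bigl((n^2\pnav)^{-1/2}\bigr)$; and under the local alternative $\mathbb{P}_{1,n}$ the edge weights have the same first two moments as under $\mathbb{P}_{0,n}$ while the dependence among distinct edges contributes only at negligible order, so the same moment computations apply verbatim.
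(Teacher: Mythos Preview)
Your proposal is correct and follows essentially the same approach as the paper's own proof. The paper proceeds by explicitly listing the equivalence classes in $\mathcal{W}_2$ for $2k=4,6$: for $k=2$ there is only the word $12121$ (your ``single edge traversed four times''); for $k=3$ there is the word $1212121$ (your shape~(i)), six classes of the path-with-multiplicities-$(4,2)$ type such as $1212321$ (your shape~(ii), and your count of six closed walks agrees exactly), and four triangle classes $1231231$, $1231321$, $1213231$, $1232131$ each traversing every edge twice (your shape~(iii)), which concentrate at mean~$1$ apiece and together furnish the constant~$4$. One small inaccuracy: your ``three or four vertices'' for~(iii) should read ``three vertices'' only --- these are all triangles.

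Your treatment of the alternative is slightly loose (the edge weights $x_{ij}-\pnav$ do \emph{not} have mean zero under $\mathbb{P}_{1,n}$), but the paper's argument, and the machinery you invoke from Theorem~\ref{thm:even}, handles this by the same decomposition into $(x_{ij}-p_{ij})+d\sigma_i\sigma_j$ as in Theorem~\ref{thm:alt}; the point is that the extra contributions are of order $O(t^{2k}/n)$ and vanish.
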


%

\subsection{Approximation of signed cycles by LSSs}

Theorems \ref{thm:null}--\ref{thm:even} suggest that signed cycles and linear spectral statistics of properly rescaled adjacency matrices are closely connected.
In what follows, we further formalize this idea and demonstrate how one could approximate signed cycles of growing lengths with carefully chosen linear spectral statistics.

As an illustration, let 
\begin{align*}
\overrightarrow{C}_{n,2k+1} & := (C_{n,3}(G),C_{n,5}(G),\ldots, C_{n,2k+1}(G) )',~~~\text{and}\\
\overrightarrow{\mathrm{Tr}}_{n,2k+1} & := \big(\Tr(A_{\mathrm{cen2}}^{3}),\Tr(A_{\mathrm{cen2}}^{5}),\ldots, \Tr( A_{\mathrm{cen2}}^{2k+1})\big)'.
\end{align*}
We proved in Theorems \ref{thm:null} and \ref{thm:alt} that under both null and local alternatives,
whenever $k= o\left(\min\left(\log(np_{n,\mathrm{av}}), \sqrt{\log{n}}\right)\right)$ and $n\pnav\to\infty$, we have elementwise,
\[
\mathbb{D}_{2k+1}\overrightarrow{C}_{n,2k+1} - 
\overrightarrow{\mathrm{Tr}}_{n,2k+1} \stackrel{p}{\to} 0.
\]
Here $\mathbb{D}_{2k+1}$ 
is the $k \times k$ lower triangular matrix
given by
\begin{equation}\label{def:D}
\left(
\begin{array}{lllll}
1& 0 & 0 & \ldots & 0\\
\frac{5f(5,3)}{3}& 1 & 0 &\ldots & 0\\
\frac{7f(7,3)}{3}& \frac{7f(7,5)}{5} & 1 & \ldots & 0\\
\vdots           & \vdots            & \vdots & \ddots &0\\
\frac{(2k+1) f(2k+1,3)}{3}& \frac{(2k+1)f(2k+1,5)}{5} &  \frac{(2k+1)f(2k+1,7)}{7} & \ldots & 1              
\end{array}
\right).
\end{equation}
Using the fact \cite[Lemma 2]{lang2000polynomials} that 
\begin{equation}
	\label{eq:fmr}
f(m,r)\frac{m}{r} = \binom{m}{\frac{m+r}{2}},
\end{equation}
one can prove that 
\begin{equation*}
(\mathbb{D}_{2k+1}^{-1})_{k,j} = P_{2k+1}[2j+1]
\end{equation*}
where $P_{j}[i]$ is the coefficient of $z^{i}$ in the polynomial $P_{j}(z)$ defined in (\ref{def:ChebyshevII}). 
See, for instance, \cite[Equation (37)]{lang2000polynomials}.
An analogous result holds for signed cycles of even lengths, in which case one needs to take in account the random variables $T_{2k}$ to offset the mean values of the even powers. 
Formally, we have the following result.
\begin{theorem}
	\label{thm:cyclestotrace}
Suppose $n \pnav \to\infty$ and $k = o( \min( \log(n\pnav), \sqrt{\log(n)} ) )$ as $n\to\infty$.
The following results hold under both $\mathbb{P}_{0,n}$ and local $\mathbb{P}_{1,n}$:
\begin{enumerate}[(i)]
\item (Construction of odd signed cycles from LSS) We have
\begin{equation}\label{formulaodd}
C_{n,2k+1}(G)- \Tr\left( P_{2k+1}(\Acentwo) \right)
\stackrel{p}{\to} 0. 
\end{equation}
\item (Construction of even signed cycles from LSS)
Let $T_0 = T_2 = 0$. Then
\begin{equation}\label{formulaeven}
C_{n,2k}(G)- 
\Tr\left( P_{2k}(\Acentwo) \right)
- \sum_{r=0:r~\text{even}}^{2k} P_{2k}[r] \left[T_{r}-{\frac{r}{2}+1 \choose 2}\psi_r \right] \stackrel{p}{\to} 0.
\end{equation}
If further $n^2\pnav^3\to\infty$ and $k = o( \min( \log(n^2\pnav^3), \sqrt{\log(n) } ) )$, then we may replace $T_r$ in \eqref{formulaeven} with $\E_{0,n}[T_r]$.
\end{enumerate}
\end{theorem}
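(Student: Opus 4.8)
The plan is to invert the lower-triangular relation $\mathbb{D}_{2k+1}\overrightarrow{C}_{n,2k+1} - \overrightarrow{\mathrm{Tr}}_{n,2k+1} \stackrel{p}{\to} 0$ from the previous subsection, together with its even-length analogue, and recognize the resulting linear combinations of power traces as $\Tr(P_{2k+1}(\Acentwo))$ and $\Tr(P_{2k}(\Acentwo))$. First I would handle the odd case (i). From Theorem \ref{thm:null}(iii) and \ref{thm:alt}(iii) one has, elementwise, $\Tr(A_{\mathrm{cen2}}^{2j+1}) = \sum_{r=3,\,r\text{ odd}}^{2j+1} f(2j+1,r)\frac{2j+1}{r} C_{n,r}(G) + o_p(1)$ for all $j\le k$. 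This is the statement $\mathbb{D}_{2k+1}\overrightarrow{C}_{n,2k+1} = \overrightarrow{\mathrm{Tr}}_{n,2k+1} + o_p(1)$, hence $\overrightarrow{C}_{n,2k+1} = \mathbb{D}_{2k+1}^{-1}\overrightarrow{\mathrm{Tr}}_{n,2k+1} + o_p(1)$ — provided the entries of $\mathbb{D}_{2k+1}^{-1}$ do not blow up too fast relative to the number of terms $k$ and the magnitude of each trace; I will need to check (using the stated bound $(\mathbb{D}_{2k+1}^{-1})_{k,j}=P_{2k+1}[2j+1]$ and that the coefficients of $P_{2k+1}$ are bounded in absolute value by $\binom{2k+1}{k}\le 2^{2k+1}$, while $k=o(\sqrt{\log n})$ makes $2^{O(k)}$ subpolynomial) that the $o_p(1)$ errors remain $o_p(1)$ after the linear combination. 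Reading off the last coordinate of $\overrightarrow{C}_{n,2k+1} = \mathbb{D}_{2k+1}^{-1}\overrightarrow{\mathrm{Tr}}_{n,2k+1}+o_p(1)$ and using $(\mathbb{D}_{2k+1}^{-1})_{k,j}=P_{2k+1}[2j+1]$ gives $C_{n,2k+1}(G) = \sum_{j} P_{2k+1}[2j+1]\,\Tr(A_{\mathrm{cen2}}^{2j+1}) + o_p(1)$. Since $P_{2k+1}$ is odd, only odd powers appear, so $\sum_j P_{2k+1}[2j+1]\Tr(A_{\mathrm{cen2}}^{2j+1}) = \sum_m P_{2k+1}[m]\Tr(A_{\mathrm{cen2}}^m) = \Tr(P_{2k+1}(\Acentwo))$, which is \eqref{formulaodd}.

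For the even case (ii) the difference is that the power traces are not asymptotically linear combinations of signed cycles alone: by the decomposition \eqref{eq:T2k}--\eqref{eq:R}, $\Tr(A_{\mathrm{cen1}}^{2j}) = n\psi_{2j} - \binom{j+1}{2}\psi_{2j} + R_{1,2j} + R_{2,2j} + T_{2j}$, where $R_{2,2j}$ is the even-signed-cycle linear combination $\sum_{r=4,\,r\text{ even}}^{2j} f(2j,r)\frac{2j}{r}C_{n,r}(G)$ and $R_{1,2j} = j\psi_{2j}[\Tr(A_{\mathrm{cen1}}^2)-n+1]$. I would rearrange this as $\Tr(A_{\mathrm{cen1}}^{2j}) - j\psi_{2j}\Tr(A_{\mathrm{cen1}}^2) = \sum_{r=4,\,r\text{ even}}^{2j} f(2j,r)\frac{2j}{r}C_{n,r}(G) + \big[T_{2j} - \binom{j+1}{2}\psi_{2j}\big]$, observing that $-n\psi_{2j} + R_{1,2j}$ combines with $-j\psi_{2j}(\Tr(A_{\mathrm{cen1}}^2)-n)$... — more carefully, $R_{1,2j} = j\psi_{2j}\Tr(A_{\mathrm{cen1}}^2) - j\psi_{2j}(n-1)$, and $C_{n,2}(G) = \Tr(A_{\mathrm{cen1}}^2) - (n-1) + o_p(1)$ so that $n\psi_{2j}$ together with $R_{1,2j}$ reorganizes into $j\psi_{2j}\big(\Tr(A_{\mathrm{cen1}}^2)\big)$ plus a deterministic piece; I expect the net effect to be that $\{\Tr(A_{\mathrm{cen1}}^{2j}) : j\le k\}$ equals a lower-triangular transform (with the same matrix structure as $\mathbb{D}$, now indexed by even lengths and with a unit "$\Tr(A^2)$-row") applied to $\{C_{n,r}(G)\}$ plus the correction vector $\{T_{2j} - \binom{j+1}{2}\psi_{2j}\}$ plus deterministic constants. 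Inverting this triangular system and reading off the $2k$-coordinate, using the even-case analogue $(\mathbb{D}^{-1}_{2k})_{k,j} = P_{2k}[2j]$ and $P_{2k}$ even, yields $C_{n,2k}(G) = \sum_{r\text{ even}} P_{2k}[r]\Tr(A_{\mathrm{cen2}}^r) + \sum_{r\text{ even}} P_{2k}[r][T_r - \binom{r/2+1}{2}\psi_r] + (\text{deterministic})$; the deterministic terms should cancel because $P_{2k}(z+z^{-1}) = z^{2k}+z^{-2k}$ implies $\sum_r P_{2k}[r]\psi_r$-type sums telescope to the "$n\psi_{2k}$" term that was already subtracted in the definition of $C_{n,2k}$ via centering, and I would verify this bookkeeping explicitly using \eqref{eq:fmr}. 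This produces \eqref{formulaeven}, with $T_0 = T_2 = 0$ absorbing the low-order rows.

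The main obstacle is twofold. First, the bookkeeping in the even case: I must be careful that the deterministic shifts ($n\psi_{2j}$, $\binom{j+1}{2}\psi_{2j}$, and the $-(n-1)$ in the definition of $C_{n,2}$) recombine to exactly the constants appearing in \eqref{formulaeven} and nothing is left over; this requires the identity $\sum_{r\ge 0,\,r\text{ even}} P_{2k}[r]\,\psi_r = 0$ for $k\ge 1$ (equivalently that $\int P_{2k}\,d\mu_{sc}$ vanishes, which follows from $P_{2k}$ being the $2k$-th Chebyshev/orthogonal polynomial for the semicircle weight), and analogous identities for the $\binom{j+1}{2}\psi_{2j}$ terms. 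Second, and more quantitatively, I need the $o_p(1)$ control to survive multiplication by the entries of $\mathbb{D}^{-1}$ and summation over $O(k)$ terms: since the $P_{2k}[r]$ are bounded by $2^{2k}$ and $k = o(\sqrt{\log n})$, the amplification factor is $n^{o(1)}$, so as long as each individual $o_p(1)$ in Theorems \ref{thm:null}--\ref{thm:alt} is actually $o_p(n^{-\epsilon})$ for some fixed $\epsilon>0$ (or more precisely, small enough to beat $2^{O(k)}$), the conclusion follows; I would need to revisit the proofs of those theorems, or the estimates in Section \ref{sec:overview}, to extract the requisite rate. Finally, the replacement of $T_r$ by $\E_{0,n}[T_r]$ under the stronger condition $n^2\pnav^3\to\infty$ is immediate from Theorem \ref{thm:even} (and Remark \ref{rem:alpha}), since there $\Var_{i,n}[T_{2k}]\to 0$ and $\E_{0,n}[T_r] = \E_{1,n}[T_r] + o(1)$, so $T_r - \E_{0,n}[T_r]\stackrel{p}{\to}0$ and the amplified sum remains $o_p(1)$ by the same $2^{O(k)} = n^{o(1)}$ argument.
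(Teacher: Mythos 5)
Your proposal is correct and follows essentially the same route as the paper: invert the lower-triangular relation between power traces and signed cycles, identify $(\mathbb{D}^{-1})_{k,j}$ with the Chebyshev coefficients $P_{2k+1}[2j+1]$ (resp.\ $P_{2k}[2j]$), use the two cancellation identities \eqref{eq:cheby-facts} to absorb the deterministic terms in the even case, and control the $2^{O(k)}=n^{o(1)}$ amplification against the polynomially small error rates from Theorems \ref{thm:null}--\ref{thm:even}. Two minor corrections: the identity $\sum_r P_{2k}[2r]\psi_{2r}=0$ holds for $k\ge 2$ (not $k\ge 1$; for $k=1$ the sum is $-1$), and it follows from orthogonality of $P_{2k}$ with respect to the arcsine weight $1/\sqrt{4-x^2}$ after writing $\sqrt{4-x^2}$ as $(P_0-P_2)/\sqrt{4-x^2}$, not from $P_{2k}$ being orthogonal for the semicircle weight.
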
 
   
For the third term on the left side of \eqref{formulaeven}, we do not have other deterministic terms involved in \eqref{eq:T2k} because of the following cancellation (see appendix for proofs) 
\begin{align}
	\label{eq:cheby-facts}
\sum_{r=0}^{k} P_{2k}[2r]\psi_{2r} = 0, \quad \mbox{and}\quad
\sum_{r=1}^{k} P_{2k}[2r] r \psi_{2r} = 0, \quad \mbox{for all $k\geq 2$.}
\end{align}

\begin{remark}
A careful examination of the proofs in the appendix shows that all the conclusions under local alternatives in Theorems \ref{thm:alt} -- \ref{thm:cyclestotrace} actually hold conditioning on the group assignments $\sigma_i$, $1\leq i\leq n$.
Thus, the approximation of signed cycles by LSSs of Chebyshev polynomials works for any group assignment configurations as long as $t$ in \eqref{eq:t} is finite.
\end{remark}

\subsection{Likelihood ratio tests}

Recall the null and alternative hypotheses in \eqref{test:sbm} with the key index $t$ defined as in \eqref{eq:t}.
\citet{Ban16} showed that if $n\pnav\to\infty$ as $n\to\infty$ and $p_n > q_n$, in the contiguous regime, i.e.~$0<t<1$, the likelihood ratio test is asymptotically the same as the test that rejects for large values of 
\begin{equation}
\label{eq:Lc}
L_c := \sum_{r=3}^\infty \frac{t^r C_{n,r}(G)}{2r} 
\end{equation}
which has the following asymptotic distributions under the null and alternative:
\begin{equation}
	\label{eq:Lcdist}
L_c | \mathbb{P}_{0,n}  \stackrel{d}{\to} N\left(0, \sigma(t)^2  \right)
\quad \mbox{and} \quad
L_c | \mathbb{P}_{1,n}  \stackrel{d}{\to} N\left( \sigma(t)^2 , \sigma(t)^2 \right),
\end{equation}  
where 
\begin{equation}
	\sigma(t)^2 = \frac{1}{2}\left[ -\log(1-t^2) - t^2 - \frac{t^4}{2} \right].
\end{equation}
If $p_n < q_n$, we replace every $t^r$ in \eqref{eq:Lc} with $(-t)^r$ while everything else remains the same. 
Hence, at any given $t\in (0,1)$, the largest asymptotic power achievable by any level $\alpha$ test is 
\begin{equation}
\label{eq:optpower}
\Phi\left( - z_{\alpha} + \sigma(t) \right),
\end{equation}
where $\Phi$ is the CDF of the standard normal distribution and $z_\alpha = \Phi^{-1}(1-\alpha)$.
However, neither the exact likelihood ratio test nor the test in \eqref{eq:Lc} based on sign cycles is computationally tractable. 

Given Theorems \ref{thm:null}--\ref{thm:cyclestotrace}, one of the key findings of the present paper is that when $n^2\pnav^3\to\infty$, we can achieve the exact asymptotic optimal power \eqref{eq:optpower} by a test based on some linear spectral statistic, which is of $O(n^3\log{n})$ time complexity.
If we only have $n\pnav \to\infty$, we propose a slightly different test based on another linear spectral statistic that has a smaller but nontrivial asymptotic power in the contiguous regime.
In particular, we have the following theorem.

\begin{theorem}
\label{thm:llslike}
Suppose that as $n\to\infty$, $t$ defined in \eqref{eq:t} satisfies $t\in (0,1)$. 
Then the following results hold if $p_n > q_n$:
\begin{enumerate}[(i)]
\item When $n^2\pnav^3\to\infty$ and $k_n = o( \min( \log(n^2\pnav^3), \sqrt{\log(n) } ) ) \to\infty$, then the test statistic
\begin{equation}
	\label{eq:optlssall}
L_a = \sum_{r=3}^{k_{n}}\frac{t^{r}\Tr\left(P_{r}\left(\Acentwo\right)\right)}{2r}
\end{equation}
satisfies 
\begin{equation}
	\label{eq:Ladist}
\begin{aligned}
L_a - \mu_{n,\pnav}(t) | \mathbb{P}_{0,n}  & \stackrel{d}{\to} N\left(0, \sigma(t)^2  \right),\\
L_a - \mu_{n,\pnav}(t) | \mathbb{P}_{1,n} & \stackrel{d}{\to} N\left( \sigma(t)^2 , \sigma(t)^2 \right),
\end{aligned}
\end{equation}  
where $\mu_{n,\pnav}(t)$ is a deterministic quantity depending only on $n,\pnav$ and $t$.
Therefore, a level $\alpha$ test that rejects for large values of $L_a$ achieves the exact asymptotic optimal power \eqref{eq:optpower}.

\item When $n\pnav\to\infty$ and $k_n = o( \min( \log(n \pnav), \sqrt{\log(n) } ) ) \to\infty$, then the test statistic
\begin{equation}
\label{eq:optlssodd}
L_o = \sum_{r=1}^{k_{n}}\frac{t^{2r+1}\Tr (P_{2r+1}(\Acentwo ) )}{2(2r+1)}
\end{equation}
satisfies 
\begin{equation}
\label{eq:Lodist}
L_o  | \mathbb{P}_{0,n}  \stackrel{d}{\to} N\left(0, \sigma_1(t)^2  \right)
\quad \mbox{and} \quad
L_o  | \mathbb{P}_{1,n}  \stackrel{d}{\to} N\left( \sigma_1(t)^2 , \sigma_1(t)^2 \right) 
\end{equation} 
where 
\begin{equation}
\sigma_1(t)^2 = \frac{1}{4}\left[ -\log\left(\frac{1-t^2}{1+t^2}\right) -2t^2
 \right].
\end{equation}
Therefore, a level $\alpha$ test that rejects for large values of $L_o$ achieves an asymptotic power of $\Phi(-z_\alpha + \sigma_1(t))$.
\end{enumerate}
If $p_n < q_n$, we replace every $t$ in the definitions of \eqref{eq:optlssall} and \eqref{eq:optlssodd} with $-t$, and the same conclusions hold.
\end{theorem}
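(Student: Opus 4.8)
The plan is to reduce both $L_a$ and $L_o$ to truncations of the signed-cycle statistic $L_c$ in \eqref{eq:Lc}, via the approximation of Chebyshev LSSs by signed cycles in Theorem~\ref{thm:cyclestotrace}, and then to read off limiting distributions from \eqref{eq:Lcdist} together with the joint CLT for signed cycles of \citet{Ban16}; the $O(n^3\log n)$ bound will come for free from the three-term recurrence for the $P_r$. For \textbf{part (i)}, first I would split $L_a=\sum_{r=3}^{k_n}\frac{t^r}{2r}\Tr(P_r(\Acentwo))$ by the parity of $r$. For odd $r$, Theorem~\ref{thm:cyclestotrace}(i) gives $\Tr(P_r(\Acentwo))=C_{n,r}(G)+o_p(1)$. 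For even $r$, the hypotheses $n^2\pnav^3\to\infty$ and $k_n=o(\min(\log(n^2\pnav^3),\sqrt{\log n}))$ let the last clause of Theorem~\ref{thm:cyclestotrace}(ii) replace the random $T_s$ by $\E_{0,n}[T_s]$, so that
\[
\Tr(P_r(\Acentwo))=C_{n,r}(G)-\sum_{s=0:\,s\,\mathrm{even}}^{r}P_r[s]\Big(\E_{0,n}[T_s]-\binom{\tfrac{s}{2}+1}{2}\psi_s\Big)+o_p(1).
\]
Collecting the deterministic parts isolates the centering
\[
\mu_{n,\pnav}(t):=-\sum_{r=4:\,r\,\mathrm{even}}^{k_n}\frac{t^r}{2r}\sum_{s=0:\,s\,\mathrm{even}}^{r}P_r[s]\Big(\E_{0,n}[T_s]-\binom{\tfrac{s}{2}+1}{2}\psi_s\Big),
\]
which, by the explicit formulas in Theorem~\ref{thm:even}, depends on the data only through $n$, $\pnav$, $t$ and the fixed choice of $k_n$ (if $\pnav$ is unknown one uses $\hpnav$ and checks $\mu_{n,\hpnav}(t)-\mu_{n,\pnav}(t)=o_p(1)$ from the same formulas, since $n^2\pnav^3\to\infty$ forces $\hpnav/\pnav\stackrel{p}{\to}1$ fast enough). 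The key approximation step is to show the $k_n$ accumulated $o_p(1)$ errors sum to $o_p(1)$; for this I would extract from the proofs of Theorems~\ref{thm:even}--\ref{thm:cyclestotrace} error bounds growing at most polynomially in $r$, which the geometric weights $t^r/(2r)$ then dominate because $0<t<1$. This gives $L_a-\mu_{n,\pnav}(t)=\sum_{r=3}^{k_n}\frac{t^r}{2r}C_{n,r}(G)+o_p(1)$ under both hypotheses.

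To identify the limit of the truncated signed-cycle sum I would use a standard truncation argument: for each fixed $M$, the joint CLT for signed cycles gives convergence of $(C_{n,3}(G),\dots,C_{n,M}(G))$ to independent Gaussians (with the mean shifts $\mu_r$ of Theorem~\ref{thm:alt} under $\mathbb{P}_{1,n}$), while the second-moment bounds of \citet{Ban16} give $\sup_n\E[(\sum_{r=M+1}^{k_n}\frac{t^r}{2r}C_{n,r}(G))^2]\to0$ as $M\to\infty$; hence the truncated sum has the same weak limit as $L_c$, namely $N(0,\sigma(t)^2)$ under $\mathbb{P}_{0,n}$ and $N(\sigma(t)^2,\sigma(t)^2)$ under $\mathbb{P}_{1,n}$ by \eqref{eq:Lcdist}. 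This is \eqref{eq:Ladist}. Since $\mu_{n,\pnav}(t)$ is deterministic, the test rejecting when $L_a>\mu_{n,\pnav}(t)+z_\alpha\sigma(t)$ has asymptotic level $\alpha$ and power $\Phi(-z_\alpha+\sigma(t))$, which is the optimal power \eqref{eq:optpower}. For the complexity, $P_{r+1}(x)=xP_r(x)-P_{r-1}(x)$ with $P_0\equiv2$, $P_1(x)=x$, so the matrices $P_r(\Acentwo)$ are built with one $n\times n$ multiplication per degree, giving $O(n^3 k_n)=O(n^3\log n)$ in total.

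For \textbf{part (ii)}, only odd powers appear, so there are no $T_r$ corrections and the weaker hypothesis $n\pnav\to\infty$ suffices: Theorem~\ref{thm:cyclestotrace}(i) together with the same error-accumulation bound gives $L_o=\sum_{r=1}^{k_n}\frac{t^{2r+1}}{2(2r+1)}C_{n,2r+1}(G)+o_p(1)$. Under $\mathbb{P}_{0,n}$ the joint CLT with a tail bound yields $L_o\stackrel{d}{\to}\sum_{r\ge1}\frac{t^{2r+1}}{2(2r+1)}Z_{2r+1}$, a centered Gaussian of variance $\sum_{r\ge1}\frac{t^{2(2r+1)}}{2(2r+1)}$; evaluating $\sum_{s\,\mathrm{odd},\,s\ge3}\frac{u^s}{s}=\frac12\log\frac{1+u}{1-u}-u$ at $u=t^2$ gives exactly $\sigma_1(t)^2$. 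Under $\mathbb{P}_{1,n}$, Theorem~\ref{thm:alt} shifts $\E[C_{n,2r+1}(G)]$ to $t^{2r+1}(1+o(1))$, adding $\sum_{r\ge1}\frac{t^{2(2r+1)}}{2(2r+1)}=\sigma_1(t)^2$ to the mean, so $L_o\mid\mathbb{P}_{1,n}\stackrel{d}{\to}N(\sigma_1(t)^2,\sigma_1(t)^2)$ and the test rejecting for $L_o>z_\alpha\sigma_1(t)$ has power $\Phi(-z_\alpha+\sigma_1(t))$. The case $p_n<q_n$ follows by substituting $-t$ for $t$ throughout, consistent with $\mu_r=(-t)^r$ in Theorem~\ref{thm:alt}, so all sign changes enter squared and the computations are unchanged.

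I expect the crux to be the uniform control of the cumulative approximation error over the growing range $3\le r\le k_n$: the pointwise approximations of Theorems~\ref{thm:even}--\ref{thm:cyclestotrace} must be made quantitative, with errors polynomial in $r$ so that the geometric factor $t^r$ absorbs them, and for the even degrees in part (i) this is precisely where $n^2\pnav^3\to\infty$ enters — it is what permits replacing $T_r$ by its deterministic mean $\E_{0,n}[T_r]$ with negligible fluctuation (cf.\ Remark~\ref{rem:alpha}). When only $n\pnav\to\infty$ holds this replacement fails, which is exactly why part (ii) must discard the even cycles and settle for the strictly smaller power $\Phi(-z_\alpha+\sigma_1(t))$.
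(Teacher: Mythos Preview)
Your approach is correct and matches the paper's: the paper simply states that Theorem~\ref{thm:llslike} ``follows directly from Theorems~\ref{thm:alt}--\ref{thm:cyclestotrace}'', and your proposal is exactly a careful unpacking of that implication (replace each $\Tr(P_r(\Acentwo))$ by $C_{n,r}(G)$ plus, for even $r$, a deterministic correction via Theorem~\ref{thm:cyclestotrace}, then invoke the signed-cycle CLT and \eqref{eq:Lcdist}).

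One inaccuracy worth fixing: the approximation errors in Theorems~\ref{thm:even}--\ref{thm:cyclestotrace} do \emph{not} grow polynomially in $r$; they grow exponentially. For instance the variance bound in the proof of Theorem~\ref{thm:cyclestotrace}(ii) is of the form $(C_3' r)^{C_4'}(C_1')^{C_2' r}/(n^2\pnav^3)$, and the Chebyshev coefficients themselves satisfy $|P_{2k}[2j]|\le 4^{2k}$. So the mechanism that makes the accumulated error vanish is not ``$t^r$ beats a polynomial'' but rather the growth condition $k_n=o(\min(\log(n^2\pnav^3),\sqrt{\log n}))$: summing the weighted errors gives something bounded by $k_n\,(Ct)^{k_n}/\sqrt{n^2\pnav^3}$ (or the analogous $n\pnav$ version for part~(ii)), and this tends to zero precisely because $C^{k_n}=o((n^2\pnav^3)^\epsilon)$ for every fixed $C$ and $\epsilon>0$. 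The geometric factor $t^r$ helps but is not essential to the argument; the real work is done by the hypothesis on $k_n$. Once you correct this point, your proof goes through as written.
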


If only $n\pnav\to\infty$, it is possible to construct tests that are more powerful than that based on \eqref{eq:optlssodd}.
We leave the pursuit of the optimal asymptotic power under this condition for future research.

We conclude the section with a discussion on the quantity $\mu_{n,\pnav}(t)$.
First suppose $\pnav\to 0$.
When $n^2\pnav^3\to\infty$ and $k_n = o( \min( \log(n^2\pnav^3), \sqrt{\log(n) } ) ) \to\infty$, we have from Theorems \ref{thm:even} and \ref{thm:cyclestotrace} that
\begin{equation}
	\label{eq:munpt}
\begin{aligned}
\mu_{n,\pnav}(t) = \frac{1}{2}\sum_{i=1}^{\infty} \frac{1}{2i}\left(\frac{t}{1+t^2}\right)^{2i} \left[\alpha_{1,2i}+\frac{\alpha_{2,2i}}{p_{n,\mathrm{av}}} + \frac{\alpha_{3,2i}}{np_{n,\mathrm{av}}^2}- \binom{i+1}{2}\psi_{2i}\right] 
-\frac{1}{2}\log(1+t^2). 
\end{aligned} 
\end{equation}
Here $\alpha_{1,2i},\alpha_{2,2i}$ and $\alpha_{3,2i}$ have been defined in Theorem \ref{thm:even}.
Although we do not have explicit formula for $\alpha_{3,2i}$'s, we may estimate them by simulation under $\mathbb{P}_{0,n}$ with an estimated $\hpnav$.
To obtain \eqref{eq:munpt}, we have used the following generating function of Chebyshev polynomials 
\[
\sum_{i=1}^{\infty} \frac{t^i P_{i}(x)}{i} = \log\left(\frac{1}{1-tx + t^2} \right)\,.
\] 
If further $n\pnav^2\to\infty$ and $k_n = o( \min( \log(n\pnav^2), \sqrt{\log(n) } ) ) \to\infty$, then we may replace the right side in \eqref{eq:munpt} with the following explicit expression
\begin{equation}
\label{eq:munpt2}
\begin{split}
& \frac{1}{2}\sum_{i=1}^{\infty} \frac{1}{2i}\left(\frac{t}{1+t^2}\right)^{2i} \left[\alpha_{1,2i}+\frac{\alpha_{2,2i}}{p_{n,\mathrm{av}}} - \binom{i+1}{2}\psi_{2i}\right]-\frac{1}{2}\log(1+t^2)  \\
  &             = \frac{1}{2}\sum_{i=1}^{\infty} \frac{1}{2i}\left(\frac{t}{1+t^2}\right)^{2i}\left[ 2^{2i-1}- \binom{2i}{i}\left[ \frac{5i+1}{2i+2}\right]+ \binom{2i}{i+2}\left[ \frac{1}{\pnav} -3 \right] \right] 
				 -\frac{1}{2}\log(1+t^2) . 
\end{split}
\end{equation}
When $\pnav \to p > 0$, we replace $1/\pnav$ in the term involving $\alpha_{2,2i}$ with $\lim_{n\to\infty} \frac{\Var_{0,n}\left[(x_{1,2}-\E_{0,n}[x_{1,2}])^{2}\right]}{\pnav^2(1-\pnav)^2}$ while the others are the same.

\begin{remark}
Suppose for simplicity $p_n > q_n$.
One might observe that when $t<1$, the analytic functions used in the LSSs in $L_a$ and $L_o$ in Theorem \ref{thm:llslike} have limits
\begin{equation}\label{functionall}
f_a(x) = \sum_{i=3}^{\infty} \frac{t^{i}P_{i}(x)}{2i}= \frac{1}{2} \log\left( \frac{1}{1-tx +t^2} \right) - \frac{tx}{2} - \frac{t^2(x^{2}-2)}{4}
\end{equation}
and 
\begin{equation}\label{functionodd}
f_o(x) = \sum_{i=1}^{\infty} \frac{t^{2i+1}P_{2i+1}(x)}{4i+2} = \frac{1}{4} \log\left( \frac{1+tx +t^2}{1-tx + t^2} \right) - \frac{tx}{2},
\end{equation}
respectively.
So it might be tempting to directly use LSSs of the foregoing limits directly as the test statistics in (\ref{eq:optlssall}) and (\ref{eq:optlssodd}) respectively. 
However, this is not preferable for the following two reasons.

First, observe that given any $t<1$, both $f_a$ and $f_o$ take finite values only in the open interval $\left(-\left(t+ \frac{1}{t} \right),  t+ \frac{1}{t} \right)$. On the other hand, it is known that the spectral norm of $\Acenone$ converges to $2$ only under the condition $\pnav \gg \frac{\log(n)^{4}}{n}$. See Vu (2007) \cite{vu2005spectral} for a reference and using Weyl's interlacing inequality it is easy to see that the same holds for the spectral norm of $\Acentwo$. 
However, the result in (\ref{eq:optlssodd}) holds as long as $n\pnav \to \infty$. So in this case the test statistic $\Tr f_{o}( \Acentwo)$ will be undefined when $\pnav \ll \frac{\log(n)^{4}}{n}$ with a nontrivial  probability. 
In an unreported simulation study, we find both $\Tr f_{a}( \Acentwo)$ and $\Tr f_{o}( \Acentwo)$ highly unstable for small values of $\pnav$. 

Another technical difficulty is the evaluation of $\E_{0,n}\left[ \Tr f(\Acentwo)\right]- n\E_{\mathrm{sc}}\left[  f(X)\right]$ for general analytic functions. 
Here $\E_{\mathrm{sc}}$ denote the expectation with respect to the semi-circle law on $[-2,2]$. 
However, the CLT of $\Tr\left(f(\Acentwo)\right)- \E_{0,n}\left[ \Tr f(\Acentwo)\right]$ with the correct asymptotic variance can be obtained from Theorem 3.5 of Anderson and Zeitouni \cite{AZ05}. 
The evaluation of $\E_{0,n}\left[ \Tr f(\Acentwo)\right]- n \E_{\mathrm{sc}}\left[  f(X)\right]$ can be shown from the convergence of $M_{n}^{2}(z)$ in Bai and Silverstein \cite{bai2004clt} (pp.585--593) when $\pnav\to p>0$. 
However, a possibly different argument is required when $\pnav \to 0$ at some rate. Informed readers might note that $M_{n}^{2}(z)$ is expected to blow up due to the $\frac{1}{\pnav}$ factor in (\ref{eq:munpt}) whenever $\pnav \to 0$. 
We anticipate it would be difficult to find the asymptotics of $M_{n}^{2}(z)$ when $n\pnav^2$ does not diverge to infinity. 
We leave it as a technical question for future research. 
\end{remark}

\section{Adaptive tests}
\label{sec:adaptive}

\subsection{Adaptive test statistics for \eqref{test:sbm}}

The following result presents two classes of adaptive test statistics which do not require knowledge of any model parameters and are completely data-driven. 
Remarkably, under appropriate growth conditions on the connection probabilities, for testing \eqref{test:sbm}, tests that reject for large values of these test statistics achieve nontrivial powers in the contiguous regime and consistency in the singular regime.
All these tests are of $O(n^3\log n)$ time complexity.

\begin{proposition}\label{prop:sing}
Suppose that as $n\to\infty$, $t$ defined in \eqref{eq:t} remains bounded.
In addition, let $\varepsilon$ be a constant in $(0, \frac{1}{2}]$.
The following results hold when $p_n > q_n$,
\begin{enumerate}[(i)]
\item If $n\pnav \to \infty$ and $k_n= o( \min( \log(n\pnav), \sqrt{\log(n) } ) ) \to\infty$, then the test statistic
\begin{equation}
	\label{eq:adapt-odd}
\widetilde{L}_o = \sum_{r=1}^{k_{n}}\frac{\Tr\left(P_{2r+1}\left(\Acentwo\right)\right)}{(4r+2)(\log (2r+1))^{\frac{1}{2}+ \varepsilon}}
\end{equation}
satisfies 
\[
\widetilde{L}_o | \mathbb{P}_{0,n} \stackrel{d}{\to} N\left(0, \widetilde{\sigma}_{o}(t)^2  \right) 
\quad \mbox{and} \quad
\widetilde{L}_o - \widetilde{\mu}_{n,o}(t) | \mathbb{P}_{1,n} \stackrel{d}{\to} N\left(0, \widetilde{\sigma}_{o}(t)^2  \right).
\]
Here 
\[
\widetilde{\sigma}_{o}(t)^{2}= \sum_{r=1}^{\infty} \frac{1}{(4r+2)(\log (2r+1))^{1+2\varepsilon}}< \infty
\]
and 
\[
\widetilde{\mu}_{n,o}(t) \to \left\{
\begin{array}{ll}
\sum_{r=1}^{\infty} \frac{t^{2r+1}}{(4r+2)(\log (2r+1))^{\frac{1}{2}+ \varepsilon}} & \text{when} ~ t<1,\\
\infty & \text{when} ~ t\geq 1.
\end{array}
\right.
\] 
\item If $n^2\pnav^3 \to \infty$ and $k_n= o( \min( \log(n^2\pnav^3), \sqrt{\log(n) } ) ) \to\infty$, then the test statistic
\begin{equation}
\label{eq:adapt-all}
\widetilde{L}_a = \sum_{r=3}^{k_{n}}\frac{\Tr\left(P_{r}\left(\Acentwo\right)\right)}{2r(\log r)^{\frac{1}{2}+ \varepsilon}}- \bar\mu_{n,a}
\end{equation}
satisfies 
\[
\widetilde{L}_a | \mathbb{P}_{0,n} \stackrel{d}{\to} N\left(0, \widetilde{\sigma}_{a}(t)^2  \right)
\quad \mbox{and} \quad
\widetilde{L}_a - \widetilde{\mu}_{n,a}(t) | \mathbb{P}_{1,n} \stackrel{d}{\to} N\left(0, \widetilde{\sigma}_{a}(t)^2  \right).
\]
Here 
\[
\widetilde{\sigma}_{a}(t)^{2}= \sum_{r=3}^{\infty} \frac{1}{2r(\log r)^{1+2\varepsilon}}< \infty
\]
and  
\[
\widetilde{\mu}_{n,a}(t) \to \left\{
\begin{array}{ll} 
\sum_{r=3}^{\infty} \frac{t^{r}}{2r(\log r)^{\frac{1}{2}+ \varepsilon}} & \text{when} ~ t<1,\\
\infty & \text{when} ~ t\geq 1.
\end{array}
\right.
\] 
In addition, if $\pnav \to 0$,
\[
\bar{\mu}_{n,a}= \sum_{r=2}^{[\frac{k_n}{2}]} \sum_{j=1}^{r} \frac{1}{2r(\log r)^{\frac{1}{2}+ \varepsilon}}P_{2r}(2j)\left[  \alpha_{1,2j}+\frac{\alpha_{2,2j}}{p_{n,\mathrm{av}}} + \frac{\alpha_{3,2j}}{np_{n,\mathrm{av}}^2}- \binom{j+1}{2}\psi_{2j}\right].
\]
When $\pnav \to p \in(0,1)$ or $n\pnav^2 \to \infty$, the change in the form of $\bar{\mu}_{n,a}$ is analogous to the discussion below Theorem \ref{thm:llslike}.
\end{enumerate}
If $p_n < q_n$, we define $\widetilde{L}_o$ by multiplying \eqref{eq:adapt-odd} with $-1$, and $\widetilde{L}_a$ by multiplying the $r$th term in the series in \eqref{eq:adapt-all} with $(-1)^r$ for all $r\geq 3$, then the same conclusions hold. 
\end{proposition}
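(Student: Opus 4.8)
\textbf{Proof plan for Proposition \ref{prop:sing}.}
The plan is to reduce everything to the approximation results already established in Theorems \ref{thm:cyclestotrace} and \ref{thm:even} together with the joint asymptotic normality of signed cycles in \cite{Ban16}. Concretely, I first rewrite each of $\widetilde{L}_o$ and $\widetilde{L}_a$ in terms of signed cycles. For the odd statistic, Theorem \ref{thm:cyclestotrace}(i) gives $\Tr(P_{2r+1}(\Acentwo)) = C_{n,2r+1}(G) + o_p(1)$ uniformly for $r$ up to $k_n$, so $\widetilde{L}_o = \sum_{r=1}^{k_n} \frac{C_{n,2r+1}(G)}{(4r+2)(\log(2r+1))^{1/2+\varepsilon}} + o_p(1)$ after checking that the accumulated errors are negligible (the sum of the $o_p(1)$ terms, weighted by the summable coefficients, is still $o_p(1)$; this is where the $k_n = o(\cdot)$ and $n\pnav\to\infty$ conditions enter). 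For the even-inclusive statistic $\widetilde{L}_a$, I use Theorem \ref{thm:cyclestotrace}(ii): each $\Tr(P_{2r}(\Acentwo))$ equals $C_{n,2r}(G)$ minus the $T$-correction terms, and the subtraction of $\bar\mu_{n,a}$ is exactly designed (via the $n^2\pnav^3\to\infty$ branch of Theorem \ref{thm:even}, where $T_r$ may be replaced by $\E_{0,n}[T_r]$) to cancel those correction terms, plus the identities \eqref{eq:cheby-facts}. So $\widetilde{L}_a = \sum_{r=3}^{k_n} \frac{C_{n,r}(G)}{2r(\log r)^{1/2+\varepsilon}} + o_p(1)$.

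Next, I invoke the joint CLT for signed cycles of growing lengths from \cite{Ban16}: under $\mathbb{P}_{0,n}$, $(C_{n,3}(G), C_{n,4}(G), \dots)$ converges (in the sense of finite-dimensional distributions, and with a uniform-integrability / tightness argument for the tails) to independent centered Gaussians with $\Var(C_{n,r}) \to 2r$, while under $\mathbb{P}_{1,n}$ with finite $t$ the mean of $C_{n,r}(G)$ shifts to $\mu_r = t^r$ (or $(-t)^r$) with the same limiting variances. Applying this to the weighted sums: under the null, $\widetilde{L}_o$ is a weighted sum of asymptotically independent Gaussians with variance contributions $\frac{2(2r+1)}{(4r+2)^2(\log(2r+1))^{1+2\varepsilon}} = \frac{1}{(4r+2)(\log(2r+1))^{1+2\varepsilon}}$, which sum to the finite $\widetilde{\sigma}_o(t)^2$; similarly for $\widetilde{L}_a$. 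Under the alternative the variance is unchanged and the mean becomes $\widetilde{\mu}_{n,o}(t) = \sum_{r=1}^{k_n} \frac{t^{2r+1}}{(4r+2)(\log(2r+1))^{1/2+\varepsilon}}$, which converges to a finite limit when $t<1$ (geometric-type decay beats the mild logarithmic weight) but diverges to $+\infty$ when $t\ge 1$ because the terms $t^{2r+1}/((4r+2)(\log(2r+1))^{1/2+\varepsilon})$ are then not summable; and analogously for $\widetilde{\mu}_{n,a}(t)$. This yields the stated limit laws, and hence a level-$\alpha$ test rejecting for large values has nontrivial power $\Phi(-z_\alpha + \widetilde{\mu}_{\infty,\cdot}(t)/\widetilde{\sigma}_\cdot(t)) \in (\alpha,1)$ in the contiguous regime and power tending to $1$ in the singular regime.

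The $p_n < q_n$ case follows by the sign change indicated in the statement: $C_{n,r}(G)$ has mean $(-t)^r$, so multiplying odd-length terms (for $\widetilde{L}_o$) or the $r$th term by $(-1)^r$ (for $\widetilde{L}_a$) restores a nonnegative mean shift term by term, and since the coefficients are otherwise unchanged the variance and all other conclusions are identical. Finally, the $O(n^3\log n)$ time-complexity claim is immediate: each $\Tr(P_r(\Acentwo))$ for $r\le k_n$ can be computed from the eigendecomposition of $\Acentwo$ (an $O(n^3)$ operation) by evaluating the degree-$r$ polynomial at each of the $n$ eigenvalues, and summing over $r\le k_n = O(\log n)$ (after handling the $\log$-factor in the weights) costs an extra $O(n\log n)$; the construction of $\bar\mu_{n,a}$ involves only deterministic quantities.

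\textbf{Main obstacle.} The crux is controlling the accumulation of approximation errors across the $k_n \to \infty$ terms. The pointwise statements in Theorems \ref{thm:cyclestotrace} and \ref{thm:even} give $o_p(1)$ errors for each fixed length, but here we sum up to $k_n$ lengths, so I need the errors to be uniformly small — this requires revisiting the proofs of those theorems to extract rates (the $k = o(\min(\log(n\pnav), \sqrt{\log n}))$ and $k = o(\min(\log(n^2\pnav^3),\sqrt{\log n}))$ conditions are precisely what make the F\"uredi--Koml\'os-type error bounds summable against the weights $1/(2r(\log r)^{1/2+\varepsilon})$). A secondary technical point is justifying the passage from finite-dimensional convergence of signed cycles to convergence of the infinite weighted series: one truncates at a fixed level $m$, controls the tail $\sum_{r>m}$ in $L^2$ uniformly in $n$ using $\Var(C_{n,r}) = O(r)$ and the summable squared weights, and lets $m\to\infty$ after $n\to\infty$ — a standard but necessary tightness argument. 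Everything else is bookkeeping with generating functions and the Chebyshev identities \eqref{eq:cheby-facts}.
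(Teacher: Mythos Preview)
Your proposal is correct and matches the paper's approach exactly: the paper omits the proof of Proposition~\ref{prop:sing}, stating only that it ``follows directly from Theorems~\ref{thm:alt}--\ref{thm:cyclestotrace}'' (together with the signed-cycle CLT from \cite{Ban16}, stated in the appendix as Proposition~\ref{prop:signdistr}). Your reduction via Theorem~\ref{thm:cyclestotrace} to weighted sums of signed cycles, the use of Theorem~\ref{thm:even} to handle the $T_r$ corrections in the even case, and the truncation/tightness argument for the infinite series are precisely the ingredients the paper has in mind.
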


\begin{remark}
In practice, if only $n^2 \pnav^3\to\infty$ while $n\pnav^2$ does not diverge, then $\bar{\mu}_{n,a}$ cannot be evaluated directly in closed form. However, one may obtain an approximation to sufficiently high precision by simulation.
\end{remark}


\subsection{Powers under multi-block alternatives}

We now investigate the powers of the adaptive tests against multi-block alternatives. 
In particular, for any fixed $\kappa > 2$, suppose the testing problem is now
\begin{equation}
\label{eq:alt-kappa}
H_0: A \sim \mathcal{G}_1\left(n, \frac{p_n+(\kappa-1)q_n}{\kappa}\right), \quad \mbox{vs.}\quad H_{1,\kappa}: A \sim \mathcal{G}_\kappa(n,p_n,q_n).
\end{equation}
We are interested in the cases where for $a_n = np_n$ and $b_n = nq_n$, as $n\to\infty$, $a_n$ and $b_n\to\infty$ and 
\begin{equation}
\label{eq:c-kappa}
c_\kappa = \frac{(a_{n}-b_{n})^2}{a_n+ (\kappa-1)b_n}
\end{equation}
remains a constant.

One can prove that the results in (\ref{formula1}), Theorem \ref{thm:even} and Theorem \ref{thm:cyclestotrace} remain valid for multiple block case by arguments similar to the proofs of (\ref{formula1}), Theorem \ref{thm:even} and Theorem \ref{thm:cyclestotrace}. However, in the multiple block case the signed cycles $C_{n,k}(G)$'s will be asymptotically independent normal with a  different asymptotic mean and asymptotic variance $2k$. 
In particular, if one can prove that the mean of $C_{n,k}(G)$ grow faster than exponential in $k$, then the tests given in Proposition \ref{prop:sing} will be consistent. 
The following Proposition gives a sufficient condition \emph{(not necessary}) for the consistency of the tests in Proposition \ref{prop:sing}. 
\begin{proposition}
	\label{prop:multi}
Suppose $c_\kappa > \kappa$, then both tests in Proposition \ref{prop:sing} are asymptotically consistent under the respectively growth conditions.
\end{proposition}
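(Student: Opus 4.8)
The plan is to follow the strategy sketched by the authors just before the statement: reduce the consistency of the tests in Proposition~\ref{prop:sing} to a lower bound on the asymptotic mean of the signed cycles $C_{n,k}(G)$ under the multi-block alternative $H_{1,\kappa}$, and then show that $c_\kappa > \kappa$ forces that mean to grow faster than any exponential in $k$. First I would record the combinatorial computation of $\E_{1,\kappa,n}[C_{n,k}(G)]$: conditioning on the group assignments $\sigma_1,\dots,\sigma_n$, a closed cycle of length $k$ contributes $(x_{i_0i_1}-\pnav)\cdots(x_{i_{k-1}i_0}-\pnav)$, and since $\E[x_{uv}-\pnav]$ equals $p_n - \pnav$ or $q_n - \pnav$ according to whether $\sigma_u = \sigma_v$, the expectation over edges of a given cyclic tuple is a product of such terms; averaging over the i.i.d.\ uniform labels and the $\sim n^k$ choices of distinct vertices, the leading term is governed by the $k$-th power of the ``mean adjacency'' matrix, whose relevant eigenvalue is $(p_n - q_n)/\kappa$ (the off-diagonal block structure of the centered mean matrix). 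After dividing by $(\sqrt{n\pnav(1-\pnav)})^k$ one obtains, to leading order,
\[
\E_{1,\kappa,n}[C_{n,k}(G)] = (1+o(1))\left(\frac{(a_n-b_n)/\kappa}{\sqrt{(a_n+(\kappa-1)b_n)/\kappa}}\right)^{k} = (1+o(1))\, t_\kappa^{\,k},
\]
where $t_\kappa = \sqrt{c_\kappa/\kappa}$ and $c_\kappa$ is as in \eqref{eq:c-kappa}; the hypothesis $c_\kappa > \kappa$ is precisely $t_\kappa > 1$.

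Next I would invoke the multi-block analogues of \eqref{formula1}, Theorem~\ref{thm:even} and Theorem~\ref{thm:cyclestotrace} (which the authors assert hold by the same proofs) to transfer this mean growth from $C_{n,k}$ to the linear spectral statistics $\Tr(P_r(\Acentwo))$ that actually appear in $\widetilde L_o$ and $\widetilde L_a$. Concretely, under $H_{1,\kappa}$ one has $C_{n,2k+1}(G) - \Tr(P_{2k+1}(\Acentwo)) \stackrel{p}{\to} 0$ and the even-length version with the $T_r$ corrections, so the deterministic centering $\widetilde\mu_{n,o}(t)$ (resp.\ $\widetilde\mu_{n,a}(t)$) is replaced by a drift of order $\sum_{r=1}^{k_n} t_\kappa^{2r+1}/((4r+2)(\log(2r+1))^{1/2+\varepsilon})$ (resp.\ $\sum_{r=3}^{k_n} t_\kappa^{r}/(2r(\log r)^{1/2+\varepsilon})$). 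Since $t_\kappa > 1$, each of these series diverges as $k_n \to \infty$: the exponential factor $t_\kappa^{r}$ dominates the polynomial-logarithmic damping $r(\log r)^{1/2+\varepsilon}$. Meanwhile, under $H_0$ the null CLTs give $\widetilde L_o, \widetilde L_a \stackrel{d}{\to} N(0,\widetilde\sigma^2)$ with finite variance, and the variance of these statistics under $H_{1,\kappa}$ stays bounded by the same argument (the covariance structure of the signed cycles is unchanged, only their means shift). Therefore the test statistics diverge to $+\infty$ in probability under $H_{1,\kappa}$ while staying $O_p(1)$ under $H_0$, which yields consistency of the level-$\alpha$ tests that reject for large values.

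The main obstacle I expect is the rigorous justification that the multi-block versions of Theorem~\ref{thm:even} and Theorem~\ref{thm:cyclestotrace} genuinely go through — in particular that the $T_{2k}$ correction terms and the F\"uredi--Koml\'os-type error bounds remain negligible relative to the now-diverging signed-cycle means, uniformly over $r \le k_n$. In the two-block case the centering was chosen so that the drift vanished in the contiguous regime; here the drift is the whole point, so one must check that the error terms $\varepsilon^{(1)}_{i,2k}, \varepsilon^{(2)}_{i,2k}$ and the remainder in the approximation $C_{n,r} \approx \Tr(P_r(\Acentwo)) + \text{corrections}$ are $o(t_\kappa^r)$ and not merely $o(1)$ for fixed $r$; since $k_n = o(\min(\log(n\pnav),\sqrt{\log n}))$ the surviving factors like $2^{2k}$, $(C_1 k)^{C_2}$ etc.\ are all sub-polynomial in $n$ and hence dominated, but this needs to be spelled out. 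A secondary point is computing the correct eigenvalue of the centered mean-block matrix under the null of \eqref{eq:alt-kappa}: one must verify that centering by $\frac{p_n+(\kappa-1)q_n}{\kappa}$ (the matched average connection probability) leaves a mean matrix whose nonzero eigenvalue is exactly $(p_n-q_n)/\kappa$ with multiplicity $\kappa-1$, so that the per-cycle contribution is $t_\kappa^k$ and not some other constant; this is a short linear-algebra check on the rank-$(\kappa-1)$ block structure. Finally one should remark, as the authors do, that $c_\kappa > \kappa$ is sufficient but not necessary — it corresponds to placing the threshold at $t_\kappa = 1$, whereas the true Kesten--Stigum threshold allows consistency already above it in the appropriate parametrization — but establishing necessity is outside the scope of this proposition.
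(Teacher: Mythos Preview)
Your approach is correct and is in fact more direct than the paper's. You compute the asymptotic mean of $C_{n,k}(G)$ under $H_{1,\kappa}$ via the spectrum of the rank-$(\kappa-1)$ centered block mean matrix, obtaining $\mu_{\kappa,k} = (1+o(1))(\kappa-1)\, t_\kappa^{\,k}$ with $t_\kappa = \sqrt{c_\kappa/\kappa}$ (you dropped the multiplicity factor $\kappa-1$, but this is harmless for consistency). The condition $c_\kappa > \kappa$ is then exactly $t_\kappa > 1$, giving exponential growth of the drift and hence divergence of the weighted sums defining $\widetilde L_o$ and $\widetilde L_a$, while their variances stay bounded. Your identification of the remaining obligation---that the $T_{2k}$-type corrections and the approximation errors from Theorem~\ref{thm:cyclestotrace} carry over to $\kappa$ blocks and remain $o(1)$ uniformly in $r\le k_n$---matches what the paper also invokes without detailed reproof.

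The paper takes a different route: instead of the eigenvalue computation, it decomposes $X_w$ as in the two-block proof, isolates the deterministic piece $\sum_{i_0,\dots,i_{k-1}}\prod_j L(\sigma_{i_j},\sigma_{i_{j+1}})$, asserts (by analogy with \eqref{eqn:prodassign}) that each such product over a closed cycle is nonnegative when $p_n>q_n$, and then gives a hands-on inductive lower bound on the resulting sum, arriving at $\mu_{\kappa,k} \gtrsim (2(\kappa-1)/\kappa)^{k-1}$, which exceeds $1$ for every $\kappa\ge 3$. The paper's argument is more combinatorial, avoids any linear algebra, and yields only a crude lower bound rather than the sharp growth rate $t_\kappa$. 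Your spectral route sidesteps the per-term positivity assertion altogether (note that for $\kappa\ge 3$ a cycle whose labels are all distinct has $\prod_j L<0$, so the product is not termwise nonnegative; the positivity one actually needs is only of the \emph{sum}, which your eigenvalue computation delivers directly). Both approaches ultimately reduce consistency to exponential growth of the signed-cycle mean, but yours gives the exact rate and is conceptually cleaner.
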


The threshold $\frac{(a_{n}-b_{n})^2}{a_n+ (\kappa-1)b_n} > \kappa$ is known as Kesten--Stigum threshold. 
See \cite{AS16} and \cite{bank16} for further details. 
A systematic investigation of powers against possibly asymmetric multi-block alternatives is beyond the scope of the present paper.


\section{Outline of proofs}
\label{sec:overview}
In this section we give a brief outline of the proofs for Theorems \ref{thm:null}--\ref{thm:even}. 
The other theorems and propositions are essentially corollaries of these core results. 
For conciseness, throughout this section, we focus on the assortative case of $p_n > q_n$ when discussing results under local alternatives.

\subsection{Outline of proof for Theorem \ref{thm:null} (i)--(iii)}
The fundamental idea here is to prove that $\Tr(\Acenone^{k})-\E_{0,n}(\Tr(\Acenone^{k}))$ converges in distribution by using the method of moments and the limiting random variables satisfy Wick's formula and hence are Gaussian. 
We first state the method of moments.
\begin{lemma}\label{lem:mom}
Let $Y_{n,1},\ldots, Y_{n,l}$ be a random vector of dimension $l$. 
Then $(Y_{n,1},\ldots, Y_{n,l}) \stackrel{d}{\to} (Z_1,\ldots,Z_{l})$ if both of the following conditions are satisfied:
\begin{enumerate}[(i)]
\item 
$\lim_{n \to \infty}\E[X_{n,1}\ldots X_{n,m}] $
exists for any fixed $m$ and $X_{n,i} \in \{ Y_{n,1},\ldots,Y_{n,l} \}$ for $1\le i \le m$.
\item(Carleman's Condition)\cite{Carl26}
\[
\sum_{h=1}^{\infty} \left(\lim_{n \to \infty}\E[X_{n,i}^{2h}]\right)^{-\frac{1}{2h}} =\infty ~~ \forall ~ 1\le i \le l.
\]
\end{enumerate} 
Further,  
$\lim_{n \to \infty}\E[X_{n,1}\ldots X_{n,m}]= \E[X_{1}\ldots X_{m}]$.
Here $X_{n,i} \in \{ Y_{n,1},\ldots,Y_{n,l} \}$ for $1\le i \le m$ and $X_{i}$ is the in distribution limit of $X_{n,i}$.
\end{lemma}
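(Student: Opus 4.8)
\textbf{Proof plan for Lemma \ref{lem:mom}.}

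The plan is to reduce the statement to two classical one-dimensional facts: the method-of-moments convergence criterion and the Cramér--Wold device. First I would observe that the claimed conclusion, joint convergence in distribution of $(Y_{n,1},\dots,Y_{n,l})$ together with convergence of all joint mixed moments, is purely a statement about the collection of \emph{linear combinations} $W_n = \sum_{j=1}^l a_j Y_{n,j}$ for arbitrary fixed $(a_1,\dots,a_l)\in\R^l$. By the Cramér--Wold theorem, to get $(Y_{n,1},\dots,Y_{n,l})\distc (Z_1,\dots,Z_l)$ it suffices to show $W_n \distc \sum_j a_j Z_j$ for each fixed $a$. So the first step is to fix $a$ and analyze the scalar sequence $W_n$.

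Next I would expand $\E[W_n^{m}]=\sum a_{j_1}\cdots a_{j_m}\,\E[Y_{n,j_1}\cdots Y_{n,j_m}]$ and note that, by hypothesis (i), each mixed moment $\E[Y_{n,j_1}\cdots Y_{n,j_m}]$ converges as $n\to\infty$ (this is exactly the case $X_{n,i}=Y_{n,j_i}$ of (i)); hence $\lim_n \E[W_n^{m}]$ exists for every fixed $m$, and equals the corresponding polynomial in the $a_j$'s applied to the limiting mixed moments $\mu_{j_1\cdots j_m}:=\lim_n\E[Y_{n,j_1}\cdots Y_{n,j_m}]$. Call this limit $m_m(a)$. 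Then I would verify that the moment sequence $\{m_m(a)\}_{m\ge 1}$ satisfies Carleman's condition $\sum_{h\ge 1} m_{2h}(a)^{-1/2h}=\infty$. This follows from hypothesis (ii): by the triangle/Minkowski inequality in $L^{2h}$, $\big(\E[W_n^{2h}]\big)^{1/2h}\le \sum_{j=1}^l |a_j|\,\big(\E[Y_{n,j}^{2h}]\big)^{1/2h}$, so in the limit $m_{2h}(a)^{1/2h}\le \sum_j |a_j|\, \big(\lim_n \E[Y_{n,j}^{2h}]\big)^{1/2h}$; since each of the $l$ sequences $\big(\lim_n\E[Y_{n,j}^{2h}]\big)^{-1/2h}$ sums to infinity by (ii) and $l$ is finite, a comparison argument gives $\sum_h m_{2h}(a)^{-1/2h}=\infty$ as well. (One should be slightly careful: if $a\equiv 0$ the statement is trivial, and in general one bounds $m_{2h}(a)^{-1/2h} \ge \big(\max_j |a_j|\big)^{-1} \cdot \big(l\cdot \max_j \lim_n\E[Y_{n,j}^{2h}]\big)^{-1/2h}$, and the latter diverges in sum.)

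With the existence of all limiting moments of $W_n$ and Carleman's condition in hand, the classical Hamburger moment-problem / method-of-moments theorem (see \citet{Carl26}) gives that there is a \emph{unique} distribution with moments $\{m_m(a)\}$, and that $W_n$ converges in distribution to that law, with convergence of all moments $\E[W_n^m]\to m_m(a)$. Applying this for every $a$ and invoking Cramér--Wold identifies the joint limit $(Z_1,\dots,Z_l)$ and shows $(Y_{n,1},\dots,Y_{n,l})\distc(Z_1,\dots,Z_l)$. Finally, to get the last assertion $\lim_n \E[X_{n,1}\cdots X_{n,m}]=\E[X_1\cdots X_m]$ for arbitrary choices $X_{n,i}\in\{Y_{n,1},\dots,Y_{n,l}\}$, I would extract the mixed moment from the polynomial identity for $\E[W_n^m]$ by polarization — i.e.\ the coefficient of a given monomial $a_{j_1}\cdots a_{j_m}$ in $\E[W_n^m]$ is, up to a combinatorial factor, the symmetrized mixed moment — and pass to the limit, using that moments of the limit vector are recovered the same way from $\E[(\sum a_j Z_j)^m]$. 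The only real subtlety, and the step I expect to require the most care in writing, is this bookkeeping that convergence of \emph{all} moments of \emph{all} linear combinations forces convergence of each individual mixed moment to the mixed moment of the limit (and that the limit vector's mixed moments are finite and well-defined) — everything else is a direct appeal to Cramér--Wold and the one-dimensional moment-problem theory.
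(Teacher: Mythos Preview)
The paper does not prove this lemma at all; it is stated as a classical fact (the multivariate method of moments under Carleman's condition) and immediately used as a black box in the proof of Theorem \ref{thm:null}. Your proposal via Cram\'er--Wold plus the one-dimensional moment problem is the standard textbook route and is essentially correct, so there is nothing in the paper to compare it against.

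That said, one step in your sketch is glossed over and is genuinely delicate. To run Cram\'er--Wold you need Carleman's condition for each linear combination $W_n=\sum_j a_j Y_{n,j}$. Your parenthetical bound gives
\[
m_{2h}(a)^{-1/(2h)}\;\gtrsim\;\Big(\max_{1\le j\le l}\lim_n \E[Y_{n,j}^{2h}]\Big)^{-1/(2h)}
=\min_{1\le j\le l}\Big(\lim_n \E[Y_{n,j}^{2h}]\Big)^{-1/(2h)},
\]
and you then claim the right-hand side sums to infinity because each of the $l$ individual series does. For \emph{general} non-increasing positive sequences this implication is false: with $l=2$ one can interleave slow and fast decay so that each series diverges while the pointwise minimum is summable. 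What rescues the argument here is the additional structure of moment sequences (log-convexity of $h\mapsto \log \E[Y^{2h}]$ via Lyapunov's inequality, which prevents such oscillation), but you would need to actually use that structure, not just monotonicity. A cleaner alternative is to bypass linear combinations entirely and invoke the multivariate Carleman theorem (marginal Carleman $\Rightarrow$ joint moment-determinacy) directly, which gives the result without Cram\'er--Wold. In the paper's specific application none of this subtlety matters anyway: the limit is shown to be Gaussian via Wick's formula (Lemma \ref{lem:wick}), and Gaussian vectors are trivially moment-determinate.
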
 

Next we state Wick's formula for Gaussian random variables which was first proved by \citet{I18} and later on introduced by \citet{W50} in the physics literature. 
\begin{lemma}[Wick's formula \cite{W50}]
	\label{lem:wick}
Let $(Y_1,\ldots, Y_{l})$ be a multivariate mean $0$ random vector of dimension $l$ with covariance matrix $\Sigma$ (possibly singular). 
Then $(Y_1,\ldots, Y_{l})$ is jointly Gaussian if and only if for any positive integer $m$ and $X_{i} \in \{ Y_1,\ldots,Y_{l} \}$ for $1\le i \le m$
\begin{equation}\label{eqn:wick}
\E[X_1\ldots X_{m}]=\left\{
\begin{array}{ll}
  \sum_{\eta} \prod_{i=1}^{\frac{m}{2}} \E[X_{\eta(i,1)}X_{\eta(i,2)}] & ~ \text{for $m$ even}\\
  0 & \text{for $m$ odd.}
\end{array}
\right.
\end{equation}
Here $\eta$ is a partition of $\{1,\ldots,m \}$ into $\frac{m}{2}$ blocks such that each block contains exactly $2$ elements and $\eta(i,j)$ denotes the $j$th element of the $i$th block of $\eta$ for $j=1,2$.
\end{lemma}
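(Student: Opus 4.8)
\textbf{Proof proposal for Wick's formula (Lemma \ref{lem:wick}).}

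The plan is to prove both directions of the equivalence. For the ``only if'' direction, I would start from the assumption that $(Y_1,\dots,Y_l)$ is jointly Gaussian with mean zero and covariance $\Sigma$, and establish \eqref{eqn:wick} via the moment generating function. Concretely, for a fixed tuple $X_1,\dots,X_m$ drawn from $\{Y_1,\dots,Y_l\}$, introduce auxiliary parameters $s_1,\dots,s_m$ and consider $\varphi(s) = \E\bigl[\exp(\sum_{i=1}^m s_i X_i)\bigr] = \exp\bigl(\tfrac12 \sum_{i,j} s_i s_j \cov(X_i,X_j)\bigr)$, which holds because $\sum_i s_i X_i$ is a univariate Gaussian with variance $\sum_{i,j} s_i s_j\cov(X_i,X_j)$. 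Then $\E[X_1\cdots X_m]$ equals the mixed partial derivative $\partial^m \varphi / \partial s_1 \cdots \partial s_m$ evaluated at $s=0$. Expanding the exponential as a power series in the quadratic form $Q(s) = \tfrac12 \sum_{i,j} s_i s_j \cov(X_i,X_j)$, only the term $Q(s)^{m/2}/(m/2)!$ contributes a nonzero derivative of order $m$ at the origin when $m$ is even (lower powers of $Q$ give total degree below $m$, higher powers give degree above $m$), and the derivative vanishes entirely when $m$ is odd. Counting which monomials of $Q(s)^{m/2}$ survive the differentiation $\partial_{s_1}\cdots\partial_{s_m}$ yields exactly a sum over pairings $\eta$ of $\{1,\dots,m\}$ into blocks of size two, with the factor $1/(m/2)!$ cancelled by the $(m/2)!$ orderings of the pairs and the factor $2^{-m/2}$ cancelled by the two orderings within each pair. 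This reproduces $\sum_\eta \prod_{i=1}^{m/2} \E[X_{\eta(i,1)} X_{\eta(i,2)}]$.

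For the ``if'' direction, I would assume \eqref{eqn:wick} holds for all $m$ and all choices of $X_i$, and deduce joint Gaussianity. The cleanest route is again through the moment generating function: for arbitrary real $t_1,\dots,t_l$, expand $\E\bigl[\exp(\sum_{k=1}^l t_k Y_k)\bigr]$ as a formal power series $\sum_{m\ge 0} \tfrac1{m!} \E\bigl[(\sum_k t_k Y_k)^m\bigr]$, use multilinearity to write each moment $\E[(\sum_k t_k Y_k)^m]$ as a sum over tuples $X_1,\dots,X_m$ with the appropriate products of $t$'s, apply \eqref{eqn:wick} to each such moment, and resum. The pairing structure is precisely what is needed to recognize the result as $\exp\bigl(\tfrac12 \sum_{k,k'} t_k t_{k'} \cov(Y_k,Y_{k'})\bigr)$, the MGF of the mean-zero Gaussian with covariance $\Sigma$; one then invokes uniqueness of distributions with a given MGF in a neighborhood of the origin (and handles the possibly singular $\Sigma$ by noting that the Gaussian with a degenerate covariance is still well-defined, e.g.\ as a limit or as a distribution supported on the range of $\Sigma$). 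Alternatively, and perhaps more in the spirit of the surrounding text which invokes Carleman's condition, one can argue that \eqref{eqn:wick} forces all the joint moments to agree with those of the Gaussian $N_l(0,\Sigma)$, whose moments satisfy Carleman's condition, hence the distribution is determined.

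The main obstacle, and the part requiring genuine care rather than routine bookkeeping, is the combinatorial identification of the derivative (or the resummed series) with a sum over pair partitions: one must verify that the factors $1/(m/2)!$, $2^{-m/2}$, the multinomial coefficients arising when several of the $X_i$ coincide, and the number of ways a given monomial in $Q(s)^{m/2}$ can be hit by $\partial_{s_1}\cdots\partial_{s_m}$ all combine to produce exactly one copy of each pairing $\eta$ with no extra multiplicity. A convenient way to sidestep the coincidence issue is to treat $X_1,\dots,X_m$ as formally distinct symbols throughout (so the $s_i$ are genuinely independent variables), prove the identity at that level, and only afterwards substitute the actual values from $\{Y_1,\dots,Y_l\}$; multilinearity guarantees the substitution is legitimate. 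Everything else — the MGF computation for a univariate Gaussian, the interchange of sum and expectation justified by the moment bounds, and the uniqueness theorem for MGFs — is standard.
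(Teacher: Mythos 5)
The paper does not prove this lemma at all: it is quoted as a classical result of Isserlis and Wick, with a citation in place of a proof, so there is nothing in the text to compare your argument against. Your proposal is the standard and correct proof. The ``only if'' direction via differentiating $\exp\bigl(\tfrac12\sum_{i,j}s_is_j\cov(X_i,X_j)\bigr)$ and checking that the factors $1/(m/2)!$ and $2^{-m/2}$ are exactly absorbed by the orderings of and within the pairs is right, as is your device of treating the $X_i$ as formally distinct symbols before substituting. For the ``if'' direction the cleanest closing step, which you essentially have, is to note that \eqref{eqn:wick} forces $\E[(\sum_k t_kY_k)^m]$ to equal $(m-1)!!\,(t'\Sigma t)^{m/2}$ for $m$ even and $0$ for $m$ odd, i.e.\ every linear combination has the moments of $N(0,t'\Sigma t)$; since the one-dimensional Gaussian is moment-determinate (Carleman), every linear combination is Gaussian, which is the definition of joint Gaussianity and handles singular $\Sigma$ with no extra work. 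The only point to make explicit is the justification for summing the moment series term by term (e.g.\ via $\E[\cosh(tZ)]<\infty$ from the even-moment growth), which you flag but do not carry out; that is routine.
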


It is worth noting that the random variables $Y_1,\ldots, Y_{l}$ need not be distinct. 
When $Y_1=\cdots = Y_{l}$, Lemma \ref{lem:wick} provides a description of the moments of Gaussian random variables.

In what follows, we focus on odd powers to illustrate the main ideas. 
Detailed arguments for even powers can be found in the actual proof.
We start with the following identity. 
\begin{equation}\label{overview:trexp}
\Tr(\Acenone^{k})= \left(\frac{1}{n\pnav(1-\pnav)}\right)^{\frac{k}{2}}\sum_{w} \left[X_{w}\right].
\end{equation}
Here any $w$ is an ordered tuple of indices (not necessarily distinct) $(i_0,\ldots,i_{k})$ with $i_0=i_{k}$ where $i_{j}\in \{  1,2,\ldots, n\}$ for $0\le j \le k$ and we define 
\begin{equation*}
X_{w}:= \prod_{j=0}^{k-1}\left(x_{i_j,i_{j+1}}- \pnav\right). 	
\end{equation*}
As we shall formally define in Section \ref{subsec:word} below, 
these $w$'s are called closed word of length $k+1$.

We at first prove when $k$ is odd, most of the random variables $X_{w}$ have mean $0$.
As a consequence, one doesn't need a centering for $\Tr(\Acenone^{k})$ when $k$ is odd. 
This is not the case for even $k$ though. 
The next step 
is to prove $\lim_{n \to \infty}\E[R_{n,1}\ldots R_{n,m}]$ exists for any fixed $m$ where $R_{n,i} \in \{ \Tr(\Acenone^{2k_1+1}),\ldots, \Tr(\Acenone^{2k_{l}+1}) \}$ and to prove the limit $\lim_{n \to \infty}\E\left[ R_{n,1}\ldots R_{n,m} \right]$ satisfy the Wick's formula (\ref{eqn:wick}).
To this end, observe that 
\begin{equation*}
\begin{split}
\E\left[\Tr(\Acenone^{l_1})\ldots \Tr(\Acenone^{l_m})\right]
&=  \left(\frac{1}{n \pnav(1-\pnav)}\right)^{\frac{\sum l_{i}}{2}} \sum_{w_1,\ldots, w_{i}} \E\left[X_{w_1}\ldots X_{w_m}\right]\\
&= \left(\frac{1}{n \pnav(1- \pnav)}\right)^{\frac{\sum l_{i}}{2}} \sum_{a}\E\left[X_{w_1}\ldots X_{w_m}\right].
\end{split}
\end{equation*}
Here $w_i$ is a closed word of length $l_{i}+1$ and any sentence $a$ is an ordered collection of words $[w_{i}]_{i=1}^{m}$.
Then we verify that $\E\left[X_{w_1}\ldots X_{w_m}\right]=0$ unless the corresponding sentence $a=[w_{i}]_{i=1}^{m}$ is a weak CLT sentence \cite{AZ05} (see also Def.~\ref{def:weakCLT} in appendix). 
We then show that among weak CLT sentences, 
\[
\left(\frac{1}{n\pnav(1-\pnav)}\right)^{\frac{\sum l_{i}}{2}} \sum_{a}\E\left[\left|X_{w_1}\ldots X_{w_m}\right|\right] \le \frac{\Psi(l_1,\ldots,l_m)}{n\pnav} \to 0
\]
as $n\pnav\to\infty$
unless $a$ is a CLT sentence in which all the involved random variables are naturally paired \cite{AZ05} (cf.~Def.~\ref{def:weakCLT} and Prop.~\ref{prop:cltrep} in appendix). 
This natural pairing is closely related to the partition $\eta$ introduced in Lemma \ref{lem:wick} which essentially proves the CLT in part (i) of Theorem \ref{thm:null}. 
Here $\Psi(\cdot)$ is an implicit function depending on the values $l_{i}$,
and we develop a careful upper bound on the number of weak CLT sentences (Lemma \ref{lem:appendix}) to ensure that the convergence to zero in the last display happens whenever $\max_{i}(l_i)=o(\log(np_n))$.
This completes the proof of asymptotic normality under the condition of of the theorem.


The variance formula (\ref{cov:null:odd}) is derived using the concepts of {\FK} sentences and unicyclic graphs introduced in Sections \ref{subsec:FK} and \ref{subsec:unicyclic}. 
Here the basic idea is similar to that in \cite{AZ05}. 
The major difference is that we shall also develop Proposition \ref{prop:uniword} and Lemma \ref{lem:cltwordpaircnt} which enable us to calculate the covariance between the traces and the signed cycles in addition to calculating $\Sigma$ in (\ref{cov:null:odd}).

The proof of part (iii) is completed by calculating the co-variance between the signed cycles and traces and hence showing the variance of the random variable in (\ref{formula1}) goes to $0$.

\subsection{Outline of proof Theorem \ref{thm:alt} (i)--(iii)}
This proof is based on the second moment argument. 
All expectation and variance calculation is under $\mathbb{P}_{1,n}$ conditioning on group assignments $\sigma_i$ for $1\leq i\leq n$. 
The subscript is thus omitted.
As before, we focus on odd powers to illustrate the main idea.
Observe that when the data is generated from $\mathcal{G}_2(n,p_n,q_n)$, 
the matrix $A_{\mathrm{cen1}}$ is \emph{not} properly centered, i.e., $\E\left[A_{\mathrm{cen1}} \right] \neq 0$. Here we write 
\begin{equation*}
\begin{split}
\Tr(\Acenone^{k})& = \left(\frac{1}{n\pnav(1-\pnav)}\right)^{\frac{k}{2}}\sum_{w} \left[X_{w}\right]\\
&= \left(\frac{1}{n\pnav(1-\pnav)}\right)^{\frac{k}{2}}\sum_{w} \left[\prod_{j=0}^{k-1}\left(x_{i_j,i_{j+1}}- p_{i_j,i_{j+1}}\right)+ V'_{n,w} \right]
\end{split}
\end{equation*}
where $w=(i_0,\ldots,i_{k}) $ is a generic closed word of length $k+1$ and $p_{i_j,i_{j+1}}= \E[x_{i_j,i_{j+1}}|\sigma_{i_j},\sigma_{i_{j+1}}]$. Here $V'_{n,w}$ is obtained by expanding $X_{w}$ for any $w$ and considering all the remaining terms apart from $\prod_{j=0}^{k-1}\left(x_{i_j,i_{j+1}}- p_{i_j,i_{j+1}}\right)$. 
Using arguments similar to those in the proof of part (i) in Theorem \ref{thm:null} one can prove that the random variable 
\[
\left(\frac{1}{n\pnav(1-\pnav)}\right)^{\frac{k}{2}}\sum_{w} \prod_{j=0}^{k-1}\left(x_{i_j,i_{j+1}}- p_{i_j,i_{j+1}}\right)
\]
converges to a Gaussian random variable with mean $0$ and variance same as the null case irrespective of the group assignments $\sigma_{i}$, $i=1,\dots,n$. 

The main task of the proof is then to prove that 
\[
\E\left[\left(\frac{1}{n\pnav(1-\pnav)}\right)^{\frac{k}{2}} \sum_{w} V'_{n,w} \right] \to \sum_{r=3: r ~ \text{odd}}^{k} f(k,r)\frac{k}{r}\, t^r
\]
and 
\[
\Var \left[ \left(\frac{1}{n\pnav(1-\pnav)}\right)^{\frac{k}{2}} \sum_{w} V'_{n,w}  \right] \to 0
\]
under suitable growth condition of $k$. 
The level of technicality here is increased due to the complicated form of the $V_{n,w}'$s, while the key
ideas underlying the proof are still {\FK} sentences and unicyclic graphs. 

We mention that the arguments in this particular proof are new and cannot be obtained by modifying the arguments in \cite{AZ05}. 
In particular,
we will be able to show that 
\begin{align*}
& \E\left[\left(\frac{1}{n\pnav(1-\pnav)}\right)^{\frac{k}{2}} \sum_{w} V'_{n,w} \right] \\
& ~~~ = (1+o(1))\sum_{r=3: r ~ \text{odd}}^{k} f(k,r)\frac{k}{r}\,t^{r} + O\left(\frac{2^{C_1k}\mathrm{poly}(k)}{n\pnav}+\frac{(C_2k)^{C_3k}}{n}\right)
\end{align*}
and 
\[
\Var \left[ \left(\frac{1}{n\pnav(1-\pnav)}\right)^{\frac{k}{2}} \sum_{w} V'_{n,w}  \right]= O \left(\frac{(C_4 k)^{C_5 k}}{n}\right).
\]
Here $C_i$'s are positive numeric constants and $\mathrm{poly}(k)$ is a known polynomial of $k$. 
Note that when $k= o(\min(\log(n\pnav),\sqrt{\log n}))$, $\frac{2^{C k}\mathrm{poly}(k)}{n\pnav}\to 0$ for any fixed $C$ and $\frac{(Ck)^{Dk}}{n} \to 0$ for any fixed $C$ and $D$. 
This gives part (i) of Theorem \ref{thm:alt}.
The proofs of parts (ii) and (iii) here rely on similar ideas to those used in the proofs of their counterparts in Theorem \ref{thm:null}.

\subsection{Outline of proof for Theorem \ref{thm:null} (iv) and Theorem \ref{thm:alt} (iv)} 

We focus on the null case and the proof for the alternative is similar.
All the expectation and variance taken below are with respect to $\mathbb{P}_{0,n}$.
Recall that when $A_{\mathrm{cen2}}$ is considered, the matrix is centered by the sample estimate $\hpnav$ instead of the actual parameter $\pnav$. 
Here we write 
\begin{equation*}
\begin{split}
\Tr(\Acentwo^{k})& = \left(\frac{1}{n\hpnav(1-\hpnav)}\right)^{\frac{k}{2}}\sum_{w} \left[X_{w}\right]\\
&= \left(\frac{1}{n\hpnav(1-\hpnav)}\right)^{\frac{k}{2}}\sum_{w} \left[\prod_{j=0}^{k-1}\left(x_{i_j,i_{j+1}}- \pnav\right)+ E_{n,w} \right].
\end{split}
\end{equation*}
Observe that $\Var[\hpnav]= O({\pnav}/{n^2})$. As a consequence, one might expect that $|\hpnav - \pnav|\le {\sqrt{\pnav}}/{n^{\delta}}$ for some $\delta \in (\frac{1}{2},1)$ with very high probability.  
We call the indicator random variable corresponding to this high probability event $\mathrm{Ev}$. 
When $\mathrm{Ev}$, $|\pnav -\hpnav| \ll (p_n-q_n)=O\left(\frac{\sqrt{\pnav}}{\sqrt{n}}\right)$. We do the analysis of 
\[
\E\left[\mathrm{Ev}\left(\left(\frac{1}{n\hpnav(1-\hpnav)}\right)^{\frac{k}{2}} \sum_{w} E_{n,w} \right)^2\right].
\] 
Note that $\mathrm{Ev}(p_n-\hpnav)$ is a random variable, not a constant like  $(p_n-q_n)$. 
So, many random variables that had mean $0$ in the proof of part (i) of Theorem \ref{thm:alt} no longer have mean $0$ due to dependence.
To tackle the additional dependence, a more careful combinatorial analysis will be carried out and we obtain
\begin{equation*}
\begin{split}
&\E\left[\mathrm{Ev} \left(\left(\frac{1}{n\hpnav(1-\hpnav)}\right)^{\frac{k}{2}} \sum_{w} E_{n,w} \right)^2\right]\\
&~~~~~ \le 
(C_1 k)^{C_2 k} \frac{1}{\sqrt{n}}+ n^{C_{3}k}\exp(-n^{C_4})+\frac{(C_{\mathrm{5}}k)^{C_{\mathrm{6}}k}}{n^{\delta-\frac{1}{2}}} \to 0.
\end{split}
\end{equation*}
Here the $C_i$'s are positive numeric constants.
This completes the proof.

\subsection{Outline of proof for Theorem \ref{thm:even}}
We start with the random variable 
\begin{equation}\label{outlineevenexp}
\begin{split}
& \Tr ( A_{\mathrm{cen1}}^{2k} )- k\psi_{2k} \Tr\left( A_{\mathrm{cen1}}^{2}\right)
\\
& ~~~
= \left(\frac{1}{n\pnav(1-\pnav)}\right)^{k} \sum_{w} X_{w} - k\psi_{2k} \Tr\left( A_{\mathrm{cen1}}^{2}\right) .
\end{split}
\end{equation}
In this case, we break the collection of words in \eqref{outlineevenexp} into four subgroups as follows 
\begin{equation*}
\begin{split}
&\Tr ( A_{\mathrm{cen1}}^{2k} )
= \left(\frac{1}{n\pnav(1-\pnav)}\right)^{k} \sum_{w} X_{w} \\
&~~~ = \left(\frac{1}{n\pnav(1-\pnav)}\right)^{k} \left[\sum_{w\in \mathcal{W}_1} X_{w} + \sum_{w\in \mathcal{W}_2} X_{w} + \sum_{w \in \mathcal{W}_3 } X_{w}+ \sum_{w \in \mathcal{W}_4} X_{w} \right]. 
\end{split}
\end{equation*}
Here $\mathcal{W}_1$ corresponds to the set of Wigner words, $\mathcal{W}_2$ stands for the set of all weak Wigner words (cf.~Def.~\ref{def:weakwigner}), $\mathcal{W}_3$ is $\cup_{r} \mathfrak{W}_{2k+1,r,k+r/2}$ which collects all unicyclic graphs (cf.~Prop.~\ref{prop:uniword}) and $\mathcal{W}_{4}$ is the complement of $\mathcal{W}_1\cup \mathcal{W}_{2}\cup\mathcal{W}_{3}$. 
Using  Lemma \ref{lem:expcltpair} in the appendix, one can ignore the class $\mathcal{W}_{4}$. 

We first show that under both $\mathbb{P}_{0,n}$ and local $\mathbb{P}_{1,n}$ (conditioning on group assignment $\sigma_i$ for $1\leq i\leq n$),
\[
\E \left[ \left(\frac{1}{n\pnav(1-\pnav)}\right)^{k} \sum_{w\in \mathcal{W}_1} X_{w} \right]= n\psi_{2k}- \binom{k+1}{2}\psi_{2k} + o(1)
\]
and 
\[
\Cov\bigg(\left(\frac{1}{n\pnav(1-\pnav)}\right)^{k} \sum_{w\in \mathcal{W}_1} X_{w}, \Tr\left( A_{\mathrm{cen1}}^2\right)  \bigg) 
= 
\frac{\sigma^2k\psi_{2k}}{\pnav} 
+ o(1).	
\]
Hence, $\Var((\frac{1}{n\pnav(1-\pnav)})^{k} \sum_{w\in \mathcal{W}_1} X_{w}- k\psi_{2k}\Tr( A_{\mathrm{cen1}}^2))\to 0$.
Next, arguments similar to the proof of Theorem \ref{thm:alt} will show that 
\[
\left(\frac{1}{n\pnav(1-\pnav)}\right)^{k} \sum_{w\in \mathcal{W}_3} X_{w}- \sum_{r=4:r ~ \text{even}} f(2k,r) \frac{2k}{r} C_{n,r}(G) \stackrel{p}{\to} 0.
\]
So our final focus is on the random variable 
\[
\left(\frac{1}{n\pnav(1-\pnav)}\right)^{k}\sum_{w\in \mathcal{W}_2} X_{w} = T_{2k} + o_p(1).
\]
We again break $\mathcal{W}_{2}$ into two further groups depending on whether the graph $G_{w}$ corresponding to a word $w$ is a tree or not. 
It can be proved that under both $\mathbb{P}_{0,n}$ and local $\mathbb{P}_{1,n}$
\[
\E\left[\left(\frac{1}{n\pnav(1-\pnav)}\right)^{k}\sum_{w \in \mathcal{W}_{2}:G_{w}\neq\text{tree}} X_{w}\right] = (1+o(1))\alpha_{1,2k} + o(1),
\] 
and 
\[
\Var\left[\left(\frac{1}{n\pnav(1-\pnav)}\right)^{k}\sum_{w \in \mathcal{W}_{2}:G_{w} \neq \text{tree}} X_{w} \right] \to 0.
\]
Finally, to get the leading terms in the expectation and variance expressions, among the words $w \in \mathcal{W}_{2}$ we only need to focus on those where $G_{w}$ is a tree with $k$ or $k-1$ nodes. 
Call the collection of these words $ \mathcal{W}_{2,1}$ and $ \mathcal{W}_{2,2}$, and the corresponding sums $W_{2,1}$ and $W_{2,2}$. 
It can be shown then under  $\mathbb{P}_{0,n}$,
\begin{align*}
\E[W_{2,1}] = (1+o(1))\frac{\alpha_{2,2k}}{\pnav} + o(1),
\quad
& \Var[W_{2,1}] = (1+o(1))\frac{v_{2k}}{n^2\pnav^3} + \tilde\varepsilon_{2,1},\\
\E[W_{2,2}] = (1+o(1))\frac{\alpha_{3,2k}}{n\pnav^2},
\quad
& \Var[W_{2,2}] = \tilde\varepsilon_{2,2},
\end{align*} 
and $\tilde\varepsilon_{2,i}\to 0$ for $i=1,2$ when $n^2\pnav^3\to \infty$ and $k=o(\min(\log(n^2\pnav^3), \sqrt{\log n}))$.
On the other hand, under $\mathbb{P}_{1,n}$ (conditioning on group assignment) by arguments similar to the proof of part (i) of Theorem \ref{thm:alt} that for $i=1,2$, 
\[
W_{2,i} \stackrel{d}{=} \tau_{i} + \Xi_{i}
\]
where $\tau_{i}$ has the same asymptotic distribution as 
$W_{2,i}|\mathbb{P}_{0,n}$
and  
\[
\E[\Xi_{i}] = O\left(  \frac{t^{2k}}{n}\right) \quad \mbox{and} \quad
\Var [\Xi_{i} ]= O\left( \frac{\left(C_1k\right)^{C_2k}}{n} \right).
\]
for some universal constants $C_{1}$ and $C_{2}$. 
The proof of this step is very similar to that of part (i) of Theorem \ref{thm:alt}. 
This completes the proof of Theorem \ref{thm:even}.



\section{Conclusion}
\label{sec:conclusion}

The present paper studies computationally feasible and adaptive procedures for testing an \ER~model against a symmetric stochastic block model with two blocks.
By a careful examination of joint asymptotic normality of linear spectral statistics of power functions and their connections to signed cycles, we find an intimate connection between the spectrum of the graph adjacency matrix and the signed cycle statistics through Chebyshev polynomials, which hold under both the null and any local alternative.
Together with the understanding that signed cycles determine the asymptotic likelihood ratio for the testing problem of interest \cite{Ban16}, this connection enables us to obtain sharp asymptotic optimal power of the likelihood ratio test via computationally tractable tests based on linear spectral statistics under an appropriate growth condition on the average degree, namely $n^2\pnav^3\to\infty$. 
In addition, we have also proposed tests which achieve nontrivial power in the contiguous regime and consistency in the singular regime as long as $n\pnav\to\infty$.
Furthermore, we have considered and proposed adaptive tests which are computationally tractable, completely data-driven, and only suffer relatively small loss in power.
A graphical illustration of our findings can be found in Fig.~\ref{fig:power}.
An important question for future research is whether it is possible to achieve sharp asymptotic optimal power of the likelihood ratio test using a procedure of polynomial time complexity when $n^{-1}\ll \pnav \leq Cn^{-2/3}$.

\begin{figure}[th]
\centering
\includegraphics[width = 0.7\textwidth]
{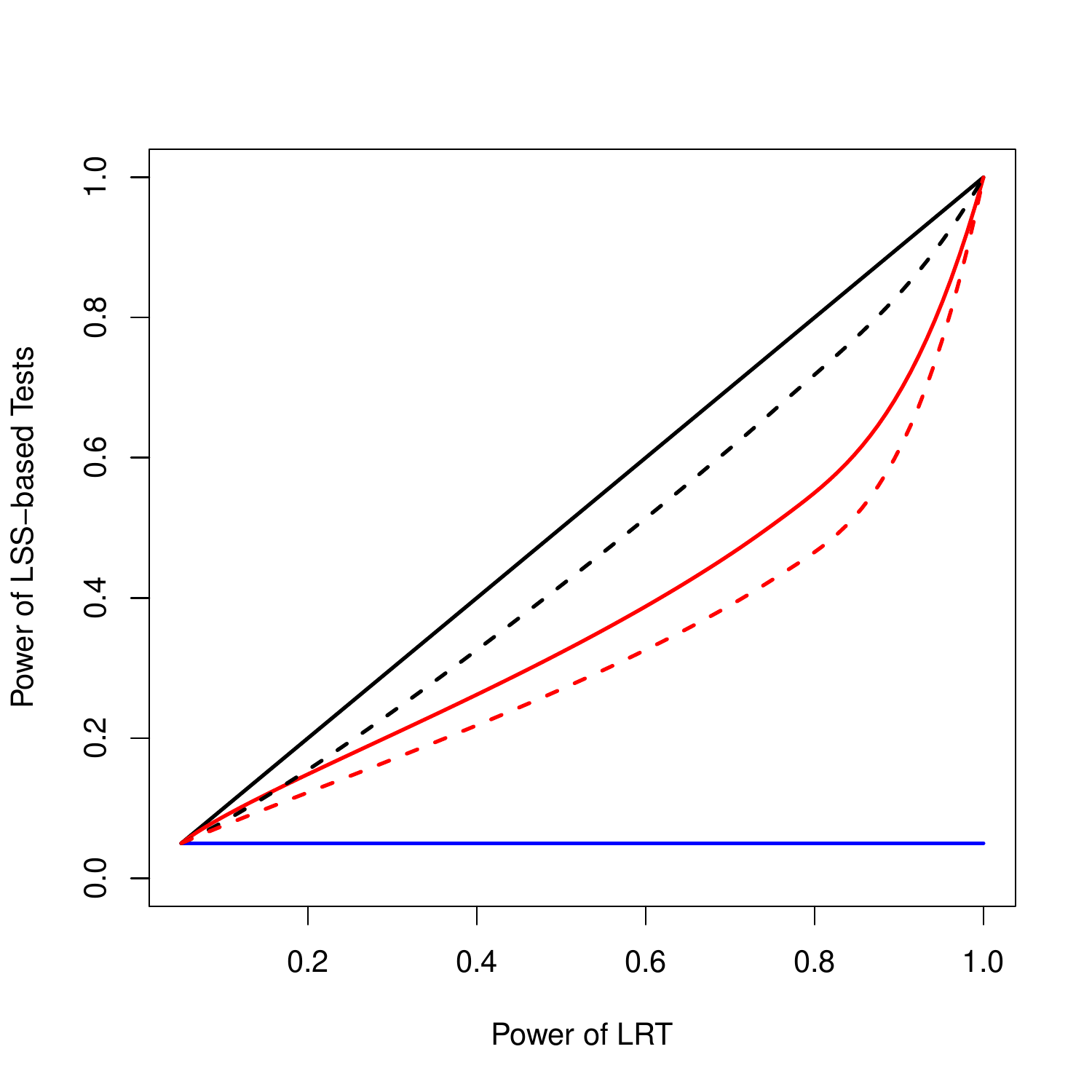}
\caption{Asymptotic power achievable by tests based on linear spectral statistics vs.~the asymptotic power of the likelihood ratio test for \eqref{test:sbm}. All tests are of level $\alpha =  0.05$.
Black curves (solid and dashed) are under growth condition $n^2\pnav^3 \to \infty$. Red curves (solid and dashed) are under growth condition $n\pnav\to\infty$. The blue line corresponds to trivial power.
The dashed curves are achievable by adaptive tests which do not require any knowledge about the hypotheses in \eqref{test:sbm}. 
They correspond to \eqref{eq:adapt-odd} (red) and \eqref{eq:adapt-all} (black) with $\varepsilon=0.15$.\label{fig:power}}
\end{figure}

\appendix 

\begin{center}
{\Large \textbf{Appendix}}
\end{center}

The following appendix presents detailed proofs of the main results.
As we have mentioned earlier, they are highly influenced by the techniques introduced in \citet{AZ05} and \citet{AGZ}. 
In the rest of this section, we first introduce important combinatorics background in Section \ref{subsec:prelim}.
Next, Section \ref{subsec:proof} provides the proofs of all the major results.

 

\section{Preliminary combinatorics results}
\label{subsec:prelim}

Section \ref{subsec:word} is dedicated to building up the preliminary ideas about words, sentences, CLT sentences and state a few important lemmas required in the proofs. 
In Section \ref{subsec:FK} we present the ideas about {\FK} sentences and related topics. These ideas will be in the center of our proofs. 
In Section \ref{subsec:unicyclic} we study unicyclic graphs. 
These results play a fundamental role in finding out the exact formula for the covariances of the linear spectral statistics and signed cycles. 
Most of the definitions and preliminary results in this section can be found in \cite{AZ05} and \cite{AGZ}, which we include here mainly for the proofs to be self-contained.
The new results here are Lemma \ref{lem:appendix} (which first appeared in \cite{Ban16}), Proposition \ref{prop:uniword} and Lemma \ref{lem:cltwordpaircnt}.
Informed readers may focus on these new results only while skipping the rest of this section.

\subsection{Words, sentences and their equivalence classes}
\label{subsec:word}

In this part we give a very brief introduction to words, sentences and their equivalence classes essential for the combinatorial analysis of random matrices. 
The definitions are taken from \citet{AGZ} and \citet{AZ05}.
For more general information, see \cite[Chapter 1]{AGZ} and \cite{AZ05}. 
\begin{definition}[$\mathcal{S}$ words]
Given a set $\mathcal{S}$, an $\mathcal{S}$ letter $s$ is simply an element of
$\mathcal{S}$. An $\mathcal{S}$ word $w$ is a finite sequence of letters $s_1
\ldots s_k$, at least one letter long.
An $\mathcal{S}$ word $w$ is \emph{closed} if its first and last letters are the same. In this paper $\mathcal{S}=\{1,\ldots,n\}$ where $n$ is the number of nodes in the graph.
\end{definition}
Two $\mathcal{S}$ words
$w_1,w_2$ are called \emph{equivalent}, denoted $w_1\sim w_2$, if there is a bijection on $\mathcal{S}$ that
maps one into the other.
For any word $w = s_1 \ldots s_k$, we use $l(w) = k$ to denote the \emph{length} of $w$, define
the \emph{weight} $wt(w)$ as the number of distinct elements of the set ${s_1,\ldots , s_k
}$ and the
\emph{support} of $w$, denoted by $\mathrm{supp}(w)$, as the set of letters appearing in $w$. 
With any word $w$ we may associate an undirected graph, with $wt(w)$ vertices and at most $l(w)-1$ edges,
as follows.
\begin{definition}[Graph associated with a word] 
	\label{def:graphword}
Given a word $w = s_1 \ldots s_k$,
we let $G_w = (V_w,E_w)$ be the graph with set of vertices $V_w = \mathrm{supp}(w)$ and (undirected)
edges $E_w = \{\{s_i, s_{i+1}
\}, i = 1,\ldots ,k - 1
\}.$
\end{definition}
The graph $G_w$ is connected since the word $w$ defines a path connecting all the
vertices of $G_w$, which further starts and terminates at the same vertex if the word
is closed. 
We note that equivalent words generate the same
graphs $G_w$ (up to graph isomorphism) and the same passage-counts of the edges. 
Given an equivalence class $\mathbf{w}$, we shall sometimes denote $\#E_{\mathbf{w}}$ and $\#V_{\mathbf{w}}$ to be the common number of edges and vertices for graphs associated with all the words in this equivalence class $\mathbf{w}$. 

\begin{definition}[Weak Wigner words]
	\label{def:weakwigner}
Any word $w$ will be called a \emph{weak Wigner word} if the following conditions are satisfied:
\begin{enumerate}
\item $w$ is closed.
\item $w$ visits every edge in $G_{w}$ at least twice. 
\end{enumerate}
\end{definition}
Suppose now that $w$ is a weak Wigner word. If $wt(w) = (
l(w) + 1)/2$, then we
drop the modifier ``weak" and call $w$ a \emph{Wigner word}. (Every single letter word is
automatically a Wigner word.) 
Except for single letter words, each edge in a Wigner word is traversed exactly twice.
If $wt(w) = (l(w)-1)/2$, then we call $w$ a \emph{critical
weak Wigner word}.

We now move to definitions related to sentences.

\begin{definition}[Sentences and corresponding graphs]
	\label{def:sentence}
A sentence $a=[w_i]_{i=1}^{m}=[[\alpha_{i,j}]_{j=1}^{l(w_i)}]_{i=1}^{m}$ is an ordered collection of $m$ words of length $(l(w_1),\ldots,l(w_m))$ respectively. We define the graph $G_a=(V_a,E_a)$ to be the graph with 
\[
V_a= \mathrm{supp}(a),\quad 
E_a= \left\{ \{ \alpha_{i,j},\alpha_{i,j+1}\}| i=1,\ldots,m ; j=1,\ldots, l(w_i)-1 \}  \right\}. 
\] 
\end{definition} 
 
\begin{definition}[Weak CLT sentences]
	\label{def:weakCLT}
A sentence $a=[w_i]_{i=1}^{m}$ is called a \emph{weak CLT sentence}, if the following conditions are satisfied:
\begin{enumerate}
\item All the words $w_i$'s  are closed.
\item Jointly the words $w_i$ visit each edge of $G_a$ at least twice.
\item For each $i\in \{1,\ldots,m \}$, there is another $j\neq i \in \{ 1,\ldots,m\}$ such that $G_{w_i}$ and $G_{w_j}$ have at least one edge in common. 
\end{enumerate}
\end{definition}
Suppose now that $a$ is a weak CLT sentence. If $wt(a)=\sum_{i=1}^{m} \frac{l(w_{i})-1}{2}$, then we $a$ a \emph{CLT sentence}. If $m = 2$ and $a$ is a CLT sentence,
then we call $a$ a \emph{CLT word pair}.

We now introduce an additional notion regarding permutation which will be important in our computations. 
\begin{definition}\label{def:cyclic}
Suppose we have a word $w=(\alpha_1,\ldots, \alpha_{k})$ of length $k$ and a permutation $\sigma$ of the set $\{ 1,\ldots,k\}$, 
we define $w^{\sigma}$ to be the word $(\alpha_{\sigma(1)},\ldots, \alpha_{\sigma(k)})$. 
If $\sigma$ is a power of the cycle $(123\ldots k)$, we call $\sigma$ a cyclic permutation and the corresponding word $w^{\sigma}$ to be a cyclic permutation of $w$. 
\end{definition}

We now state a few propositions and lemmas which will be used in our proof. These results, except for that of Lemma \ref{lem:appendix}, can be found in \cite{AZ05}.
We at first state an elementary yet general lemma about a forest $G$ and a word $w$ admitting the interpretation of a walk on $G$.

\begin{lemma}[The parity principle. Lemma 4.4 in \cite{AZ05}]
	\label{lem:parity}
Let $G$ be a forest and $e$ be an edge of $G$.
Let $w$ be a word admitting the interpretation as a walk on $G$. 
Let $w_*$ be the unique
path in $G$ with initial and terminal vertices coinciding with those of $w$. Then
the word/walk $w$ visits the edge $e$ an odd number of times if and only if  $w_*$ visits $e$.
\end{lemma}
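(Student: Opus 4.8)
The plan is to reduce both sides of the claimed equivalence to a single combinatorial condition on $w$, namely whether its initial and terminal vertices lie on the same side of the edge $e$ or on opposite sides after $e$ is deleted.

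First I would dispose of the trivial case. Since $w$ is a walk on $G$ and consecutive vertices of a walk are adjacent, $w$ stays within one connected component of $G$; as $G$ is a forest, that component is a tree $T$. Let $u_0, u_1$ be the initial and terminal vertices of $w$, so that $w_*$ is the unique simple path in $T$ from $u_0$ to $u_1$. If $e \notin E(T)$, then $w$ never traverses $e$ (zero, an even number) and neither does $w_*$, so the equivalence holds vacuously. Hence I may assume $e \in E(T)$.

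Next I would introduce the bipartition induced by $e$. Since $e$ is a bridge in the tree $T$, deleting it yields exactly two subtrees $T^{(0)}, T^{(1)}$, one containing each endpoint of $e$; define $\phi : V(T) \to \mathbb{Z}/2\mathbb{Z}$ by $\phi(v) = 0$ if $v \in T^{(0)}$ and $\phi(v) = 1$ if $v \in T^{(1)}$. The one place the forest hypothesis enters is the elementary observation that every edge $f \in E(T)$ with $f \neq e$ has both endpoints in the same subtree (they remain joined by $f$ itself after $e$ is removed); thus traversing such an $f$ leaves $\phi$ unchanged, while traversing $e$ flips $\phi$. Writing $w = (v_0,\dots,v_k)$ with $v_0 = u_0$ and $v_k = u_1$, and telescoping the congruence $\phi(v_{i+1}) - \phi(v_i) \equiv \mathbf{1}\{\{v_i,v_{i+1}\} = e\} \pmod 2$ over $i = 0,\dots,k-1$, I obtain that the number of times $w$ traverses $e$ is congruent mod $2$ to $\phi(u_1) - \phi(u_0)$; in particular it is odd if and only if $\phi(u_0) \neq \phi(u_1)$.

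Finally I would apply the same telescoping argument to the simple path $w_*$. It traverses each edge of $T$ zero or one times, and its number of traversals of $e$ is again congruent mod $2$ to $\phi(u_0) - \phi(u_1)$; being an element of $\{0,1\}$, it equals $1$ — i.e.\ $w_*$ visits $e$ — precisely when $\phi(u_0) \neq \phi(u_1)$. Chaining the two conclusions, $w$ visits $e$ an odd number of times if and only if $\phi(u_0) \neq \phi(u_1)$ if and only if $w_*$ visits $e$, which is the assertion. There is no genuine obstacle in this argument; the only point requiring care is verifying that deleting $e$ splits the component into exactly two pieces and that no other edge straddles the cut, which is exactly what the forest hypothesis guarantees.
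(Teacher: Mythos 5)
Your proof is correct: the bridge/bipartition argument (delete $e$ from its tree component, observe that only traversals of $e$ flip the side-indicator $\phi$, and telescope over both $w$ and the simple path $w_*$) is exactly the standard proof of the parity principle, and it is the same argument given for Lemma 4.4 in \cite{AZ05}. The present paper only quotes this lemma from \cite{AZ05} without reproducing a proof, so there is nothing further to compare against.
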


The following facts about critical weak Wigner words are important for analyzing trace of even powers and for proving Theorem \ref{thm:even}.
\begin{proposition}
	[Proposition 4.8 in \cite{AZ05}]
	\label{prop:weakwigrep}
Let $w$ be a critical weak Wigner word and let $G=(V,E)= G_{w}=(V_{w},E_{w})$. Then the following hold:
\begin{enumerate}
\item $G$ is connected.
\item Either $\#V-1= \# E$ or $\#V=\#E$.
\item If $\#V-1= \# E$, then:
\begin{enumerate}[a)]
\item $G$ is a tree.
\item With exactly one exception $w$ visits each edge of $G$ exactly twice.
\item But $w$ visits the exceptional edge exactly four times.
\end{enumerate}
\item If $\#V = \#E$, then:
\begin{enumerate}[a)]
\item $G$ is not a tree. 
\item $w$ visits each edge of $G$ exactly twice.
\end{enumerate}
\end{enumerate}
\end{proposition}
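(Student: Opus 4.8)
The plan is to prove all five assertions by elementary edge counting, supplemented in the tree case by the parity principle (Lemma~\ref{lem:parity}).

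First I would set up the bookkeeping. Write $k = l(w)$; since $w$ is closed it encodes a closed walk on $G = G_w$ with exactly $k-1$ steps, and by the definition of a critical weak Wigner word $\#V = wt(w) = (k-1)/2$, so the walk has exactly $2\#V$ steps. Assertion (1) is immediate, since the graph of any word is connected (the walk visits all of its vertices). For assertion (2): each of the $\#E$ distinct edges is traversed at least twice by the weak Wigner property in Definition~\ref{def:weakwigner}, and the traversal multiplicities of the distinct edges sum to the number of steps, $2\#V$; hence $2\#E \le 2\#V$, i.e.\ $\#E \le \#V$, while connectedness forces $\#E \ge \#V - 1$. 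So $\#E \in \{\#V - 1, \#V\}$.

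Next, I would split into the two cases. If $\#E = \#V - 1$, then a connected graph with $\#V - 1$ edges is a tree, which is 3(a); since $G$ is then a forest and $w$ is a \emph{closed} walk, the path $w_*$ of Lemma~\ref{lem:parity} is trivial and visits no edge, so every edge of $G$ is traversed an \emph{even} number of times. The multiplicities are thus even integers $\ge 2$ summing to $2\#V = 2\#E + 2$, which is possible only if exactly one edge has multiplicity $4$ and all others have multiplicity $2$; this gives 3(b) and 3(c). If instead $\#E = \#V$, then $G$ is connected with as many edges as vertices and so cannot be a tree (a tree on $\#V$ vertices has $\#V-1$ edges), which is 4(a); and now the multiplicities are $\ge 2$ and sum to $2\#V = 2\#E$, forcing each edge to be traversed exactly twice, which is 4(b).

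The argument is short; the only step that needs care is the use of the parity principle in the tree case — one must observe that closedness of $w$ makes the associated path $w_*$ trivial, so ``odd multiplicity'' never occurs and all multiplicities are even, which is precisely what upgrades the bound ``$\ge 2$'' to the sharp statement ``one edge of multiplicity $4$, the rest of multiplicity $2$.'' Beyond keeping the relations among $l(w)$, the number of steps, $\#V$ and $\#E$ straight, I do not anticipate any genuine obstacle.
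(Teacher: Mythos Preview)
Your proof is correct. The paper does not actually supply its own proof of this proposition---it is quoted as background from \cite{AZ05} (Proposition~4.8 there)---so there is nothing in-paper to compare against; your edge-multiplicity counting together with the parity principle (Lemma~\ref{lem:parity}) in the tree case is precisely the standard argument one finds in \cite{AZ05}.
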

 
The following proposition is crucial for verifying Wick's formula for traces of powers which will in turn prove the CLTs in Theorem \ref{thm:null}.

\begin{proposition}[Proposition 4.9 in \cite{AZ05}]
	\label{prop:cltrep}
Let $a=[w_i]_{i=1}^{m}$ be a weak CLT sentence containing $m$ words. Then we have the following:
\begin{enumerate}
\item 
\[
wt(a)\le \sum_{i=1}^{m} \frac{l(w_i)-1}{2}.
\]
\item Suppose the equality holds i.e. $a$ is a CLT sentence. Then the words of the sentence a are perfectly matched
in the sense that for all $i$ there exists unique $j$ distinct from $i$ such that $w_i$ and $w_j$
have at least one letter in common. In particular, $m$ is even.
\end{enumerate}
\end{proposition}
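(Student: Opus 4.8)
The plan is to prove both parts by the F\"uredi--Koml\'os style bookkeeping of \cite{AZ05}: count the vertices of $G_a$ through a spanning tree generated by the walk, and bound the edges of $G_a$ through the total number of edge passages. Since the conclusions concern only $wt(a)$, the lengths $l(w_i)$, the edge set $E_a$ with its passage counts, the weak CLT property of $a$, and the incidence pattern of the words, they are unaffected by reordering the words of $a$ and by cyclically rotating any individual closed word (which changes neither $G_{w_i}$, nor $wt(w_i)$, nor any passage count), so I would first normalize $a$. Split $G_a$ into connected components; since each $G_{w_i}$ is connected, $\mathrm{supp}(w_i)$ lies in one component, and an edge common to $w_i$ and $w_j$ forces them into the same component, so the sub-sentence occupying each component is again a weak CLT sentence and part (i) for $a$ follows by adding the component-wise estimates. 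Thus assume $G_a$ connected, greedily reorder the words so that $\mathrm{supp}(w_i)$ meets $\bigcup_{j<i}\mathrm{supp}(w_j)$ for every $i\geq 2$ (possible by connectedness: a crossing edge of $G_a$ lies in some not-yet-chosen word whose support meets $\bigcup_{j<i}\mathrm{supp}(w_j)$), and cyclically rotate each closed $w_i$ so that its first letter lies in $\bigcup_{j<i}\mathrm{supp}(w_j)$.

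Now read the sentence letter by letter in this order and call an edge of $G_a$ a \emph{tree edge} if its first traversal ends at a not-yet-seen vertex. Every vertex except the first letter of $w_1$ is reached by a unique tree edge from a strictly earlier vertex, so the tree edges form a spanning tree of $G_a$ and $wt(a) = 1 + (\text{number of tree edges})$. On the other side, letting $p_e$ be the passage count of $e$, the second condition of Definition \ref{def:weakCLT} gives $p_e\geq 2$, whence $\sum_i\bigl(l(w_i)-1\bigr) = \sum_{e\in E_a} p_e \geq 2|E_a| \geq 2\,(\text{number of tree edges})$. To gain the extra $2$ I would split into two cases. If $G_a$ is not a tree there is at least one non-tree edge and the bound $\sum_{e}p_e\geq 2|E_a|$ already supplies it. If $G_a$ is a tree, the parity principle (Lemma \ref{lem:parity}) shows each closed word traverses each of its own edges an even, hence $\geq 2$, number of times, so $l(w_i)-1\geq 2|E_{w_i}|$; moreover the edge sets $E_{w_i}$ cover $E_a$ but, by the third condition of Definition \ref{def:weakCLT}, cannot partition it, so $\sum_i|E_{w_i}|\geq |E_a|+1$ and the extra $2$ follows again. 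In either case $wt(a) = 1 + (\text{number of tree edges}) \leq \frac{1}{2}\sum_i\bigl(l(w_i)-1\bigr)$, which is part (i).

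For part (ii), I would push equality back through the argument. Equality forces, in each component, $p_e = 2$ for all $e\in E_a$ together with one of two extremal pictures: either $G_a$ has cyclomatic number one, or $G_a$ is a tree in which exactly one edge belongs to two of the sets $E_{w_i}$ and every other edge to exactly one, each word traversing each of its own edges exactly twice. In the tree picture the only edge-sharing pair of words is the pair sharing that doubled edge, so a third word in the component would violate the third condition of Definition \ref{def:weakCLT}; the component thus has exactly two words. In the unicyclic picture, an edge used by two distinct words is used exactly once by each, hence is crossed an odd number of times and so is not a bridge, i.e.\ lies on the unique cycle; a refinement of the parity principle (a closed walk on a unicyclic graph has constant passage parity along its cycle) then forces the word in question to traverse every cycle edge an odd, hence positive, number of times, so with three or more words some cycle edge would receive at least three passages, contradicting $p_e=2$. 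Hence each component has exactly two words, which necessarily share a letter; pairing the words inside each component gives the asserted perfect matching and exhibits $m$ as twice the number of components, hence even.

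I expect the main obstacle to be neither inequality itself but the recovery of the exact additive constant and the extremal classification behind part (ii): a crude passage count loses a constant, and pinning it down forces one to use the \emph{edge}-flavored third condition of a weak CLT sentence together with the parity principle simultaneously. Within part (ii) the delicate step is excluding three or more words per component in the unicyclic case, which needs the constant-parity-along-the-cycle refinement of Lemma \ref{lem:parity}; the tree case is comparatively painless, reducing to the remark that the word edge sets cannot form a partition.
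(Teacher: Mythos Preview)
Your argument is correct. The paper does not give its own proof of this proposition; it simply quotes it from \cite{AZ05} and remarks that the proof there goes through Lemma~\ref{lem:cltrep}, which bounds $wt(a)$ by $k-m+\lfloor\sum_i l(w_i)/2\rfloor$ and uses $k\le m/2$ for the number $k$ of connected components. Your route is in the same F\"uredi--Koml\'os spirit but more direct: you decompose into components first, then within each component build a spanning tree from the walk and bound passage counts, treating the tree and non-tree cases separately. For part~(i) the two packagings are essentially equivalent reorganizations of the same inequalities.

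For part~(ii) your structural analysis is a genuine plus. By chasing equality through the two cases you recover, component by component, exactly the dichotomy that the paper later records as Lemma~\ref{lem:expcltpair} (Proposition~4.12 of \cite{AZ05}): either a tree with a single doubly-covered edge, or a unicyclic graph with every edge visited exactly twice. The ``constant parity along the cycle'' refinement you invoke is correct and has a clean justification: the mod-$2$ passage vector of any closed walk is a $1$-cycle in $H_1(G_a;\mathbb{Z}/2)$, and for a connected unicyclic graph this group is $\mathbb{Z}/2$ with generator the indicator of the cycle. This cleanly rules out three or more words per component in the unicyclic case, and the tree case is handled by your observation that $\sum_i|E_{w_i}|=|E_a|+1$ forces a unique shared edge. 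What you gain over the abstract route via Lemma~\ref{lem:cltrep} is a self-contained extremal description; what the lemma route buys is a single inequality that can be reused elsewhere in the paper.
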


The proof of Proposition \ref{prop:cltrep} in \cite{AZ05} is based on the next important Lemma. 
\begin{lemma}[Lemma 4.10 in \cite{AZ05}]
	\label{lem:cltrep}
Let $a=[w_i]_{i=1}^{m}$ be weak CLT sentence containing $m$ words. Put $G=G_{a}$. Let $k$ be the number of connected components of $G$. Then 
\begin{enumerate}
\item $k\le [\frac{m}{2}]$. 
\item $wt(a) \le k-m+ \left[ \frac{\sum_{i=1}^{n}l(w_i)}{2}\right].$
\end{enumerate}
Here $[y]$ denotes the largest integer less than or equal to $y$.
\end{lemma}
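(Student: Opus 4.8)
The plan for part 1 is to pass to an auxiliary graph $H$ on the vertex set $\{1,\dots,m\}$ indexing the words, where $i$ and $j$ are joined whenever $G_{w_i}$ and $G_{w_j}$ share at least one edge. Condition (3) in the definition of a weak CLT sentence says exactly that $H$ has no isolated vertex, so each connected component of $H$ contains at least two indices and $H$ therefore has at most $\lfloor m/2\rfloor$ components. If $i$ and $j$ lie in the same component of $H$, then $\mathrm{supp}(w_i)$ and $\mathrm{supp}(w_j)$ intersect, so $w_i$ and $w_j$ lie in the same component of $G_a$; and since every vertex of $G_a$ is in the support of some word, the number $k$ of components of $G_a$ is at most the number of components of $H$. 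Hence $k\le\lfloor m/2\rfloor$.

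For part 2 I would first record the Euler identity $wt(a)=|E_a|+k-\beta$, where $\beta$ is the cycle rank of $G_a$, together with two soft inequalities: $|E_a|\ge wt(a)-k$ (each component of $G_a$ is connected) and $2|E_a|\le\sum_{i=1}^m(l(w_i)-1)$ (condition (2), by counting edge-traversals). Substituting, one checks that the assertion $wt(a)\le k-m+\lfloor\tfrac12\sum_{i}l(w_i)\rfloor$ is \emph{equivalent} to the single inequality
\[
\Bigl(\sum_{i=1}^m(l(w_i)-1)-2|E_a|\Bigr)+2\beta\ \ge\ m ,
\]
i.e.\ the total number of repeated edge-traversals plus twice the cycle rank is at least the number of words. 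The two soft inequalities alone only give $wt(a)\le k+\tfrac12\sum_i(l(w_i)-1)$, which is weaker by about $m/2$, so the whole point is to extract the extra $m$ units above, and this is where individual closedness of the words enters through the parity principle (Lemma~\ref{lem:parity}): a closed walk on a forest traverses every edge an even number of times. I would argue by induction on $m$, building $G_a$ one word at a time in an order for which the support of every initial segment stays connected within each component of $G_a$ (possible because the ``share a vertex'' graph on the words of a given component is connected). Each newly adjoined closed word $w$ either re-traverses some of its edges beyond the double cover, or uses an odd-traversed edge to close a new cycle, and in the extremal case the structure of such components is pinned down by Proposition~\ref{prop:weakwigrep}; the base case $m=2$ is handled directly, the two words sharing an edge which must then be traversed at least four times in total if $G_a$ is a tree.

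The main obstacle is the tension between \emph{vertex efficiency} and \emph{cycle creation}. A word that runs once around a long cycle discovers a new vertex at every step, far beating the ``two steps per new vertex'' rate of a tree traversal, but it pays by raising $\beta$; one must verify that these two effects cancel to produce exactly the constant $k-m$ --- not $k-m/2$ --- in the bound. Consequently the per-word contributions to $\bigl(\sum_i(l(w_i)-1)-2|E_a|\bigr)+2\beta$ are \emph{not} uniformly at least $2$ (a short closed word like $(u,v,u)$ that first introduces its edge contributes nothing), so the count has to be amortized across words, using condition (3) to guarantee a compensating partner; and one cannot naively delete a word from the sentence, since that can destroy the weak-CLT property. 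Organizing the bookkeeping so that every unit is charged exactly once --- keeping track of which edges are odd- versus even-traversed by which words, via the parity principle --- is the delicate step, after which summing over the connected components of $G_a$ yields the claim.
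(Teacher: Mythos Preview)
The paper does not prove this lemma; it merely quotes it from \cite{AZ05}. So there is no in-paper proof to compare against, and I can only evaluate your proposal on its own merits.

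\medskip

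Your argument for part 1 is clean and correct. Passing to the auxiliary graph $H$ on the word-indices, observing that condition (3) forbids isolated vertices, and then noting that $H$-components surject onto $G_a$-components is exactly the right move.

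\medskip

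For part 2, your reformulation is correct: using $wt(a)=|E_a|+k-\beta$, the claim is equivalent (after handling the floor via parity of the left side) to
\[
\Bigl(\sum_{i}(l(w_i)-1)-2|E_a|\Bigr)+2\beta\ \ge\ m,
\]
and you are right that the two ``soft'' inequalities alone fall short by $m$. However, what follows is a plan rather than a proof. You propose induction on $m$, adding words in a connectivity-respecting order, and you correctly flag the obstruction: a newly adjoined closed word need not by itself contribute a unit to the left side (your example $(u,v,u)$ contributes zero), so the accounting must be amortized across the paired words guaranteed by condition (3). But you do not actually carry out that amortization. Saying that ``organizing the bookkeeping \dots\ is the delicate step'' names the gap without closing it. In particular, appealing to Proposition~\ref{prop:weakwigrep} only helps in the extremal CLT-sentence case, not for general weak CLT sentences, and the parity principle (Lemma~\ref{lem:parity}) applies to walks on forests, so to invoke it on $G_a$ you must first fix a spanning forest and track tree versus non-tree edges separately --- a reduction you gesture at but never set up. As written, part 2 is a reasonable sketch of where the difficulty lies, but it is not yet a proof; the inductive step needs to be made precise, with an explicit statement of what quantity increases by how much when a word is adjoined, and why condition (3) forces the total to reach $m$.
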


Lemma \ref{lem:expcltpair} below gives an explicit description of the structure of the CLT word pairs.
\begin{lemma}[Proposition 4.12 in \cite{AZ05}]
	\label{lem:expcltpair}
Let $a = [w, x]$ be a CLT word pair and put $G=(V,E)= G_{a}=(V_{a},E_{a})$. For any $e \in E$ let $\nu(e,w)$ (respectively $\nu(e,x)$) denote the number of time the edge $e$ is visited by the word $w$ (respectively $x$). Then the following hold:
\begin{enumerate}
\item $G$ is connected.
\item $\#V-1=\#E$ or $\#V=\#E$. 
\item If $\#V-1=\#E$, then:
\begin{enumerate}[a)]
\item $G$ is a tree.
\item For all $e\in E$, $\nu(e,w)$ and $\nu(e,x)$ are even.
\item There is an unique $e_{0} \in E$ such that $\nu(e_0,w)=\nu(e_0,x)=2$. 
\item For all $e \in E\backslash\{e_0\}$, $\nu(e,w)+\nu(e,x)=2$.
\item Both $w$ and $x$ are Wigner words.
\end{enumerate}
\item If $\#V=\#E$, then:
\begin{enumerate}[a)]
\item $G$ is not a tree.
\item For all $e\in E$ we have $\nu(e,w)+\nu(e,x)=2$.
\item There is at least one edge $e \in E$ such that $\nu(e,w)=\nu(e,x)=1$.
\end{enumerate}
\end{enumerate}
\end{lemma}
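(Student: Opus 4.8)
The plan is to establish all four clauses of Lemma~\ref{lem:expcltpair} from three ingredients: the component bound of Lemma~\ref{lem:cltrep}, a single passage-count identity obtained from the defining equality $wt(a)=\frac{l(w)-1}{2}+\frac{l(x)-1}{2}$ of a CLT sentence, and the parity principle (Lemma~\ref{lem:parity}). Throughout, write $\nu(e)=\nu(e,w)+\nu(e,x)$ for the total number of passages over an edge $e\in E$, and recall that the total passage count is $\sum_{e\in E}\nu(e)=(l(w)-1)+(l(x)-1)=2\,wt(a)=2\#V$, while $\nu(e)\ge 2$ for every $e$ because $a$ is a weak CLT sentence. I would begin with clauses 1 and 2: since $a$ has $m=2$ words, Lemma~\ref{lem:cltrep}(1) gives $k\le[m/2]=1$, so $G=G_a$ is connected; and $2\#E\le\sum_e\nu(e)=2\#V$ forces $\#E\le\#V$, which together with $\#E\ge\#V-1$ (connectedness) yields clause~2.

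For the tree case $\#V-1=\#E$, the graph $G$ is connected with $\#V-1$ edges, hence a tree (clause 3a). Since $w$ and $x$ are closed walks on the forest $G$, Lemma~\ref{lem:parity} applied to the trivial return path shows $\nu(e,w)$ and $\nu(e,x)$ are even for every $e$, giving 3b. Substituting evenness into $\sum_e\nu(e)=2\#E+2$ with each $\nu(e)\ge 2$ even, exactly one edge $e_0$ has $\nu(e_0)=4$ and all others have $\nu(e)=2$, which is 3d. To upgrade $\nu(e_0)=4$ to $\nu(e_0,w)=\nu(e_0,x)=2$ (clause 3c), I would argue by contradiction: if $\nu(e_0,x)=0$ then every edge of $G_x$ is distinct from $e_0$, so $\nu(e)=2$ there, while parity forces $\nu(e,x)\ge2$, hence $\nu(e,x)=2$ and $\nu(e,w)=0$; thus $G_w$ and $G_x$ share no edge, contradicting condition~(3) of a weak CLT sentence (Definition~\ref{def:weakCLT}), which for $m=2$ requires $G_w$ and $G_x$ to share an edge. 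The case $\nu(e_0,w)=0$ is symmetric. Finally, for any $e\in E_w$ we have $\nu(e,w)\ge1$ and even, and $\nu(e,w)\le2$ (from $\nu(e)=2$ when $e\neq e_0$, from clause 3c when $e=e_0$), so $w$ traverses each edge of $G_w$ exactly twice and is a Wigner word; likewise for $x$, giving 3e.

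For the non-tree case $\#V=\#E$, the graph $G$ is connected with as many edges as vertices, hence has a cycle and is not a tree (clause 4a). The identity $\sum_e\nu(e)=2\#V=2\#E$ with each $\nu(e)\ge2$ now forces $\nu(e)=2$ for every edge, i.e.\ clause 4b. For clause 4c I would again argue by contradiction: if no edge satisfied $\nu(e,w)=\nu(e,x)=1$, then every edge would be traversed twice by exactly one of the two words, so $E_w$ and $E_x$ would be disjoint and partition $E$, once more contradicting condition~(3) of Definition~\ref{def:weakCLT}; hence some edge carries exactly one passage from each of $w$ and $x$.

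I expect the only genuinely delicate point to be the dichotomy for the distinguished edge $e_0$ in the tree case: the weight count and parity pin down that one edge is visited four times, but they do not by themselves decide between $(\nu(e_0,w),\nu(e_0,x))\in\{(4,0),(0,4)\}$ and $(2,2)$. Resolving it is exactly where the edge-sharing requirement built into a weak CLT sentence, specialized to $m=2$, must be used. The rest is bookkeeping with the single identity $\sum_{e}\nu(e)=2\,wt(a)$ and the parity principle, so I do not anticipate further obstacles.
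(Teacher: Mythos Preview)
Your proof is correct. The paper does not supply its own proof of this lemma; it is quoted verbatim as Proposition~4.12 of \cite{AZ05} and used without argument, so there is no in-paper proof to compare against. Your approach---deriving everything from the single passage-count identity $\sum_e\nu(e)=2\,wt(a)=2\#V$, the component bound of Lemma~\ref{lem:cltrep}, the parity principle, and the edge-sharing clause of Definition~\ref{def:weakCLT}---is exactly the right skeleton and matches the spirit of the original argument in \cite{AZ05}.

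One small point worth making explicit in clause~3e: having shown that $w$ traverses each edge of $G_w$ exactly twice is not quite the definition of a Wigner word, which is $wt(w)=(l(w)+1)/2$. You need the extra observation that $G_w$ is a connected subgraph of the tree $G$, hence itself a tree, so $\#V_w=\#E_w+1=(l(w)-1)/2+1$. This is immediate, but since the paper defines Wigner words by the weight identity rather than by the ``each edge twice'' property, it is worth one sentence.
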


In the next part we shall be able to enumerate all the CLT word pairs explicitly. 
We end the discussion of this part by Lemma \ref{lem:appendix} which will crucial for proving the CLTs when the power $k$ slowly diverges to infinity. 
Its first proof can be found in \cite{Ban16}. 
However, the embedding algorithm used in the proof will also be useful in the proof of Theorem \ref{thm:alt}. 
So we spell out the proof of Lemma \ref{lem:appendix} here to make the present paper self-contained. 
To begin with, we give an upper bound on the number of equivalence classes corresponding to weak Wigner words.

\begin{lemma}[Lemma 2.1.23 in \cite{AGZ}]
	\label{lem:bounded}
Let $\mathcal{W}_{k,t}$ collect the equivalence classes corresponding to all weak Wigner words $w$ of length $k+1$ with ${wt}(w)=t$. Then for $k\geq \min(2,2t-2)$,
\[ 
\# \mathcal{W}_{k,t} \le 2^k k^{3(k-2t+2)}.
\] 
\end{lemma}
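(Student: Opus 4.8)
The plan is to bound the number of equivalence classes of weak Wigner words by encoding each one via a spanning tree plus a bounded amount of extra data, and then counting the possibilities. First I would recall the relevant structure: if $w$ is a weak Wigner word of length $k+1$ with $\mathrm{wt}(w)=t$, then $G_w$ is connected on $t$ vertices, each edge is visited at least twice, and since $w$ is a walk of $k$ steps we have $\#E_w \le k/2$. Fixing a root (the first letter, which without loss of generality we may take to be $1$ since we count equivalence classes), the walk $w$ performs a depth-first-like exploration of $G_w$. The key idea is the standard surgery: at each step the walk either traverses a ``tree edge'' for the first time (discovering a new vertex) or traverses an already-seen edge. There are exactly $t-1$ discovery steps, and the tree edges form a spanning tree $T$ of $G_w$.

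The main combinatorial step is to record, for each of the $k$ steps of the walk, enough information to reconstruct the equivalence class. I would mark each step as either a ``new'' step (discovering a fresh vertex) or an ``old'' step. The new steps, read in order, determine the spanning tree $T$ together with its planar embedding relative to the walk; there are at most $t-1$ such steps and the tree on $t$ vertices together with the DFS order is determined by the sequence of new steps, of which there are $\le k$ slots to place $t-1$ of them. For each ``old'' step, I must record which vertex the walk moves to; the walk is currently at some vertex of $T$, and I claim that the destination is one of at most $k$ possibilities (indeed, in a weak Wigner word each old step either backtracks along a tree edge or closes one of the at most $\#E_w - (t-1) \le k/2 - (t-1)$ excess edges, and in either case there are at most $k$ choices of target vertex among the $t \le k$ vertices already seen). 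Thus the number of old steps is $k - (t-1)$, and each carries a label from an alphabet of size at most $k$.

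Assembling the count: the positions of the $t-1$ new steps among the $k$ steps contribute a factor at most $\binom{k}{t-1} \le 2^k$ (more crudely, $2^k$ bounds all subsets of the step set), the shape of the discovered tree and its embedding is determined by these new steps, and each of the $k-(t-1)$ old steps contributes a factor of at most $k$, giving at most $k^{k-(t-1)}$. One then checks that in a weak Wigner word the parameters interlock so that $k - (t-1)$ can be bounded by $3(k-2t+2)$ up to the indicated room: since every edge is visited $\ge 2$ times, $k \ge 2\#E_w \ge 2(t-1)$, and a finer accounting of how the excess visits (beyond two per tree edge) and the excess edges consume steps yields $k - t + 1 \le 3(k - 2t + 2)$ once $k \ge \min(2, 2t-2)$; this is exactly the regime in which the naive bound $k^{k-t+1}$ can be improved to $k^{3(k-2t+2)}$. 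Combining, $\#\mathcal{W}_{k,t} \le 2^k k^{3(k-2t+2)}$.

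The hard part will be the bookkeeping in the middle step: making precise that each ``old'' step really has only $O(k)$ reconstructable choices and that the total number of labelled old steps needed is at most $3(k-2t+2)$ rather than the trivial $k$. This requires using the weak-Wigner constraint (every edge visited at least twice, no edge visited an odd number of times beyond what the tree structure forces — cf.\ the parity principle, Lemma \ref{lem:parity}) to show that ``wasted'' excursions are limited, so that the exponent genuinely collapses from $k$ to $O(k-2t+2)$. Since this is Lemma 2.1.23 in \cite{AGZ}, I would follow that argument, which carries out exactly this encoding and verifies the exponent bound; I include the sketch here only for completeness.
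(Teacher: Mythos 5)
The paper does not actually prove this lemma: it is quoted verbatim from \citet[Lemma 2.1.23]{AGZ}, and the machinery behind it is the {\FK} syllabification developed in Section \ref{subsec:FK} (Lemmas \ref{lem:wordbyFK} and \ref{lem:mbdd}). Your encoding --- spanning tree of ``new'' steps, labels for ``old'' steps --- is the right general strategy and is essentially the same as the \FK{} parsing, so you have identified the correct route. However, as a proof your write-up has a genuine gap exactly at the step you flag as ``the hard part,'' and the fallback inequality you offer there is false: $k-t+1\le 3(k-2t+2)$ is equivalent to $2k\ge 5(t-1)$, which does \emph{not} follow from $k\ge 2t-2$ (take $t=3$, $k=4$: the left side is $2$ and the right side is $0$). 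So the naive count ``$k-(t-1)$ old steps, each costing a factor $k$'' cannot be massaged into the stated bound by any purely arithmetic comparison of exponents; one must show that almost all old steps are \emph{forced} and carry no entropy.

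The way this is actually done (and what your sketch is missing) is the following bookkeeping. Parse $w$ into its \FK{} sentence $a=[w_i]_{i=1}^m$: a break occurs at every traversal of an old edge and at every third-and-subsequent traversal of a new edge. For a weak Wigner word every edge is visited at least twice, so the first and second visits to the $t-1$ new (tree) edges account for $2(t-1)$ of the $k$ steps, and every remaining step is a break; hence $m-1 = k-2t+2$ exactly (this is the content of Lemma \ref{lem:mbdd} with $\#E^1_a=0$). The only data beyond the tree structure are the $m-1$ break positions (at most $\binom{k}{m-1}\le k^{m-1}$ choices) and, for each new constituent word, its starting geodesic in the forest built so far (at most $t^2\le k^2$ choices each, by Lemma \ref{lem:FKcon}); together with the $2^{l-m}\le 2^k$ count of Wigner-word shapes this gives $2^k k^{3(m-1)} = 2^k k^{3(k-2t+2)}$. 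In other words, the exponent $3(k-2t+2)$ arises as three factors of $k$ per \emph{break}, not per old step, and the identity ``number of breaks $=k-2t+2$'' is the substantive use of the weak Wigner hypothesis. Since you explicitly defer this step to \cite{AGZ}, your proposal is an accurate outline of where the proof lives rather than a self-contained argument.
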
 

We now state Lemma \ref{lem:appendix} and its proof.

\begin{lemma}\label{lem:appendix}
Let $\mathcal{A} = \mathcal{A}^{n}_{m,t}(l_1,\dots,l_m)$ be the set of weak CLT sentences $a=[w_i]_{i=1}^{m}$ such that the letter set is $\{1,\dots,n\}$, $\#V_{a}=t$ and $l(w_i) = l_i$ for $i=1,\dots,m$. 
If $l_i\geq 3$ for $i=1,\dots,m$, then 
\begin{equation}\label{eqn:bddweakclt}
\# \mathcal{A} \le n^{t}2^{l}\left(C_1 l\right)^{C_2 m} l^{3(l-2t)}
\end{equation}
where $l = \sum_{i=1}^m l_i$ and $C_1,C_2>0$ are numeric constants.
\end{lemma}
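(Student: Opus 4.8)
The plan is to reduce the count to a purely combinatorial bound on equivalence classes and then obtain the latter by lifting the F\"uredi--Koml\'os-type estimate of Lemma \ref{lem:bounded} (Lemma 2.1.23 of \cite{AGZ}) from single weak Wigner words to sentences. First I would observe that every equivalence class $\mathbf a$ of weak CLT sentences with $\#V_{\mathbf a}=t$ contributes at most $n(n-1)\cdots(n-t+1)\le n^t$ elements to $\mathcal A$, obtained by choosing an injection of the $t$ support vertices into $\{1,\dots,n\}$; hence $\#\mathcal A\le n^t N$, where $N=N_{m,t}(l_1,\dots,l_m)$ is the number of equivalence classes of weak CLT sentences with word lengths $l_1,\dots,l_m$ and weight $t$. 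It therefore suffices to show $N\le 2^l(C_1l)^{C_2m}l^{3(l-2t)}$. Along the way I would record the elementary inequalities valid for any weak CLT sentence: by Proposition \ref{prop:cltrep} (equivalently Lemma \ref{lem:cltrep}), $t\le\tfrac12\sum_i(l_i-1)=\tfrac{l-m}{2}$, so that $l-2t\ge m\ge0$; the total number of edge traversals is $\sum_i(l_i-1)=l-m$, so $\#E_{\mathbf a}\le\tfrac{l-m}{2}$; and by Lemma \ref{lem:cltrep} the graph $G_{\mathbf a}$ has $k\le\lfloor m/2\rfloor$ connected components.

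Next I would encode an equivalence class by an embedding/exploration argument, normalizing a representative by traversing the words in the order $w_1,\dots,w_m$, reading letters within each word in order, and relabelling vertices $1,2,3,\dots$ in order of first appearance. With this normalization the class is determined by the resulting sequence of moves, which splits into two groups. (a) The starting letter of each $w_i$ with $i\ge2$ is either a fresh vertex or one of the at most $t$ previously seen vertices, giving at most $(t+1)^{m-1}\le(l+1)^m$ possibilities overall. (b) Each of the $l-m$ single steps carries a binary symbol recording ``fresh vertex'' versus ``return'', together with, in the return case, the F\"uredi--Koml\'os refinement data relative to the spanning forest $T$ of $G_{\mathbf a}$ grown during the exploration. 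Exactly as in the proof of Lemma \ref{lem:bounded}, a return step that backtracks a tree edge of $T$ in the expected way is forced, and the number of remaining (``bad'') return steps is at most a constant multiple of the excess $(l-m)-2t+2k$, while each bad step costs at most a fixed power of $l$ many choices and the binary symbols contribute $2^{l-m}$. Since $2k\le m$, the excess is at most $l-2t$, so group (b) contributes at most $2^{l-m}l^{3(l-2t)}$ (the constant $3$ inherited from Lemma \ref{lem:bounded}, lower-order powers of $l$ absorbed below). Multiplying, $N\le(l+1)^m\cdot2^{l-m}\,l^{3(l-2t)}\le2^l(C_1l)^{C_2m}l^{3(l-2t)}$, which together with Step~1 gives \eqref{eqn:bddweakclt}. (An equivalent route merges, along spanning trees of the ``overlap graph'' on $\{1,\dots,m\}$ in which $i\sim j$ iff $E_{w_i}\cap E_{w_j}\neq\emptyset$, each block of pairwise edge-sharing words into a single closed word; one checks this leaves the total length unchanged, yields at most $\lfloor m/2\rfloor$ weak Wigner words with pairwise disjoint edge sets, and then applies Lemma \ref{lem:bounded} directly, paying an extra $(C_1l)^{C_2m}$ factor to record the merge and vertex-overlap data.)

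The main obstacle is Step (b): faithfully adapting the single-word embedding algorithm behind Lemma \ref{lem:bounded} to sentences. The delicate points are that $T$ is now a spanning forest with up to $\lfloor m/2\rfloor$ components rather than one, that a ``return'' move may occur at a word boundary as well as inside a word, and --- most importantly --- that the bookkeeping must be arranged so that the number of bad steps is controlled by $(l-m)-2t+2k$ rather than by something of order $l$, since only then does the slack $2k\le m$ convert it into the clean exponent $3(l-2t)$ while confining the entire dependence on $m$ to the factor $(C_1l)^{C_2m}$. The hypothesis $l_i\ge3$ enters here to exclude degenerate words and to guarantee the side condition of Lemma \ref{lem:bounded} for the relevant substructures. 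I expect the remaining pieces --- the counting in (a), the elementary inequalities of Step~1, and the absorption of lower-order powers of $l$ into $(C_1l)^{C_2m}$ --- to be routine.
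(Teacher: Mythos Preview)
Your parenthetical ``equivalent route'' is precisely the argument the paper gives; your primary direct-encoding plan is not. The paper partitions $\{1,\dots,m\}$ by the transitive closure of $i\sim j\Leftrightarrow E_{w_i}\cap E_{w_j}\neq\emptyset$, and within each block $B_j$ runs an explicit splicing algorithm: start from one word, repeatedly locate another word in $B_j$ sharing an edge with the current concatenation, and splice it in at the shared vertex. The output is a single closed weak Wigner word $W_j$ per block, to which Lemma~\ref{lem:bounded} applies verbatim. The preimage of the splicing map is bounded by $m^{2m}l^{3m}$ (choice of partition, merge order within each block, and three cut-positions in $W_j$ needed to undo each splice), and summing over the profile $(L_j,t_j)_j$ costs a further $l^{2m}$. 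One small correction to your sketch: each splice drops one letter, so $\sum_j L_j=l-m+\mathcal{C}$ rather than $l$; this is harmless for the final bound. The hypothesis $l_i\ge3$ is used here, to guarantee that the inverse splice always produces two legitimate words.

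Your primary plan---a single F\"uredi--Koml\'os exploration across all words with a global spanning forest---is a genuinely different route. The difficulty you flag in step~(b) is real: a return step inside $w_i$ may land on a vertex first seen in some earlier $w_j$, possibly merging two components of the forest built so far; such steps are neither ``fresh'' nor standard tree-backtracks, and bounding their total number by a constant times $(l-m)-2t+2k$ would require reworking the FK bookkeeping in this more general setting rather than quoting it. The merge route sidesteps all of this by reducing each block to a single closed word, so the known single-word lemma does the heavy lifting. Your approach would plausibly yield the same bound if carried through, but the paper's has the advantage of being complete at exactly the step you correctly identify as the crux.
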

\begin{proof}	
Let $a=[w_{i}]_{i=1}^{m}$ be a weak CLT sentence in $\mathcal{A}$.
Suppose $G_a$ have $\mathcal{C}(a)$ many connected components. 
Then the sentence $a$ induces a partition $\eta(a)$ of the set $\{1,\dots, m\}$: we put $i$ and $j$ in same block of $\eta(a)$ if and only if $G_{w_i}$ and $G_{w_j}$ share an edge. 
We first fix such a partition $\eta$ and bound the number of sentences $a$ such that $\eta(a)=\eta$.  
Let $\mathcal{C}(\eta)$ be the number of blocks in $\eta$, then for any $a$ with $\eta(a)=\eta$, we have $\mathcal{C}(\eta)=\mathcal{C}(a)$. From now on we denote $\mathcal{C}(\eta)$ by $\mathcal{C}$ for convenience.

Let $a$ be any weak CLT sentence such that $\eta(a)=\eta$. We now propose an algorithm to embed $a$ into $\mathcal{C}$ ordered closed words $(W_1,\ldots,W_{\mathcal{C}})$ such that the equivalence class of each $W_j$ belongs to $\mathcal{W}_{L_j,t_j}$ for some numbers $L_j$ and $t_j$.
This would then allow us to invoke Lemma \ref{lem:bounded} to upper bound the number of such sentences.
Let $B_1,\ldots,B_\mathcal{C}$ be the blocks of the partition $\eta$ ordered in the following way. 
Let $i_j(1) = \min\{i: i \in B_j\}$ and we order the blocks $B_j$ such that $i_1(1)< i_2(1) < \ldots < i_{\mathcal{C}}(1)$. 
Note that this ordering is unique for any given partition $\eta$.
For $j=1,\dots, \mathcal{C}$, let 
\[
B_j=\{ i_j(1)<i_j(2)<\ldots<i_j(m_j) \}.
\] 
Here $m_j$ denotes the number of elements in $B_j$ and so $\sum_{j=1}^\mathcal{C}m_j = m$. 

\paragraph{An embedding algorithm:}
For each $B_j$ we embed the sentence $a_j := [w_{i_j(k)}]_{1\leq k\leq m_j}$ into a weak Wigner word $W_j$ sequentially in the following manner.

\textbf{Step 1.} 
Let $S_1=\{i_j(1)\}$ and $\mathfrak{w}_1=w_{i_j(1)}$.

\textbf{Step 2.}
For $k=1,\dots, m_j-1$, perform the following:
\begin{enumerate}
\item Write $\mathfrak{w}_k = \alpha^k_1 \dots \alpha^k_{l(\mathfrak{w}_k)}$. 
Find an index $n_k \in B_i\backslash S_k$ such that the following two conditions hold:
\begin{enumerate}[(a)]
	\item $G_{\mathfrak{w}_k}$ and $G_{w_{n_k}}$ shares at least one edge $e = \{\alpha^k_{\nu_1}, \alpha^k_{\nu_1+1}\}$.
	\item $\nu_1$ is the minimum among all such choices.
\end{enumerate}

\item Write $w_{n_k} = \beta^k_1\dots \beta^k_{l(w_{n_k})}$ and let $\{\beta_{\nu_2}^k, \beta^k_{\nu_2+1}\}$ be the first time the edge $e$ appears in $w_{n_k}$. Note that $\alpha^k_{\nu_1} \in \{\beta_{\nu_2}^k, \beta^k_{\nu_2+1}\}$. 
Let ${\nu_1'} \in \{\nu_2, \nu_2 + 1\}$ such that $\alpha^k_{\nu_1} = \beta^k_{{\nu_1'}}$.
If $\beta^k_{\nu_2} = \beta^k_{\nu_2+1}$, set $\nu_1' = \nu_2$.

\item Define $S_{k+1} := S_k \cup \{n_k\}$ and
\begin{equation*}
\mathfrak{w}_{k+1} := \alpha^k_1\, \ldots \, \alpha^k_{\nu_1}\,
\beta^k_{\nu_1'+1}\, \ldots\, \beta^k_{l(w_{n_k})}
\,\beta^k_{2}\,\ldots\, \beta^k_{\nu_1'}\,\alpha^k_{\nu_1+1}\,\ldots\, \alpha^k_{l(\mathfrak{w}_{k})}.
\end{equation*}
Let $\tilde{a}_k = [\mathfrak{w}_k, w_{n_k}]$ be the sentence consisting of the two words $\mathfrak{w}_k$ and $w_{n_k}$.
Then the edges in $G_{\tilde{a}_k}$ are preserved along with their passage counts in $G_{\mathfrak{w}_{k+1}}$.
By induction and the fact that $w_{n_k}$ is closed, $\mathfrak{w}_{k+1}$ is a closed word. 
\end{enumerate}

\textbf{Step 3.} Return $W_j = \mathfrak{w}_{m_j}$.

\bigskip

By induction $W_j$ is a weak Wigner word. 
Its length $L_j = \sum_{k=1}^{m_j} l(w_{i_j(k)}) = \sum_{i\in B_j} l(w_i)$, 
and its weight $t_j \le\frac{L_j+1}{2}$.

Note that the embedding algorithm has actually defined a function $f$ which maps any weak CLT sentence $a$ into $\mathcal{C} = \mathcal{C}(a)$ ordered weak Wigner words $(W_1,\ldots,W_{\mathcal{C}})$.
Unfortunately $f$ is not injective. 
So given $(W_1,\ldots,W_{\mathcal{C}})$ we are to find an upper bound on the cardinality of the following set 
\[
f^{-1}(W_1,\ldots,W_{\mathcal{C}}):=\{a| f(a)=(W_1,\ldots,W_{\mathcal{C}})\}.
\]

Note that $\mathcal{C}$ is the number of blocks in $\eta = \eta(a)$.
However, in general $(W_1,\ldots,W_{\mathcal{C}})$ specifies neither the partition $\eta$ nor the order in which the words are concatenated within each block $B_j$ of $\eta$. 
So as an intermediate step we fix a partition $\eta = \{B_1,\dots, B_\mathcal{C}\}$ with $\mathcal{C}$ many blocks and an order of concatenation $\mathcal{O}$. 
Observe that any order of concatenation with a given $\eta$ can now always be written as
\[
\mathcal{O}= (\sigma_1(B_1),\ldots,\sigma_{\mathcal{C}}(B_\mathcal{C}))
\]
where for each $j$, $\sigma_{j}(B_j)$ is a permutation of the elements in $B_j$.
Now we give a uniform upper bound to the cardinality of the following set 
\[
f^{-1}_{\eta,\mathcal{O}}(W_1,\ldots,W_{\mathcal{C}})
:=\left\{a \,|\, \eta(a)=\eta,\, \mathcal{O}(a)= \mathcal{O} \,\,\text{and}\,\,f(a)=(W_1,\ldots,W_{\mathcal{C}}) \right\}.
\]
According to the embedding algorithm, 
any word $W_j$ is formed by recursively applying Step 2 to $\mathfrak{w}_{k}$  and $w_{n_k}$ for $1 \le k \le m_j-1$. 
Given a word 
\[
\mathfrak{w}_{k+1} = \alpha^{k+1}_1 \dots \alpha^{k+1}_{l(\mathfrak{w}_{k+1})},
\]
we want to find out the number of two words combinations $\mathfrak{w}$ and $\mathfrak{w}'$ such that applying Step 2 to $\mathfrak{w}$ and $\mathfrak{w}'$ gives $\mathfrak{w}_{k+1}$ as the output.
This is equivalent to choosing three positions $a<b<c$ in the set $\{1,\dots, l(\mathfrak{w}_{k+1})\}$ such that $\alpha^{k+1}_a = \alpha^{k+1}_c$.
Once these three positions are chosen, we can set
\begin{align*}
\mathfrak{w} & = \alpha^{k+1}_1\,\dots\, \alpha^{k+1}_a\, \alpha^{k+1}_{c+1}\,\dots \, \alpha^{k+1}_{l(\mathfrak{w}_{k+1})}
\quad \mbox{and} \quad
\mathfrak{w}' = \alpha^{k+1}_b\,\dots \alpha^{k+1}_c\,\alpha^{k+1}_{a+1}\,\dots\,\alpha^{k+1}_{b}.
\end{align*}
Note that the length of each word of interest is at least three, and so the above construction is always feasible.
The total number of distinct $(a,b,c)$ triplets is upper bounded by $l(\mathfrak{w}_{k+1})^3 \leq (\sum_{i=1}^m l(w_i))^3 = l^3$.
In addition, for each block $B_j$, Step 2 of the embedding algorithm is run $m_j-1$ times.
Together with $\mathcal{C}$ different blocks, the foregoing arguments lead to
\[
\# f^{-1}_{\eta,\mathcal{O}}(W_1,\ldots,W_{\mathcal{C}}) \le \prod_{j=1}^\mathcal{C} l^{3(m_j-1) } \leq l^{3m}.
\]
Furthermore, observe that there are at most $m^{m}$ different partitions and for each partition there are at most $\prod_{i=1}^{\mathcal{C}}m_j!\le m^{m}$ choices of $\mathcal{O}$. So
\begin{equation}
	\label{bdd1}
\# f^{-1}(W_1,\ldots,W_{\mathcal{C}})\le m^{2m}l^{3m}
\le \left(D_1 l\right)^{D_2m}
\end{equation}
for some numeric constants $D_1$ and $D_2$.

\medskip

Now we fix the sequence $(L_j,t_j)$ for $j=1,\dots, \mathcal{C}$ and find an upper bound to the number of the tuples $(W_1,\ldots, W_{\mathcal{C}})$. From Lemma \ref{lem:bounded} we know the number of choices of $W_j$ is bounded by $2^{L_j-1}(L_j-1)^{L_j-2t_j+1}n^{t_j}$. So the total number of choices for the tuples $(W_1,\ldots, W_{\mathcal{C}})$ is bounded by 
\begin{equation}
	\label{bdd2}
2^{\sum_{j=1}^{m}l(w_j)}\prod_{j=1}^{\mathcal{C}}(L_j-1)^{3(L_j-2t_j+1)}n^{t_j}
\le 2^l n^{t} l^{3(l-2t + m)}.
\end{equation}
Now the number of choices $(L_j,t_j)_{j=1}^\mathcal{C}$ such that $\sum_{j=1}^{\mathcal{C}}L_{j}= l$ and $\sum_{j=1}^{\mathcal{C}}t_j = t$ are bounded by 
\begin{equation}\label{bdd3}
\binom{l-1}{\mathcal{C}-1} \binom{t-1}{\mathcal{C}-1}\le  l^{2m}.
\end{equation}
Here the inequality follows since $\mathcal{C}\le m$ and $t\le l$. 
Finally we use the fact that $1\le \mathcal{C}\le m$ and (\ref{bdd1}), (\ref{bdd2}) and (\ref{bdd3}) to conclude
\begin{equation}
\begin{split}
&\# \mathcal{A} \le \left(D_1l\right)^{D_2m} \times  2^{l}n^{t}l^{3(l-2t+m)} \times l^{2m} 
\le n^{t}2^{l}\left(C_1 l\right)^{C_2 m} l^{3(l-2t)}
\end{split}
\end{equation}
as claimed.
\end{proof}

\subsection{{\FK} enumeration}
\label{subsec:FK}

We now introduce the notion of {\FK} sentences. 
It was the key idea underlying the proof of Lemma \ref{lem:bounded} (Lemma 2.1.23 in \cite{AGZ}), which in turn was crucial for the proof of Lemma \ref{lem:appendix}. 
In addition, as we shall show below, it plays an important role in getting the exact enumeration of CLT word pairs and provides some general insight about the covariance structure between different linear spectral statistics. 
Most of the materials in this Subsection are borrowed from Section 7 of \cite{AZ05} and Chapter 1 of \cite{AGZ}. 
The original idea of {\FK} sentences dates back to \citet{furedi1981eigenvalues}.

\begin{definition}[FK sentences]
Let $a= [w_{i}]_{i=1}^{m}$ be a sentence consisting of $m$ words. We say that $a$ is an \emph{FK sentence} under
the following conditions:
\begin{enumerate}
\item $G_a$ is a tree.
\item Jointly the words/walks $w_i$, $i=1,\dots,m$, visit no edge of $G_a$ more than twice.
\item For $i = 1,\ldots, m-1$, the first letter of $w_{i+1}$ belongs to $\cup_{j=1}^{i} \mathrm{supp}(w_j)$.
\end{enumerate}
We say that $a$ is an \emph{FK word} if $m=1$.
\end{definition}

By definition, any word admitting interpretation as a walk on a forest visiting no edge of the forest more than twice is automatically an FK word. 
The constituent words of an FK sentence are FK words. 
If an FK sentence is at least two words long, then the result of dropping the last word is again an FK sentence. 
If the last word of an FK sentence is at least two letters long, then the
result of dropping the last letter of the last word is again an FK sentence.

\begin{definition}[The stem of an FK sentence]
Given an FK sentence $a=[w_{i}]_{i=1}^{m}$, we define $G_{a}^{1}=(V_{a}^{1},E_{a}^{1})$ to be the
subgraph of $G_a=(V_a,E_a)$ with $V^{1}_{a}= V_a$ and $E_{a}^{1}$ equal to the set of edges $e \in E_{a}$
such that the words/walks $w_i$, $i=1,\dots,m$, jointly visit $e$ exactly once.
\end{definition}

The following lemma characterizes the exact structure of an FK word.

\begin{lemma}[Lemma 2.1.24 in \cite{AGZ}]
	\label{lem:FKembed}
Suppose $w$ is an FK word. Then there is exactly one way to write $w=w_1,\ldots,w_r$ where each $w_i$ is a closed Wigner word and they are pairwise disjoint. 
\end{lemma}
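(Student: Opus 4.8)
The plan is to expose the \emph{stem} of $w$ using the parity principle (Lemma~\ref{lem:parity}) and then read off the decomposition as the ordered list of excursions hanging off that stem. Write $w=s_1\cdots s_k$ and let $P$ be the unique path in the tree $G_w$ joining $s_1$ to $s_k$, with vertices $v_0=s_1,v_1,\dots,v_\ell=s_k$ and edges $e_j=\{v_{j-1},v_j\}$ for $j=1,\dots,\ell$ (so $\ell=0$ and $P=\{v_0\}$ precisely when $w$ is closed). The first step is to note that, since $w$ is an FK word, it visits no edge of $G_w$ more than twice; hence by Lemma~\ref{lem:parity} the edges it visits an odd number of times --- equivalently, exactly once --- are precisely the edges of $P$, while every other edge of $G_w$ is visited exactly twice. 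In particular $P=G_w^1$.

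Next I would show that each stem edge $e_j$ is traversed exactly once and in the direction $v_{j-1}\to v_j$: deleting $e_j$ splits $G_w$ into the component containing $s_1$ and the one containing $s_k$, and a walk that starts in the former, ends in the latter, and crosses the separating edge only once must cross it forward; applying this in turn also forces the stem edges to be traversed in the order $e_1,e_2,\dots,e_\ell$. Deleting all stem edges breaks the tree $G_w$ into pairwise vertex-disjoint subtrees $T_0,\dots,T_\ell$ with $v_j\in T_j$. Cutting $w$ at its (unique) traversals of $e_1,\dots,e_\ell$ produces subwords $w_0,\dots,w_\ell$ with $w=w_0w_1\cdots w_\ell$, the traversals of the $e_j$'s serving as the seams. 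I would then verify that $w_j$ is a closed walk based at $v_j$ confined to $T_j$ (during its ``phase'' the walk can use no stem edge) which visits every edge of $T_j$, each exactly twice --- each non-stem edge of $G_w$ lies in a unique $T_j$ and is visited only while the walk is in phase $j$. Hence $G_{w_j}=T_j$ is a tree and $w_j$ is a closed Wigner word (a single-letter one exactly when $T_j=\{v_j\}$), giving existence with $r=\ell+1$; the $w_j$ are pairwise disjoint because the $T_j$ are.

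For uniqueness, suppose $w=u_1\cdots u_s$ with each $u_i$ a closed Wigner word and the $G_{u_i}$ pairwise disjoint. Because the $G_{u_i}$ are disjoint and each $u_i$ is closed, every seam step (from the base point of $u_i$ to that of $u_{i+1}$) is an edge of $G_w$ that is internal to no $u_j$ and is a seam for exactly one $i$, hence is traversed by $w$ exactly once and so, by Lemma~\ref{lem:parity}, lies on $P$; conversely no stem edge can sit inside a single $u_i$, since a closed walk on a tree traverses each of its edges an even number of times. Therefore the seams are exactly the $\ell$ single traversals of $e_1,\dots,e_\ell$, which occur at positions of $w$ determined by $w$ alone, forcing $s=\ell+1$ and $u_i=w_{i-1}$ for all $i$.

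I expect the main obstacle to be the bookkeeping in the uniqueness part, and in particular pinning down the exact meaning of ``pairwise disjoint'' (vertex-disjoint, versus disjoint apart from shared seam vertices) so that each seam step is genuinely an edge of $G_w$ and the even-parity argument for closed walks applies to each $u_i$ individually; the existence part is a fairly direct consequence of Lemma~\ref{lem:parity} together with the fact that $G_w$ is a tree. An alternative route, hinted at by the remarks preceding the lemma, is induction on $l(w)$ via deletion of the last letter, but the parity-principle approach seems cleaner to organize.
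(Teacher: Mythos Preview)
The paper does not supply its own proof of this lemma: it is quoted as Lemma~2.1.24 of \cite{AGZ} and used as a black box (the proof of Proposition~\ref{prop:FKcount} simply points back to \cite{AGZ}). Your argument is correct and is essentially the standard one: the parity principle identifies the once-visited edges of $w$ with the geodesic $P$ from the first to the last letter of $w$, the unique traversals of the stem edges occur in order and cut $w$ into closed Wigner words supported on the pairwise disjoint pendant subtrees, and any competing decomposition must have its seams at exactly those positions.

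Your stated worry about the meaning of ``pairwise disjoint'' resolves the natural way: it means the $w_i$ have pairwise disjoint \emph{supports} (vertex-disjoint $G_{w_i}$). This is exactly what makes the acronym $\alpha_1,\dots,\alpha_r$ a sequence of distinct letters --- see the paragraph following the lemma, and the way the acronym is used in Proposition~\ref{prop:uniword} and Lemma~\ref{lem:FKcon} --- and it is the hypothesis you actually invoke in the uniqueness step. With that reading, the seam vertices of a putative decomposition $u_1\cdots u_s$ are distinct, so the seam edges $\{\alpha_i,\alpha_{i+1}\}$ form a simple path in the tree $G_w$ from $s_1$ to $s_k$; in a tree that path is unique, hence equals $P$ as an ordered path, forcing $s=\ell+1$ and pinning down the cutting positions.
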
 

Let $\alpha_i$ be the first letter of the word $w_i$, we declare the word $\alpha_{1},\ldots,\alpha_r$ to be the \emph{acronym} of the word $w$ in Lemma \ref{lem:FKembed}.  
Since the counts of Wigner words are well known, one can explicitly enumerate the equivalence classes of all FK words as follows.

\begin{proposition}
	\label{prop:FKcount}
Let $F(m,r)$ be the set of equivalence classes of all FK words of length $m$ with acronym of size $r\le m.$ Then 
\[
\#F(m,r)= f(m,r)
\]
and $\sum_{r=1}^m f(m,r)\le 2^{m-1}$.
Here $f(m,r)$ is as defined in (\ref{gen:f}).
\end{proposition}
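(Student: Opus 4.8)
The plan is to establish the two assertions --- the identity $\#F(m,r) = f(m,r)$ and the bound $\sum_{r=1}^m f(m,r) \le 2^{m-1}$ --- by combining the structural decomposition of FK words (Lemma~\ref{lem:FKembed}) with the known enumeration of Wigner words and a generating-function bookkeeping argument. First I would recall the classical fact that the number of equivalence classes of closed Wigner words of length $2k+1$ (equivalently, rooted plane trees on $k$ edges, or Dyck paths of length $2k$) is the $k$-th Catalan number $C_k = \frac{1}{k+1}\binom{2k}{k}$, with generating function $\sum_{k\ge 0} C_k z^{2k} = \frac{1-\sqrt{1-4z^2}}{2z^2}$. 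Then, by Lemma~\ref{lem:FKembed}, every FK word $w$ of length $m$ decomposes \emph{uniquely} as a concatenation $w = w_1 \cdots w_r$ of pairwise disjoint closed Wigner words, and the acronym has size $r$. Up to equivalence, such a decomposition is determined by the sequence of lengths (or equivalently the sequence of ``edge counts'') of the $w_i$'s together with the equivalence class of each $w_i$; the disjointness together with the FK ordering condition (the first letter of $w_{i+1}$ must lie in $\bigcup_{j\le i}\mathrm{supp}(w_j)$) means there is no additional freedom in how the vertex sets are glued --- each new Wigner word is attached at a prescribed already-seen vertex. This is the point I expect to require the most care: one must verify that passing to equivalence classes, the data $(r; \text{ordered list of Wigner-word equivalence classes with a distinguished starting vertex among previously used letters})$ is in bijection with equivalence classes of FK words with acronym size $r$, and that the attachment vertex contributes the correct multiplicative factor so that the count becomes a pure convolution of Catalan numbers.

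Granting that bijective reduction, the counting becomes a generating-function identity. Writing each $w_i$ as a closed Wigner word with $k_i \ge 1$ edges (so it has length $2k_i+1$ but contributes $2k_i$ to the ``edge-length'' ledger because consecutive words share their gluing vertex), the total length constraint translates to $2(k_1+\cdots+k_r) = m$, forcing $m$ even for $r < m$, while the acronym bookkeeping gives exactly $r$ factors. The number of FK words with acronym size $r$ and length $m$ is then $\sum_{k_1+\cdots+k_r = m/2,\, k_i\ge 1} \prod_{i=1}^r C_{k_i}$, which is precisely the coefficient of $z^m$ in $\left(\sum_{k\ge 1} C_k z^{2k}\right)^r = \left(\frac{1-\sqrt{1-4z^2}}{2z}\right)^r$, i.e.\ $f(m,r)$ by the defining equation~\eqref{gen:f}. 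I would also double-check the edge case $r = m$: this corresponds to all $k_i$ equal to a single edge, i.e.\ $w$ is a path traversed back and forth, which matches $f(m,m) = 1$ from the lowest-order term of the generating function.

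For the bound $\sum_{r=1}^m f(m,r) \le 2^{m-1}$, I would sum the generating-function identity over $r$: $\sum_{r\ge 1} \left(\frac{1-\sqrt{1-4z^2}}{2z}\right)^r = \frac{(1-\sqrt{1-4z^2})/(2z)}{1 - (1-\sqrt{1-4z^2})/(2z)}$, and then extract the coefficient of $z^m$ and bound it. Alternatively --- and this is cleaner --- I would argue combinatorially: each equivalence class of FK words of length $m$ is determined (via Lemma~\ref{lem:FKembed} and the plane-tree description of Wigner words) by a lattice-path / Dyck-type object of $m-1$ steps, and the total number of such objects over all acronym sizes is at most the number of $\pm 1$ sign sequences of length $m-1$, namely $2^{m-1}$. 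Concretely, a closed walk of length $m$ on a tree that uses each edge at most twice is encoded by a word in $\{\text{up},\text{down}\}$ of length $m-1$ (each step either discovers a new edge or backtracks along an already-discovered one), and distinct FK equivalence classes give distinct such words; hence $\sum_r \#F(m,r) \le 2^{m-1}$. The main obstacle, as noted, is making the first bijective reduction fully rigorous --- in particular confirming that the FK ordering condition removes exactly the gluing ambiguity so that no extra combinatorial factor (beyond the product of Catalan numbers) appears --- but this is exactly the content of Lemma~\ref{lem:FKembed} together with the ``drop the last letter / last word'' closure properties of FK sentences recorded just before the statement, so it should go through by induction on $m$.
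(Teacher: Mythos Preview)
Your approach is the one the paper intends (it simply refers to the proof of Lemma~2.1.24 in \cite{AGZ}): decompose an FK word via Lemma~\ref{lem:FKembed} into closed Wigner words, count each factor by Catalan numbers, and read off the generating-function coefficient. The $\pm 1$-step encoding for the bound $\sum_r f(m,r)\le 2^{m-1}$ is also the standard argument and is fine.

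However, your length bookkeeping is wrong and, as written, the argument does not go through. In the decomposition $w=w_1\cdots w_r$ of Lemma~\ref{lem:FKembed} the Wigner words have \emph{pairwise disjoint} supports, so consecutive $w_i$ do \emph{not} share a gluing vertex; the step from $\alpha_i$ (last letter of $w_i$) to $\alpha_{i+1}$ (first letter of $w_{i+1}$) is a new edge of $G_w$ traversed exactly once, and the FK-\emph{sentence} ordering condition you invoke is irrelevant here. Consequently each $w_i$ has length $2k_i+1$ with $k_i\ge 0$ (the single-letter case $k_i=0$ must be allowed), and the constraint is $\sum_i(2k_i+1)=m$, i.e.\ $2\sum_i k_i=m-r$, not $2\sum_i k_i=m$. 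The correct identity is
\[
\Big(\sum_{k\ge 0} C_k\, z^{2k+1}\Big)^{r}=\left(\frac{1-\sqrt{1-4z^2}}{2z}\right)^{r},
\]
whereas $\sum_{k\ge 1}C_k z^{2k}$ is a different series and does \emph{not} equal $\frac{1-\sqrt{1-4z^2}}{2z}$. Your edge case $r=m$ then corresponds to all $k_i=0$, so $w=\alpha_1\cdots\alpha_m$ is a simple path with each edge traversed once, not ``a path traversed back and forth.'' Once these corrections are made there is no gluing ambiguity whatsoever (disjoint supports), hence no attachment-vertex factor, and the bijection between equivalence classes of FK words with acronym size $r$ and ordered $r$-tuples of Wigner-word classes with total length $m$ gives $\#F(m,r)=f(m,r)$ directly.
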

\begin{proof}
The proof follows immediately from the proof of Lemma \ref{lem:FKembed} (See the proof of Lemma 2.1.24 in \cite{AGZ}).
\end{proof}

\paragraph{FK syllabification}
Our interest in FK sentences is mainly due to the fact that any word $w$ can be parsed into an FK sentence sequentially.
In particular, one declares a new word at each time when not doing so would prevent the sentence formed up to that point from being an FK sentence. 
Formally, we define the FK sentence corresponding to any given word $w$ in the following way. Suppose the word $w$ is formed by $m$ letters. We declare any edge $e \in E_{w}$ to be new if $e=\{\alpha_{i},\alpha_{i+1}\}$ and $\alpha_{i+1} \notin \{\alpha_1,\ldots,\alpha_{i} \}$ otherwise we declare $e$ to be old. 
We now construct the FK sentence $w'$ corresponding to the word $w$ by breaking the word at each position of an old edge and the third and all subsequent positions of a new edge. 
Observe that any old edge gives rise to a cycle in $G_{w}$. As a consequence, by breaking the word at the old edge we remove all the cycles in $G_w$. 
On the other hand, all new edges are traversed at most twice as we break at their third and all subsequent occurrences. 
It is easy to see that the graph $G_{w'}$ remains connected since we are not deleting the first occurrence of a new edge. 
As a consequence, the graph $G_{w'}$ is a tree where every edge is traversed at most twice. 
Furthermore, by the definition of old and new edges, 
the first letter in the second and any subsequent word in $w'$ belongs to the support of all the previous ones. 
Therefore, the resulting sentence $w'$ is an FK sentence. 
Note that the FK syllabification preserves equivalence, i.e., if $w \sim x$ then the corresponding FK sentences $w'\sim x'$. 

\medskip

The discussion about FK syllabification shows that all words can be uniquely parsed into an FK sentence. 
Hence we can use the enumeration of FK sentences to enumerate words of specific structures of interest.
The following lemma gives an upper bound to the number of ways one FK sentence $b$ and one FK word $c$ can be concatenated so that the sentence $[b,c]$ is again an FK sentence.
\begin{lemma}[Lemma 7.6 in \cite{AZ05}]
	\label{lem:FKcon}
Let $b=[w_i]_{i=1}^{m}$ be an FK sentence and $c$ be an FK word such that the first letter in $c$ is in $\mathrm{supp}(b)$. Let $\gamma_1,\ldots,\gamma_r$ be the acronym of $c$ where $\gamma_1 \in \mathrm{supp}(b)$. Let $l$ be the largest index such that $\gamma_{l} \in \mathrm{supp}(b)$ and write $d=\gamma_1,\ldots,\gamma_{l}$. Then the sentence $[b,c]$ is an FK sentence if and only if the following conditions are satisfied:
\begin{enumerate}
\item $d$ is a geodesic in the forest $G^1_
b$.
\item $\mathrm{supp}(b) \cap \mathrm{supp}(c)=\mathrm{supp}(d)$.
\end{enumerate}
Here a geodesic connecting $x,y \in G^1_
b$
is a path of minimal length starting at $x$ and
terminating at $y$. 
Further, there are at most $(wt(b))^2$ equivalence classes $[x_{i}]_{i=1}^{m+1}$ such that $b\sim [x_i]_{i=1}^{m}$ and $c\sim x_{m+1}$.
\end{lemma}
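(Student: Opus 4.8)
The plan is to check, one clause at a time, which parts of the definition of an FK sentence are non-trivial for $[b,c]$ and then translate them into the stated geometry. Since $b$ is already an FK sentence and, by hypothesis, the first letter $\gamma_1$ of $c$ lies in $\mathrm{supp}(b)$, the only requirements not automatic for $[b,c]$ are: (A) $G_{[b,c]}=G_b\cup G_c$ is a tree, and (B) jointly the walks of $b$ together with the walk $c$ visit no edge of $G_{[b,c]}$ more than twice. So it suffices to prove that (A) and (B) together are equivalent to conditions 1 and 2. As a preliminary I would record the elementary Euler-characteristic identity: since $G_b$ and $G_c$ are trees sharing the vertex $\gamma_1$, the union $G_b\cup G_c$ is a tree if and only if the intersection graph $G_b\cap G_c$ (vertex set $\mathrm{supp}(b)\cap\mathrm{supp}(c)$, edge set $E_b\cap E_c$) is connected, i.e.\ a subtree; a cycle in the union is equivalent to a disconnection of the intersection.

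The next step is to describe $c$ explicitly using Lemma~\ref{lem:FKembed}. Writing $c$ as the unique concatenation of pairwise disjoint closed Wigner words $c_1,\dots,c_r$ based at the acronym letters $\gamma_1,\dots,\gamma_r$, the walk $c$ consists of closed excursions at $\gamma_1,\dots,\gamma_r$, each traversing the edges of its own ``fresh'' subtree exactly twice, linked by single-edge transitions $\gamma_i\to\gamma_{i+1}$ that are traversed exactly once. Consequently $c$ visits every edge not on the simple path $\pi_c:=\gamma_1\gamma_2\cdots\gamma_r$ exactly twice and every edge of $\pi_c$ exactly once; in particular the stem $G^1_c$ has edge set equal to the edges of $\pi_c$. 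Establishing this description rigorously --- including that consecutive acronym letters are genuinely adjacent in $G_c$ and that no linking edge is revisited --- is, I expect, the main technical obstacle, and it is where the parity principle (Lemma~\ref{lem:parity}) and the uniqueness part of Lemma~\ref{lem:FKembed} must be invoked with care.

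With this in hand I would finish the equivalence. For (B): any edge of $E_b\cap E_c$ is visited at least once by the walks of $b$ and at least once by $c$, so (B) forces it to be visited exactly once by each, i.e.\ to lie in $E^1_b$ and on the path $\pi_c$; conversely no edge outside $E_b\cap E_c$ can be over-visited. Hence (B) is equivalent to $E_b\cap E_c\subseteq E^1_b$ together with $E_b\cap E_c\subseteq\pi_c$. For (A): $G_b\cap G_c$ must be connected; since all its edges lie on $\pi_c$ and $\pi_c$ starts at $\gamma_1\in\mathrm{supp}(b)$, connectivity forces the shared subgraph to be the initial segment of $\pi_c$ ending at the last acronym vertex lying in $\mathrm{supp}(b)$, i.e.\ exactly the path $d=\gamma_1\cdots\gamma_l$; this yields $\mathrm{supp}(b)\cap\mathrm{supp}(c)=\mathrm{supp}(d)$, which is condition 2. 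Finally, the remaining content of (B), that the edges of $d$ all lie in $E^1_b$, combined with the fact that $d$ is the unique simple $G_b$-path between $\gamma_1$ and $\gamma_l$, is exactly the statement that $d$ is a geodesic in the forest $G^1_b$, which is condition 1. Every implication above reverses, so $[b,c]$ is an FK sentence if and only if conditions 1 and 2 hold.

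For the counting bound, note that once the equivalence classes of $b$ (an $m$-word sentence) and of $c$ are fixed, an equivalence class of $[x_1,\dots,x_{m+1}]$ with $[x_i]_{i=1}^m\sim b$ and $x_{m+1}\sim c$ is determined by the way $c$ is attached to $b$, which by the characterization just proved is the choice of a geodesic in $G^1_b$ together with a distinguished endpoint playing the role of $\gamma_1$ --- that is, an ordered pair of vertices of $G_b$; after this the remaining (fresh) letters of $x_{m+1}$ are forced up to relabeling. There are at most $(wt(b))^2$ such ordered pairs, so at most $(wt(b))^2$ equivalence classes, as claimed.
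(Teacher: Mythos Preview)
The paper does not give its own proof of this lemma; it is quoted verbatim as Lemma~7.6 of \cite{AZ05} and used as a black box. So there is no in-paper argument to compare against.

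That said, your proof sketch is correct and is essentially the natural argument. A few small remarks:

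\begin{itemize}
\item Your hedging about the ``main technical obstacle'' is unnecessary. Once you invoke Lemma~\ref{lem:FKembed} to write $c=c_1\cdots c_r$ with the $c_i$ pairwise disjoint closed Wigner words based at $\gamma_i$, the structure of $c$ is immediate: each $c_i$ traverses every edge of its own tree exactly twice, the transition from the last letter of $c_i$ (which is $\gamma_i$) to the first letter of $c_{i+1}$ (which is $\gamma_{i+1}$) contributes the edge $\{\gamma_i,\gamma_{i+1}\}$, and disjointness of the supports guarantees this edge is visited exactly once and is not equal to any other linking edge. No appeal to the parity principle is needed here.
\item The Euler-characteristic argument you outline for ``$G_b\cup G_c$ is a tree $\Leftrightarrow$ $G_b\cap G_c$ is connected'' is exactly right: both $G_b$ and $G_c$ are trees sharing $\gamma_1$, so the union is connected, and $\#E(G_b\cup G_c)=\#V(G_b\cup G_c)-1$ reduces to $\#E(G_b\cap G_c)=\#V(G_b\cap G_c)-1$, which for a subgraph of a tree means connectedness.
\item In the forward direction you should also note that once $E_b\cap E_c$ is forced onto $\pi_c$, any vertex of $\mathrm{supp}(b)\cap\mathrm{supp}(c)$ not among the $\gamma_i$'s would be isolated in $G_b\cap G_c$ and violate connectedness; this is what pins $\mathrm{supp}(b)\cap\mathrm{supp}(c)\subseteq\{\gamma_1,\dots,\gamma_r\}$ and then, by the same connectedness, forces it to be an initial segment.
\item For the count, your reduction to an ordered pair of vertices of $G_b$ is exactly the right bookkeeping: the pair $(\gamma_1,\gamma_l)$ determines the geodesic in $G_b^1$ (uniquely, since $G_b^1$ is a forest), hence the identification of $\gamma_1,\dots,\gamma_l$ with letters of $b$, and the remaining letters of $c$ are fresh.
\end{itemize}
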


The following two lemmas together give an upper bound on the number of equivalence classes corresponding to closed words via the corresponding FK sentences. 
\begin{lemma}[Lemma 7.7 in \cite{AZ05}]
	\label{lem:wordbyFK}
Let $\Gamma(k,l,m)$ denote the set of equivalence classes of FK sentences $a=[w_i]_{i=1}^{m}$ consisting of $m$ words such that $\sum_{i=1}^{m}l(w_i)=l$ and $wt(a)=k$. Then we have 
\begin{equation}\label{wordbyFK}
\#\Gamma(k,l,m) \le 2^{l-m} \binom{l-1}{m-1}k^{2(m-1)}.
\end{equation}
\end{lemma}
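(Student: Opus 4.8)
The plan is to enumerate the equivalence classes in $\Gamma(k,l,m)$ by assembling an FK sentence one word at a time, exploiting the fact that every prefix $[w_1,\dots,w_i]$ of an FK sentence $[w_1,\dots,w_m]$ is again an FK sentence whose new word has its first letter in the support of the preceding words. This reduces the count to (a) the number of equivalence classes of the individual FK words, supplied by Proposition~\ref{prop:FKcount}, and (b) the number of ways a new FK word can be glued onto a growing FK sentence, controlled by the final assertion of Lemma~\ref{lem:FKcon}.

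Concretely, I would first fix a composition $l = l_1 + \cdots + l_m$ with all $l_i \ge 1$ --- there are $\binom{l-1}{m-1}$ of these --- and bound, uniformly over the composition, the number of equivalence classes of FK sentences $[w_i]_{i=1}^m$ with $l(w_i)=l_i$ and $wt(a)=k$. The first word $w_1$ is an FK word of length $l_1$, so by Proposition~\ref{prop:FKcount} its equivalence class admits at most $\sum_r f(l_1,r) \le 2^{l_1-1}$ choices. Inductively, suppose the equivalence class of the FK sentence $b := [w_1,\dots,w_i]$ has been specified; since $\mathrm{supp}(b)\subseteq\mathrm{supp}(a)$ we have $wt(b)\le wt(a)=k$. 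The word $w_{i+1}$ is an FK word of length $l_{i+1}$ whose first letter lies in $\mathrm{supp}(b)$, so its equivalence class has at most $2^{l_{i+1}-1}$ choices, and by Lemma~\ref{lem:FKcon}, once the equivalence classes of $b$ and of $w_{i+1}$ are fixed there are at most $(wt(b))^2 \le k^2$ equivalence classes of $[w_1,\dots,w_{i+1}]$ compatible with them. Because every element of $\Gamma(k,l,m)$ with this composition arises from such a sequence of choices, multiplying gives a bound of $\prod_{i=1}^m 2^{l_i-1} \cdot \prod_{i=1}^{m-1} k^2 = 2^{l-m} k^{2(m-1)}$; this is independent of the composition, so summing over the $\binom{l-1}{m-1}$ compositions yields $\#\Gamma(k,l,m) \le 2^{l-m}\binom{l-1}{m-1}k^{2(m-1)}$, as claimed.

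I do not anticipate a substantive difficulty here: the argument is essentially bookkeeping built on the cited lemmas. The points needing care are that the sequential assembly is genuinely surjective onto $\Gamma(k,l,m)$ (which is exactly the content of the prefix-stability of FK sentences together with the placement rule for the first letter of each new word), that $wt(b)\le k$ at every stage so that the gluing factor can be replaced by $k^2$ uniformly, and that the per-composition estimate $2^{l-m}k^{2(m-1)}$ does not depend on the composition --- which is what collapses the sum over compositions into the single binomial coefficient $\binom{l-1}{m-1}$.
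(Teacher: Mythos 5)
Your argument is correct and is essentially the standard proof of this lemma from Anderson--Zeitouni (the paper itself only cites \cite{AZ05} without reproducing the proof): fix the composition of $l$ into $m$ positive parts ($\binom{l-1}{m-1}$ choices), bound the equivalence classes of each constituent FK word by $2^{l_i-1}$ via Proposition \ref{prop:FKcount}, and control each of the $m-1$ concatenation steps by the $(wt(b))^2\le k^2$ factor from Lemma \ref{lem:FKcon}, using prefix-stability of FK sentences for surjectivity. The bookkeeping $\prod_i 2^{l_i-1}\cdot k^{2(m-1)}=2^{l-m}k^{2(m-1)}$ is right, so no issues.
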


\begin{lemma}[Lemma 7.8 in \cite{AZ05}]
	\label{lem:mbdd}
For any FK sentence $a=[w_i]_{i=1}^{m}$, we have 
\begin{equation}\label{mval}
m =\#E_{a}^{1} - 2wt(a) + 2 + \sum_{i=1}^{m} l(w_i).
\end{equation}
\end{lemma}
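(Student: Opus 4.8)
The plan is to deduce \eqref{mval} from a double count of edge traversals, using only that $G_a$ is a tree and that an FK sentence traverses each of its edges either once or twice.

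First I would record two bookkeeping facts. Since $a = [w_i]_{i=1}^m$ is an FK sentence, its graph $G_a$ is a tree, so $\#E_a = \#V_a - 1 = wt(a) - 1$. Moreover, jointly the walks $w_1,\dots,w_m$ traverse no edge of $G_a$ more than twice (clause 2 of the definition of an FK sentence), while every edge of $E_a$ is traversed at least once by construction; hence each edge is traversed either exactly once or exactly twice. By the definition of the stem, the once-traversed edges are precisely those in $E_a^1$, so if $e_2$ denotes the number of twice-traversed edges then $\#E_a^1 + e_2 = wt(a) - 1$.

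Next I would count the total number $N$ of edge traversals performed by the sentence in two ways. Grouping by word, the walk $w_i$ has $l(w_i)$ letters and therefore makes $l(w_i) - 1$ steps, each traversing one edge, so $N = \sum_{i=1}^m (l(w_i) - 1) = \sum_{i=1}^m l(w_i) - m$. Grouping by edge, $N = \#E_a^1 + 2 e_2 = \#E_a^1 + 2\bigl(wt(a) - 1 - \#E_a^1\bigr) = 2\,wt(a) - 2 - \#E_a^1$. Equating the two expressions and solving for $m$ gives $m = \#E_a^1 - 2\,wt(a) + 2 + \sum_{i=1}^m l(w_i)$, which is exactly \eqref{mval}.

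There is no serious obstacle here; the identity is a purely combinatorial consequence of the FK conditions. The only point that deserves a word of care is the dichotomy ``each edge is traversed exactly once or exactly twice'' together with the identification of the once-traversed edges with $E_a^1$, both of which follow immediately from the definitions of an FK sentence and of its stem. Single-letter words, if present, contribute $l(w_i) - 1 = 0$ steps and do not affect any of the counts.
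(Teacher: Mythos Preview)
Your proof is correct: the double count of edge traversals, combined with the tree identity $\#E_a = wt(a)-1$ and the once/twice dichotomy from the FK definition, yields \eqref{mval} immediately. The paper does not supply its own proof of this lemma---it simply quotes it as Lemma 7.8 of \cite{AZ05}---so there is nothing to compare against here; your argument is exactly the standard elementary derivation.
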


\subsection{Unicyclic graphs, CLT word pairs and their enumeration}
\label{subsec:unicyclic}

As discussed earlier, enumeration of the CLT word pairs gives us the exact variance expression in Theorem \ref{thm:null}. 
In this part we shall present a few more concepts and finally give an explicit enumeration of the CLT word pairs based on {\FK} enumeration. 
Most of the concepts in this part can be found in Section 8 in \cite{AZ05}.  

\begin{definition}[Bracelets]
We say a graph $G=(V,E)$ is a \emph{bracelet} if there is an enumeration $\alpha_{1},\ldots,\alpha_r$ of $V$ such that 
\begin{equation}
E=\left\{  
\begin{array}{ll}
\{\{\alpha_1,\alpha_1\}\} & \text{if $r=1$}\\
\{\{\alpha_1,\alpha_2 \}  \} & \text{if $r=2$}\\
\{\{\alpha_1,\alpha_2\},\{\alpha_2,\alpha_3 \},\ldots, \{\alpha_{r-1},\alpha_{r}\},\{\alpha_{r},\alpha_1\}  \} & \text{if $r\ge 3$}.
\end{array}
\right.
\end{equation}
We call $r$ the \emph{circuit length} of the bracelet $G$.
\end{definition}
In this paper we shall ignore the bracelets corresponding $r=1$ since they correspond to the diagonal elements of $A_{\mathrm{cen}i}$, $i=1,2$, which are all zeros. 

\begin{definition}[Unicyclic graphs]
A graph $G=(V,E)$ is called \emph{unicyclic} if $\#V=\#E$. 
\end{definition}

In particular any bracelet of length $\neq 2$ is a unicyclic graph, while a bracelet of length $2$ is a tree.
The following proposition describes the structure of unicyclic graphs. 

\begin{proposition}[Proposition 8.2 in \cite{AZ05}]
	\label{prop:unicyclicgraph}
Let $G=(V,E)$ be a unicyclic graph. For each edge $e \in E$ put $G\backslash e = (V,E\backslash \{ e \})$. Let $Z$ be the subgraph of $G$ consisting of all $e \in E$ such that $G\backslash e$ is connected, along with all attached vertices. Let $r$ be the number of edges
of $Z$. Let $F$ be the graph obtained from $G$ by deleting all edges of $Z$. The following
statements hold:
\begin{enumerate}
\item $F$ is a forest with exactly $r$ connected components.
\item If $G$ has a degenerate edge, then $r = 1$.
\item If $G$ has no degenerate edge, then $r \ge 3$.
\item $Z$ meets each connected component of $F$ in exactly one vertex.
\item $Z$ is a bracelet of circuit length $r$.
\item For all $e \in E$ the following conditions are equivalent:
\begin{enumerate}
\item $G\backslash e$ is connected.
\item $G \backslash e$ is a tree.
\item $G \backslash e$ is a forest.
\end{enumerate}
\end{enumerate}
We call $Z$ the bracelet of $G$. We call $r$ the circuit length of $G,$ and each of the connected components of $F$ a pendant tree.
\end{proposition}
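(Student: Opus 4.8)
The plan is to deduce the whole structure from one elementary fact: a \emph{connected} graph $G$ with $\#V=\#E$ has a unique cycle. (Throughout I use that $G$ is connected, which holds for every graph $G_w$ attached to a word.) To establish this I would fix a spanning tree $T$ of $G$; since $T$ has $\#V-1$ edges while $G$ has $\#E=\#V$ edges, there is exactly one edge $e_0=\{x,y\}\in E\setminus E(T)$. Every cycle of $G$ must traverse some non-tree edge, hence must use $e_0$; but a cycle through $e_0$ runs from $x$ to $y$ within $T$, and in a tree there is only one such path, so every cycle of $G$ is the fundamental cycle $C$ of $e_0$. Consequently $e$ is a bridge (i.e.\ $G\backslash e$ is disconnected) exactly when $e\notin E(C)$, so the subgraph $Z$ in the statement --- the edges $e$ with $G\backslash e$ connected, together with their endpoints --- coincides with $C$, whence $\#E_Z=\#V_Z=r$, the length of $C$.

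Granting $Z=C$, items 2, 3 and 5 are immediate: a degenerate edge is itself a cycle, hence equals $C$, forcing $r=1$; if $G$ has no degenerate edge then $C$ is a genuine cycle in a graph with neither loops nor repeated edges, so $r\geq 3$; and $Z=C$ is by definition a bracelet of circuit length $r$. For item 1, the graph $F=(V,E\setminus E(C))$ has $\#V$ vertices, $\#E-r=\#V-r$ edges and no cycle, so it is a forest, and a forest on $\#V$ vertices with $\#V-r$ edges has exactly $r$ connected components. For item 6, $G\backslash e$ has $\#V$ vertices and $\#V-1$ edges, and for any such graph ``connected $\Rightarrow$ tree $\Rightarrow$ forest $\Rightarrow$ connected'' via the identity that a forest with $c$ components has $\#V-c$ edges; hence the three conditions are equivalent.

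The remaining claim, item 4, is the step I expect to be the main obstacle, because it is the only place where \emph{uniqueness} of the cycle must be used structurally rather than through edge counts. I would argue by contradiction: if some component $\mathcal{T}$ of $F$ contained two distinct vertices $u,v$ of $Z=C$, there would be a simple $u$--$v$ path $P$ inside $F$, automatically edge-disjoint from $C$; then the subgraph $P\cup C$ is connected and has $|E(P)|+r$ edges but at most $|E(P)|+r-1$ vertices, so its cycle space has dimension at least two, and since it sits inside $G$ this contradicts the fact that $G$ has cycle space of dimension $\#E-\#V+1=1$. Therefore each component of $F$ holds at most one vertex of $Z$; as $F$ has exactly $r$ components (item 1), $Z$ has $r$ vertices, and the vertex sets of $F$ and $G$ coincide, each component holds exactly one vertex of $Z$. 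The degenerate case $r=1$ is consistent: then $F=G\backslash e_0$ is connected (a loop is never a bridge), so it is a single component containing the unique vertex of the loop. What remains is only to verify the cycle-space monotonicity claim and the forest-counting identities used above, both routine.
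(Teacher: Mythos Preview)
Your argument is correct. The paper does not supply its own proof of this proposition; it is quoted verbatim as Proposition~8.2 of Anderson--Zeitouni and used as background, so there is no in-paper proof to compare against. Your route---fix a spanning tree, identify the unique non-tree edge and hence the unique cycle $C$, then read off $Z=C$ and deduce all six items from basic forest/cycle-rank bookkeeping---is the standard and cleanest way to establish the statement, and every step you outline goes through. The one assumption you flag, that $G$ is connected, is indeed implicit in the proposition (and always satisfied for the graphs $G_w$ to which the paper applies it), so your caveat is appropriate.
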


We shall see in the proof of Theorem \ref{thm:null} that we need to consider closed words $w$ such that $G_{w}$ is unicyclic. 
We call such words \emph{uniwords}. 
We now provide an explicit way to enumerate uniwords. 
This part of the proof deviates from \cite{AZ05}. 
The argument presented here gives us a unified way to calculate the covariances in \eqref{cov:null:odd} and \eqref{cov:null:even}, and to calculate the covariance between the signed cycles and the LSSs.

\begin{proposition}[Enumeration of uniwords]
	\label{prop:uniword}
Let $\mathfrak{W}_{m+1,r,t}$ denote the set of all closed words with letters taken from $\{1,\dots,n\}$, such that for any $w \in \mathfrak{W}_{m+1,r,t}$, $l(w)=m+1$, $wt(w)=t$ so that $2t-r=m$ and $G_w$ is a unicyclic graph with circuit length $r\ge 3$. 
Then for any $m = o(\sqrt{n})$, 
\begin{equation}
\frac{\#\mathfrak{W}_{m+1,r,t}}{n^{t}}= (1+o(1)) \frac{mf(m,r)}{r}.
\end{equation} 
Here $f(m,r)$ is as defined in (\ref{gen:f}) which by Proposition \ref{prop:FKcount} is the number of equivalence classes of FK words of length $m$ with acronyms of length $r$.
\end{proposition}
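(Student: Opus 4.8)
The plan is to parse each uniword via its FK syllabification and to use the structure of unicyclic graphs in Proposition~\ref{prop:unicyclicgraph} to reduce the counting problem to a product of a bracelet count and an FK-word count. Fix $w\in\mathfrak{W}_{m+1,r,t}$. Since $G_w$ is unicyclic with circuit length $r\ge 3$, Proposition~\ref{prop:unicyclicgraph} gives a canonical decomposition of $G_w$ into its bracelet $Z$ (a cycle of length $r$, meeting the rest of the graph in exactly $r$ vertices) and $r$ pendant trees $F_1,\dots,F_r$, each attached to $Z$ at one of these $r$ vertices. The walk $w$ traverses each of the $r$ bracelet edges exactly twice? — no: here we must recall that for a uniword arising from $\Tr$ of an even power, the bracelet edges are the unique edges whose removal keeps the graph connected, and the walk is forced (because $\#V=\#E$ and $w$ visits every edge at least twice, so $2(\#E)\le l(w)-1 = 2t-1$, forcing each edge to be visited exactly twice except possibly one). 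I would first establish this combinatorial rigidity: each edge of $G_w$ is visited exactly twice, so the walk, read cyclically, decomposes into an alternating sequence of ``excursions into a pendant tree'' and ``single steps along the bracelet.'' The pendant-tree excursions are precisely FK words (closed walks on a tree visiting each edge at most twice), and their first letters all lie in $\mathrm{supp}$ of the bracelet.

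Next I would set up the bijection-up-to-$o(1)$. Choosing the labels for $Z$: a bracelet of circuit length $r$ on a labelled vertex set of size $n$ can be placed in $(1+o(1))\, n^r/(2r)$ ways when $r=o(\sqrt n)$ (ordered $r$-tuples of distinct vertices, modulo the dihedral symmetry of the cycle; the ``$(1+o(1))$'' absorbs the distinctness constraint). Then the walk must enter the $r$ pendant trees in a cyclic order determined by the bracelet, and attaching an FK word at each of the $r$ marked vertices is exactly the operation enumerated in Proposition~\ref{prop:FKcount} and Lemma~\ref{lem:FKcon}: if the total number of vertices used by the $r$ pendant FK pieces (beyond their attachment points) is $t-r$, and the total length contributed is $m-r$ (since the $r$ bracelet edges contribute $2r$ to the walk length $2m$... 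I would recompute the bookkeeping carefully: $l(w)=m+1$, $wt(w)=t$, $\#E=t$, each of the $t$ edges visited twice gives walk length $2t+1 = m+1$, consistent with $2t-r=m$ only if $r=1$, so I must be more careful — the correct statement is that the bracelet contributes its $r$ edges and the pendant trees contribute $t-r$ edges, and the closed walk of length $m+1=2t-r+1$ visits the $t-r$ tree edges twice and the $r$ bracelet edges... this is where the arithmetic $2t-r=m$ pins down that exactly the right combination occurs). Summing the FK-word counts over the compositions of $m-r$ into $r$ ordered parts, using $\sum \prod f(\cdot,\cdot)$ and the generating-function identity \eqref{gen:f}, collapses to a single coefficient, yielding the factor $f(m,r)$; the remaining factor $m/r$ (rather than $1/(2r)$ from the bracelet symmetry) comes from the cyclic but not dihedral freedom in choosing where along the walk the bracelet traversal begins, combined with the $m$ choices of starting letter of the closed word $w$.

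Concretely, the key steps in order: (1) prove the rigidity lemma — every edge of $G_w$ is visited exactly twice, hence $w$ decomposes canonically into $r$ bracelet steps interleaved with $r$ pendant FK excursions; (2) define the map sending $w$ to (its bracelet with labels, a cyclic starting point, the ordered tuple of $r$ pendant FK words with their attachment data) and show it is a bijection onto tuples satisfying the obvious length/weight constraints; (3) count the bracelets: $(1+o(1))n^r$ ordered placements after accounting for distinctness, using $m=o(\sqrt n)$ so that $\binom{n}{r}r! = n^r(1+o(1))$ and likewise for the pendant vertices, so the total vertex count is $n^t(1+o(1))$; (4) count the pendant data via Proposition~\ref{prop:FKcount} and the convolution identity built into \eqref{gen:f}, extracting the coefficient $f(m,r)$; (5) assemble the symmetry factors to obtain $m/r$, so that $\#\mathfrak{W}_{m+1,r,t} = (1+o(1))\, n^t\, m f(m,r)/r$. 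The main obstacle I anticipate is step (1) together with the precise bookkeeping in step (4): one must show the pendant excursions are genuinely FK words with \emph{independent} label choices (no forced coincidences among the pendant trees beyond their distinct attachment points), and that the degenerate or near-degenerate configurations — where a pendant tree is empty, or two excursions would reuse a label — contribute only a $o(1)$ relative error, which is exactly where the hypothesis $m=o(\sqrt n)$ is used (there are $O(m^2)$ potential ``collisions,'' each costing a factor $O(1/n)$ in the count).
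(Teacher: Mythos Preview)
Your proposal contains a genuine error in the structural rigidity claim. You assert in step~(1) that ``every edge of $G_w$ is visited exactly twice,'' but your own arithmetic check already flags the inconsistency: with $\#E_w=t$ edges each visited twice the walk would have length $2t+1$, whereas $l(w)=m+1=2t-r+1$. The correct statement, forced by the constraint $2t-r=m$, is that each of the $r$ bracelet edges is visited \emph{exactly once} and each of the $t-r$ pendant-tree edges is visited exactly twice; this gives $r+2(t-r)=2t-r=m$ steps. You also import the hypothesis ``$w$ visits every edge at least twice,'' which is not in the proposition's statement and is in fact false for the bracelet edges. Until this is fixed, the decomposition in step~(1) (``$r$ bracelet steps interleaved with $r$ pendant FK excursions'') is unsupported, and the downstream bookkeeping in steps~(4)--(5) cannot be carried out as written.

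Your overall strategy --- decompose into bracelet plus pendant FK pieces and count by convolution --- is different from the paper's and can be made to work once the rigidity is corrected, but the paper's route is shorter. The paper defines a single multivalued map $\chi$ from FK words of length $m$ with acronym of length $r$ directly to $\mathfrak{W}_{m+1,r,t}$: given an FK word $w_{\mathrm{FK}}$ with acronym $(\alpha_1,\dots,\alpha_r)$, its ``base'' $w_{\mathrm{FK}}^B=(w_{\mathrm{FK}},\alpha_1)$ is a uniword (closing the word adds the last bracelet edge), and $\chi(w_{\mathrm{FK}})$ is the set of all $m$ cyclic rotations of this base. Conversely, for any uniword $w$ there are exactly $r$ cyclic rotations that place a bracelet vertex first with the next step into its pendant tree, each yielding a distinct FK word. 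This $m$-to-$r$ correspondence gives $\#\mathfrak{W}_{m+1,r,t}=(m/r)\cdot n(n-1)\cdots(n-t+1)f(m,r)$ immediately, with no convolution over pendant lengths and no separate dihedral symmetry accounting; the factor $m/r$ drops out of the forward/backward fiber sizes rather than from the heuristic you sketch. Your approach would require an explicit generating-function identity to collapse the $r$-fold convolution of Catalan counts into $f(m,r)$, which the paper's map sidesteps entirely.
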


\begin{proof}
The proof will be done by creating a multivalued map $\chi$ from the set of FK words of length $m$ having acronym of length $r$ to $\mathfrak{W}_{m+1,r,t}$. We shall enumerate exactly the cardinality of the forward and inverse image of every element. In this proof every FK word will be denoted by $w_{\mathrm{FK}}$ and any closed word will be denoted by $w$ to make the distinction.

First start with any FK word $w_{\mathrm{FK}}$ of length $m$ having acronym of length $r$. We at first construct the ``base" $w_{\mathrm{FK}}^{B} \in \mathfrak{W}_{m+1,r,t} $ of $w_{\mathrm{FK}}$. Let $(\alpha_1,\ldots,\alpha_r)$ be the acronym of $w_{\mathrm{FK}}$. From the proof of Lemma \ref{lem:FKembed}, it is easy to see that the first and the last letter of $w_{\mathrm{FK}}$ is given by $\alpha_1$ and $\alpha_r$ respectively. We take 
\begin{equation}\label{eqn:base}
w_{\mathrm{FK}}^{B}= (w_{\mathrm{FK}},\alpha_1).
\end{equation}
Observe that $w_{\mathrm{FK}}^{B}$ is a closed word of length $m+1$ and it has a bracelet of length $r$ formed by the acronym and removing the bracelet we are left with a forest where each edge in the forest has been traversed exactly twice by $w_{\mathrm{FK}}^{B}$. So the graph $G_{w_{\mathrm{FK}}^{B}}$ is unicyclic hence $w_{\mathrm{FK}}^{B}$ is a uinword. It is also easy to check that $2t-r=m$. So $w_{\mathrm{FK}}^{B} \in \mathfrak{W}_{m+1,r,t}$.

Before going into the construction of the map $\chi$, we introduce a useful notation. Let $w=[\alpha_i]_{i=1}^{l(w)}$ be a closed word. We denote $\check{w}$ to be the word dropping the last letter of $w$.  
Now we construct the multivalued map $\chi$ as follows:
\begin{equation}
\chi(w_{\mathrm{FK}})= \{ w \in  \mathfrak{W}_{m+1,r,t} |~ \exists ~ \sigma ~ \text{so that} ~ \check{w}^{\sigma}\alpha_{\sigma(1)}= w_{\mathrm{FK}}^{B} \}
\end{equation}
Here $\sigma$ is a cyclic permutation of $\{1,\ldots,m\}$ and for any word $w$ we have defined $w^{\sigma}$ in Definition \ref{def:cyclic}. As there are $m$ cyclic permutations $\sigma$ of $\{1,\ldots,m\}$, $\#\chi(w_{\mathrm{FK}})= m.$

Now we shall prove that for any given $w=[\alpha_{i}]_{i=1}^{m+1} \in \mathfrak{W}_{m+1,r,t} $, $\#\chi^{-1}(w)= r$. We start with any word $w \in \mathfrak{W}_{m+1,r,t}$ and drop the last letter to get $\check{w}$. Now consider the bracelet $Z_{w}$ in $G_{w}$. There are $r$ many vertices in $Z_{w}$ from the assumption and let the corresponding letters be $\{ \beta_{1},\ldots,\beta_{r}\}$. The set $\{ \beta_{1},\ldots,\beta_{r} \}$ has cardinality $r$ from the definition of $Z_{w}$. As a consequence, the letters $\beta_i$, $1\le i \le r$ are distinct. Now consider the cyclic permutation $\sigma_{i}$, $1\le i \le r$ such that $\sigma_{i}(1)=\beta_{i}$ and $\sigma_{i}(2)$ is a vertex in the pendent tree meeting $Z_{w}$ at position $\beta_{i}$ if it is not empty. However if the pendent tree meeting $Z_{w}$ at position $\beta_{i}$ is empty we take $\sigma_i$ to be such that $\sigma_{i}(1)=\beta_{i}$ and $\sigma_{i}(2) \in \{ \beta_1,\ldots,\beta_r\}$. From the condition $2t-r=m$ it follows that every edge in $Z_{w}$ has been traversed exactly once and every edge in the forest $F_{w}$ has been traversed exactly twice by the word $w_i$. As a consequence, there are exactly $r$ such permutations $\{\sigma_{i}\}_{1\le i \le r}$.  Consider the closed word $w_i= \check{w}^{\sigma_i}\alpha_{\sigma_i(1)}.$ Let $\beta^{(i,j)}$ be the $j$ th appearing letter of $\{ \beta_1,\ldots,\beta_r\}$ in the permutation $\sigma_i$. Observe that $\beta^{(i,1)}=\beta_i$. The construction of $\sigma_i$ compels the word $w_i$ to be of the following form 
\begin{equation*}
w_{i}= w^{(i,1)}w^{(i,2)}\ldots w^{(i,r)} \beta_{i}.
\end{equation*}
Here $w^{(i,j)}$, $1\le j\le r$ is a Wigner word corresponding to the pendent tree meeting $Z_{w}$ at position $\beta^{(i,j)}$. 
Now given any such $w_{i}$ consider the unique FK word 
\begin{equation*}
W_{\mathrm{FK},i}= w^{(i,1)}w^{(i,2)}\ldots w^{(i,r)}.	
\end{equation*}
All these words $W_{\mathrm{FK},i}$'s are distinct since their starting points are distinct. As a consequence, $\#\chi^{-1}(w)= r$.

There are $f(m,r)$ many equivalence classes corresponding to FK words of length $m$ having acronym of length $r$. So the total number of words corresponding to this class is 
\[
n(n-1)\ldots(n-t+1)f(m,r).
\]
Observing $t \le m$, we get
\[
\frac{\#\mathfrak{W}_{m+1,r,t}}{n^{t}}= (1+o(1)) \frac{mf(m,r)}{r} 
\]
for all $m = o(\sqrt{n})$ as declared.
\end{proof}

Now we state one more property about the CLT word pairs.
Its proof is straightforward and follows from the discussion on p.32 of \cite{AGZ}. We omit the details.
\begin{proposition}\label{prop:finalcltwordpair}
Let $a=[w,x]$ be a CLT word pair such that $G_a$ is not a tree. Then 
\begin{enumerate}
\item The graphs $G_w$ and $G_x$ are both unicyclic.
\item They have common bracelet $Z$.
\item Every edge in the bracelet $Z$ is traversed exactly once by both $w$ and $x$.
\item Let $F_w$ and $F_x$ be the forests corresponding to $G_w$ and $G_x$. Then the common vertices between $F_w$ and $F_x$ are subset of $Z$. In other words, $F_w$ and $F_x$ can't have any common edge.
\item Each edge in $F_w$ is traversed exactly twice by the word $w$ and each edge it $F_x$ is traversed exactly twice by the word $x$.
\end{enumerate}
\end{proposition}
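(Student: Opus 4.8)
The plan is to feed the structural dichotomy for CLT word pairs (Lemma~\ref{lem:expcltpair}) into a short parity argument about closed walks, and then read off the pendant-tree picture from Proposition~\ref{prop:unicyclicgraph}.

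First I would note that, since $G_a$ is not a tree, Lemma~\ref{lem:expcltpair} places us in the case $\#V_a=\#E_a$: thus $G_a$ is unicyclic, $\nu(e,w)+\nu(e,x)=2$ for every $e\in E_a$, and there is an edge $e_*$ with $\nu(e_*,w)=\nu(e_*,x)=1$. As $A_{\mathrm{cen}i}$ has zero diagonal, the words under consideration have no degenerate edges, so $G_a$ is a simple loopless graph; hence its unique cycle has length $r\ge 3$ and, by Proposition~\ref{prop:unicyclicgraph}, this cycle is precisely the bracelet $Z$ of $G_a$.

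The engine of the argument is the following parity observation. For any closed walk $\omega$ on a loopless graph, the set $O(\omega)$ of edges traversed an odd number of times has all even degrees, because at each vertex $v$ the quantity $\sum_{e\ni v}\nu(e,\omega)$ equals twice the number of visits of $\omega$ to $v$ (using that $\omega$ is closed). In a unicyclic graph a nonempty even subgraph must equal the unique cycle (each of its edges lies on one of its own cycles, and $Z$ is the only cycle available). I would apply this to $w$: the edge $e_*$ shows $O(w)\neq\emptyset$, so $O(w)=Z$; combined with $\nu(e,w)+\nu(e,x)=2$ this forces $\nu(e,w)=1$ on $Z$ (odd and at most $2$), hence $\nu(e,x)=1$ on $Z$, which is claim (3) and also gives $O(x)=Z$. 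Off $Z$ both counts are even with sum $2$, so $\{\nu(e,w),\nu(e,x)\}=\{0,2\}$ there. Writing $F_w:=E_w\setminus Z$ and $F_x:=E_x\setminus Z$, this says that $F_w$ and $F_x$ partition $E_a\setminus Z$ (so they share no edge), that each edge of $F_w$ is traversed exactly twice by $w$, and each edge of $F_x$ exactly twice by $x$ --- this gives claim (5) and the edge-disjointness part of (4).

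It then remains to establish (1), (2) and the vertex statement in (4). Since $G_w=(V_w,\,Z\cup F_w)$ contains the cycle $Z$ it is not a tree, and as a connected subgraph of the unicyclic $G_a$ it admits no second independent cycle; hence $G_w$ is unicyclic with unique cycle $Z$, so its bracelet is $Z$, and the same holds for $G_x$ --- this is (1) and (2), and it identifies $F_w$ and $F_x$ with the pendant forests of $G_w$ and $G_x$. Finally, if some vertex $v\notin Z$ were incident to an edge of $F_w$ and to an edge of $F_x$, then $v\in V_w\cap V_x$; inside the pendant forest $G_a\setminus Z$ there is a unique path $P$ from $v$ to $Z$, and since $G_w\setminus Z=F_w$ is a subforest of $G_a\setminus Z$ that contains $v$ and reaches $Z$, uniqueness forces $P\subseteq F_w$, and symmetrically $P\subseteq F_x$, contradicting $F_w\cap F_x=\emptyset$. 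I expect the only delicate point to be this last matching of ``the unique $v$-to-$Z$ path in the pendant forest of $G_a$'' against the pendant trees of $G_w$ and of $G_x$; everything else is immediate from the parity observation and Lemma~\ref{lem:expcltpair}.
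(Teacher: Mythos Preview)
Your proof is correct. The paper itself does not spell out an argument for this proposition---it simply says the proof ``is straightforward and follows from the discussion on p.~32 of \cite{AGZ}'' and omits the details---so there is no in-paper proof to compare against line by line.

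That said, your route is exactly in the spirit of what the paper is gesturing at: you invoke Lemma~\ref{lem:expcltpair} to land in the unicyclic case $\#V_a=\#E_a$ with $\nu(e,w)+\nu(e,x)=2$ everywhere, and then use a parity argument to pin down which edges carry the $1+1$ split. Your observation that the odd-multiplicity edge set $O(\omega)$ of a closed walk is an even subgraph, and that in a unicyclic (loopless) graph a nonempty even subgraph must coincide with the unique cycle $Z$, is a clean self-contained substitute for the forest parity principle (Lemma~\ref{lem:parity}) and is the right tool here. Claims (1)--(3), (5), and the edge-disjointness in (4) fall out immediately.

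The only step requiring any care is indeed the vertex statement in (4), and your argument there is sound: a vertex $v\in (V_w\cap V_x)\setminus Z$ lies in a pendant tree of $G_a$, the unique $v$-to-$Z$ path $P$ in that pendant tree must also be the unique $v$-to-$Z$ path inside the pendant forest of $G_w$ (since $F_w$ is a subforest of $G_a\setminus Z$ containing $v$ and touching $Z$), hence $P\subseteq F_w$, and symmetrically $P\subseteq F_x$, contradicting $F_w\cap F_x=\emptyset$. This is exactly the kind of reasoning the AGZ reference encodes, so your write-up would serve well as the omitted proof.
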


Our last result fixes a uniword $w$ and calculates the number of words $x$ such that $a=[w,x]$ is a CLT word pair.

\begin{lemma}[CLT word pairing]
	\label{lem:cltwordpaircnt}
Fix a uniword $w \in \mathfrak{W}_{m+1,r,t}$. Let $S_{m',t'}(w)$ be the set of words $x$ such that $x \in \mathfrak{W}_{m'+1,r,t'}$ and $a=[w,x]$ is a CLT word pair. 
Then for all $m'=o(\sqrt{n})$,
\[
\frac{\#S_{m',t'}(w)}{n^{t'-r}}=(1+o(1))\,2m'f(m',r).
\]
\end{lemma}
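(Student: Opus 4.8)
The plan is to leverage the structural description of CLT word pairs from Proposition \ref{prop:finalcltwordpair} together with the enumeration of uniwords in Proposition \ref{prop:uniword}. First I would invoke Proposition \ref{prop:finalcltwordpair}: since $w \in \mathfrak{W}_{m+1,r,t}$ is a uniword with $r \ge 3$, any word $x$ making $a=[w,x]$ a CLT word pair must itself be a uniword $x \in \mathfrak{W}_{m'+1,r,t'}$, and $G_w$ and $G_x$ must share the \emph{same} bracelet $Z$ of circuit length $r$, each edge of $Z$ being traversed exactly once by both $w$ and $x$. Moreover the pendant forests $F_w$ and $F_x$ share no edges, their only possible common vertices lying in $Z$. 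So to build $x$ from $w$, the bracelet $Z$ is already fixed (it is determined by $w$), and one must (a) choose, for each of the $r$ vertices of $Z$, a Wigner word (i.e.\ a pendant tree with a distinguished root traversed twice) hanging off that vertex, with total weight/length parameters matching $t'$ and $m'$, and (b) choose a starting point and an orientation/cyclic rotation for the closed walk $x$. The key bookkeeping identity is $m' = 2t' - r$, exactly analogous to $m = 2t - r$ for $w$.

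The main computation is then to count the ways of decorating the fixed bracelet $Z$ with pendant Wigner words of prescribed total length $m'$ summed over $r$ positions, and then to count the embeddings into $\{1,\dots,n\}$. This is the same combinatorial skeleton that appears in the proof of Proposition \ref{prop:uniword}: the decorated bracelet, read as a closed walk, is (up to a base point and a cyclic rotation) in bijection with an FK word of length $m'$ with acronym of length $r$, of which there are $f(m',r)$ equivalence classes by Proposition \ref{prop:FKcount}. The multivalued map $\chi$ used there contributes a factor $m'$ from the $m'$ cyclic permutations, while the inverse-image count $r$ is now \emph{absent}: because $Z$ is already pinned down by $w$, the $r$ distinct rotations that played the role of choosing which bracelet vertex starts the walk are no longer free — instead the base point of $x$ can be any of the $l(x)-1 = m'$ positions, and the orientation of traversing $Z$ can be either of $2$ directions (this is where the extra factor of $2$ comes from, since $Z$ has $r\ge 3$ and is traversed once in each direction to close up $x$ consistently with the common-bracelet requirement). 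Combining, the number of equivalence classes of admissible $x$ is $(1+o(1))\,2\,m'\,f(m',r)$. For the embedding count: once the equivalence class is chosen, $x$ has $t'$ distinct letters, of which $r$ are forced to coincide with the bracelet vertices of $w$ (already labeled) and the remaining $t'-r$ are new labels from $\{1,\dots,n\}$ not yet used — giving $n(n-1)\cdots(n-(t'-r)+1) = (1+o(1))\,n^{t'-r}$ choices, valid uniformly for $m' = o(\sqrt n)$ since $t' \le m'$. Multiplying and dividing by $n^{t'-r}$ yields the claimed asymptotic.

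The step I expect to be the main obstacle is pinning down the exact constant, i.e.\ justifying the factor $2m'$ rather than $m'$ or $rm'$ or something else. This requires care about which structures are being counted up to equivalence versus with labels, and about exactly which choices are free once the common bracelet $Z$ is fixed by $w$. Concretely, I would argue that a word $x$ with $a=[w,x]$ a CLT word pair is uniquely reconstructed from: the cyclic sequence of $r$ pendant Wigner words hung on the $r$ (labeled) bracelet vertices, read in one of the two cyclic orders around $Z$, together with a choice of which of the $m'$ letters of $x$ is its first letter. The $2$ comes from the two possible orientations around the bracelet (which give genuinely different words $x$ because the pendant trees sit at distinct, labeled vertices), and the $m'$ from the base point; the FK enumeration $f(m',r)$ handles the decoration itself. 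I would double-check this against the $r=3$ base case and against the structure established in Proposition \ref{prop:uniword}, where the asymmetry between the forward count $m$ and the backward count $r$ already isolates precisely these two sources of multiplicity. Once the constant is nailed down, the rest — the embedding count and the $o(1)$ error control under $m'=o(\sqrt n)$ — is routine and parallels Proposition \ref{prop:uniword} verbatim.
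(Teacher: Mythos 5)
Your proposal is correct and follows essentially the same route as the paper: reduce via the structure of CLT word pairs to uniwords sharing the fixed bracelet of $w$, count the admissible decorations as FK words of length $m'$ with the prescribed acronym (giving $f(m',r)$ equivalence classes and $(1+o(1))n^{t'-r}$ labelings of the new vertices), and recover $x$ from such an FK word by choosing a base point ($m'$ choices) and an orientation around the bracelet (factor $2$), with the absence of the factor $r$ from Proposition \ref{prop:uniword} explained exactly as you do, by the acronym being pinned down by $w$. Your identification of the $2m'$ constant and the no-overcounting argument match the paper's proof.
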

\begin{proof}
Before going into proof at first we observe that as $2t'-r=m'$, given $r$ and fixing $m'$, $t'$  automatically fixed.

Consider the graph $G_{w}$ with bracelet $Z_{w}$. The word $w$ admits a walk on the edges of $G_{w}$. Let $(\beta_1,\ldots,\beta_{r})$ be the vertices in $Z_w$ \emph{ordered} according to their exploration by the walk corresponding to $w$. We consider all FK words of length $m'$ and weight $t'$ $w_{\mathrm{FK}}$ such that $V_{w} \cap V_{w_{\mathrm{FK}}}= V_{Z_{w}}$. Here $V_{Z_{w}}$ is the set of vertices of the graph $Z_{w}$. We denote this set of FK words by $F_{m',t'}(w)$. There are total $f(m',r)$ \emph{equivalence classes} of such words.  
Since we have fixed the acronym and the word $w$, the number of possible choices of such $w_{\mathrm{FK}}$ is given by 
\[
\#F_{m',t'}(w)= f(m',r)(n-t)\ldots(n-t-t'+r+1)= f(m',r) (1+o(1)) n^{t'-r}.
\] 
For any FK word $w_{\mathrm{FK}}$ let us recall its base $w_{\mathrm{FK}}^{B}$ in $\mathfrak{W}_{m'+1,r,t'}$ from (\ref{eqn:base}). Let $w=[\alpha_i]_{i=1}^{l(w)}$ be a closed word. We denote $\check{w}$ to be the word dropping the last letter of $w$.
Now construct the set $S_{m',t'}(w)$ as follows
\[
S_{m',t'}(w)=\cup_{w_{\mathrm{FK} \in F_{m',t'}(w)} }\{x \in \mathfrak{W}_{m'+1,r,t'} ~|~ \exists ~ \sigma ~ \text{so that} ~ \check{w}^{\sigma}\alpha_{\sigma(1)}= w_{\mathrm{FK}}^{B} \}.
\]  
Here $\sigma$ is  either a cyclic permutation of $\{ 1,\ldots, m\}$ or its mirror image. It is easy to observe that there are $2m'$ such $\sigma$'s.

We now prove that there is no over-counting in $S_{m',t'}$. This trivially follows from the proof of Proposition \ref{prop:uniword}. Since the distinct FK words in the inverse image $\chi^{-1}(x)$ of any word $x \in S_{m',t'}(w)$ will have acronyms such that one is a non-trivial cyclic permutation of other. However we only considered FK words corresponding to fixed acronym $(\beta_1,\ldots,\beta_r)$. Now observe that these are the only possible choices of $x$ so that $[w,x]$ is a CLT word pair. Hence the result is proved.  
\end{proof}


\section{Proofs of main results}
\label{subsec:proof}

We shall first prove part (i)--(iii) of Theorem \ref{thm:null}. 
Then we shall prove part (i)--(iii) of Theorem \ref{thm:alt} and finally we shall come back to prove part (iv) of Theorem \ref{thm:null} and part (iv) of Theorem \ref{thm:alt}. The proof of all the subsequent results are given after the completion of the proofs of Theorem \ref{thm:null} and Theorem \ref{thm:alt}.
The proofs of Theorem \ref{thm:llslike} and Proposition \ref{prop:sing} are omitted as they follow directly from Theorems \ref{thm:alt}--\ref{thm:cyclestotrace}.
Before we proceed, we quote the following result on the joint asymptotic normality of signed cycles under both the null and local alternatives, which will be used repeatedly in the rest of this section.
Throughout the rest of this section, we focus on the assortative case of $p_n > q_n$ when proving results under local alternatives, and the proofs are essentially the same for the disassortative case of $p_n<q_n$ due to the second part of the following proposition. 

\begin{prop}[\cite{Ban16}]
	\label{prop:signdistr}
Suppose that as $n\to\infty$, $n\pnav\to\infty$ and $c$ and $t$ are constants.
Then the following results hold:
\begin{enumerate}[(i)]
\item Under $P_{0,n}$, for any $3\le k_1 < \ldots< k_l=o(\log(np_{n,\mathrm{av}}))$, 
\begin{equation}\label{dist:nullcycles}
\left(\frac{C_{n,k_1}(G)}{\sqrt{2k_1}},\ldots, \frac{C_{n,k_l}(G)}{\sqrt{2k_l}}\right) \stackrel{d}{\to} N_l(0,I_l).
\end{equation}

\item Under $P_{1,n}$, for any $3\le k_1 < \ldots< k_l=o (\min(\log(np_{n,\mathrm{av}}),\sqrt{\log(n)}) )$, if $p_n>q_n$,
\begin{equation}\label{dist:altcylces}
\left(\frac{C_{n,k_1}(G)-\mu_1}{\sqrt{2k_1}},\ldots, \frac{C_{n,k_l}(G)-\mu_l}{\sqrt{2k_l}}\right) \stackrel{d}{\to}
N_l(0,I_l)
\end{equation}
where $\mu_i = t^{k_i}$ for $1\le i \le l$.
If $p_n<q_n$, the conclusion holds with $\mu_i = (-t)^i$ for all $i$.
\end{enumerate}
\end{prop}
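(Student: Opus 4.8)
The plan is to follow the method-of-moments / Füredi--Komlós route, treating the signed cycle as a normalized sum over simple-cycle words and peeling off its (deterministic) mean from a ``fully centered'' part that behaves exactly as in the null. Write $\lambda_n = np_{n,\mathrm{av}}(1-p_{n,\mathrm{av}})$, so that $C_{n,k}(G) = \lambda_n^{-k/2}\sum_w X_w$ with $X_w = \prod_{a=0}^{k-1}(x_{i_a,i_{a+1}}-p_{n,\mathrm{av}})$ and $w = (i_0,\dots,i_{k-1},i_0)$ ranging over closed words of length $k+1$ with pairwise distinct letters; each such $w$ has $G_w$ a simple $k$-cycle, so $wt(w)=k=\#E_w$, and there are $n(n-1)\cdots(n-k+1)$ of them. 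Under a local alternative I would condition on the labels and set $p_{i,j}=\E[x_{i,j}\mid\sigma]$, so that $p_{i,j}-p_{n,\mathrm{av}} = \tfrac12(p_n-q_n)\,\varepsilon_i\varepsilon_j$ with $\varepsilon_u = \pm 1$ encoding the block of $u$. Expanding each factor of $X_w$ as $(x_{i_a,i_{a+1}}-p_{i_a,i_{a+1}}) + (p_{i_a,i_{a+1}}-p_{n,\mathrm{av}})$ splits $C_{n,k}(G)$ into $2^k$ terms indexed by the subset $S$ of cycle edges that remain ``centered''. The $S=\varnothing$ term is deterministic:
\[
\lambda_n^{-k/2}\sum_w \prod_{a=0}^{k-1}(p_{i_a,i_{a+1}}-p_{n,\mathrm{av}}) = \lambda_n^{-k/2}\Big(\tfrac{p_n-q_n}{2}\Big)^{k}\, n(n-1)\cdots(n-k+1),
\]
since $\prod_a\varepsilon_{i_a}\varepsilon_{i_{a+1}}=1$ around any cycle; using $(p_n-q_n)^2=c(a_n+b_n)/n^2$ and $p_{n,\mathrm{av}}\to p$ this equals $(1+o(1))t^{k_i}$, and $(1+o(1))(-t)^{k_i}$ when $p_n<q_n$, which is the claimed mean $\mu_i$. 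Under $\mathbb{P}_{0,n}$ this term is absent.

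Next I would treat the ``fully centered'' term $\widetilde{C}_{n,k} := \lambda_n^{-k/2}\sum_w \prod_a(x_{i_a,i_{a+1}}-p_{i_a,i_{a+1}})$, which under $\mathbb{P}_{0,n}$ is exactly $C_{n,k}(G)$. Joint asymptotic normality of $\{\widetilde{C}_{n,k_i}/\sqrt{2k_i}\}$ would follow from the method of moments (Lemma \ref{lem:mom}) and Wick's formula (Lemma \ref{lem:wick}): expanding $\E[\prod_j \widetilde{C}_{n,k_{\pi(j)}}]$ as a sum over sentences $a=[w_j]$ of cycle-words, independence and centering of the edge variables leave only weak CLT sentences, so by Proposition \ref{prop:cltrep} the dominant terms are CLT sentences, whose words are perfectly matched in pairs. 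For a matched pair $[w,x]$ of cycle-words, Lemma \ref{lem:expcltpair} (necessarily the case $\#V=\#E$, since $G_w$ already contains a cycle) forces $G_w=G_x$ to be a common $k$-cycle, $k_w=k_x$, with both words traversing every edge once; there are $2k$ such $x$ per $w$, each contributing $\lambda_n^{-k}\cdot n(n-1)\cdots(n-k+1)\cdot 2k\cdot\prod_e \E[(x_e-p_e)^2] \to 2k$, while cross-length pairs contribute $0$. This yields the moments of independent $N(0,2k_i)$, and Carleman's condition, Lemma \ref{lem:mom}(ii), upgrades this to convergence in law. Under a local alternative the identical computation holds conditionally on $\sigma$ for \emph{every} label configuration, since the $x_{i,j}-p_{i,j}$ remain independent and centered with $\var = p_{i,j}(1-p_{i,j}) = (1+o(1))\lambda_n/n$ uniformly ($c$ constant forces $p_n/q_n\to1$); hence $\widetilde{C}_{n,k}$ has the null limit unconditionally, and the vector converges jointly.

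It then remains to show that the sum of the ``mixed'' terms, those with $1\le \#S\le k-1$, is $o_p(1)$. Since within one cycle every edge occurs once, each such term has mean zero, so I would bound its second moment by a Füredi--Komlós estimate: pairing two cycle-words $w,w'$ with edge subsets $S,S'$, a nonvanishing contribution needs every edge of $S\cup S'$ covered at least twice, which forces heavy overlap of $w$ and $w'$, reduces the number of free vertices, and---after substituting $(p_n-q_n)^2 \asymp p_{n,\mathrm{av}}/n$---produces a net negative power of $n$. Summing over the $\le 2^k$ values of $\#S$ and over the relevant configurations (controlled by counting bounds in the spirit of Lemma \ref{lem:appendix}, with higher moments where needed) gives $o(1)$ once $k=o(\log(np_{n,\mathrm{av}}))$, with the additional $o(\sqrt{\log n})$ ceiling under the alternative absorbing combinatorial factors of the form $(C_1k)^{C_2 k^2}$ arising in those moment estimates. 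Combining the three pieces by Slutsky gives $C_{n,k_i}(G) = \mu_i\mathbf{1}\{H_1\} + \widetilde{C}_{n,k_i} + o_p(1)$ and hence the stated CLTs; in the disassortative case only the sign of $\mu_i$ changes.

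The hard part is this last step: controlling the mixed terms uniformly over diverging $k$ requires sharp enumeration of the relevant word and sentence equivalence classes together with careful tracking of the extra conditional-mean contributions under the alternative, and it is precisely these estimates that dictate the growth restrictions on $k$. By contrast, after the reduction to cycle-words the first two steps are essentially the specialization of the Anderson--Zeitouni machinery \cite{AZ05} recalled in the appendix.
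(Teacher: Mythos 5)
Your proposal is correct and follows essentially the same route as the source: the paper itself does not prove Proposition \ref{prop:signdistr} but quotes it from \cite{Ban16}, and your argument is precisely the specialization to simple-cycle words of the word/sentence and F\"uredi--Koml\'os machinery that the paper uses for Theorems \ref{thm:null} and \ref{thm:alt} (the deterministic term with $\prod_a \varepsilon_{i_a}\varepsilon_{i_{a+1}}=1$ giving the mean $t^{k}$, the fully centered part reduced to CLT word pairs sharing the entire $k$-cycle and hence contributing $2k\,\delta_{k,k'}$, and the mixed terms killed by a second-moment bound as in the analysis of $V_{n,w}$). One small correction: the combinatorial factors absorbed by the $k=o(\sqrt{\log n})$ ceiling are of the form $(C_1k)^{C_2k}$ (as in Lemma \ref{lem:appendix}), not $(C_1k)^{C_2k^2}$; the latter would not in general be absorbed under that growth condition.
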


\subsection{Proof of parts (i)--(iii) of Theorem \ref{thm:null}}
Throughout this subsection, all expectation and variance are taken under $\mathbb{P}_{0,n}$.
 
\subsubsection{Proof of part (i)}
We start with a generic $k=o(\log(n\pnav))$.
Observe that 
\begin{equation}\label{traceexp}
\Tr(A_{\mathrm{cen1}}^{2k+1})= \left(\frac{1}{n\pnav(1-\pnav)}\right)^{\frac{2k+1}{2}}\sum_{w:l(w)= 2k+2 ~ \& ~ w ~\text{closed}} \left[X_{w}\right].
\end{equation}
Here any word $w$ is an ordered pair $(i_0,\ldots,i_{2k+1})$ where the numbers $i_{j}\in \{  1,2,\ldots, n\}$ for $0\le j \le 2k+1$, $i_0 = i_{2k+1}$, and we define $X_{w}:= \prod_{j=0}^{2k}\left(x_{i_j,i_{j+1}}- \pnav\right).$
The proof is divided into following two steps: (1) we figure out a subset of words in the summation which matters for the asymptotic distribution;
(2) we apply the method of moments spelled out in Section \ref{sec:overview} to summation over that subset. 

\smallskip

\noindent\textbf{Step 1:}
At first we prove that the random variable $\Tr(\Acenone^{2k+1})$ does not require any additional centering.
Observe that if $\E[X_{w}] \neq 0$, all the edges in $G_{w}=(V_{w},E_{w})$ have been traversed at least twice. Since $l(w)=2k+2$, this will imply that 
\begin{equation}
\begin{split}
\#E_{w} \le \frac{2k+1}{2}
\quad \Rightarrow \quad  \#E_{w} \le k.
\end{split}
\end{equation}
On the other hand from Lemma \ref{lem:parity},  $G_{w}$ cannot be a tree as the total number of edge traversals on $G_{w}$ by the word $w$ is odd. So $\#E_{w} \ge \#V_{w}$. This forces $\#V_w \le k$.  Denote the set of such words by $\mathrm{No}_{k}$. 
We shall prove that the contribution of these words is negligible. 
In particular,
\begin{equation}\label{NO}
\begin{split}
&\E\left[\left(\frac{1}{n\pnav(1-\pnav)}\right)^{\frac{2k+1}{2}} \sum_{w \in \mathrm{No}_{k} } X_{w}\right]^2\\
&= \left(\frac{1}{n\pnav(1-\pnav)}\right)^{2k+1} \sum_{w,w' \in \mathrm{No}_{k}} \E[X_{w}X_{w'}]\\
&\le  \left(\frac{1}{n\pnav(1-\pnav)}\right)^{2k+1} \sum_{a=[w,w']: w,w' \in \mathrm{No}_{k} } \pnav^{\# E_{a}} .
\end{split}
\end{equation}
Here $a$ is the two word sentence obtained by concatenating $w$ and $w'$. Now there can be two cases.

\underline{Case 1: The words $w_1$ and $w_2$ share an edge.} 
In this case, $a$ is a weak CLT sentence. So we can apply Lemma \ref{lem:appendix} with $m=2$ and $l_1=l_2=2k+2$
to get that the sum in this case is bounded by 
\begin{equation}\label{term1}
\begin{split}
&\left(\frac{1}{n\pnav(1-\pnav)}\right)^{2k+1}
\sum_{\zeta=1}^{2k} n^{\zeta}\pnav^{\zeta} 2^{4k+4} (C_1 k)^{2C_2}(4k+4)^{6(2k+2-\zeta)}\\
& \le \left(\frac{1}{(1-\pnav)}\right)^{2k+1}2^{4k+4}(C_1 k)^{2C_2}(4k+4)^{6}\sum_{\zeta=1}^{2k} \left(\frac{4k+4}{n\pnav}\right)^{6(2k+1-\zeta)}
\end{split}
\end{equation}
where $C_1$ and $C_2$ are known constants. 

Observe that $2k+1-\zeta>1$. 
As a consequence,
the R.S. of (\ref{term1}) is a geometric sum on $\big((\frac{4k+4}{n\pnav} )^{6} \big)^{i}$ with lowest index being $1$. 
We also have $(\frac{4k+4}{n\pnav})^{6} \to 0$ by the assumption $k=o(\log(n\pnav))$. 
As a consequence, the R.S. of (\ref{term1}) can be bounded by 
\begin{equation}\label{term1s}
\left(\frac{1}{(1-\pnav)}\right)^{2k+1}2^{4k+4} (C_1 k)^{2C_2}(4k+4)^{6} C_3 \left(\frac{4k+2}{n\pnav}\right)^{6}.
\end{equation}
Here $C_3$ is another known constant. It is easy to see (\ref{term1s}) goes to zero when $k=o(\log(n\pnav))$.

\underline{Case 2: The words $w_1$ and $w_2$ don't share an edge.}
Let $wt(w_1)=\zeta_1$ and $wt(w_2)=\zeta_2$.  
We shall apply Lemma \ref{lem:bounded} in this case. 
Since both $\zeta_1$ and $\zeta_2$ are less than or equal to $k$. 
The equation $2k+1> 2\zeta-2$ is trivially satisfied. 
Now from Lemma \ref{lem:bounded} a crude upper bound to the number of sentences $a=[w_1,w_2]$ such that $w_1$ and $w_2$ don't share an edge such that $wt(a)=\zeta$ is given by 
\begin{equation}
\begin{split}
&\sum_{\zeta_1} \sum_{\zeta_2= \zeta-\zeta_1} n^{\zeta} 2^{2k+1} (2k)^{3(2k+1-2\zeta_1+2)} \times 2^{2k+1} (2k)^{3(2k+1-2\zeta_2+2)}\\
& ~~~~~ =\sum_{\zeta_1} \sum_{\zeta_2=\zeta-\zeta_{1}} n^{\zeta}2^{4k} (2k)^{3(4k-2\zeta+4)}
 \le n^{\zeta} \zeta^2 2^{4k+2}(2k)^{6(2k-\zeta+2)}.
\end{split}
\end{equation}
Here the factor $\zeta^2$ comes due to the sum. 
Consequently, the sum in this case is bounded by 
\begin{equation}\label{term2}
\begin{split}
&\left(\frac{1}{n\pnav(1-\pnav)}\right)^{2k+1}\sum_{\zeta=1}^{2k} n^{\zeta}\pnav^{\zeta} \zeta^2 2^{4k+2}(2k)^{6(2k-\zeta+2)}\\
&\le \left( \frac{1}{1-\pnav}\right)^{2k+1}\sum_{\zeta=1}^{2k} 2^{4k+2}k^{2}(2k)^{6}\left(\frac{2k}{n\pnav}\right)^{6(2k+1-\zeta)}.
\end{split}
\end{equation}
Now (\ref{term2}) can be analyzed similarly as (\ref{term1}) to get that (\ref{term2}) goes to $0$ also.
This forces the first expression of (\ref{NO}) to go to $0$. As a consequence, we can simply neglect the words in $\mathrm{No}_{k}$.
In particular, any limiting distribution (if exists) of $\Tr(\Acenone^{2k+1})$ is same as the limiting distribution of the following random variable 
\begin{equation}\label{def:y}
Y_{n,2k+1}= \left(\frac{1}{n\pnav(1-\pnav)}\right)^{\frac{2k+1}{2}}\sum_{w \notin \mathrm{No}_{k}} X_{w}.
\end{equation}

\smallskip

\noindent 
\textbf{Step 2:}
Now we prove the joint asymptotic normality of 
$$
\Sigma^{-\frac{1}{2}}
\big(\Tr(A_{\mathrm{cen1}}^{2k_1+1}),\ldots,\Tr( A_{\mathrm{cen1}}^{2k_l+1}) \big).
$$
In particular, we are to prove the following:
There exists random variables $Z_1,\ldots,Z_l$ such that for any fixed $m$
\begin{equation}\label{eqn:lim2}
\lim_{n \to \infty}\E[R_{n,1}\ldots R_{n,m}]\to\left\{
\begin{array}{ll}
  \sum_{\eta} \prod_{i=1}^{\frac{m}{2}} \E[Z_{\eta(i,1)}Z_{\eta(i,2)}] & ~ \text{for $m$ even}\\
  0 &~ \text{for $m$ odd.}
\end{array}
\right.
\end{equation}
Here $R_{n,i} \in \{Y_{n,2k_1+1},\ldots,Y_{n,2k_l+1} \}$ and $\eta$ is a partition of $\{1,2\ldots,m \}$ into $\frac{m}{2}$ blocks such that each block contains exactly two elements.
First observe that (\ref{eqn:lim2}) will simultaneously imply part (i) and (ii) of Lemma \ref{lem:mom}. Implication of (i) is obvious. However, for (ii) one can take $R_{n,i}$'s to be all equal and from Wick's formula (Lemma \ref{lem:wick}) the limiting distribution of $R_{n,i}$'s is normal. It is well known that normal random variables satisfy Carleman's condition. 

Note that 
\begin{equation}\label{jointmom}
\E[R_{n,1}\ldots R_{n,m}]= \left(\frac{1}{n\pnav(1-\pnav)}\right)^{\sum_{i=1}^{m} \frac{l_{i}-1}{2}}\sum_{w_1\ldots w_m} \E\left[ X_{w_1}\ldots X_{w_m}\right].
\end{equation}
Here $w_{i}$ is a closed word with $l_i=l(w_i)=2k+2$, not belonging to $\mathrm{No}_{k}$ if $R_{n,i}= Y_{n,2k+1}.$
We start with any generic $X_{w_1}\ldots X_{w_m}$. 
We at first prove 
\[
\E\left[ X_{w_1}\ldots X_{w_m}\right] = 0 
\]
if the sentence $a=[w_i]_{i=1}^{m}$ is not a weak CLT sentence. If $a$ is not a weak CLT sentence, then there is at least one edge in $G_{a}$ which is traversed exactly once by the sentence $a$. This means there is at least one random variable in the product $X_{w_1}\ldots X_{w_m}$ which has appeared exactly once. Since $X_{w_1}\ldots X_{w_m}$ is product of independent mean $0$ random variable, we have 
\[
\E\left[ X_{w_1}\ldots X_{w_m}\right] = 0.
\]

Now let $\mathcal{A}_{m,\zeta}$ be the set of weak CLT sentences obtained by concatenating $m$ words such that the $i$th word has length $l_i$ and $wt(a)=\zeta$ for any $a \in \mathcal{A}_{m,\zeta}$. 
Proposition \ref{prop:cltrep} 
leads to
\[
wt(a) \le \sum_{i=1}^{m} \frac{l_{i}-1}{2}.
\]
As a consequence, we can write (\ref{jointmom}) as 
\begin{equation}\label{clt:verify}
\begin{split}
\E[R_{n,1}\ldots R_{n,m}]= 
\left(\frac{1}{n\pnav(1-\pnav)}\right)^{\sum_{i=1}^{m} \frac{l_{i}-1}{2}} 
\sum_{\zeta=1}^{\sum_{i=1}^{m} \frac{l_{i}-1}{2}} \sum_{a=[w_{i}]_{i=1}^{m} \in \mathcal{A}_{m,\zeta}} \E\left[ X_{w_1}\ldots X_{w_m}\right].
\end{split}
\end{equation}

We now show that only CLT sentences matter (those s.t.~$\zeta = \sum_{i=1}^{m} \frac{l_{i}-1}{2}$) on the right side of the last display asymptotically. 
To this end, fix any weak CLT sentences $a\in \mathcal{A}_{m,\zeta}$. 
For any edge $e=\{i,j\}$ in the graph $G_a$, we shall denote the random variable $x_{i,j}-\pnav$ by $x_{e}$. Since $|x_{i,j}-\pnav|\le 1$, we have for any power $b \ge 2$, 
\[
\E |x_{i,j}-\pnav|^{b} \le \E |x_{i,j}-\pnav|^{2} = \pnav(1-\pnav).
\] 
As a consequence, we have 
\[
\E  |X_{w_1}\ldots X_{w_m}|  \le \left(\pnav(1-\pnav)\right)^{\# E_{a}} \le \left(\pnav(1-\pnav)\right)^{\#V_{a}} .
\]
Here $V_{a}$ and $E_{a}$ denote the vertex and the edge set of the graph $G_{a}$ respectively. The second inequality follows from the fact $\#E_{a} \ge \# V_{a}$ as $l(w_i)$ is even for all $1\le i \le m $.
As a consequence, 
\begin{equation}\label{bddweakclt}
\begin{split}
&\left(\frac{1}{n\pnav(1-\pnav)}\right)^{\sum_{i=1}^{m} \frac{l_{i}-1}{2}} \sum_{1 \le \zeta <\sum_{i=1}^{m} \frac{l_{i}-1}{2}} \sum_{a \in \mathcal{A}_{m,\zeta}} \E  \left|X_{w_1}\ldots X_{w_m}\right| \\
& \le \left(\frac{1}{n\pnav(1-\pnav)}\right)^{\sum_{i=1}^{m} \frac{l_{i}-1}{2}} \sum_{1 \le \zeta <\sum_{i=1}^{m} \frac{l_{i}-1}{2}} \sum_{a \in \mathcal{A}_{m,\zeta}} \left(\pnav(1-\pnav)\right)^{\zeta}\\
&= \left(\frac{1}{n\pnav(1-\pnav)}\right)^{\sum_{i=1}^{m} \frac{l_{i}-1}{2}} \sum_{1 \le \zeta <\sum_{i=1}^{m} \frac{l_{i}-1}{2}}   \left(\pnav(1-\pnav)\right)^{\zeta} \# \mathcal{A}_{m,\zeta}.
\end{split}
\end{equation}
Now we use Lemma \ref{lem:appendix} to get that 
\[
\# \mathcal{A}_{m,\zeta} \le n^{\zeta} 2^{\sum_{i} l_i}\left(C_1\sum_{i}l_i\right)^{C_2m}
\left(\sum_{i}l_i\right)^{3(\sum_{i}l_i-2\zeta)}.
\] 
As a consequence, the first expression in (\ref{bddweakclt}) is bounded by
\begin{equation}\label{bddwakcltII}
\begin{split}
&\left(\frac{1}{n\pnav(1-\pnav)}\right)^{\sum_{i=1}^{m} \frac{l_{i}-1}{2}}
\sum_{1 \le \zeta <\sum_{i=1}^{m} \frac{l_{i}-1}{2}}   \left(n\pnav(1-\pnav)\right)^{\zeta} 
2^{\sum_{i} l_i}\left(C_1\sum_{i}l_i\right)^{C_2m}
\left(\sum_{i}l_i\right)^{3(\sum_{i}l_i-2\zeta)}\\
& \le \left(\frac{1}{n\pnav(1-\pnav)}\right)^{\sum_{i=1}^{m} \frac{l_{i}-1}{2}}
\sum_{1 \le \zeta<\sum_{i=1}^{m} \frac{l_{i}-1}{2}}   \left(n\pnav(1-\pnav)\right)^{\zeta} 2^{2mk^*}(C_12mk^*)^{C_2m}\left( 2mk^* \right)^{3(\sum_{i}l_i-2\zeta)}\\
&= \left(\frac{1}{n\pnav(1-\pnav)}\right)^{\sum_{i=1}^{m} \frac{l_{i}-1}{2}}\\
& ~~~~~~~~\times \sum_{1 \le \zeta<\sum_{i=1}^{m} \frac{l_{i}-1}{2}} \left(n\pnav(1-\pnav)\right)^{\zeta}2^{2mk^*}(C_12mk^*)^{C_2m}(2mk^*)^{3m}\left( 2mk^* \right)^{6(\sum_{i}\frac{l_i-1}{2}-\zeta)}\\
&=\sum_{1 \le \zeta<\sum_{i=1}^{m} \frac{l_{i}-1}{2}}2^{2mk^*}(C_3mk^*)^{C_4m} \left( \frac{2mk^*}{n\pnav(1-\pnav)} \right)^{3({\sum_{i}(l_i-1)}-2\zeta)}\,.
\end{split}
\end{equation}
Here $k^*=\max_{1\le i \le l}(k_i+1)$ and $C_1$, $C_2$, $C_3$ and $C_4$ are positive numeric constants. 
Now $\zeta$ and $\sum_{i}(l_i-1)$ are both integers so ${\sum_{i}(l_i-1)}-2\zeta \ge 1$. As a consequence, again by property of geometric series and the fact $k^*=o(\log(n\pnav))$, we have the last expression in (\ref{bddwakcltII}) is bounded by
\begin{equation}\label{bddwakcltIII}
C_52^{2mk^*}(C_3mk^*)^{C_4m}\left(\frac{2mk^*}{n\pnav(1-\pnav)}\right)^{3}
\end{equation}
where $C_5$ is another numeric constant. Now we consider two cases when $\pnav$ converges to $0$ and when $\pnav$ converges to some $p<1$. In both the cases $(1-\pnav)$ is asymptotically lower bounded by $\frac{1}{2}$ and $\frac{1-p}{2}$ respectively. So we shall not be concerned about the factor $\frac{1}{(1-\pnav)^3}$ in (\ref{bddwakcltIII}). Now ignoring $\frac{1}{(1-\pnav)^3}$ in (\ref{bddwakcltIII}) and taking logarithm of the rest we have 
\begin{equation}\label{bddwakcltIV}
\log(C_5)+ 2mk^*\log(2)+ (C_4m)\left(\log(k^*)+\log(m)+\log(C_3)\right)+ 3\left( \log(2mk^*)- \log(n\pnav)\right).
\end{equation}
For large value of $k^*$ the dominant term with the positive sign in (\ref{bddwakcltIV}) is $2mk^*\log(2)$. However from our assumption 
\[
2mk^*\log(2)-3\log(n\pnav) \to -\infty
\] 
for any fixed $m$. As a consequence, the first expression in (\ref{bddweakclt}) goes to $0$.

So we can only focus on the words such that $wt(a)=\frac{\sum_{i=1}^{m} l_i-1}{2}$. In this case the words $w_i$ of the sentence $a$ are perfectly matched
in the sense that for any $i$ there exists a unique $j$ distinct from $i$ such that $w_i$ and $w_j$
have at least one letter in common. 
In particular, $m$ is even. 
Now given any such sentence $a$, we introduce a partition $\eta(a)$ of $\{1,\ldots,m \}$ in the following way. 
If $i$ and $j$ are in same block of the partition $\eta(a)$, then $G_{w_i}$ and $G_{w_j}$ have at least one edge in common. Observe that any such $\eta(a)$ is a partition of $\{ 1,\ldots,m \}$ such that each block contains exactly two elements. 
As a consequence, we can write the L.S. of (\ref{clt:verify}) as 
\begin{equation}\label{wickapprox}
\begin{split}
& \E[R_{n,1}\ldots R_{n,m}]\\ &=o(1)+\left(\frac{1}{n\pnav(1-\pnav)}\right)^{\sum_{i=1}^{m} \frac{l_{i}-1}{2}}\sum_{\eta} \sum_{a: \eta(a)=\eta} \E\left[ X_{w_1}\ldots X_{w_m}\right]\\
                          &=o(1)+\left(\frac{1}{n\pnav(1-\pnav)}\right)^{\sum_{i=1}^{m} \frac{l_{i}-1}{2}}\sum_{\eta} \sum_{a: \eta(a)=\eta} \prod_{i=1}^{\frac{m}{2}} \E\left[X_{w_{\eta(i,1)}}X_{w_{\eta(i,2)}}\right]\\
                          &=o(1)+\left(\frac{1}{n\pnav(1-\pnav)}\right)^{\sum_{i=1}^{m} \frac{l_{i}-1}{2}}\sum_{\eta} \sum_{a: \eta(a)=\eta} \prod_{i=1}^{\frac{m}{2}} \left(\pnav(1-\pnav)\right)^{wt([w_{\eta(i,1)},w_{\eta(i,2)}])}\\
                          &= o(1)+\left(\frac{1}{n}\right)^{\sum_{i=1}^{m} \frac{l_{i}-1}{2}}\sum_{\eta}  \prod_{i=1}^{\frac{m}{2}}(\#[w_{\eta(i,1)},w_{\eta(i,2)}]). 
\end{split}
\end{equation}
Here $[w_{\eta(i,1)},w_{\eta(i,2)}]$ denotes a typical CLT word pair where $w_{\eta(i,1)}$ and $w_{\eta(i,2)}$ are closed words of length $l_{\eta(i,1)}$ and $l_{\eta(i,2)}$ respectively and $\#[w_{\eta(i,1)},w_{\eta(i,2)}]$ denotes the cardinality of such CLT word pairs. 
Note that second step in (\ref{wickapprox}) follows from Proposition \ref{prop:cltrep}.
The third step follows from Lemma \ref{lem:expcltpair}, and
since each $w_i$ has an odd number of total edge visits, it has to be $\#V = \#E$ by Lemma \ref{lem:expcltpair}.
Now recalling Proposition \ref{prop:finalcltwordpair} and applying Proposition \ref{prop:uniword} and Lemma \ref{lem:cltwordpaircnt} we get if the length of the common bracelet between $w_{\eta(i,1)}$ and $w_{\eta(i,2)}$ is $r$ then there are 
\[
n^{\zeta_1}(1+o(1)) \frac{ (l_{\eta(i,1)}-1)f(l_{\eta(i,1)}-1,r)}{r} 
\]
many choices of $w_{\eta(i,1)}$ and for any such $w_{\eta(i,1)}$, there are
\[
n^{\zeta_2-r}(1+o(1)) 2 (l_{\eta(i,2)}-1)f(l_{\eta(i,2)}-1,r)
\]
many choices of $w_{\eta(i,2)}$. 
Here $\zeta_1$ and $\zeta_2$ are $wt(w_{\eta(i,1)})$ and $wt(w_{\eta(i,2)})$ respectively. Finally 
$wt(w_{\eta(i,1)},w_{\eta(i,2)})= \zeta_1 + \zeta_2 - r$. 
So
\begin{equation}\label{eqn:varexpproof}
\begin{split}
\#[w_{\eta(i,1)},w_{\eta(i,2)}]=(1+o(1)) n^{wt(w_{\eta(i,1)},w_{\eta(i,2)})} V_{\eta(i,1), \eta(i,2)},
\end{split}
\end{equation}
where
\begin{equation}\label{varexp}
V_{i,j}:= \sum_{r=3~:~ r ~\text{odd}}^{\min(l_i-1,l_j-1)}f(l_i-1,r)f(l_j-1,r)\frac{2(l_{i}-1)(l_j-1)}{r}.
\end{equation}
Plugging in these values in (\ref{wickapprox}), we get
\[
\E[R_{n,1}\ldots R_{n,m}]= o(1)+ \sum_{\eta}\prod_{i=1}^{\frac{m}{2}} V_{\eta(i,1),\eta(i,2)}.
\]
Finally taking $m=2$ we get $V_{i,j}$ to be the asymptotic covariance between $Y_{n,2k_i+1}$ and $Y_{n,2k_j+1}$. This completes the proof of part (i).

\subsubsection{Proof of part (ii)}
We only prove the case when $\pnav \to p \in (0,1)$ here. The case when $\pnav \to 0$ is similar. Firstly,
\begin{equation}
\beta_{2k}=  \left(\frac{1}{n\pnav(1-\pnav)}\right)^{\frac{2k+1}{2}}\sum_{w~:~ l(w)= 2k+1 ~ \& ~ w ~\text{closed}} \big(X_{w}-\E\left[X_{w}\right]\big).
\end{equation}
For any fixed $m$, we shall again verify (\ref{clt:verify}), but with $R_{n,i} \in \{\beta_{2k_1},\ldots, \beta_{2k_{l}} \}$. 
We again have, 
\[
\E\left[R_{n,1},\ldots,R_{n,m}  \right]=\left(\frac{1}{n\pnav(1-\pnav)}\right)^{\sum_{i=1}^{m} \frac{l_{i}-1}{2}}\sum_{w_1\ldots w_m} \E\left[ \left(X_{w_1}-\E\left[  X_{w_1}\right]\right)\ldots \left(X_{w_m}-\E\left[  X_{w_m}\right]\right)\right].
\]
\noindent  
Repeating the arguments in the previous proof, it is easy to see that 
\[
\E\left[ \left(X_{w_1}-\E\left[  X_{w_1}\right]\right)\ldots \left(X_{w_m}-\E\left[  X_{w_m}\right]\right)\right]=0
\]
unless the sentence $a=[w_{i}]_{i=1}^{m}$ is a weak CLT sentence. 
So we keep our focus only on the weak CLT sentences. 
Let $\mathcal{A}_{m,\zeta}$ be the set of weak CLT sentences obtained from concatenating $m$ words such that the $i$th word has length $l_{i}$ and $wt(a)=\zeta$ for any $a \in \mathcal{A}_{m,\zeta}$. 
From Proposition \ref{prop:cltrep}, $\zeta \le \sum_{i=1}^{m} \frac{l_{i}-1}{2}$ where the equality holds only if $a$ is a CLT sentence. 
Analysis similar to (\ref{bddweakclt}) and (\ref{bddwakcltII}) shows that 
\begin{equation}\label{bddweakclteven}
\begin{split}
&\left(\frac{1}{n\pnav(1-\pnav)}\right)^{\sum_{i=1}^{m} \frac{l_{i}-1}{2}} \sum_{1 \le \zeta <\sum_{i=1}^{m} \frac{l_{i}-1}{2}} \sum_{a \in \mathcal{A}_{m,\zeta}} \E  \left|\left(X_{w_1}-\E\left[ X_{w_1}\right]\right)\ldots \left(X_{w_1}-\E\left[ X_{w_1}\right]\right) \right|\\
& \le \left(\frac{1}{\pnav(1-\pnav)}\right)^{\frac{m}{2}}\sum_{1 \le \zeta <\sum_{i=1}^{m} \frac{l_{i}-1}{2}}2^{2mk^*}(C_3mk^*)^{C_4m} \left( \frac{2mk^*}{n\pnav(1-\pnav)} \right)^{3({\sum_{i}(l_i-1)}-2\zeta)}. 
\end{split}
\end{equation}
Here $k^*=\max_{1\le i \le l}(k_i)$ and $C_1$, $C_2$, $C_3$ and $C_4$ are positive numeric constants. 
The additional factor $\left(\frac{1}{\pnav(1-\pnav)}\right)^{\frac{m}{2}}$ is due to the following fact. 
Here some of the connected components in the graph $G_{a}$ for some $a \in \mathcal{A}_{m,\zeta}$ can be trees where the number of edges is one less than the number of vertices. 
On the other hand, by Lemma \ref{lem:cltrep} the number of connected components in $G_{a} \le \frac{m}{2}$. 
This gives rise to the additional factor.
In this context we mention that this additional factor also comes when $\pnav \to 0$, 
which is compensated by the scaling $\sqrt{\pnav}$ in the CLT of $\beta_{2k}$. 
 
By the foregoing discussion, we still only need to consider the CLT sentences as in the odd power case. 
Let $a=[w_{i}]_{i=1}^{m}$ be a typical CLT sentence. Applying Proposition \ref{prop:cltrep} we again have for any word $w_{i}$ there is exactly one other word $w_{j}$ such that $G_{w_{i}}$ and $G_{w_{j}}$ share an edge. As a consequence, the partition $\eta= \eta(a)$ is again a partition of $\{ 1,\ldots, m \}$ such that each block has exactly two elements. As a consequence,
\begin{equation}
\begin{split}
&\E[R_{n,1}\ldots R_{n,m}] \\ & =o(1)+\left(\frac{1}{n\pnav(1-\pnav)}\right)^{\sum_{i=1}^{m} \frac{l_{i}-1}{2}}\sum_{\eta} \sum_{a: \eta(a)=\eta} \E\left[ \left(X_{w_1}- \E\left[ X_{w_1} \right]\right)\ldots\left(X_{w_m}- \E\left[ X_{w_m} \right]\right)\right]\\
&=o(1)+\left(\frac{1}{n\pnav(1-\pnav)}\right)^{\sum_{i=1}^{m} \frac{l_{i}-1}{2}}\sum_{\eta} \sum_{a: \eta(a)=\eta} \prod_{i=1}^{\frac{m}{2}} 
\E\Big[ \big(X_{w_{\eta(i,1)}}- \E\big[ X_{w_{\eta(i,1)}} \big]\big)
\big(X_{w_{\eta(i,2)}}- \E\big[ X_{w_{\eta(i,2)}} \big]\big)\Big],
\end{split}
\end{equation}
where $[w_{\eta(i,1)},w_{\eta(i,2)}]$ is a typical CLT word pair. 
From Lemma \ref{lem:expcltpair}, there are two possible cases. 
Firstly, the graph corresponding to the CLT word pair is unicyclic. 
The analysis is of this case is the same as in the odd power case and has been presented in the proof of part (i). 
We only do the analysis of the second case when the graph is a tree. 
Observe that in this case, from Lemma \ref{lem:expcltpair}, both $w_{\eta(i,1)}$ and $w_{\eta(i,2)}$ are Wigner words,
and there is a common edge between the $G_{w_{\eta(i,1)}}$ and $G_{w_{\eta(i,2)}}$. 
Up to a multiplicative factor of $1+o(1)$,
there are $n^{\frac{l_{\eta(i,1)}-1}{2}+1}C_{(l_{\eta(i,1)-1})}$ many Wigner words of length $l_{\eta(i,1)}$. 
Once any such word is fixed there are $\frac{l_{\eta(i,1)}-1}{2}$ many choices for the edge in $w_{\eta(i,1)}$ which is shared by $w_{\eta(i,2)}$. 
Once a word $w_{\eta(i,1)}$ and a choice of this edge is fixed there are exactly two ways this edge can be traversed by $w_{\eta(i,2)}$ depending on which letter appears first. 
Finally, we again have $n^{\frac{l_{\eta(i,2)}-1}{2}+1-2}C_{(l_{\eta(i,2)-1})}$  many choices of $w_{\eta(i,2)}$ after fixing $w_{\eta(i,1)}$, the edge which is shared by $w_{\eta(i,1)}$ and the order of traversal of this edge by $w_{\eta(i,2)}$. So the total number of choices for the CLT word pair of this kind is, up to a multiplicative factor of $1+o(1)$,
\[
2\,n^{\frac{l_{\eta(i,1)}-1}{2}+\frac{l_{\eta(i,2)}-1}{2}}\,\frac{(l_{\eta(i,1)}-1)(l_{\eta(i,2)}-1)}{4} C_{(l_{\eta(i,1)-1})} C_{(l_{\eta(i,2)-1})}. 
\]
Now observe that for any such word pair,
\begin{equation}
\begin{split}
&\E\Big[ \big(X_{w_{\eta(i,1)}}- \E\big[ X_{w_{\eta(i,1)}} \big]\big)
\big(X_{w_{\eta(i,2)}}- \E\big[ X_{w_{\eta(i,2)}} \big]\big)\Big]\\
& = \left(\pnav(1-\pnav)\right)^{\frac{l_{\eta(i,1)}-1}{2}+\frac{l_{\eta(i,2)}-1}{2}-2}\E\left[ (x_{1,2}-\pnav)^{4}\right]\\
& ~~~~~ - \left(\pnav(1-\pnav)\right)^{\frac{l_{\eta(i,1)}-1}{2}+\frac{l_{\eta(i,2)}-1}{2}}\\
&= \left(\pnav(1-\pnav)\right)^{\frac{l_{\eta(i,1)}-1}{2}+\frac{l_{\eta(i,2)}-1}{2}-2}  \Var\left[\left(x_{1,2}-\pnav \right)^{2} \right].
\end{split}
\end{equation}
The rest of the argument is the same as the proof of the odd power case and so we omit the details. 

\subsubsection{Proof of part (iii)} 
Now we give a proof of part (iii) of Theorem \ref{thm:null}. We are supposed to show 
for any $k=o(\log(n \pnav))$,
\begin{equation}\label{eqn:momcycle}
{\Tr(A_{\mathrm{cen1}}^{2k+1})- \sum_{r=3: \text{$r$ odd}}^{2k+1} f(2k+1,r)\frac{2k+1}{r} C_{n,r}(G)} \stackrel{p}{\to} 0.
\end{equation}
We prove this by showing the variance of the L.S. of (\ref{eqn:momcycle}) goes to $0$. Recalling the expression of $C_{n,r}(G)$ from (\ref{def:signed}) that 
\begin{equation}
\begin{split}
C_{n,r}(G) &= \left(\frac{1}{\sqrt{np_{n,\mathrm{av}}(1-p_{n,\mathrm{av}})}}\right)^{r} \sum_{i_0,i_1,\ldots,i_{r-1}} (x_{i_0,i_1}-p_{n,\mathrm{av}})\ldots(x_{i_{r-1}i_0}-p_{n,\mathrm{av}})\\
           &= \left(\frac{1}{\sqrt{np_{n,\mathrm{av}}(1-p_{n,\mathrm{av}})}}\right)^{r} \sum_{w \in \mathrm{Sc}_{r}} X_{w}
\end{split}
\end{equation}
where $i_0,\ldots,i_{r-1}$ are all distinct.
Here $\mathrm{Sc}_{r}$ is the class of closed words such that for any $w \in \mathrm{Sc}_{r}$, $G_{w}$ is a cycle of length $r$. First observe that if $r_{1}\neq r_{2}$, then $\E[X_{w_1}X_{w_2}]=0$ for any $w_{1} \in \mathrm{Sc}_{r_1}$ and $w_{2} \in \mathrm{Sc}_{r_2}$ trivially. As a consequence, $\Cov(C_{n,r_1}(G), C_{n,r_2}(G))=0$ whenever $r_1\neq r_2$. Now we evaluate 
\[
\Cov(\Tr(A_{\mathrm{cen1}}^{2k+1}), C_{n,r}(G))
\]
for any odd number $r\le 2k+1$. 
One can imitate the proof of part (i) to get that 
\begin{equation}\label{nocov}
\begin{split}
E_r:=&\Cov\bigg(\left(\frac{1}{n\pnav(1-\pnav)}\right)^{\frac{2k+1}{2}}\sum_{w \in \mathrm{No}_{k}} X_{w},C_{n,r}(G)\bigg)\\
& \le 
2^{2k+r+3}(C_{1}(2k+r+3))^{2C_2}(2k+r+3)^{6}C_{3}\left(  \frac{2k+r+3}{n\pnav}\right)^{6}.
\end{split}
\end{equation}
Here $C_1$, $C_2$ and $C_3$ are known constants. 
Since $r \le 2k+1$, summing the second expression in (\ref{nocov}) we have 
\begin{equation}
\begin{split}
&\sum_{r=3: r ~ \text{odd}}^{2k+1}E_{r}\le k 2^{4k+4}(C_{1}(4k+4))^{2C_2}(4k+4)^{6}C_{3}\left(  \frac{4k+4}{n\pnav}\right)^{6} \to 0.
\end{split}
\end{equation}
As a consequence, we only need to analyze the covariance between $Y_{n,2k+1}$ and $C_{n,r}(G)$ where $Y_{n,2k+1}$ was defined in (\ref{def:y}). Now 
\begin{equation}\label{covycycle}
\begin{split}
\Cov(Y_{n,2k+1}, C_{n,r}(G))= \left( \frac{1}{n\pnav(1-\pnav)} \right)^{\frac{2k+1+r}{2}}\sum_{w_{1} \notin \mathrm{No}_{k}} \sum_{w_{2} \in \mathrm{Sc}_{r}} \E[X_{w_1} X_{w_2}].
\end{split}
\end{equation}
It is easy to see that $\E[X_{w_1} X_{w_2}]=0$ unless $[w_1,w_2]$ is a weak CLT sentence. 
Observe that if $a=[w_1,w_2]$ is a CLT word pair then then $wt(a)= \frac{2k+1+r}{2}$, this is an integer as $r$ is taken to be odd. 
Again we consider $\mathcal{A}_{2,\zeta}$ the set of weak CLT sentences obtained by concatenating two words $w_{1} \notin \mathrm{No}_{k}$ and $w_{2} \in \mathrm{Sc}_{r}$ such that for any $a \in \mathcal{A}_{2,\zeta}$, $wt(a)= \zeta$. 
Observe that 
\begin{equation}
\begin{split}
\Cov(Y_{n,2k+1}, C_{n,r}(G)) &= \left( \frac{1}{n\pnav(1-\pnav)} \right)^{\frac{2k+1+r}{2}} 
\sum_{\zeta=1}^{\frac{2k+1+r}{2}} \sum_{a \in \mathcal{A}_{2,\zeta}}
\E[X_{w_1} X_{w_2}].
\end{split}
\end{equation}
By applying Lemma \ref{lem:appendix} again we have 
\begin{equation}\label{weakcltcovI}
\begin{split}
 \mathcal{T}_{r}:=&\left( \frac{1}{n\pnav(1-\pnav)} \right)^{\frac{2k+1+r}{2}}\sum_{1 \le \zeta < \frac{2k+1+r}{2}}\sum_{a \in \mathcal{A}_{2,\zeta}} \E[|X_{w_1} X_{w_2}|]\\
 & \le C_52^{4(k+1)}(2C_3(k+1))^{2C_4}\left(\frac{4(k+1)}{n\pnav(1-\pnav)}\right)^{3}.
\end{split}
\end{equation}
Note that 
\[
\sum_{r=3: r ~ \text{odd}}^{2k+1} \mathcal{T}_{r} \le 2^{C_{T}k}P_{T}(k)\left(\frac{1}{n\pnav}\right)^3 \to 0,
\]
where $C_{T}$ is a known constant and $P_{T}$ is a known polynomial in $k$.
The convergence occurs whenever $k=o(\log(n\pnav))$ as $n\pnav\to\infty$.

Now observe that  any $w_2 \in \mathrm{Sc}_{r}$ is a uniword with bracelet length $r$ and there are 
\begin{equation}
n(n-1)\ldots (n-r+1) \ge (n-r+1)^{r}
\end{equation} 
such words. Now by applying Lemma \ref{lem:cltwordpaircnt} we have for each $w_2$, there are at least 
\begin{equation}
\begin{split}
&(n-r)(n-r-1)\ldots \left(n-\frac{2k+1+r}{2}+1 \right) 2(2k+1)f(2k+1,r)\\
& \ge  2(2k+1)f(2k+1,r) \left(n-\frac{2k+1+r}{2}+1 \right)^{\frac{2k+1-r}{2}}
\end{split}
\end{equation} 
many choices of $w_1$ such that $[w_1,w_2]$ is a CLT word pair. As a consequence, 
\begin{equation}\label{cove'mateI}
\begin{split}
 & \left( \frac{1}{n\pnav(1-\pnav)} \right)^{\frac{2k+1+r}{2}}\sum_{a \in \mathcal{A}_{2,\frac{2k+1+r}{2}}}\E[X_{w_1} X_{w_2}]\\
 & \ge \left( \frac{1}{n\pnav(1-\pnav)} \right)^{\frac{2k+1+r}{2}} 2(2k+1)f(2k+1,r)\left(n-\frac{2k+1+r}{2}+1\right)^{\frac{2k+1-r}{2}}\\
 & \hskip 15em \times (n-r+1)^{r}(\pnav(1-\pnav))^{\frac{2k+1+r}{2}}\\
 & \ge 2(2k+1)f(2k+1,r)\left(1-\frac{2k-1+r}{2n}\right)^{\frac{2k+1+r}{2}}.
\end{split}
\end{equation}
On the other hand, arguments similar to step 2 of the proof of part (i) of Theorem \ref{thm:null} gives us
\begin{equation}\label{cove'mateII}
\Var(Y_{n,2k+1}) \le \sum_{r=3 : r ~ \text{odd}}^{2k+1} 2f(2k+1,r)^2\frac{(2k+1)^2}{r} + o(1),
\end{equation} 
and 
\begin{equation}\label{cove'mateIII}
\Var(C_{n,r}) \le 2r.
\end{equation}
Plugging in the estimates in (\ref{weakcltcovI}),(\ref{cove'mateI}), (\ref{cove'mateII}) and (\ref{cove'mateIII}) and recalling the fact $\Cov(C_{n,r_1},C_{n,r_2})=0$ for $r_1 \neq r_2$, we have 
\begin{equation}\label{finalcovex}
\begin{split}
&\Var\left(Y_{n,2k+1}- \sum_{r=3 ~:~ r ~ \text{odd}}^{2k+1} \frac{(2k+1)f(2k+1,r)}{r} C_{n,r}  \right)\\
& = \Var(Y_{n,2k+1})+ \Var\left(\sum_{r} \frac{(2k+1)f(2k+1,r)}{r} C_{n,r}\right)- 2\sum_{r} \Cov\left(Y_{n,2k+1},\frac{(2k+1)f(2k+1,r)}{r}C_{n,r}\right)\\
& \le o(1)+\sum_{r} 4f(2k+1,r)^2\frac{(2k+1)^2}{r}- \sum_{r}4f(2k+1,r)^2\frac{(2k+1)^2}{r}\left(1-\frac{2k-1+r}{2n}\right)^{\frac{2k+1+r}{2}}\\
&= \sum_{r}\left(1-\left(1-\frac{2k-1+r}{2n}\right)^{\frac{2k+1+r}{2}}\right) 4f(2k+1,r)^2\frac{(2k+1)^2}{r} +o(1)\\
&= \sum_{r} O\left(\frac{(2k-1+r)^2}{n}\right)4f(2k+1,r)^2\frac{(2k+1)^2}{r}+ o(1).
\end{split}
\end{equation}
Here the last step follows from the elementary inequality $1-(1-x)^{y}\le \frac{xy}{1-x}$ for any $0<x<1$ and $y>0$. From Proposition \ref{prop:FKcount} we know $f(2k+1,r) \le 2^{2k}$. As a consequence, the first expression in (\ref{finalcovex}) can be further bounded by 
\begin{equation}\label{last}
4(2k+1)^22^{4k}\sum_{r} O\left(\frac{(2k-1+r)^2}{n}\right)+o(1) \to 0
\end{equation}
whenever $k=o(\log(n\pnav))$ as $n\pnav\to\infty$.
Recalling (\ref{nocov}), we get 
\begin{equation}\label{varcen1}
\begin{split}
&\Var\left(\Tr\left(A_{\mathrm{cen1}}^{2k+1}\right)- \sum_{r=3 : r ~ \text{odd}}^{2k+1} \frac{(2k+1)f(2k+1,r)}{r} C_{n,r}  \right)
\to 0.
\end{split}
\end{equation}
This completes the proof of part (iii).
\hfill{$\square$}


\subsection{Proof of parts (i)--(iii) of Theorem \ref{thm:alt}}

We focus on part (i) of Theorem \ref{thm:alt}. The arguments for parts (ii) and (iii) are similar. 
All expectation and variance in this part are taken under $\mathbb{P}_{1,n}$ conditioning on the group assignment $\sigma_i$, $1\leq i\leq n$.

Before going into the proof we introduce some notations that will be useful in the proof. 
We define $\mathcal{E}_{k}= \{(0,1),\ldots,(k-1,k)\}$.
In the proof we often denote $\mathcal{E}_{2k+1}$ by $\mathcal{E}$ for notational convenience.  
We shall deal with two disjoint subsets $\mathcal{E}_{L}$ and $\mathcal{E}_{T}$ of $\mathcal{E}$ such that $\mathcal{E}_{L} \cup \mathcal{E}_{T}= \mathcal{E}.$ Let $w= (i_{0},\ldots,i_{2k+1})$ be any word. Then for any $e=(j,j+1)\in \mathcal{E}$, we define 
\[
e(w)=  (i_{j},i_{j+1}).
\] 
For any word $w$, we consider the graph $G_{w}=(V_{w}, E_{w})$ as defined in Section \ref{subsec:word}. 
Given the word $w$ and  a subset $\mathcal{E}' \subset \mathcal{E}$, we define $E(\mathcal{E}'(w)):=\{ e(w):   e \in \mathcal{E}' \}$. 
Observe that $E(\mathcal{E}'(w))$ is the set of unique (undirected) edges traversed by $e(w),\, e\in \mathcal{E}'$, in the graph $G_{w}$, and it does not take into account the number of passages of any of its elements.

Let $d= \frac{p_n-q_n}{2}$. 
In what follows, we focus on the case where $p_n > q_n$. 
If $p_n<q_n$, we simply need to replace every $t$ with $-t$.
Recall (\ref{traceexp}) to get 
\begin{equation}\label{traceexpalt}
\Tr(\Acenone^{2k+1})= \left(\frac{1}{n\pnav(1-\pnav)}\right)^{\frac{2k+1}{2}}\sum_{w:l(w)= 2k+2 ~ \& ~ w ~\text{closed}} \left[X_{w}\right].
\end{equation}
Here for any word $w$ is an ordered pair $(i_0,\ldots,i_{2k+1})$ where the numbers $i_{j}\in \{  1,2,\ldots, n\}$ for $0\le j \le 2k+1$ and $X_{w}= \prod_{j=0}^{2k}\left(x_{i_j,i_{j+1}}- \pnav\right).$ However, as the data is generated under the alternative, here $\E[x_{i,j}]=p_{i,j}$ where $p_{i,j}=p_n$ if $\sigma_{i}=\sigma_{j}$ and $p_{i,j}=q_n$ if $\sigma_{i}\neq\sigma_{j}$. As a consequence, for any $i\neq j$,
\begin{equation}
x_{i,j}-\pnav= x_{i,j}-p_{i,j}+p_{i,j}-\pnav= x_{i,j}-p_{i,j} + d\sigma_{i} \sigma_{j},
\end{equation}
and so
\begin{equation}\label{Xwdecom}
X_{w} = \prod_{j=0}^{2k}\left(x_{i_j,i_{j+1}}- \pnav\right)
= \prod_{j=0}^{2k} \left( x_{i_j,i_{j+1}}- p_{i_j,i_{j+1}} +d \sigma_{i_j}\sigma_{i_{j+1}} \right).
\end{equation} 
At first we note that 
\begin{equation}\label{eqn:prodassign}
\prod_{j=0}^{2k} {\sigma_{i_j}\sigma_{i_{j+1}}}=1
\end{equation}
irrespective of the values of $\sigma_{i_{j}}$'s. 
This is due to the fact that $\sigma_{i_0} = \sigma_{i_{2k+1}}$ and so each $\sigma_{i_j}$ is multiplied an even number of times in \eqref{eqn:prodassign}.
Note that the foregoing argument depends only on the word being closed, regardless of whether its length is odd or even.
Now we can write (\ref{Xwdecom}) as 
\begin{equation}\label{plug:Xwdecom}
X_{w}= \prod_{j=0}^{2k} \left(  x_{i_j,i_{j+1}}- p_{i_j,i_{j+1}} \right)+ d^{2k+1} + V_{n,w}. 
\end{equation} 
Here $V_{n,w}$ comprises of all the cross terms.
Plugging (\ref{plug:Xwdecom}) in (\ref{traceexpalt}), we get 
\begin{equation}
\begin{split}
&\Tr(\Acenone^{2k+1})\\
&= \left(\frac{1}{n\pnav(1-\pnav)}\right)^{\frac{2k+1}{2}}\sum_{w:l(w)= 2k+2 ~ \& ~ w ~\text{closed}} \left( \prod_{j=0}^{2k} \left(  x_{i_j,i_{j+1}}- p_{i_j,i_{j+1}} \right)+ V_{n,w} \right)\\ 
& ~~~~~~~~~~~~~~~~~~~~~~ + \left(\frac{1}{n\pnav(1-\pnav)}\right)^{\frac{2k+1}{2}} (nd)^{2k+1}.\\
&= \left(\frac{1}{n\pnav(1-\pnav)}\right)^{\frac{2k+1}{2}} \sum_{w}\left( \prod_{j=0}^{2k} \left(  x_{i_j,i_{j+1}}- p_{i_j,i_{j+1}} \right)+ V_{n,w} \right) + t_{n}^{{2k+1}}.
\end{split}
\end{equation}
Here $t_n= \sqrt{\frac{c}{2(1-\pnav)}} \to t$ as $n\to\infty$.

The analysis of 
\[
D_{n,k}:= \left(\frac{1}{n\pnav(1-\pnav)}\right)^{\frac{2k+1}{2}}\sum_{w: l(w)= 2k+2 ~ \& ~ w ~\text{closed}} 
\prod_{j=0}^{2k} \left(  x_{i_j,i_{j+1}}- p_{i_j,i_{j+1}} \right) .
\]
is same as the proof of Theorem \ref{thm:null} part (i). 
We only mention that the covariance structure of $\{D_{n,k_{i}}\}_{i=1}^{l}$ is the same as the covariance structure of $\{\Tr(\Acenone^{2k_i+1}) \}_{i=1}^{l}$ due the fact that whenever $k=o(\log(n\pnav))$ both 
\begin{equation}\label{lim1}
\lim_{n \to \infty}\left(\frac{(\pnav+d)(1-\pnav-d)}{\pnav(1-\pnav)}\right)^{\frac{2k+1}{2}} =1
\end{equation}
and 
\begin{equation}\label{lim2}
\lim_{n \to \infty}\left(\frac{(\pnav-d)(1-\pnav+d)}{\pnav(1-\pnav)}\right)^{\frac{2k+1}{2}} =1.
\end{equation}
It is easy to see that $\mathrm{Cov}( D_{n,k_i}, D_{n,k_j}) /{V_{i,j}}$ is sandwiched by the left sides of (\ref{lim1}) and (\ref{lim2}). 
Here $V_{i,j}$ is defined as in (\ref{varexp}).

In the rest of this subsection, we complete the proof by analyzing the mean and variance of 
\begin{equation}\label{Vnw}
\left(\frac{1}{n\pnav(1-\pnav)}\right)^{\frac{2k+1}{2}}  \sum_{w}V_{n,w}.
\end{equation}

\medskip

\textbf{Analysis of the mean of (\ref{Vnw}).}
At first fix $w$ and consider the graph $G=(V,E)$ corresponding to the word $w$. Now 
\begin{equation}\label{expVnw}
\begin{split}
V_{n,w} &= \sum_{ \emptyset \subsetneq \mathcal{E}_{T}   \subsetneq \mathcal{E}} \prod_{e \in \mathcal{E}_{T}} (\sigma_{e(w)} d) \prod_{e \in \mathcal{E}_{L}} (x_{e(w)}-p_{e(w)})\\
&= \sum_{\emptyset \subsetneq \mathcal{E}_{T}\subsetneq \mathcal{E}}  d^{\# \mathcal{E}_{T} }\prod_{e \in \mathcal{E}_{T}} \sigma_{e(w)}\prod_{e \in \mathcal{E}_{L}} (x_{e(w)}-p_{e(w)}).
\end{split}
\end{equation}
Here for any $e(w)=(i_{j},i_{j+1})$, $\sigma_{e(w)}:= \sigma_{i_{j}}\sigma_{i_{j+1}}$, $x_{e(w)}:= x_{i_{j},i_{j+1}}$ and $p_{e(w)}:= \E[x_{e(w)}]$. 
Observe that 
\begin{equation}\label{leftrandom}
\E\left[ \prod_{e \in \mathcal{E}_{L}} (x_{e(w)}-p_{e(w)}) \right]=0 
\end{equation}
unless all the random variables $x_{e(w)}-p_{e(w)}$, $e\in \mathcal{E}_L$, have been repeated at least twice and in this case 
\[
\E\left[ \prod_{e \in \mathcal{E}_{L}} | x_{e(w)}-p_{e(w)} | \right] \le  (1+o(1)) \left(\pnav(1-\pnav)\right)^{\#E( \mathcal{E}_{L}(w))}.
\]

We now fix a typical set $\emptyset \subsetneq \mathcal{E}_{L}\subsetneq \mathcal{E}$ and an equivalence class $\mathbf{w}$ such that all the random variables on the L.S. of (\ref{leftrandom}) is repeated at least twice. Fixing $\mathbf{w}$ automatically fixes the graph $G_{\mathbf{w}}=(V_{\mathbf{w}},E_{\mathbf{w}})=G=(V,E)$. 
Observe that 
\begin{equation}\label{expectationleftrandom}
\begin{split}
&\left(\frac{1}{n\pnav(1-\pnav)}\right)^{\frac{2k+1}{2}}\sum_{w~: ~ w \in \mathbf{w}}\E\left[\left| d^{\# \mathcal{E}_{T} }\prod_{e \in \mathcal{E}_{T}} \sigma_{e(w)}\prod_{e \in \mathcal{E}_{L}} \left(x_{e(w)}-p_{e(w)}\right)\right|\right]\\
& \le \left(\frac{1}{n\pnav(1-\pnav)}\right)^{\frac{2k+1}{2}} n^{\#V}d^{\# \mathcal{E}_{T} }(1+o(1)) \left(\pnav(1-\pnav)\right)^{\#E( \mathcal{E}_{L}(w))}\\
&= (1+o(1))\left(\frac{1}{n\pnav(1-\pnav)}\right)^{\frac{2k+1}{2}}n^{\#V} \left(\frac{c\pnav}{2n}\right)^{\frac{\# \mathcal{E}_{T}}{2}}\left(\pnav(1-\pnav)\right)^{\#E( \mathcal{E}_{L}(w))}\\
&\le C^{k} \left( \frac{1}{n\pnav} \right)^{\frac{2k+1}{2}}n^{\#V} \left(\frac{\pnav}{n}\right)^{\frac{\# \mathcal{E}_{T}}{2}}\pnav^{\#E( \mathcal{E}_{L}(w))}\\
&= C^{k} \left(\frac{1}{n}\right)^{\frac{2k+1}{2}- \#V + \frac{\# \mathcal{E}_{T}}{2}} \left( \frac{1}{\pnav} \right)^{\frac{2k+1}{2}-\#E( \mathcal{E}_{L}(w))-\frac{\# \mathcal{E}_{T}}{2}}.
\end{split}
\end{equation}
Here $C$ is a deterministic constant depending on $c$ and $(1-p)$ where $p = \lim_{n \to \infty} \pnav \in [0,1).$ Since every edge in $E(\mathcal{E}_L)$ has been traversed at least twice, we have 
\begin{equation}\label{eletbound}
\begin{split}
&2k+1 = \# \mathcal{E}_{L}+ \#\mathcal{E}_{T} \ge 2\# E(\mathcal{E}_{L}(w)) + \#\mathcal{E}_{T}
\,\, \Rightarrow \,\,
\frac{2k+1}{2}-\#E( \mathcal{E}_{L}(w))-\frac{\# \mathcal{E}_{T}}{2} \ge 0.
\end{split}
\end{equation}
Now 
\begin{equation}\label{pownvspowp}
\begin{split}
&\left(\frac{2k+1}{2}- \#V + \frac{\# \mathcal{E}_{T}}{2}\right) - \left( \frac{2k+1}{2}-\#E( \mathcal{E}_{L}(w))-\frac{\# \mathcal{E}_{T}}{2} \right)\\
&
\hskip 10em = \#E( \mathcal{E}_{L}(w)) + \# \mathcal{E}_{T} - \#V \ge \#E -\#V \ge 0.
\end{split}
\end{equation}
Here the last inequality in \eqref{pownvspowp} holds due to Lemma \ref{lem:parity}.
In what follows, we divide the arguments into three different cases, depending on whether the equalities in (\ref{pownvspowp}) and/or (\ref{eletbound}) hold.

\medskip

\underline{Case 1: the equalities in both (\ref{pownvspowp}) and (\ref{eletbound}) hold.}
This case occurs if and only if the following conditions are satisfied:
\begin{enumerate}
\item The graph $G$ is unicyclic (from (\ref{pownvspowp})).
\item Every edge in $E(\mathcal{E}_{T}(w))$ has been traversed exactly once (from (\ref{pownvspowp})).
\item $E(\mathcal{E}_{T}(w)) \cap E( \mathcal{E}_{L}(w)) = \emptyset$ (from (\ref{pownvspowp})).
\item Every edge in $E(\mathcal{E}_{L}(w))$ has been traversed exactly twice (from (\ref{eletbound})).
\end{enumerate} 
Observe that from Proposition \ref{prop:unicyclicgraph} these properties are satisfied if and only if
\[
w \in \mathfrak{W}_{2k+2,r,\zeta} 
\]
for some odd number $r$ and $2\zeta-r=2k+1$. 
Here $\mathfrak{W}_{2k+2,r, \zeta}$ is defined as in Proposition \ref{prop:uniword}. From condition $2$ above, we get $\mathcal{E}_{T}$ corresponds to the walk along the bracelet of the unicyclic graph $G$. 
Hence the collection $\mathcal{E}_{T}$ is actually a closed word. As a consequence, arguing as (\ref{eqn:prodassign}), we get 
$\prod_{e \in \mathcal{E}_{T}} \sigma_{e(w)}=1$.

Using Proposition \ref{prop:uniword} for any $r$, there are $(1+o(1))f(2k+1,r)\frac{2k+1}{r}$ many equivalence classes of such words. Further, for each of these equivalence classes $\mathbf{w}$
\begin{equation}
\begin{split}
&\left(\frac{1}{n\pnav(1-\pnav)}\right)^{\frac{2k+1}{2}}\sum_{w~: ~ w \in \mathbf{w}}\E\left[ d^{\# \mathcal{E}_{T} }\prod_{e \in \mathcal{E}_{T}} \sigma_{e(w)}\prod_{e \in \mathcal{E}_{L}} \left(x_{e(w)}-p_{e(w)}\right)\right]\\
&= (1+o(1))\left(\frac{1}{n\pnav(1-\pnav)}\right)^{\frac{2k+1}{2}}n^{\#V} \left(\frac{c\pnav}{2n}\right)^{\frac{\# \mathcal{E}_{T}}{2}}\left(\pnav(1-\pnav)\right)^{\#E( \mathcal{E}_{L}(w))}\\
&= (1+o(1)) t^r.
\end{split}
\end{equation}  
Here the second step follows from the fact that  $\prod_{e \in \mathcal{E}_{T}} \sigma_{e(w)}=1$ and every edge in $E(\mathcal{E}_{L})$ has been traversed exactly twice. The third step follows from the equality in (\ref{eletbound}) and (\ref{pownvspowp}). So summing over the equivalence classes and the value of $r$, we get the total contribution of these words in the mean of (\ref{Vnw}) is, up to a $1+o(1)$ multiplier,
\begin{equation}
\begin{split}
\sum_{r=3 : r ~ \text{odd}}^{2k-1}f(2k+1,r)\frac{2k+1}{r} t^{r}.
\end{split}
\end{equation}
We sum up to $2k-1$ due to the fact that $\mathcal{E}_{T}\neq \mathcal{E}$.
 
\medskip

\underline{Case 2:
the equality (\ref{pownvspowp}) is satisfied but that in  (\ref{eletbound}) is violated.}
In this case, the graph is unicyclic. 
Let $Z$ and $F$ be the bracelet and the forest corresponding to $G$ respectively. 
Using the parity principle (Lemma \ref{lem:parity}) we get that every edge in the forest $F$ has been traversed an even number of times. 
So the edges traversed exactly once are a subset of the edges in the bracelet $Z$. 
Let $r$ be the circuit length. Then $\#E^{1} \le r$. Let $a=[w'_{i}]_{i=1}^{m}$ be the FK parsing of the word $w$. Then from Lemma \ref{lem:wordbyFK} and Lemma \ref{lem:mbdd} we have the number of equivalence classes corresponding to a given $m$ is bounded by 
\begin{equation}\label{gammabound}
\begin{split}
\# \Gamma(\zeta,2k+2,m) \le 2^{2k+1-m}\binom{2k+1}{m-1} \zeta^{2(m-1)} \le 2^{2k+1} (2k+1)^{3(m-1)}.
\end{split}
\end{equation}
 and 
 \begin{equation}\label{mboundddd}
 m = \#E^{1}_{a} - 2wt(a) +2 + (2k+2) \le \#E^{1} -2 \zeta +2 + (2k+2).
 \end{equation}
 Here $\zeta = wt(a)= wt(w).$
  
As the equality in (\ref{pownvspowp}) is satisfied, we have $\#E(\mathcal{E}_{T}(w))=\#E^{1}= \# \mathcal{E}_{T}$. From the definition of $V_{n,w}$ we also have $\#E^{1}>0$. 
However, the word $w$ is closed which makes $\#E^{1}=r$. Observe that 
\begin{equation*}
\zeta = \#E(\mathcal{E}_{L}(w))+r \quad \mbox{and} \quad 
\#\mathcal{E}_{L}+r = 2k+1.
\end{equation*}
Let $$m' :=  \#\mathcal{E}_{L}-2\#E(\mathcal{E}_{L}(w))$$ where $m' \ge 1$ as the inequality in (\ref{eletbound}) is strict. 
 Plugging in these values in (\ref{mboundddd}) we have 
 \[
 m \le r-2(\#E(\mathcal{E}_{L}(w))+r)+2 + (2k+2) = -r - \#\mathcal{E}_{L} + m' +2 + (2k+2)= m'+3.
 \]
On the other hand, 
 \[
  \frac{2k+1}{2}-\# E(\mathcal{E}_{L}(w))-\frac{\# \mathcal{E}_{T}}{2} = \frac{2k+1}{2}- \frac{\#\mathcal{E}_L-m'}{2} - \frac{r}{2}= \frac{m'}{2}.
 \]
 Plugging in these estimates in (\ref{expectationleftrandom}) and summing over all equivalence classes $\mathbf{w}$ of current concern and summing over all such choices of $\mathcal{E}_{T}$ ($\le (2^{2k+1}-1)$), we have the contribution of these words in the expectation of (\ref{Vnw}) is bounded by
 \[
 (2^{2k+1}-1)2^{2k+1} \sum_{m'=1}^{2k+1} (2k+1)^{6}\left(\frac{(2k+1)^{6}}{n\pnav}\right)^{\frac{m'}{2}} \to 0.
 \]

\medskip

\underline{Case 3:
the equality in (\ref{pownvspowp}) is not satisfied.}
In this case the graph is not unicyclic. As a consequence, $\#E -\# V \ge 1$. So for any equivalence class $\mathbf{w}$ of this type, we have from the rightmost side of \eqref{expectationleftrandom} that
 \begin{equation}\label{thirdexpsum}
 \begin{split}
 &\left(\frac{1}{n\pnav(1-\pnav)}\right)^{\frac{2k+1}{2}}\sum_{w: w \in \mathbf{w}}\E \left| d^{\# \mathcal{E}_{T} }\prod_{e \in \mathcal{E}_{T}} \sigma_{e}\prod_{e \in \mathcal{E}_{L}} x_{e(w)}-p_{e(w)}\right|
\le  \frac{C^{k}}{n}.
 \end{split}
 \end{equation}
Consider any $w \in \mathbf{w}$ and let $a=[w'_{i}]_{i=1}^{m}$ with $wt(a)=\zeta$ be the FK parsing of the word $w$. Then using Lemma \ref{lem:wordbyFK} and Lemma \ref{lem:mbdd} again we have the number of equivalence classes corresponding to a given $m$ is bounded by 
\begin{equation}\label{gammaboundII}
\begin{split}
\# \Gamma(\zeta,2k+2,m) 
\le 2^{2k+1-m}\binom{2k+1}{m-1} \zeta^{2(m-1)} \le 2^{2k+1} (2k+1)^{3(m-1)}.
\end{split}
\end{equation}
 and 
 \begin{equation}\label{mbounddddII}
 m = \#E^{1}_{a} - 2wt(a) +2 + (2k+2) \le \#E^{1} -2 \zeta +2 + (2k+2)\le 4k+3.
 \end{equation}
Here the last step follows from $\#E^{1} \le 2k+1$ and $\zeta \ge 1$.  So (\ref{gammaboundII}) can further be upper bounded by 
\[
2^{2k+1} (2k+1)^{3(4k+3)}.
\] 
Now taking the sum over $\mathbf{w}$ in the first expression of (\ref{thirdexpsum}) and summing over all such choices of $\mathcal{E}_{T}$ ($\le (2^{2k+1}-1)$), we get the contribution of theses words in (\ref{Vnw}) is bounded by
\begin{equation}\label{firstana}
C^{k} (2^{2k+1}-1)2^{2k+1} (2k+1)^{3(4k+3)}\frac{1}{n},
\end{equation}
which converges to zero as $n\to\infty$ for all
$k= o(\sqrt{\log n})$.

\medskip

Combining all these results, we have 
\[
\left(\frac{1}{n\pnav(1-\pnav)}\right)^{\frac{2k+1}{2}}  \E\left[\sum_{w}V_{n,w}\right]- \sum_{r=3 : r ~ \text{odd}}^{2k-1}f(2k+1,r)\frac{2k+1}{r} t^{r} \to 0. 
\]

\medskip

\textbf{Analysis of the variance of (\ref{Vnw}).}
Now we prove the variance of the random variable defined in (\ref{Vnw}) goes to $0$. For any given word $w$ and $\mathcal{E}_{T} \subset \mathcal{E}$ let us define 
\begin{equation}\label{def:Vet}
V_{n,w,\mathcal{E}_{T}}= \prod_{e \in \mathcal{E}_{T}} (\sigma_{e(w)} d) \prod_{e \in \mathcal{E}_{L}(w)} (x_{e(w)}-p_{e(w)}).
\end{equation}
So 
\[
V_{n,w}- \E\left[V_{n,w}\right]= \sum_{\emptyset \subsetneq \mathcal{E}_{T}\subsetneq \mathcal{E}} V_{n,w,\mathcal{E}_{T}}- \E\left[ V_{n,w,\mathcal{E}_{T}} \right].
\]
As a consequence, 
\begin{equation}\label{varvnw}
\begin{split}
&\left(\frac{1}{n\pnav(1-\pnav)}\right)^{2k+1}  \Var\left[\sum_{w}V_{n,w}\right]\\
&=\left(\frac{1}{n\pnav(1-\pnav)}\right)^{2k+1} \sum_{w}\sum_{x}\sum_{\emptyset \subsetneq \mathcal{E}_{T_1}\subsetneq \mathcal{E}}\sum_{\emptyset \subsetneq \mathcal{E}_{T_2}\subsetneq \mathcal{E}} \Cov\big(V_{n,w,\mathcal{E}_{T_1}},V_{n,x,\mathcal{E}_{T_2}} \big).
\end{split}
\end{equation}
Observe that if the graphs $G_{w}$ and $G_{x}$ corresponding to words $w$ and $x$ do not share any edge, the random variables $V_{n,w,\mathcal{E}_{T_1}}$ and $V_{n,x,\mathcal{E}_{T_2}}$ are independent no matter what $\mathcal{E}_{T_1}$ and $\mathcal{E}_{T_2}$ are. As a consequence, 
\[
\Cov\big(V_{n,w,\mathcal{E}_{T_1}},V_{n,x,\mathcal{E}_{T_2}} \big) =0
\]
for these word pairs. So we consider the case when $G_{w}$ and $G_{x}$ share at least one edge. 

As the first step, we bound the number of such word pairs
by applying the embedding algorithm stated in the proof of Lemma \ref{lem:appendix}. Here $m=2$ and the partition $\eta =\{ 1,2\}$. 
As a consequence, applying the embedding algorithm to any such pair $(w,x)$ leads to a closed word $\mathfrak{w}$ of length $4k+3$ where at least one edge in the graph $G_{\mathfrak{w}}$ has been repeated at least twice. We call the function corresponding to the embedding algorithm $f$ (i.e. $f(w,x)=\mathfrak{w}$). One can check that in this case 
\[
\#f^{-1}(\mathfrak{w}) \le (4k+3)^{3}
\]
for any closed word $\mathfrak{w}$. Now given any closed word $\mathfrak{w}$ with $wt(\mathfrak{w})= \zeta$, 
we consider its FK parsing $a=[\mathfrak{w}_{i}]_{i=1}^{m}$ where $wt(a)=\zeta$. 
We again use Lemma \ref{lem:wordbyFK} to get the number of equivalence classes of $a$ with a fixed $m$ and $\zeta$ is bounded by 
\begin{equation}\label{gammabddIII}
\# \Gamma(\zeta,4k+3,m)\le 2^{4k+3} (4k+3)^{3(m-1)}
\end{equation}
On the other hand, using Lemma \ref{lem:mbdd} we get for any $a$
\begin{equation}\label{mbdddIII}
m = \#E^{1}_{a} - 2wt(a) +2 + (4k+3) \le \#E^{1}_{\mathfrak{w}}-2\zeta + 2+ (4k+3)\le 8k+5.
\end{equation}
Here the last step follows from $\#E^{1}_{\mathfrak{w}} \le 4k+2$ and $\zeta \ge 1$. Plugging in the upper bound of $m$ in the R.S. of (\ref{mbdddIII}) in (\ref{gammabddIII}) we get for any $m$,
\begin{equation}\label{gammabddIV}
\# \Gamma(\zeta,4k+3,m)\le 2^{4k+3} (4k+3)^{3(8k+4)}.
\end{equation}

Now observe that for any $\mathcal{E}_{T_1}$ and $\mathcal{E}_{T_2}$, $V_{n,w,\mathcal{E}_{T_1}}$ and $V_{n,x,\mathcal{E}_{T_2}}$ are product of independent Bernoulli random variables multiplied with some deterministic constants. So $\Cov(V_{n,w,\mathcal{E}_{T_1}},V_{n,x,\mathcal{E}_{T_2}})=0$ unless all the random variables in the product $V_{n,w,\mathcal{E}_{T_1}}V_{n,x,\mathcal{E}_{T_2}}$ are repeated at least twice.  On the other hand, for all $(\mathcal{E}_{T_1},\mathcal{E}_{T_2})$ where all the random variables in the product $V_{n,w,\mathcal{E}_{T_1}}V_{n,x,\mathcal{E}_{T_2}}$ are repeated at least twice, by Jensen's inequality
$\E|V_{n,w,\mathcal{E}_{T_1}}  V_{n,x,\mathcal{E}_{T_2}} | \ge  \E|  V_{n,w,\mathcal{E}_{T_1}} |
\E |  V_{n,x,\mathcal{E}_{T_2}} |$.
 As a consequence,
\[
\big|\Cov\big(V_{n,w,\mathcal{E}_{T_1}},V_{n,x,\mathcal{E}_{T_2}} \big)\big| \le 2 \E \big|V_{n,w,\mathcal{E}_{T_1}}  V_{n,x,\mathcal{E}_{T_2}}  \big| .
\]
Recall that $\mathcal{E}_{L_i}= \mathcal{E}\backslash \mathcal{E}_{T_i}$ for $i=1,2$.
Now, (\ref{varvnw}) can be upper bounded by 
\begin{equation}\label{embedded}
\begin{split}
&\left(\frac{1}{n\pnav(1-\pnav)}\right)^{2k+1}  \Var\left[\sum_{w}V_{n,w}\right]\\
&= \left(\frac{1}{n\pnav(1-\pnav)}\right)^{2k+1} \sum_{\mathfrak{w}} \sum_{(w,x)\in f^{-1}(\mathfrak{w})} \sum_{\mathcal{E}_{T_1},\mathcal{E}_{T_2}} \Cov\left(V_{n,w,\mathcal{E}_{T_1}},V_{n,x,\mathcal{E}_{T_2}} \right)\\
&\le \left(\frac{1}{n\pnav(1-\pnav)}\right)^{2k+1}\sum_{\mathfrak{w}}\sum_{(w,x)\in f^{-1}(\mathfrak{w})}\sum_{\mathcal{E}_{T_1},\mathcal{E}_{T_2}} 2 
\E \left|V_{n,w,\mathcal{E}_{T_1}}  V_{n,x,\mathcal{E}_{T_2}}  \right|  \\
&= 2\left(\frac{1}{n\pnav(1-\pnav)}\right)^{2k+1}
\\ 
&~~~ \times \sum_{\mathfrak{w}}\sum_{(w,x)\in f^{-1}(\mathfrak{w})}\sum_{\mathcal{E}_{T_1},\mathcal{E}_{T_2}} d^{\#\mathcal{E}_{T_1}+\#\mathcal{E}_{T_2}} 
\E \bigg|\prod_{e \in \mathcal{E}_{L_1}}(x_{e(w)}-p_{e(w)}) \times \prod_{e \in \mathcal{E}_{L_2}}(x_{e(x)}-p_{e(x)}) \bigg| \\
&\le 2\left(\frac{1}{n\pnav(1-\pnav)}\right)^{2k+1}\sum_{\mathfrak{w}}\sum_{(w,x)\in f^{-1}(\mathfrak{w})} \sum_{\emptyset \subsetneq \mathcal{E}_{T} \subsetneq \mathcal{E}_{4k+2}} d^{\# \mathcal{E}_{T}} 
\E  \bigg|\prod_{e \in \mathcal{E}_{L}} (x_{e(\mathfrak{w})}-p_{e(\mathfrak{w})}) \bigg| .
\end{split}
\end{equation}
Here $\mathcal{E}_{4k+2}:= \{ (0,1),\ldots,(4k+1,4k+2) \}$ and $\mathcal{E}_{T}$ and $\mathcal{E}_L$ give a disjoint partition of $ \mathcal{E}_{4k+2}$.
Let $w=[\alpha_{i}]_{i=0}^{2k+1}$, $x=[\beta_{i}]_{i=0}^{2k+1}$ and $\mathfrak{w}=[\gamma_{i}]_{i=0}^{4k+2}$. 
Also let for any $e=(e_1,e_2)\in \mathcal{E}$, $\alpha_{e}=(\alpha_{e_1},\alpha_{e_2})$ similarly define $\beta_{e}$ and $\gamma_{e}$.
The last expression follows from the construction of $\mathfrak{w}$ from $(w,x)$. 
The most important observation here is, after fixing the words $w$ and $x$, the function $f$ is defined in such a way that for any $e^{(1)} \in \mathcal{E}_{L_1}$ and $e^{(2)} \in \mathcal{E}_{L_2}$ there are unique $e'^{(1)},e'^{(2)} \in \mathcal{E}_{4k+2}$ such that $\alpha_{e^{(1)}}= \gamma_{e'^{(1)}}$ and $\beta_{e^{(2)}}=\gamma_{e'^{(2)}}$. Further, $e'^{(1)} \neq e'^{(2)}$. Hence the last expression of (\ref{embedded}) is justified.
  
It is easy to observe that 
$\E|\prod_{e \in \mathcal{E}_{L}}(x_{e(\mathfrak{w})}-p_{e(\mathfrak{w})})  |   \le (1+o(1))(\pnav(1-\pnav))^{ \# E ( \mathcal{E}_{L} )    }$
and plugging in the estimate $\#f^{-1}(\mathfrak{w})\le (4k+3)^3$ we find the last expression in (\ref{embedded}) is further bounded by 
\begin{equation}\label{furtherembed}
\begin{split}
&(1+o(1))2\left(\frac{1}{n\pnav(1-\pnav)}\right)^{2k+1}(4k+3)^3\sum_{\mathfrak{w}}\sum_{\mathcal{E}_{T}\neq \emptyset} d^{\#\mathcal{E}_{T}}(\pnav(1-\pnav))^{ \# E ( \mathcal{E}_{L} )    }\\
&\le 4 \left(\frac{1}{n\pnav(1-\pnav)}\right)^{2k+1}(4k+3)^3 \sum_{\mathbf{w}} \sum_{\mathcal{E}_{T} \neq \emptyset}\sum_{\mathfrak{w} \in \mathbf{w}} d^{\#\mathcal{E}_{T}}(\pnav(1-\pnav))^{ \# E ( \mathcal{E}_{L} )    }\\
&= 4 \left(\frac{1}{n\pnav(1-\pnav)}\right)^{2k+1}(4k+3)^3\sum_{\mathbf{w}} \sum_{\mathcal{E}_{T} \neq \emptyset} n^{\#V_{\mathbf{w}}}d^{\#\mathcal{E}_{T}}(\pnav(1-\pnav))^{ \# E ( \mathcal{E}_{L} )    }\\
&\le 4 (4k+3)^3C'^{k} \sum_{\mathbf{w}} \sum_{\mathcal{E}_{T} \neq \emptyset} \left(\frac{1}{n}\right)^{2k+1- \#V + \frac{\# \mathcal{E}_{T}}{2}} \left( \frac{1}{\pnav} \right)^{2k+1-\#E( \mathcal{E}_{L})-\frac{\# \mathcal{E}_{T}}{2}}.
\end{split}
\end{equation} 
Here $\mathbf{w}$ is the equivalence class corresponding to $\mathfrak{w}$ and $C'$ is a numeric constant. 
Arguing as (\ref{eletbound}) and (\ref{pownvspowp}) we again get 
\begin{equation}\label{eletboundII}
2k+1-\#E( \mathcal{E}_{L})-\frac{\# \mathcal{E}_{T}}{2} \ge 0
\end{equation}
 and 
 \begin{equation}\label{pownvspowpII}
 \#E( \mathcal{E}_{L}) + \# \mathcal{E}_{T} - \#V_{\mathbf{w}} \ge \#E_{\mathbf{w}} -\#V_{\mathbf{w}} \ge 0.
 \end{equation}
To see the last inequality in \eqref{pownvspowpII}, notice that $\#E_w \geq \#V_w$ and $\#E_x \geq \#V_x$ due to the parity principle,
and the embedding algorithm ensures that $\#E_w + \#E_x - \#E_\mathbf{w} \leq \#V_w + \#V_x - \#V_\mathbf{w}$.
Indeed we can further show that the inequality in (\ref{pownvspowpII}) is always strict. Recall that if the equality in (\ref{pownvspowpII}) holds, then $G_{\mathbf{w}}$ is a unicyclic graph and every edge in $E(\mathcal{E}_{T})$ has been traversed exactly once. Let $\mathfrak{w}=f(w,x) \in \mathbf{w}$ be any word and $Z_{\mathfrak{w}}$ and $F_{\mathfrak{w}}$ be the bracelet and the forest in $G_{\mathfrak{w}}$ where $r$  is the circuit length. As $\#E(\mathcal{E}_{T}) >0$, $\#E^{1}_{\mathfrak{w}}>0$. We have argued earlier that in this case $\#E^{1}_{\mathfrak{w}} =1$ and all the edges in the bracelet $Z_{\mathfrak{w}}$ have traversed exactly once. On the other hand $f$ is defined in such a way that $V_{\mathfrak{w}}=V_{w} \cup V_{x}$ and $E_{\mathfrak{w}}= E_{w} \cup E_{x}$. As $l(w)=2k+2$ and $l(x)=2k+2$, $\#E_{w} \ge \#V_{w}$ and $\#E_{x} \ge \#V_{x}$ by the parity principle. This forces both $G_{w}$ and $G_{x}$ to be unicyclic. This means $Z_{w}=Z_{x}=Z_{\mathfrak{w}}$ since $E_{w} \cap E_{x} \neq \emptyset$. This is a contradiction to the fact that all the edges in $Z_{\mathfrak{w}}$ have been traversed exactly once by the word $\mathfrak{w}$. As a consequence the term inside the summand of the last expression in (\ref{furtherembed}) is bounded by $\frac{1}{n}$ for any $\mathbf{w}$ and any $\mathcal{E}_{T}$. Plugging in this estimate and recalling that there are at most $2^{4k+3} (4k+3)^{3(8k+4)}$ many $\mathbf{w}$'s and at most $2^{4k+2}$ many $\mathcal{E}_{T}$, we come to the following final upper bound to the last expression in (\ref{furtherembed}):
\begin{equation}\label{finalembed}
4 (4k+3)^3C'^{k}2^{4k+2}2^{4k+3} (4k+3)^{3(8k+4)}\frac{1}{n}.
\end{equation}
Analysis similar to (\ref{firstana}) will prove that (\ref{finalembed}) goes to $0$. This completes the proof. \hfill{$\square$}

\subsection{Proof of part (iv) of Theorem \ref{thm:null} and part (iv) of Theorem \ref{thm:alt}}

Here we focus on the proof of part (iv) of Theorem \ref{thm:null}. 
The proof of part (iv) of Theorem \ref{thm:alt} is similar.
We first state two important Lemmas which will play important roles in the proof. 
\begin{lemma}(Bernstein inequality)\label{lem:bern}
Let $\{X_{i}\}_{i=1}^{m}$ be independent mean $0$ random variables such that $|X_i|\le M$ for some fixed $M$. Then for any $s>0$, 
\begin{equation}\label{bernstein}
\mathbb{P}\left[ \sum_{i=1}^{m} X_{i} \ge s \right] \le \exp \left( - \frac{\frac{1}{2}s^2}{\sum_{i}\E[X_{i}^2]+\frac{1}{3}Ms} \right).
\end{equation}
In particular, if $X_{i}$'s are i.i.d. centered Bernoulli $p$ random variables, then 
\begin{equation}\label{usebern}
\mathbb{P}\left[ \left|\sum_{i=1}^{m} X_{i} \right|\ge s \right] \le 2 \times \left\{
\begin{array}{ll}
\exp \left( - \frac{3 s}{4}\right) & \text{if} ~ 3mp(1-p) \le s, \\
\exp \left( - \frac{s^2}{4mp(1-p)} \right) & \text{if} ~ 3mp(1-p) > s.
\end{array}
\right.
\end{equation} 
The above inequality directly follows from plugging in $M=1$ and using inequality (\ref{bernstein}) on $X_{i}$ and $-X_{i}$ and taking the union bound.
\end{lemma}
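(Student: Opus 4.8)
The plan is to run the standard exponential-moment (Chernoff) argument. First I would fix $\lambda\in(0,3/M)$ and combine Markov's inequality applied to $e^{\lambda\sum_i X_i}$ with independence:
\begin{equation*}
\mathbb{P}\!\left[\sum_{i=1}^m X_i\ge s\right]\le e^{-\lambda s}\,\E\!\big[e^{\lambda\sum_i X_i}\big]=e^{-\lambda s}\prod_{i=1}^m\E\!\big[e^{\lambda X_i}\big].
\end{equation*}
The elementary input is that $e^u-1-u\le \dfrac{u^2/2}{1-|u|/3}$ for $|u|<3$; for $u\ge 0$ this follows from $k!\ge 2\cdot 3^{k-2}$ ($k\ge 2$), and the case $u<0$ reduces to $u\ge 0$ via $e^{-a}-1+a\le e^{a}-1-a$ for $a\ge 0$. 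Applying this with $u=\lambda X_i$ (valid since $|\lambda X_i|\le\lambda M<3$), taking expectations, using $\E[X_i]=0$ and $1+v\le e^v$, I get $\E[e^{\lambda X_i}]\le\exp\!\big(\tfrac{\lambda^2\E[X_i^2]}{2(1-\lambda M/3)}\big)$, hence
\begin{equation*}
\mathbb{P}\!\left[\sum_{i=1}^m X_i\ge s\right]\le\exp\!\left(-\lambda s+\frac{\lambda^2\sigma^2}{2(1-\lambda M/3)}\right),\qquad \sigma^2:=\sum_{i=1}^m\E[X_i^2].
\end{equation*}

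Next I would plug in the choice $\lambda=s/(\sigma^2+Ms/3)$, which lies in $(0,3/M)$; a direct substitution gives $1-\lambda M/3=\sigma^2/(\sigma^2+Ms/3)$ and collapses the exponent to $-\lambda s/2=-\tfrac{s^2/2}{\sigma^2+Ms/3}$, which is exactly \eqref{bernstein}.

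For the second display, I would specialize to i.i.d.\ centered Bernoulli$(p)$ variables, so $M=1$ and $\sigma^2=mp(1-p)$; applying \eqref{bernstein} to $\{X_i\}$ and to $\{-X_i\}$ and taking a union bound yields $\mathbb{P}[|\sum_i X_i|\ge s]\le 2\exp\!\big(-\tfrac{s^2/2}{mp(1-p)+s/3}\big)$. A two-case split on the denominator then finishes: if $3mp(1-p)\le s$ then $mp(1-p)+s/3\le 2s/3$, so the exponent is at least $3s/4$; if $3mp(1-p)>s$ then $mp(1-p)+s/3<2mp(1-p)$, so the exponent exceeds $s^2/(4mp(1-p))$. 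This reproduces \eqref{usebern}. All steps are textbook; there is essentially no obstacle, the only points requiring a moment's care being the series estimate $k!\ge 2\cdot 3^{k-2}$ that produces the constant $1/3$, the reduction of the $u<0$ case, and checking that the selected $\lambda$ stays below $3/M$.
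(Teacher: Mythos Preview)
Your proof is correct and complete; it is the standard Chernoff argument for Bernstein's inequality, carried out carefully. The paper itself does not give a proof at all --- it simply states that ``this is a well known inequality in probability theory hence its proof will be omitted'' --- so you have supplied strictly more than the paper does. There is nothing to compare in terms of approach.
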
 
\noindent
This is a well known inequality in probability theory hence its proof will be omitted.  

For any event $E$, let $\mathbb{I}_E$ stand for its indicator function.
\begin{lemma}\label{lem:condexpec}
\begin{enumerate}
\item Suppose $A$ and $B$ are any two random variables. Then 
\begin{equation}
\E\left[\left|AB\mathbb{I}_{|B|\le s}\right|\right]\le s\E[|A|].
\end{equation}
\item Let $E$ be an event with $\mathbb{P}(E) \ge (1-c)$ and $A$ be any random variable with $|A|\le 1$. Then 
\begin{equation}
\left|\E\left[A\mathbb{I}_E\right]- \E[A]\right| \le c.
\end{equation} 
\end{enumerate}
\end{lemma}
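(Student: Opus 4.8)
Both parts are pointwise estimates followed by monotonicity of expectation, so the plan is simply to bound the relevant random variables almost surely and then integrate.

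For part (1), I would argue on the event $\{|B|\le s\}$: there we have $|AB\,\mathbb{I}_{|B|\le s}| = |A|\,|B|\,\mathbb{I}_{|B|\le s} \le s\,|A|\,\mathbb{I}_{|B|\le s} \le s\,|A|$, while off that event the left-hand side vanishes. Hence $|AB\,\mathbb{I}_{|B|\le s}|\le s|A|$ surely, and taking expectations gives $\E[|AB\,\mathbb{I}_{|B|\le s}|]\le s\,\E[|A|]$. No integrability hypotheses are needed because the inequality holds in $[0,\infty]$.

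For part (2), I would write $\E[A\mathbb{I}_E]-\E[A] = -\E[A\mathbb{I}_{E^c}]$ using $\mathbb{I}_E = 1-\mathbb{I}_{E^c}$, and then estimate $|\E[A\mathbb{I}_{E^c}]|\le \E[|A|\mathbb{I}_{E^c}]\le \E[\mathbb{I}_{E^c}] = \mathbb{P}(E^c) = 1-\mathbb{P}(E)\le c$, where the second inequality uses the assumption $|A|\le 1$ and the last uses $\mathbb{P}(E)\ge 1-c$. This yields $|\E[A\mathbb{I}_E]-\E[A]|\le c$.

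There is no genuine obstacle here; the only thing to be careful about is to phrase the pointwise bounds so they hold on all of the sample space (in particular on the complementary events, where the indicator-truncated quantities are zero), after which Jensen/monotonicity of the integral does the rest. The lemma is stated in this form purely so it can be cited cleanly in the proofs of Theorems \ref{thm:null}(iv) and \ref{thm:alt}(iv), where $B$ will be a power-sum statistic truncated to the high-probability event $\mathrm{Ev}$ and $A$ a residual term.
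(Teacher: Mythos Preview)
Your proof is correct and matches the paper's approach: the paper omits the details entirely, stating only that ``the proofs follow from direct application of the definition of expectation,'' and your pointwise bounds followed by monotonicity of the integral are exactly that direct application.
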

\noindent 
The proofs follow from direct application of the definition of expectation. We omit the details.

\bigskip

With Lemma \ref{lem:bern} and Lemma \ref{lem:condexpec} in hand, we now turn to the proof of part (iv) of Theorem \ref{thm:null}.
In the rest of this subsection, all expectation and variance are taken with respect to $\mathbb{P}_{0,n}$.
The fundamental idea behind this proof is the following. As we estimate $\hpnav= \frac{1}{n(n-1)}\sum X_{ij}$,
$\Var(\hpnav)= \frac{\sqrt{2\pnav(1-\pnav)}}{\sqrt{n(n-1)}} \approx \frac{\sqrt{2\pnav}}{n}$. 
As a consequence, one expect that in a typical realization  $\frac{\sqrt{\pnav}}{n}\ll |\hpnav-\pnav| \ll \frac{\sqrt{\pnav}}{\sqrt{n}}$.
Then one could imitate the proof of part (i) of Theorem \ref{thm:alt}. We now formalize these ideas.

At first we fix some $\delta \in (\frac{1}{2},1)$. Let
\begin{equation}\label{def:Ev}
\mathrm{Ev}:= \mathbb{I}_{\left|\hpnav-\pnav\right|\le \frac{\sqrt{\pnav}}{n^{\delta}}} .
\end{equation}
Recall from (\ref{cen2}), that 
\begin{equation}\label{traceexpcen2}
\Tr(\Acentwo^{2k+1})= \left(\frac{1}{n\hpnav(1-\hpnav)}\right)^{\frac{2k+1}{2}}\sum_{w: l(w)= 2k+2 ~ \& ~ w ~\text{closed}} \left[\hat{X}_{w}\right].
\end{equation}
Here for any word $w$, we define $\hat{X}_{w}:= \prod_{j=0}^{2k}\left(x_{i_j,i_{j+1}}- \hpnav\right).$ We now write the R.S. of (\ref{traceexpcen2}) in the following way:
\begin{equation}\label{tracecomcen2}
\begin{split}
&\left(\frac{\pnav(1-\pnav)}{\hpnav(1-\hpnav)}\right)^{\frac{2k+1}{2}}\left(\frac{1}{n\pnav(1-\pnav)}\right)^{\frac{2k+1}{2}}\sum_{w~:~ l(w)= 2k+2 ~ \& ~ w ~\text{closed}} \left[\hat{X}_{w}\right]\\
&= \mathrm{Ev}\left(\frac{\pnav(1-\pnav)}{\hpnav(1-\hpnav)}\right)^{\frac{2k+1}{2}}\left(\frac{1}{n\pnav(1-\pnav)}\right)^{\frac{2k+1}{2}}\sum_{w~:~ l(w)= 2k+2 ~ \& ~ w ~\text{closed}} \left[\hat{X}_{w}\right]\\
& ~~~~~~~~~~~~~~~~~~~+(1-\mathrm{Ev}) \left(\frac{1}{n\hpnav(1-\hpnav)}\right)^{\frac{2k+1}{2}}\sum_{w~:~ l(w)= 2k+2 ~ \& ~ w ~\text{closed}} \left[\hat{X}_{w}\right].
\end{split}
\end{equation}
Now we apply (\ref{usebern}) with $m= \frac{n(n-1)}{2}$ and $s= \frac{m\sqrt{\pnav}}{n^{\delta}}$  to get that 
\begin{equation}\label{probbound:Ev}
\mathbb{P}\left[ \mathrm{Ev} =0\right]\le 2 \times
\left\{  
\begin{array}{ll}
\exp\left(-\frac{3s}{4}\right) & \text{if} ~ 3\sqrt{\pnav}(1-\pnav) \le \frac{1}{n^{\delta}} \\
\exp\left( - \frac{s^2}{4m\pnav(1-\pnav)} \right) & \text{if} ~ 3\sqrt{\pnav}(1-\pnav) \ge \frac{1}{n^{\delta}}.
\end{array}
\right. 
\end{equation}
Since $m=O(n^2)$, 
\[
s = \frac{m\sqrt{\pnav}}{n^{\delta}} =O(n^{2-\delta}\sqrt{\pnav})= O(n^{\frac{3}{2}-\delta}\sqrt{n\pnav}) \gg \sqrt{n}.
\]
On the other hand,
\[
\frac{s^2}{m\pnav(1-\pnav)}= O\left( \frac{m^2\pnav}{n^{2\delta}m\pnav(1-\pnav)} \right)= O\left( \frac{m}{n^{2\delta}} \right)=O(n^{2-2\delta}).
\]
As a consequence, in either case there exists some $\eta >0$ such that $$\mathbb{P}\left[ \mathrm{Ev} =0\right] \le \exp(- n^{\eta}) \to 0.$$
So we can ignore the second term in the last expression of (\ref{tracecomcen2}). 

\medskip

Now we analyze the first term of (\ref{tracecomcen2}).
Observe that when $\mathrm{Ev}=1$, 
\[
\frac{\hpnav}{\pnav}= 1+ \frac{\hpnav- \pnav}{\pnav}= 1+ O\left(\frac{\sqrt{\pnav}}{n^{\delta}\pnav}\right)= 1+ O\left( \frac{1}{n^{\delta-\frac{1}{2}}\sqrt{n\pnav}} \right)
\]
and 
\[
\frac{1-\hpnav}{1-\pnav}= 1- \frac{\hpnav- \pnav}{1-\pnav}.
\]
Now 
\[
\left|\left( 1+ \frac{\hpnav- \pnav}{\pnav} \right)^{\frac{2k+1}{2}} -1 \right|= O\left( \frac{2k+1}{2n^{\delta-\frac{1}{2}}\sqrt{n\pnav}}\right) \to 0.
\]
Hence 
$\left(\frac{\pnav}{\hpnav}\right)^{\frac{2k+1}{2}} \to 1$.
A similar argument proves that 
$\left(\frac{1-\pnav}{1-\hpnav}\right)^{\frac{2k+1}{2}} \to 1$.
Now
\begin{equation}
\begin{split}
&\mathrm{Ev}\left(\frac{1}{n\pnav(1-\pnav)}\right)^{\frac{2k+1}{2}}\sum_{w~:~ l(w)= 2k+2 ~ \& ~ w ~\text{closed}} \left[\hat{X}_{w}\right]\\
&=\mathrm{Ev}\left(\frac{1}{n\pnav(1-\pnav)}\right)^{\frac{2k+1}{2}}\left(\sum_{w~:~ l(w)= 2k+2 ~ \& ~ w ~\text{closed}} \left[{X}_{w}\right] + \sum_{w} E_{n,w}\right)
\end{split}
\end{equation} 
where 
\begin{equation}\label{def:enw}
E_{n,w}= \sum_{\mathcal{E}_{T}} (\pnav- \hpnav)^{\# \mathcal{E}_{T}} \prod_{e \in \mathcal{E}_{L}} (x_{e(w)}-\pnav).
\end{equation}
Here $\mathcal{E}_{T}$ and $\mathcal{E}_{L}$ are as defined in the proof of part (i) of Theorem \ref{thm:alt}. 
As a consequence, in order to prove part (iv) of Theorem \ref{thm:null}, it is enough to prove 
\[
\E\left[ \left(\mathrm{Ev} \left(\frac{1}{n\pnav(1-\pnav)}\right)^{\frac{2k+1}{2}}\sum_{w} E_{n,w}\right)^2 \right] \to 0.
\]
To this end, first note
\begin{equation}\label{pnavexp}
\begin{split}
&\E\left[ \left(\mathrm{Ev} \left(\frac{1}{n\pnav(1-\pnav)}\right)^{\frac{2k+1}{2}}\sum_{w} E_{n,w}\right)^2 \right]\\
&= \E \left[ \mathrm{Ev} \left(\frac{1}{n\pnav(1-\pnav)}\right)^{2k+1} \sum_{w}\sum_{x} E_{n,w}E_{n,x}  \right]\\
&=  \left(\frac{1}{n\pnav(1-\pnav)}\right)^{2k+1} \times \\
&~~~~~ \E\left[ \mathrm{Ev} \sum_{w,x,\mathcal{E}_{T_1},\mathcal{E}_{T_2}}\left( \pnav-\hpnav\right)^{\# \mathcal{E}_{T_1}+\#\mathcal{E}_{T_2}}\prod_{e \in \mathcal{E}_{L_1}} (x_{e(w)}-\pnav) \prod_{e \in \mathcal{E}_{L_2}} (x_{e(x)}-\pnav) \right].
\end{split}
\end{equation}
We divide the remaining arguments into two different cases, depending on whether $E(\mathcal{E}_{L_1}(w))\cup E(\mathcal{E}_{L_2}(x))$ has any edge that has been traversed only once.

\medskip

\noindent
\underline{Case 1: At least one edge in $E(\mathcal{E}_{L_1}(w))\cup E(\mathcal{E}_{L_2}(x))$ has been traversed exactly once.}
Let $\mathrm{def}(w,x)$ $(= \mathrm{def}_{L_1,L_2}(w,x))\ge 1$ be the total number of edges in $E(\mathcal{E}_{L_1}(w))\cup E(\mathcal{E}_{L_2}(x))$ which have been traversed exactly once. In this case,
$$\E\left[\prod_{e \in \mathcal{E}_{L_1}} (x_{e(w)}-\pnav)\prod_{e \in \mathcal{E}_{L_2}} (x_{e(x)}-\pnav)\right]=0.$$ It is easy to check that 
\[
\left|\prod_{e \in \mathcal{E}_{L_1}} (x_{e(w)}-\pnav)\prod_{e \in \mathcal{E}_{L_2}} (x_{e(x)}-\pnav)\right|<1
\]
since each $|x_{e}-\pnav|< 1$. 
We now expand the last expression of (\ref{pnavexp}) in this case.
\begin{equation}\label{pnavexpII}
\begin{split}
& \E \left[ \mathrm{Ev} \left( \pnav-\hpnav\right)^{\# \mathcal{E}_{T_1}+\#\mathcal{E}_{T_2}}\prod_{e \in \mathcal{E}_{L_1}} (x_{e(w)}-\pnav) \prod_{e \in \mathcal{E}_{L_2}} (x_{e(x)}-\pnav) \right] \\
&= \E \left[\mathrm{Ev} \left(\frac{2}{n(n-1)}\right)^{\# \mathcal{E}_{T_1}+\#\mathcal{E}_{T_2}} \times 
\sum_{I_1,\ldots, I_{\# \mathcal{E}_{T_1}+\#\mathcal{E}_{T_2}}}
P_{w,x, I_j}(\mathcal{E}_{T_1},\mathcal{E}_{L_1},\mathcal{E}_{T_2},\mathcal{E}_{L_2})
 \right]
\end{split}
\end{equation}
where for any $I_{j} \in \{ (u,v)~|~ 1\le u<v \le n\}$
\begin{equation}
	\label{eq:subcase}
\begin{split}
& P_{w,x, I_j}(\mathcal{E}_{T_1},\mathcal{E}_{L_1},\mathcal{E}_{T_2},\mathcal{E}_{L_2})\\
& ~~~=\prod_{j=1}^{\# \mathcal{E}_{T_1}+\#\mathcal{E}_{T_2}}(x_{I_j}-\pnav)\prod_{e \in \mathcal{E}_{L_1}} (x_{e(w)}-\pnav) \prod_{e \in \mathcal{E}_{L_2}} (x_{e(x)}-\pnav).
\end{split}
\end{equation}

\medskip

\underline{Subcase (a):
Every random variable in \eqref{eq:subcase}
has been repeated at least twice.} 
Clearly in this case $\# \mathcal{E}_{T_1}+\#\mathcal{E}_{T_2} \ge \mathrm{def}(w,x)$ since otherwise there are simply not enough random variables in the first product to match those that appear only once in the second and the third products combined. 
Let $l\ge \mathrm{def}(w,x)$ be the number of random variables common between
\begin{equation}
	\label{eq:subcase-prod1}
	\prod_{j=1}^{\# \mathcal{E}_{T_1}+\#\mathcal{E}_{T_2}}(x_{I_j}-\pnav)
\end{equation}
and 
\begin{equation}
	\label{eq:subcase-prod2}
	\prod_{e \in \mathcal{E}_{L_1}} (x_{e(w)}-\pnav) \prod_{e \in \mathcal{E}_{L_2}} (x_{e(x)}-\pnav).
\end{equation}
Note that there are at most $\binom{ \#\left(E(\mathcal{E}_{L_1}(w)) \cup E(\mathcal{E}_{L_2}(x))\right)}{l}$ ways these common random variables can be chosen from the product in \eqref{eq:subcase-prod2}.
Once any such collection of random variables are fixed, we look at the positions occupied by these $l$ common random variables in the product in \eqref{eq:subcase-prod1}.
Let $\theta$ be the number of positions occupied by these $l$ random variables. Clearly, $\theta \ge l$. There are $\binom{\# \mathcal{E}_{T_1}+ \#\mathcal{E}_{T_2}}{\theta}$ many choices of the positions. Once these positions are fixed, the chosen $l$ random variables induces a partition of these $\theta$ positions into $l$ blocks. There are at most $l^{\theta}$ many partitions of $\theta$ objects into $l$ blocks. Finally one can permute the $l$ random variables once such a partition is fixed. This further induces an additional $l!$ factor. Once all these are fixed, one is free to choose the rest of the positions in the product \eqref{eq:subcase-prod1},
for those random variables which have not appeared in the product \eqref{eq:subcase-prod2}.
Now 
\begin{equation}
\begin{split}
&\sum_{j: j ~ \text{is not fixed}} \prod_{j=1}^{\# \mathcal{E}_{T_1}+\#\mathcal{E}_{T_2}}(x_{I_j}-\pnav)\\
& =  \prod_{j: j ~ \text{is fixed}} (x_{I_j}-\pnav)\left( \sum_{j~:~ j ~ \text{is not fixed}} (x_{I_j}-\pnav) \right)^{\# \mathcal{E}_{T_1}+ \mathcal{E}_{T_2}-\theta}\\
&=  \prod_{j: j ~ \text{is fixed}} (x_{I_j}-\pnav) \left( \pnav- \hpnav + O\left(\frac{k}{n^2}\right) \right)^{\# \mathcal{E}_{T_1}+ \mathcal{E}_{T_2}-\theta} \left( \frac{n(n-1)}{2}\right)^{\# \mathcal{E}_{T_1}+ \mathcal{E}_{T_2}-\theta}
\end{split}
\end{equation}
Observe that each of the quantities, $l!\binom{\# \mathcal{E}_{T_1}+\#\mathcal{E}_{T_2}}{l} \le (2k+1)^{(2k+1)}$, $\binom{ \#\left(E(\mathcal{E}_{L_1}(w)) \cup E(\mathcal{E}_{L_2}(x))\right)}{l}$ and $\theta^{l}$ are uniformly bounded by $(2k+1)^{2k+1}$. 
These estimates allow us to write R.S. of (\ref{pnavexpII}) in the following way 
 \begin{equation}\label{pnavexpfirstreduction}
 \begin{split}
 & \E\left|(2k+1)^{6k+3}\mathrm{Ev} \sum_{l\ge \mathrm{def}(w,x)} \sum_{\theta \ge l}\left(\frac{2}{n(n-1)}  \right)^{\theta} \left( \pnav- \hpnav + O\left(\frac{k}{n^2}\right) \right)^{\# \mathcal{E}_{T_1}+ \mathcal{E}_{T_2}-\theta} R_{l,\theta} \right|.
 \end{split}
 \end{equation}
 Here $R_{l,\theta}$ is a monomial of $\#\left(E(\mathcal{E}_{L_1}(w)) \cup E(\mathcal{E}_{L_2}(x))\right)$ many independent Bernoulli random variables such that each of the random variables appear more than once.
Now  
\[
\E\left| R_{l,\theta} \right| \le \left( \pnav(1-\pnav) \right)^{ \#E_{a}(L_1, L_2)}.
\]
Here $E_{a}(L_1,L_2):=\left(E(\mathcal{E}_{L_1}(w)) \cup E(\mathcal{E}_{L_2}(x))\right)$ is a subset of edges in the graph $G_{a}$ for the sentence $a=[w,x].$ 
Now applying Lemma \ref{lem:condexpec} and using the fact when $\mathrm{Ev}=1$, $|\pnav-\hpnav|\le \frac{\sqrt{\pnav}}{n^{\delta}}$ and $\frac{\sqrt{\pnav}}{n^{\delta}} \gg \frac{k}{n^2}$, we can bound (\ref{pnavexpfirstreduction}) by  
\begin{equation}\label{expectationEV}
\begin{split}
&(2k+1)^{6k+3} \sum_{l\ge \mathrm{def}(w,x)} \sum_{\theta \ge l}\left(\frac{2}{n(n-1)}  \right)^{\theta} \left( \frac{\sqrt{\pnav}}{n^{\delta}} \right)^{\# \mathcal{E}_{T_1}+ \# \mathcal{E}_{T_2}-\theta} \left[\left( \pnav(1-\pnav)\right)^{\#E_{a}(L_1,L_2)} 
 + \exp(-n^{\eta}) \right]\\
&\le (2k+1)^{6k+3}\left( 2\pnav(1-\pnav)\right)^{\#E_{a}(L_1,L_2)} \left( \frac{\sqrt{\pnav}}{n^{\delta}} \right)^{\# \mathcal{E}_{T_1}+ \# \mathcal{E}_{T_2}} \sum_{l\ge \mathrm{def}(w,x)} \sum_{\theta \ge l}\left(\frac{2n^{\delta}}{n(n-1)\sqrt{\pnav}}\right)^{\theta}\\
& \le C (2k+1)^{6k+3}\left( 2\pnav(1-\pnav)\right)^{\#E_{a}(L_1,L_2)} \left(\frac{2}{n(n-1)}\right)^{\mathrm{def}(w,x)} \left(\frac{\sqrt{\pnav}}{n^{\delta}}\right) ^{\# \mathcal{E}_{T_1}+\#\mathcal{E}_{T_2}-\mathrm{def}(w,x)}
\end{split}
\end{equation}
form some numeric constant $C$.
Here we have used the facts that $\frac{2n^{\delta}}{n(n-1)\sqrt{\pnav}} \to 0$ and that 
$ (\pnav(1-\pnav))^{Ck}  \gg \exp(-n^{\eta})$ for any positive numeric constants $C$ and $\eta$ and ${\#E_{a}(L_1,L_2)} = O(k)$.

Now we look at the equivalence classes corresponding to the sentence $a=[w,x]$. Fixing any equivalence class $\mathbf{a}$ , let $\#V_{\mathbf{a}}$ be the number of vertices in the graph $G_{\mathbf{a}}$. Summing the R.S. of the last expression of (\ref{expectationEV})  over all $a\in \mathbf{a}$ and dividing the sum by $\left( \frac{1}{n\pnav(1-\pnav)} \right)^{2k+1}$, we have it is less than or equal to
\begin{equation}\label{simplifyeqn}
\begin{split}
&D^{2k+1} (2k+1)^{6k+3}\left( \frac{1}{n\pnav} \right)^{2k+1}n^{\#V_{\mathbf{a}}}\left( \pnav \right)^{\#E_{\mathbf{a}}(L_1,L_2)}\left(\frac{1}{n^2}\right)^{\mathrm{def}(\mathbf{a})} \left(\frac{\sqrt{\pnav}}{n^{\delta}}\right) ^{\# \mathcal{E}_{T_1}+\#\mathcal{E}_{T_2}-\mathrm{def}(\mathbf{a})}\\
\end{split}
\end{equation}
Here $\#V_{\mathbf{a}}$, $\#E_{\mathbf{a}}(L_1,L_2)$ and $\mathrm{def}(\mathbf{a})$ are the common value of $\#V_{a}$, $\#E_{a}(L_1,L_2)$ and $\mathrm{def}(a)$ for any $a \in \mathbf{a}$. Also note that we have ignored the terms containing  $(1-\pnav)$ since $\lim_{n}(1-\pnav)>(1-p)$ and $p \in [0,1)$.
Simplifying, we have the powers of $\frac{1}{n}$ and $\frac{1}{\pnav}$ in (\ref{simplifyeqn}) are given by 
\begin{equation}\label{powernsimplifyeqn}
2k+1 - \#V_{\mathbf{a}}+ 2\mathrm{def}(\mathbf{a})+ \delta(\# \mathcal{E}_{T_1}+\#\mathcal{E}_{T_2}-\mathrm{def}(\mathbf{a}))
\end{equation}
and 
\begin{equation}\label{powerpsimplifyeqn}
2k+1 - \#E_{\mathbf{a}}(L_1,L_2)- \frac{1}{2}(\# \mathcal{E}_{T_1}+\#\mathcal{E}_{T_2}-\mathrm{def}(\mathbf{a})).
\end{equation}
Observe that 
\begin{equation*}
\begin{split}
&2(\#E_{\mathbf{a}}(L_1,L_2)-\mathrm{def}(\mathbf{a}))+ \mathrm{def}(\mathbf{a}) \le \#\mathcal{E}_{L_1}+ \#\mathcal{E}_{L_2}
\\
& 
\Leftrightarrow \#E_{\mathbf{a}}(L_1,L_2) \le \frac{1}{2}(\#\mathcal{E}_{L_1}+ \#\mathcal{E}_{L_2}+\mathrm{def}(\mathbf{a})).
\end{split}
\end{equation*}
Plugging this estimate in (\ref{powerpsimplifyeqn}) and using the fact $\#\mathcal{E}_{L_1}+ \#\mathcal{E}_{L_2}+ \#\mathcal{E}_{T_1}+ \#\mathcal{E}_{T_2}= 4k+2,$, we have 
(\ref{powerpsimplifyeqn}) is always greater than or equal to $0$. Now we prove the difference between (\ref{powernsimplifyeqn}) and (\ref{powerpsimplifyeqn}) is always greater than or equal to $\frac{1}{2}$:
\begin{equation}\label{finalpluggine'mate}
\begin{split}
& - \#V_{\mathbf{a}}+ 2\mathrm{def}(\mathbf{a})+ \delta(\# \mathcal{E}_{T_1}+\#\mathcal{E}_{T_2}-\mathrm{def}(\mathbf{a}))+ \#E_{\mathbf{a}}(L_1,L_2)+\frac{1}{2}(\# \mathcal{E}_{T_1}+\#\mathcal{E}_{T_2}-\mathrm{def}(\mathbf{a}))\\
& \ge - \#V_{\mathbf{a}} + 2 \mathrm{def}(\mathbf{a})- (\delta+\frac{1}{2})\mathrm{def}(\mathbf{a})+ \#E_{\mathbf{a}}(L_1,L_2) + \# \mathcal{E}_{T_1}+\#\mathcal{E}_{T_2}\\
& \ge \#E_{\mathbf{a}}- \#V_{\mathbf{a}} + \frac{1}{2} \mathrm{def}(\mathbf{a})\ge \frac{1}{2} \mathrm{def}(\mathbf{a})\ge \frac{1}{2}.
\end{split}
\end{equation}
Here we have used the fact $\frac{1}{2}< \delta <1$. Recall (\ref{gammabddIII}) and (\ref{mbounddddII}) to get that there are at most $(C'k)^{D'k}$ many equivalence classes $\mathbf{a}$ where $C'$ and $D'$ are some known numbers. So summing (\ref{simplifyeqn}) over all the equivalence classes $\mathbf{a}$, we get the contribution of all terms in the current subcase in (\ref{pnavexp}) is bounded by 
\begin{equation}\label{con:subcaseI}
D^{2k+1} (2k+1)^{4k+2}(C'k)^{D'k} \frac{1}{\sqrt{n}} \to 0.
\end{equation}

\medskip

\underline{Subcase (b):
At least one random variable in the product \eqref{eq:subcase}
appears only once.} 
Here we apply Lemma \ref{lem:condexpec} to get 
\begin{equation*}
\begin{split}
&\sum_{I_1,\ldots, I_{\# \mathcal{E}_{T_1}+\#\mathcal{E}_{T_2}}}\left| \E\left[\mathrm{Ev} \left(\frac{2}{n(n-1)}\right)^{\# \mathcal{E}_{T_1}+\#\mathcal{E}_{T_2}} \times \right.\right.\\
& ~~~~~~~ \left.\left.\prod_{j=1}^{\# \mathcal{E}_{T_1}+\#\mathcal{E}_{T_2}}(x_{I_j}-\pnav)\prod_{e \in \mathcal{E}_{L_1}} (x_{e(w)}-\pnav) \prod_{e \in \mathcal{E}_{L_2}} (x_{e(x)}-\pnav) \right]\right|\\
& \le n^{2(\# \mathcal{E}_{T_1}+\#\mathcal{E}_{T_2})} \exp(-n^{\eta}).
\end{split}
\end{equation*}
One can again sum over all the equivalence classes $\mathbf{a}$ to get the contribution of the current subcase in (\ref{pnavexp}) is bounded by
\begin{equation}\label{con:subcaseII}
C_{1}^{2k+1}(C_{2}k)^{C_{3}k}n^{C_{4}k}\exp(-n^{\eta}) \le n^{C_{5}k}\exp(-n^{\eta})
\end{equation}
Here $C_{1},\ldots,C_{5}$ are some known constants. Since $k=o(\min(\sqrt{\log n},\log(n\pnav)))$, one gets $n^{C_{5}k}\exp(-n^{\eta}) \to 0$ for any $\eta>0$. As a consequence, the contribution of the current subcase in (\ref{pnavexp}) goes to $0$.

\medskip

\noindent
\underline{Case 2: All the edges in  $E(\mathcal{E}_{L_1}(w))\cup E(\mathcal{E}_{L_2}(x))$ have been traversed at least twice.} 
This case can be done by imitating the analysis of mean of (\ref{Vnw}) in the proof of part (i) of Theorem \ref{thm:alt}. In particular using arguments analogous to (\ref{eletbound}) and (\ref{pownvspowp}) for the sentence $a=[w,x]$ one can prove that the contribution of the present case in (\ref{pnavexp}) is bounded by 
\begin{equation}\label{con:caseII}
\frac{(C_{\mathrm{r}}k)^{D_{\mathrm{r}}k}}{n^{\delta-\frac{1}{2}}} \to 0
\end{equation}
for some known $C_{\mathrm{r}}$ and $D_{\mathrm{r}}$. 

\paragraph{Summary}
Combining (\ref{con:subcaseI}), (\ref{con:subcaseII}) and (\ref{con:caseII}) one gets 
\begin{equation}\label{pluginvar}
\begin{split}
&\E\left[ \left(\mathrm{Ev} \left(\frac{1}{n\pnav(1-\pnav)}\right)^{\frac{2k+1}{2}}\sum_{w} E_{n,w}\right)^2 \right]\\
& \hskip 3em \le D^{2k+1} (2k+1)^{4k+2}(C'k)^{D'k} \frac{1}{\sqrt{n}}+ n^{C_{5}k}\exp(-n^{\eta})+\frac{(C_{\mathrm{r}}k)^{D_{\mathrm{r}}k}}{n^{\delta-\frac{1}{2}}} \to 0.
\end{split}
\end{equation}
This completes the proof.
\hfill{$\square$}

\subsection{Proof of Theorem \ref{thm:even}}

In this proof we focus on proving results under both null and alternative for $A_{\mathrm{cen1}}$. 
The proof of being able to use $A_{\mathrm{cen2}}$ instead of $A_{\mathrm{cen1}}$ is similar to the proof of part (iv) of Theorem \ref{thm:null}, and hence is omitted. 

\subsubsection{Proof under the null}
Throughout this part, all expectation and variance are taken with respect to $\mathbb{P}_{0,n}$.
Observe that
\begin{equation}
\begin{split}
& \Tr\left( A_{\mathrm{cen1}}^{2k}\right)=  \left(\frac{1}{n\pnav(1-\pnav)}\right)^{k}\sum_{w} X_{w}\\
& ~~~~ =   \left(\frac{1}{n\pnav(1-\pnav)}\right)^{k} \left[\sum_{w\in \mathcal{W}_1} X_{w} + \sum_{w\in \mathcal{W}_2} X_{w} + \sum_{w \in \mathcal{W}_3 } X_{w}+ \sum_{w \in \mathcal{W}_4} X_{w} \right]. 
\end{split}
\end{equation}
Here $\mathcal{W}_1$ corresponds to the set of Wigner words, $\mathcal{W}_2$ stands for the set of all weak Wigner words (Definition \ref{def:weakwigner}), $\mathcal{W}_3=\cup_{r} \mathfrak{W}_{2k+1,r, k+r/2}$ (Proposition \ref{prop:uniword}) collects all words corresponding to unicyclic graphs with a bracelet of length at least 4, and $\mathcal{W}_{4}$ is the complement of $\mathcal{W}_1\cup \mathcal{W}_{2}\cup \mathcal{W}_{3}$. 

A direct and straightforward evaluation shows that as $n\pnav \to\infty$,
\[
\Var\left[\left(\frac{1}{n\pnav(1-\pnav)}\right)^{k}\sum_{w\in \mathcal{W}_1} X_{w} - 
k\psi_{2k} \Tr(\Acenone^2)
\right] \to 0.
\]
There are $\psi_{2k}n(n-1)\ldots(n-k)$ many words in $\mathcal{W}_1$ and each of them have expectation $\left(\pnav(1-\pnav)\right)^{k}$. As a consequence,
\begin{align*}
\E\left[ \left(\frac{1}{n\pnav(1-\pnav)}\right)^{k}\sum_{w\in \mathcal{W}_1} X_{w} \right] & = n\psi_{2k}- \psi_{2k}\sum_{j=1}^{k} j +O\left(\frac{1}{n}\right)\\
& = n\psi_{2k}- \psi_{2k}\binom{k+1}{2}+O\left( \frac{1}{n}\right).
\end{align*}
Next, arguments similar to the proof of Theorem \ref{thm:null} part (i) and (iii) lead to
\[
\left(\frac{1}{n\pnav(1-\pnav)}\right)^{k} \sum_{w \in \mathcal{W}_3 } X_{w} - \sum_{r=4:r ~ \text{even}}^{2k} \frac{f(2k,r)2k}{r} C_{n,r}(G) \stackrel{p}{\to} 0.
\]
Furthermore, by definition all the words in $\mathcal{W}_{4}$ have zero expectation and we have seen in Lemma \ref{lem:expcltpair} that these words do not contribute in the asymptotic variance of $\Tr\left(A_{\mathrm{cen}_1}^{2k}\right)$, either. 
So we can simply ignore the words in $\mathcal{W}_{4}$.
At this point it is clear that 
\[
T_{2k}-  \left(\frac{1}{n\pnav(1-\pnav)}\right)^{k}\sum_{w\in \mathcal{W}_2} X_{w} \stackrel{p}{\to} 0.
\] 
So we turn to inspecting the words $w \in \mathcal{W}_{2}$. 

The words in $\mathcal{W}_{2}$ can further be divided into the following two classes:
\begin{enumerate}
\item $w$ is a critical weak Wigner word (Definition \ref{def:weakwigner}): $\#V_{w}= k$ for $G_{w}= (V_{w},E_{w})$. 
\item $w$ is not a critical weak Wigner word. In this case we call $w$ to be a sub-critical weak Wigner words.
\end{enumerate}
The computation related to critical weak Wigner words on dense graphs can be found in \cite[pp.320-322]{AZ05}. 
However, when the graph is sparse (i.e., when $\pnav\to 0$) we have to be especially careful with the trees: 
Since the number of edges in a tree is one less than the number of vertices, one gets additional powers of $\pnav$ in the denominator. 

From Proposition \ref{prop:weakwigrep}, we know that whenever $w$ is critical weak Wigner word, $G_{w}$ is either a unicyclic graph or a tree. 
When $G_{w}$ is a tree, there is one exceptional edge which is traversed four times and all the other edges are traversed twice. Also $\#V_{w}=k$ and $\#E_{w}=k-1$. 
So for any such $w$,
\[
\E[X_{w}]= \left(\pnav(1-\pnav)\right)^{k-2} \E\left[ (x_{1,2}-\pnav)^{4} \right].
\] 
On the other hand, given any equivalence class there are $(1+o(1))n^{k}$ many such words. Let $\alpha_{2,2k}$ be the number of equivalence classes of such critical weak Wigner trees. (An exact enumeration can be found below.) 
So the total contribution of these critical weak Wigner trees to the expectation of $T_{2k}$ is given by
\[
\frac{\alpha_{2,2k}\E\left[ (x_{1,2}-\pnav)^{4}\right]}{\left(\pnav(1-\pnav)\right)^2} = (1+o(1))\frac{\alpha_{2,2k}}{\pnav}
\] 
for vanishing $\pnav$.
For the variance calculation, we need to consider the word pairs $[w_1,w_2]$ where both $w_1$ and $w_2$ are critical weak Wigner tree and the sentence $a=[w_1,w_2]$ is a weak CLT sentence. 
The leading term here comes from the case when $a$ is a tree and $w_1$, $w_2$ share exactly one edge. 
There can be three possible sub-cases here. 
Firstly, one edge in $a$ is repeated exactly eight times and all the other edges are repeated exactly twice. 
Secondly, three edges in $a$ are repeated exactly four times and all the other edges are repeated exactly twice. 
Thirdly, one edge is repeated exactly six times, one edge is repeated exactly four times and all the other edges are repeated exactly twice. 
Now $\#V_{a}= 2k-2$ since $w_1$ and $w_2$ share exactly one edge this corresponds to two common vertices. Whenever $\pnav \to 0$, in all the aforesaid cases $\E[X_{a}]=(1+o(1))\left(\pnav(1-\pnav)\right)^{2k-1}$. 
Let $v_{2k}n^{2k-2}$ be the number of such sentences. 
The contribution of these sentences in the variance of $T_{2k}$ is 
\[
(1+o(1))v_{2k}\frac{n^{2k-2}\left(\pnav(1-\pnav)\right)^{2k-1}}{n^{2k}\left(\pnav(1-\pnav)\right)^{2k}}= (1+o(1))\frac{v_{2k}}{n^2\pnav^{3}}.
\]
It can be proven that the variances of all the other random variables in $T_{2k}$ are negligible with respect to $\frac{1}{n^2\pnav^3}$. 

When $w$ is a critical weak Wigner word and $G_{w}$ is unicyclic, $\E[X_{w}]= \left(\pnav(1-\pnav)\right)^{k}$. As a consequence, the total contribution of these critical weak Wigner unicyclic words in the expectation of $T_{2k}$ is given by
\[
\alpha_{1,2k}(1+o(1)).
\] 
Here $\alpha_{1,2k}$ is the number of equivalence classes of such words. 
We will also compute the exact value of $\alpha_{1,2k}$ later in the proof.

\smallskip

Now we consider the contributions of sub-critical weak Wigner words. 
In this case, we are only concerned about the trees. 
All the words $w$ such that $G_{w}$ is a tree and $\#V_{w}= k-1$ contribute jointly a term of $\frac{\alpha_{3,2k}}{n\pnav^2}$ in the expectation of $\E[T_{2k}]$. Here $\alpha_{3,2k}$ is the number of equivalence classes of such words. 
The variance of these words is negligible compared to $\frac{1}{n^2\pnav^3}$. Unlike the previous two cases we do not know an easy way to calculate $\alpha_{3,2k}$ explicitly. 
There are also two cases arising here. Firstly, one edge in $a$ is repeated exactly six times and all the other edges are repeated exactly twice. Secondly, two edges in $a$ are repeated exactly four times and all the other edges are repeated exactly twice. 
Here one needs to consider multiple bracelets and the argument becomes tedious.
 
\medskip
 
We conclude this part by deriving the expressions of $\alpha_{1,2k}$ and $\alpha_{2,2k}$.
A generic recipe for evaluating $\alpha_{1,2k}$ and $\alpha_{2,2k}$ is given in equation (46) of \cite{AZ05},
which was done for more general matrices. 
Simplifying all the results in (46) of \cite{AZ05} for Wigner matrices one gets 
\begin{equation}\label{alpha1}
\begin{split}
\alpha_{1,2k} &= \sum_{r=3}^{k} f(2k,2r) \frac{k(r+1)}{r} = \sum_{r=3}^{k} (r+1) \binom{2k}{k+r}\\
              &= \sum_{r=3}^{k} (r+1) \binom{2k}{k+r} - \binom{k+1}{2} \psi_{2k}+ 3\binom{2k}{k+2} + \binom{k+1}{2} \psi_{2k} - 3\binom{2k}{k+2} \\
              &= \sum_{r=1}^{k}  \binom{2k}{k+r} - 2k\psi_{2k}  + \binom{k+1}{2} \psi_{2k} - 3\binom{2k}{k+2}\\
              &= 2^{2k-1} - \binom{2k}{k}\frac{5k+1}{2(k+1)} + \binom{k+1}{2} \psi_{2k} - 3\binom{2k}{k+2},
\end{split}
\end{equation}
and 
\begin{equation}\label{alpha2}
\alpha_{2,2k}= f(2k,4)\frac{k}{2}= \binom{2k}{k+2}.
\end{equation}
Note that in order to derive the final expression in (\ref{alpha1}), we have used the following identity 
\[
\sum_{r=1}^{k} \binom{2k}{k+r}- 2k \psi_{2k}= \sum_{r=3}^{k} (r+1) \binom{2k}{k+r} - \binom{k+1}{2} \psi_{2k}+ 3\binom{2k}{k+2}.
\]
This can be verified by elementary calculation. Hence the proof is skipped.
Using Lemma \ref{lem:bounded} and Lemma \ref{lem:appendix} one gets 
\begin{equation}\label{bound:alpha3}
\alpha_{3,2k} \le 2^{2k} (2k)^{12}
\end{equation}
and 
\begin{equation}\label{bound:v2k}
v_{2k} \le 2^{4k} (C_{1}k)^{C_2}
\end{equation}
for some positive numeric constants $C_{1}$ and $C_{2}$.
This completes the proof under the null.

\subsubsection{Proof under the alternative}
The proof under the alternative is very much similar in spirit to the proof of part (i) of Theorem \ref{thm:alt}. 
The major difference is here we also need to deal with those trees in which the number of edges is one less than the number of vertices. 
In what follows, we shall only focus on any word $w$ such that $G_{w}$ is a tree and shall only give the analysis of the mean. 
All the other cases follow from the arguments similar to the proof of part (i) of Theorem \ref{thm:alt} with suitable modifications for the trees described here.
We will also use notations defined at the beginning of the proof of part (i) of Theorem \ref{thm:alt}.
Throughout this part, all expectation and variance are taken with respect to $\mathbb{P}_{1,n}$ conditioning on the group assignment $\sigma_i$ for $1\leq i\leq n$.

We at first fix any word $w \in \mathcal{W}_{2}$ such that $G_{w}$ is a tree. 
Recall that for a word $w=(i_{0},\ldots, i_{2k})$ we have
\begin{equation*}
\begin{split}
&\left( \frac{1}{n\pnav(1-\pnav)} \right)^{k} \sum_{w \in \mathcal{W}_{2}: G_{w} ~ \text{is a tree}}X_{w}\\
&= \left( \frac{1}{n\pnav(1-\pnav)} \right)^{k} \sum_{w \in \mathcal{W}_{2}: G_{w} ~ \text{is a tree}}\prod_{j=0}^{2k-1}\left(x_{i_{j},i_{j+1}}-\pnav\right)\\
     &= \left( \frac{1}{n\pnav(1-\pnav)} \right)^{k} \sum_{w \in \mathcal{W}_{2}: G_{w} ~ \text{is a tree}}\left[\prod_{j=0}^{2k-1}\left(x_{i_{j},i_{j+1}}-p_{i_{j},i_{j+1}}\right) + d^{2k} + V_{n,w}\right] 
\end{split}
\end{equation*}
where 
\begin{equation*}
\begin{split}
V_{n,w} &= \sum_{ \emptyset \subsetneq \mathcal{E}_{T}   \subsetneq \mathcal{E}_{2k}} \prod_{e \in \mathcal{E}_{T}} (\sigma_{e(w)} d) \prod_{e \in \mathcal{E}_{L}} (x_{e(w)}-p_{e(w)})\\
&= \sum_{\emptyset \subsetneq \mathcal{E}_{T}\subsetneq \mathcal{E}_{2k}}  d^{\# \mathcal{E}_{T} }\prod_{e \in \mathcal{E}_{T}} \sigma_{e(w)}\prod_{e \in \mathcal{E}_{L}} (x_{e(w)}-p_{e(w)}).
\end{split}
\end{equation*}

Since the graph corresponding to any word $w \in \mathcal{W}_{2}$ has the number of vertices less than or equal to $k$, it is easy to see that 
\[
\left( \frac{1}{n\pnav(1-\pnav)} \right)^{k} \sum_{w \in \mathcal{W}_{2}: G_{w} ~ \text{is a tree}}d^{2k} \to 0.
\]

\noindent 
Arguing as before, we have $\E\left[ V_{n,w}\right] \neq 0$ only if each edge in $E(\mathcal{E}_{L}(w))$ has been repeated at least twice by the exploration $e \in \mathcal{E}_{L}.$ We shall only focus on this case.
Now fix $\emptyset \subsetneq \mathcal{E}_{T}\subsetneq \mathcal{E}_{2k} $ and an equivalence class $\mathbf{w} \subset \mathcal{W}_{2}$ corresponding to graph $G=(V,E)$ such that for any $w \in \mathbf{w}$, $G_{w}$ is a tree. Arguing as (\ref{leftrandom})-(\ref{expectationleftrandom}) one arrives at the following upper bound:  
\begin{equation}
\begin{split}
&\left(\frac{1}{n\pnav(1-\pnav)}\right)^{k}\sum_{w : w \in \mathbf{w}}\E\left[\left| d^{\# \mathcal{E}_{T} }\prod_{e \in \mathcal{E}_{T}} \sigma_{e(w)}\prod_{e \in \mathcal{E}_{L}} \left(x_{e(w)}-p_{e(w)}\right)\right|\right]\\
&\le C^{k} \left(\frac{1}{n}\right)^{k- \#V + \frac{\# \mathcal{E}_{T}}{2}} \left( \frac{1}{\pnav} \right)^{k-\#E( \mathcal{E}_{L}(w))-\frac{\# \mathcal{E}_{T}}{2}}.
\end{split}
\label{eq:thm33temp}
\end{equation}
Here $w \in \mathbf{w}$ is any word.
Observe that in this case $G_{w}$ is a tree. So we require a slight modification of (\ref{eletbound}) and (\ref{pownvspowp}) in the current scenario. 
 Firstly 
\begin{equation}\label{newpownvspowp}
\begin{split}
&\left(k- \#V + \frac{\# \mathcal{E}_{T}}{2}\right) - \left( k-\#E( \mathcal{E}_{L}(w))-\frac{\# \mathcal{E}_{T}}{2} \right)\\
& = \#E( \mathcal{E}_{L}(w)) + \# \mathcal{E}_{T} - \#V
\end{split}
\end{equation}
 as before. Now 
 \begin{equation}\label{explhs}
 \begin{split}
 \#E( \mathcal{E}_{L}(w)) + \# \mathcal{E}_{T} = \sum_{\gamma \in E_{w}} \left[ \mathbb{I}_{\gamma \in E( \mathcal{E}_{L}(w))} + \sum_{e \in \mathcal{E}_{T}} \mathbb{I}_{\gamma = e(w)}\right].
 \end{split}
 \end{equation}
Since $\emptyset \subsetneq \mathcal{E}_{T}$ and $\mathcal{E}_{L} \cup \mathcal{E}_{T}=\mathcal{E}_{2k}$, we have for all $\gamma$
\begin{equation}\label{sumgammabound}
\mathbb{I}_{\gamma \in E( \mathcal{E}_{L}(w))} + \sum_{e \in \mathcal{E}_{T}} \mathbb{I}_{\gamma = e(w)} \ge 1
\end{equation}
and there exists at least one $\gamma$ such that (\ref{sumgammabound}) is greater than equal to $2$. As a consequence, (\ref{explhs}) is greater than or equal to $\#E_{w}+1$. So the final expression of (\ref{newpownvspowp}) is always greater than equal to $0$. Observe that the equality happens only if  $\mathcal{E}_{T}$ is either exactly equal to both the traversal of an edge $\gamma$ traversed exactly twice or $\#\mathcal{E}_{T}=1$ and the corresponding edge has been traversed at least four times.

Now 
\begin{equation*}
\begin{split}
2k= \# \mathcal{E}_{L} + \# \mathcal{E}_{T}= \sum_{\gamma \in E_{w}}\left[  \sum_{e \in \mathcal{E}_{L}} \mathbb{I}_{\gamma = e(w)} + \sum_{e \in \mathcal{E}_{T}} \mathbb{I}_{\gamma = e(w)}\right]
\end{split}
\end{equation*}
Arguing similarly as (\ref{eletbound}) we always have 
\begin{equation}\label{neweletbound}
k- \#E(\mathcal{E}_{L}(w)) -\frac{\# \mathcal{E}_{T}}{2} \ge 0.
\end{equation}
However here we prove that (\ref{newpownvspowp}) and (\ref{neweletbound}) can not be $0$ simultaneously.  
Observe that as $\mathbf{w} \subset \mathcal{W}_{2}$, all the edges in $G$ is traversed at least twice and at least one edge is traversed at least four times. As a consequence, we have for all $\gamma$ we have 
\[
\sum_{e \in \mathcal{E}_{L}} \mathbb{I}_{\gamma = e(w)} + \sum_{e \in \mathcal{E}_{T}} \mathbb{I}_{\gamma = e(w)} \ge 2
\]
and there exists at least one $\gamma$ such that the above sum is greater than equal to $4$. Now consider the cases when (\ref{newpownvspowp}) is $0$. Let us fix a $\gamma' \in E_{w}$ such that $\gamma'$ is traversed at least $4$ times. In both cases, we have $\mathbb{I}_{\gamma' \in E(\mathcal{E}_{L}(w))}=1$ and 
\[
\sum_{e \in \mathcal{E}_{L}} \mathbb{I}_{\gamma'= e(w)}=4 
\] 
in the first case and 
\[
\sum_{e \in \mathcal{E}_{L}} \mathbb{I}_{\gamma'= e(w)}=3 
\]
in the second case. As a consequence, in both cases 
\[
\sum_{\gamma \in E_{w}} \sum_{e \in \mathcal{E}_{L}} \mathbb{I}_{\gamma = e(w)} \ge 2 \#E(\mathcal{E}_{L}(w))+1 
\,\,\Rightarrow\,\, 
k- \#E(\mathcal{E}_{L}(w)) -\frac{\# \mathcal{E}_{T}}{2} \ge \frac{1}{2}.
\]
Thus, we always have the right side of \eqref{eq:thm33temp} converging to $0$ as $n\pnav\to \infty$.
 \hfill{$\square$}

\subsection{Proof of Proposition \ref{prop:correction}}
We actually determine the equivalence classes in this two cases. When $2k=4$, it is easy to observe that $\alpha_{1,2k}=\alpha_{3,2k}=0$ and $\alpha_{1,2k}=1$ and each word in this class is equivalent to the word $12121$. Summing over all the words in this equivalent class one gets the random variable corresponding to this equivalence class is 
\[
 \left( \frac{1}{n\pnav(1-\pnav)} \right)^2\sum_{i,j} (x_{i,j}-\pnav)^{4}.
\]

For $k=3$, observe that $\alpha_{3,2k}=1$ and each word in this class is equivalent to the word $1212121$. Summing over all the words in this equivalent class one gets the random variable corresponding to this equivalence class is 
\[
 \left( \frac{1}{n\pnav(1-\pnav)} \right)^3\sum_{i,j} (x_{i,j}-\pnav)^{6}.
\]

Note that $\alpha_{1,2k}=4$ hence there are four distinct equivalence classes. Any word of this kind is equivalent to one of the following four words $1231231$, $1231321$, $1213231$, $1232131$. The random variables corresponding to each word have mean $\pnav^{3}(1-\pnav)^3$. It is also easy to see that the variance of the random variable corresponding to each of the equivalence class vanishes as $n\pnav\to \infty$.

Finally $\alpha_{2,2k}=6$ hence there are six distinct equivalence classes. Any word of this kind is equivalent to one of the following six words $1212321$, $1232121$, $1232321$, $1212131$, $1213121$ and $1213131$. Observe that the sum over all the words corresponding to any of this equivalence classes give rise to the following random variable
\[
 \left( \frac{1}{n\pnav(1-\pnav)} \right)^3 \sum_{i_1,i_2,i_3} (x_{i_1,i_2}-\pnav)^{4}(x_{i_2,i_3}-\pnav)^2.
\]
So the six equivalence classes will give a total contribution of 
\[
6  \left( \frac{1}{n\pnav(1-\pnav)} \right)^3 \sum_{i_1,i_2,i_3} (x_{i_1,i_2}-\pnav)^{4}(x_{i_2,i_3}-\pnav)^2.
\]
This completes the proof. \hfill{$\square$}

\subsection{Proof of Theorem \ref{thm:cyclestotrace}}

\subsubsection{Proof of part (i)}
Recall that
\begin{equation*}
f(m,r)\frac{m}{r}= \left\{
\begin{array}{ll}
{m\choose \frac{m+r}{2}} & \text{whenever}~ m-r ~\text{even}\\
0 & \text{otherwise.}
\end{array}
\right.
\end{equation*}
Observe that $\binom{m}{\frac{m+r}{2}} $ is the coefficient of $\left(z^{r}+1/z^{r}\right)$ in the expansion of $\left(z+ {1}/{z}\right)^{m}$. 
Recall the definition of $\mathbb{D}_{2k+1}$ in (\ref{def:D}). We further define 
\[
u_{2k+1}:= \left(\binom{3}{2},\binom{5}{3},\ldots, \binom{2k+1}{k+1} \right)'.
\]
Then
\begin{equation*}
\begin{split}
\left(
\begin{array}{ll}
1& 0\\
u_{2k+1}&\mathbb{D}_{2k+1}
\end{array}
\right)
\left(z+\frac{1}{z},z^3+\frac{1}{z^3},\ldots, z^{2k+1}+\frac{1}{z^{2k+1}}\right)'~~~~~~~~~&\\
= \left( \left(z+\frac{1}{z}\right), \left(z+\frac{1}{z} \right)^3,\ldots, \left(z+\frac{1}{z} \right)^{2k+1} \right)'.&
\end{split}
\end{equation*}
Hence
\begin{equation*}
\begin{split}
& \left(z+\frac{1}{z},z^3+\frac{1}{z^3},\ldots, z^{2k+1}+\frac{1}{z^{2k+1}}\right)'\\
& = \left(
\begin{array}{ll}
1& 0\\
- \mathbb{D}_{2k+1}^{-1}u_{2k+1}&\mathbb{D}_{2k+1}^{-1}
\end{array}
\right) \left( \left(z+\frac{1}{z} \right), \left(z+\frac{1}{z}\right)^3,\ldots, \left(z+\frac{1}{z}\right)^{2k+1} \right)'.
\end{split}
\end{equation*}
On the other hand, we have defined $P_{2k+1}(\cdot)$ to be such that 
\[
P_{2k+1}\left( z+ \frac{1}{z} \right)= z^{2k+1} + \frac{1}{z^{2k+1}}.
\]
As a consequence, we have 
\[
\mathbb{D}_{2k+1}^{-1}(k,j)= P_{2k+1}[2j+1].
\]
The rest of the proof of part (i) is exactly similar to the proofs of part (iii) of Theorem \ref{thm:null} and part (iii) of Theorem \ref{thm:alt}. 
We thus omit the details.

\subsubsection{Proof of part (ii)} 
Overall the proof here is similar to the proof of part (i). 
However, here we further prove $\Var\left( \sum_{r=0: r~ \text{even}}^{2k} P_{2k}(r) T_{r}\right) \to 0$ when $n^2\pnav^3 \to 0$ under both null and local alternative.
Firstly, under $\mathbb{P}_{0,n}$,
\begin{equation}
\begin{split}
\Var\left( \sum_{r=0: r~ \text{even}}^{2k} P_{2k}(r) T_{r} \right)
&\le (2k)^2 \sup_{r\le k}P_{2k}(2r)^{2} \sup_{r \le k} v_{2r} \frac{1}{n^{2}\pnav^3}
\\
&
\le (C_{3}'k)^{C_{4}'} (C_1')^{C_{2}'k}\frac{1}{n^{2}\pnav^3}
\end{split}
\end{equation}
for some universal constants $C_{1}', C_{2}',C_{3}'$ and $C_{4}'$. Here we have used the well known fact that $\sup_{r\le k}P_{2k}(2r) \le 4^{2k}$.

Under $\mathbb{P}_{1,n}$ conditioning on the group assignments, recalling the discussion of the outline of the proof of Theorem \ref{thm:even} we have,
 for any $i=1,2$  
\[
W_{2,i} \stackrel{d}{=} \tau_{i} + \Xi_{i}
\]
where $\tau_{i}$ has same asymptotic distribution as $
W_{2,i}\,|\,\mathbb{P}_{0,n}$ and  
\[
\E [\Xi_{i}] = O\left(  \frac{t^{2k}}{n}\right) \quad \mbox{and} \quad
\Var [\Xi_{i}] = O\left( \frac{\left(C_1k\right)^{C_2k}}{n} \right) \quad \mbox{for}\quad i= 1,2, 
\]
for some universal positive constants $C_{1}$ and $C_{2}$. As a consequence,
\begin{equation*}
\begin{split}
&
\Var\left( \sum_{r=0~:~ r~ \text{even}}^{2k} P_{2k}(r) T_{r} \right)
\\
&~~~~
\le (C_{3}'k)^{C_{4}'} (C_1')^{C_{2}'k}\frac{1}{n^{2}\pnav^3} + (2k)^2 \sup_{r\le k}P_{2k}(2r)^{2} \frac{\left(C_1k\right)^{C_2k}}{n} \to 0.
\end{split}
\end{equation*}
This completes the proof. 
\hfill{$\square$}

\subsection{Proof of Proposition \ref{prop:multi}} 
We now provide a result which justify the consistency of the tests proposed in Proposition \ref{prop:sing}. Following the discussion before the statement of Proposition \ref{prop:multi} it is clear that the results in (\ref{formula1}), Theorem \ref{thm:even} and Theorem \ref{thm:cyclestotrace} remains valid in the multiple block case also. In particular, one can still write for any $k=o\left(\min\left(\log(np_{n,\mathrm{av}}), \sqrt{\log{n}}\right)\right)$
\begin{equation*}
C_{n,k}(G)=\left\{ 
\begin{array}{ll}
\Tr\left(P_{k}(\Acentwo)\right)+o_{p}(1) & \text{for} ~ k ~ \text{odd}.\\
\Tr\left(P_{k}(\Acentwo) \right)- \sum_{r=0:r~ \text{even}}^{k} P_{k}[r] \left[T_{r}- \binom{\frac{r}{2}+1}{2}\psi_{r}\right]+o_{p}(1) & \text{for} ~k ~\text{even}. 
\end{array}
\right.
\end{equation*}
where $T_{r}$'s have same asymptotic distribution under both null and alternative. 
Further, one can also prove 
\[
\frac{C_{n,k}(G)- \mu_{\kappa,k}}{\sqrt{2k}}\,\bigg|\,\mathbb{P}_{1,n} \stackrel{d}{\to} N(0,1).
\]
However, in the multiple block case $\mu_{\kappa,k}$ is no longer $t^{k}$ for some known $t$. 
In order to prove the consistency of our tests, it is enough to show that when $p_n > q_n$, above the Kesten-Stigum threshold, $\mu_{\kappa,k} \ge t_{\kappa}^{k-1}$ where $t_{\kappa} >1$. 
When $p_n < q_n$, we replace $\mu_{\kappa,k} $ with $(-1)^k \mu_{\kappa,k}$ while all the others remain the same.
In what follows, all expectation and variance are taken under $H_{1,\kappa}$ as defined in \eqref{eq:alt-kappa}.

\medskip

Observe that in this case  
\[
\pnav= \frac{p_{n}+ (\kappa-1) q_{n}}{\kappa}.
\]
In what follows, we focus on the assortative case, i.e., $p_n > q_n$, as before, and indicate the key difference for the disassortative case whenever needed.
Now 
\begin{equation}
\begin{split}
&C_{n,k}(G)=\left(\frac{1}{n\pnav(1-\pnav)}\right)^{\frac{k}{2}} \sum_{i_0,i_1,\ldots,i_{k-1}} (x_{i_0,i_1}-\pnav)\ldots(x_{i_{k-1}i_0}-\pnav)\\
&= \left(\frac{1}{n\pnav(1-\pnav)}\right)^{\frac{k}{2}}\sum_{i_0,i_1,\ldots,i_{k-1}} (x_{i_0,i_1}-p_{i_0,i_1}+p_{i_0,i_1}-\pnav)\ldots(x_{i_{k-1}i_0}-p_{i_{k-1},i_{k}}+p_{i_{k-1},i_{k}}-\pnav)\\
&=\left(\frac{1}{n\pnav(1-\pnav)}\right)^{\frac{k}{2}}\sum_{i_0,i_1,\ldots,i_{k-1}}(x_{i_0,i_1}-p_{i_0,i_1}+L(\sigma_{i_0},\sigma_{i_1}))\ldots(x_{i_{k-1}i_0}-p_{i_{k-1},i_{k}}+L(\sigma_{i_{k-1}},\sigma_{i_k}))\\
&=\left(\frac{1}{n\pnav(1-\pnav)}\right)^{\frac{k}{2}}\left[\sum_{i_0,i_1,\ldots,i_{k-1}}(x_{i_0,i_1}-p_{i_0,i_1})\ldots (x_{i_{k-1}i_0}-p_{i_{k-1},i_{k}}) + \prod_{j=0}^{k-1} L(\sigma_{i_j},\sigma_{i_{j+1}})\right]+ V_{n,k,\kappa}.
\end{split}
\end{equation}
Here $V_{n,k,\kappa}$ is the random variable corresponding to all the cross terms,
\[
L(\sigma_{1},\sigma_{2})= \left\{
\begin{array}{ll}
\frac{\kappa-1}{\kappa}(p_n-q_n) & \text{if} ~ \sigma_{1}= \sigma_{2}\\
- \frac{1}{\kappa}(p_n-q_n) & \text{otherwise},
\end{array}
\right.
\]
 $p_{i,j}=p_n$ if $\sigma_{i}=\sigma_{j}$ and $q_n$ otherwise.
Let
\[
D_{n,k,\kappa}:=\left(\frac{1}{n\pnav(1-\pnav)}\right)^{\frac{k}{2}}\sum_{i_0,i_1,\ldots,i_{k-1}}(x_{i_0,i_1}-p_{i_0,i_1})\ldots (x_{i_{k-1}i_0}-p_{i_{k-1},i_{k}}).
\]
It can be shown that the random variable $\frac{D_{n,k,\kappa}}{\sqrt{2k}}$ have asymptotically $N(0,1)$ distribution by arguments similar to the proof of Proposition 4.1 part (i) in \cite{Ban16}. 
In addition, arguments similar to the proof of Theorem \ref{thm:alt} part (i) will show that $\E\left[V_{n,k,\kappa}\right]=0$ and $\Var\left[ V_{n,k,\kappa} \right] \to 0$ as $n \to \infty$.
Arguments similar to the proof of (\ref{eqn:prodassign}) further show 
\[ 
\prod_{j=0}^{k-1} L(\sigma_{i_j},\sigma_{i_{j+1}})= \left\{
\begin{array}{ll}
\left| \prod_{j=0}^{k-1} L(\sigma_{i_j},\sigma_{i_{j+1}}) \right| & \text{if} ~ p_{n}>q_{n}\\
(-1)^{k} \left| \prod_{j=0}^{k-1} L(\sigma_{i_j},\sigma_{i_{j+1}}) \right| & \text{if} ~ p_{n} < q_{n}.
\end{array}
\right.
\] 
 irrespective of the value of $\sigma_{i_{j}}$'s.
 So the proof will be complete if we can show, 
 \begin{equation}\label{boundmulti}
 \frac{ \sum_{i_{0},\ldots,i_{k-1}}\left| \prod_{j=0}^{k-1} L(\sigma_{i_j},\sigma_{i_{j+1}}) \right|}{\left({n\pnav(1-\pnav)}\right)^{\frac{k}{2}}}\ge \frac{ \sum_{i_{0},\ldots,i_{k-1}}\left| \prod_{j=0}^{k-1} L(\sigma_{i_j},\sigma_{i_{j+1}}) \right|}{\left({n\pnav}\right)^{\frac{k}{2}}} \ge t_{\kappa}^{k-1}
 \end{equation}
 with high probability for some $t_{\kappa} >1$.

To prove \eqref{boundmulti},
let for any $l \in \{1,\ldots,\kappa \}$, $N_{l}$ be the number of nodes having label $l$. Obviously, $\sum_{l=1}^{\kappa} N_{l}= n$.
 As the labels $\sigma_{u}$ have been assigned uniformly and independently, for any $l$
\[
N_{l} \sim \mathrm{Bin}\left(n,\frac{1}{\kappa} \right).
\] 
By CLT, we have for any $\epsilon >0$ and for any $l$ 
\[
\mathbb{P}\left[ \sqrt{\frac{\kappa-1}{\kappa^2}}\left| N_{l}- \frac{n}{\kappa} \right| \le  n^{\frac{1}{2}+\epsilon}  \right] = 1- o(1).
\]
As a consequence, 
\[
\mathbb{P} \left[ \bigcap_{i=1}^{\kappa}\left\{\sqrt{\frac{\kappa-1}{\kappa^2}}\left| N_{l}- \frac{n}{\kappa} \right| \le  n^{\frac{1}{2}+\epsilon} \right\}\right]= 1- o(1).
\]
Let $\mathrm{Ev}$ be the indicator function of the event
\[
\bigcap_{i=1}^{\kappa} \left\{\sqrt{\frac{\kappa-1}{\kappa^2}}\left| N_{l}- \frac{n}{\kappa} \right| \le  n^{\frac{1}{2}+\epsilon} \right\}.
\]
From now on, we shall only consider the case when $\mathrm{Ev}=1$.
It is easy to observe that when $\mathrm{Ev}=1$, for any $0<\epsilon<1$ and $k= o\left(\min\left(\log(np_{n,\mathrm{av}}), \sqrt{\log{n}}\right)\right)$,
\begin{equation}\label{multilow}
\prod_{j=1}^{k} \left( \min_{l}N_{l}- j +1 \right) = (1+o(1))\left(\frac{1}{\kappa}\right)^{k}n^{k}
\end{equation}
and 
\begin{equation}\label{multiup}
 \left( \max_{l}N_{l} \right)^{k} = 
 (1+o(1))\left(\frac{1}{\kappa}\right)^{k}n^{k}.
\end{equation}
We now proceed by induction. 
Without loss of generality we can take $p_{n}>q_{n}$. The analysis for $p_{n}<q_{n}$ is exactly similar.
First we drop the index $i_{k}$ and evaluate  
\[
\sum_{i_{0},\ldots,i_{k-1}}\left| \prod_{j=0}^{k-2} L(\sigma_{i_j},\sigma_{i_{j+1}}) \right|.
\]
Here $i_{0},\ldots, i_{k-1}$ are all distinct. At first observe that for any $r\le k-1$ fixing the values $i_0,\ldots,i_{r}$, there are at least $N_{\sigma_{i_r}}-r$ many nodes which have the label same as $\sigma_{i_r}$ and at least $\sum_{l\neq \sigma_{i_r}} N_{l}-r$ many nodes which have the label different from $\sigma_{i_{r}}$.
As a consequence, 
\begin{equation}\label{multiinductionI}
\begin{split}
&\left(\prod_{j=0}^{r-1} \left|L(\sigma_{i_j},\sigma_{i_{j+1}})\right|\right)
\left[
\frac{\kappa-1}{\kappa}\left(N_{\sigma_{i_r}}-r\right)(p_n-q_n)+ \frac{1}{\kappa}\left(\sum_{l\neq \sigma_{i_r}} N_{l}-r\right)(p_n-q_n) 
\right]\\
&\le \sum_{j \notin \{ i_{0},\ldots,i_{r} \}}\prod_{j=0}^{r-1} \left|L(\sigma_{i_j},\sigma_{i_{j+1}})\right|\left|L(\sigma_{i_r},\sigma_{j}) \right|\\
&\le \left(\prod_{j=0}^{r-1} \left|L(\sigma_{i_j},\sigma_{i_{j+1}})\right|\right)
\left[ 
\frac{\kappa-1}{\kappa}\left(N_{\sigma_{i_r}}\right)(p_n-q_n)+ \frac{1}{\kappa}\left(\sum_{l\neq \sigma_{i_r}} N_{l}\right)(p_n-q_n) 
\right]
\end{split}
\end{equation} 
When $\mathrm{Ev}=1$, using (\ref{multilow}) and (\ref{multiup}) we can write (\ref{multiinductionI}) as 
\begin{equation}
\begin{split}
&\sum_{j \notin \{ i_{0},\ldots,i_{r} \}}\prod_{j=0}^{r-1} \left|L(\sigma_{i_j},\sigma_{i_{j+1}})\right|\left|L(\sigma_{i_r},\sigma_{j}) \right|\\
& = (1+o(1)) n \left(\prod_{j=0}^{r-1} \left|L(\sigma_{i_j},\sigma_{i_{j+1}})\right|\right)\left( \frac{p_{n}-q_{n}}{\kappa}\right)\left( \frac{2(\kappa-1)}{\kappa} \right)\\
& > (1+o(1)) \sqrt{n\pnav} \left( \frac{2(\kappa-1)}{\kappa} \right)\left(\prod_{j=0}^{r-1} \left|L(\sigma_{i_j},\sigma_{i_{j+1}})\right|\right).
\end{split}
\end{equation}  
As a consequence,
\[
\sum_{i_{0},\ldots,i_{k-1}}\left| \prod_{j=0}^{k-2} L(\sigma_{i_j},\sigma_{i_{j+1}}) \right| > (1+o(1)) \left(n\pnav\right)^{\frac{k-1}{2}} n \left( \frac{2(\kappa-1)}{\kappa} \right)^{k-1}.
\]
Here the additional $n$ factor comes due to the summation over $i_0$.
Let 
\[
L_{l_{1},l_{2}}:= \sum_{i_{0},\ldots,i_{k-1}: \sigma_{i_0}=l_{1},\sigma_{i_{k-1}}=l_2}\left| \prod_{j=0}^{k-2} L(\sigma_{i_j},\sigma_{i_{j+1}}) \right|.
\]
By symmetry it is easy to check that 
 \[
 L_{l_{1},l_{2}}=
 \left\{
 \begin{array}{ll}
 (1+o(1)) L_{k,1} \left(n\pnav\right)^{\frac{k-1}{2}} n, & \text{if} ~ l_{1}=l_{2},\\
 (1+o(1)) L_{k,2}  \left(n\pnav\right)^{\frac{k-1}{2}} n, & \text{otherwise}.
 \end{array}
 \right.
 \]
 Here $L_{k,1}$ and $L_{k,2}$ are some known number depending only on $k$.
 As a consequence,
 \[
 (1+o(1))\left[ \kappa L_{k,1} + \kappa(\kappa-1) L_{k,2} \right]> 
 \left[ \frac{2(\kappa-1)}{\kappa} \right]^{k-1}.
 \] 
 However 
 \begin{equation}
 \begin{split}
 &\sum_{i_{0},\ldots,i_{k-1}}\left| \prod_{j=0}^{k-2} L(\sigma_{i_j},\sigma_{i_{j+1}}) \right|\\
 &= (1+o(1)) \left(n\pnav\right)^{\frac{k-1}{2}} n \left( \kappa L_{k,1}\frac{\kappa - 1 }{\kappa}(p_n-q_n) +  \kappa(\kappa-1) L_{k,2} \frac{p_n-q_n}{\kappa}\right)\\
 &= (1+o(1)) \left(n\pnav\right)^{\frac{k-1}{2}}n\frac{(p_n-q_n)}{\kappa} \left[ L_{k,1}\kappa(\kappa-1) + L_{k,2} \kappa(\kappa-1) \right]\\
 & \ge (1+o(1))\left(n\pnav\right)^{\frac{k-1}{2}}n\frac{(p_n-q_n)}{\kappa} \left[ \kappa L_{k,1} + \kappa(\kappa-1) L_{k,2} \right]\\
 & > (1+o(1))\left(n\pnav\right)^{\frac{k-1}{2}} \sqrt{n\pnav} \left[ \frac{2(\kappa-1)}{\kappa} \right]^{k-1}\\
 & > (1+o(1))\left(n\pnav\right)^{\frac{k}{2}} \left[ \frac{2(\kappa-1)}{\kappa} \right]^{k-1}.
 \end{split}
 \end{equation}
We complete the proof by noting that $\frac{2(\kappa-1)}{\kappa}>1$ for every $\kappa\ge 3$.
 \hfill{$\square$}

\subsection{Proof of \eqref{eq:cheby-facts}}
Consider the first identity first. Note that by definition for any integer $r\geq 0$,
\begin{equation*}
\psi_{2r} = \int_{-2}^2x^{2r}\frac{\sqrt{4-x^2}}{2\pi}\,\mathrm{d}x.
\end{equation*}
Hence for any $k\geq 2$
\begin{align*}
\sum_{r=0}^k P_{2k}[2r]\psi_{2r}
& = \int_{-2}^2 \sum_{r=0}^k P_{2k}[2r]x^{2r} \frac{\sqrt{4-x^2}}{2\pi}\mathrm{d}x 
= \int_{-2}^2 P_{2k}(x) \frac{\sqrt{4-x^2}}{2\pi}\mathrm{d}x\\
& = \int_{-2}^2 P_{2k}(x)[P_0(x) - P_2(x)] \frac{1}{2\pi\sqrt{4-x^2}}\mathrm{d}x\\
& = 0.
\end{align*}
Here, the second equality holds since the odd power terms in $P_{2k}$ are all zeros.
The third equality holds since $P_0(x)=2$ and $P_2(x) = x^2-2$.
The last equality holds since the $P_j$'s are mutually orthogonal with respect to the weight function $1/\sqrt{4-x^2}$ on $[-2,2]$ by their definitions and the orthogonality of Chebyshev polynomials with respect to $1/\sqrt{1-x^2}$ on $[-1,1]$.

Turn to the second identity. we have
\begin{align*}
\sum_{r=1}^k P_{2k}[2r] (2r) \psi_{2r}
& = \int_{-2}^2 \left[\frac{\mathrm{d}}{\mathrm{d}x} P_{2k}(x) \right] x \frac{\sqrt{4-x^2}}{2\pi}\mathrm{d}x \\
& = P_{2k}(x) x \frac{\sqrt{4-x^2}}{2\pi}\bigg|_{-2}^{2}
- \int_{-2}^2  P_{2k}(x) \frac{\mathrm{d}}{\mathrm{d}x}\left[x \frac{\sqrt{4-x^2}}{2\pi} \right]\mathrm{d}x\\
& = 0 + \int_{-2}^2 P_{2k}(x) P_2(x) \frac{1}{\pi\sqrt{4-x^2}}\mathrm{d}x = 0.
\end{align*}
Here we have used the fact that $P_2(x) = x^2-2$. This completes the proof.

\bibliographystyle{abbrvnat}
\bibliography{HYP_SBM}

\end{document}